\def \qed {\hfill \vrule height6pt width 6pt depth 0pt}
\def\textmatrix#1&#2\\#3&#4\\{\bigl({#1 \atop #3}\ {#2 \atop #4}\bigr)}
\def\dispmatrix#1&#2\\#3&#4\\{\left({#1 \atop #3}\ {#2 \atop #4}\right)}
\newcommand{\beg}{\begin{equation}}
	\newcommand{\eeg}{\end{equation}}
\newcommand{\ben}{\begin{eqnarray*}}
	\newcommand{\een}{\end{eqnarray*}}
\newtheorem{thm}{Theorem}[section]
\newtheorem{cor}[thm]{Corollary}
\newtheorem{lem}[thm]{Lemma}
\newtheorem{prop}[thm]{Proposition}
\numberwithin{equation}{section} \theoremstyle{definition}
\newtheorem{defn}[thm]{Definition}
\newtheorem{rem}[thm]{Remark}
\newtheorem{eg}[thm]{Example}
\newcommand{\C}{\mathbb{C}}
\newcommand{\B}{\mathbb{B}}
\newcommand{\G}{\mathbb{G}}
\newcommand{\D}{\mathbb{D}}
\newcommand{\T}{\mathbb{T}}
\newcommand{\N}{\mathbb{N}}
\newcommand{\Z}{\mathbb{Z}}
\newcommand{\Pe}{\mathbb{P}}
\newcommand{\HS}{\mathcal{H}}
\newcommand{\PC}{\overline{\mathbb{P}}}
\newcommand{\BC}{\overline{\mathbb{B}}_2}
\newcommand{\DC}{\overline{\mathbb{D}}}
\newcommand{\UT}{\underline{T}}
\newcommand{\BNC}{\overline{\mathbb{B}}_n}
\def\textmatrix#1&#2\\#3&#4\\{\bigl({#1 \atop #3}\ {#2 \atop #4}\bigr)}
\def\dispmatrix#1&#2\\#3&#4\\{\left({#1 \atop #3}\ {#2 \atop #4}\right)}
\title[Operators associated with the pentablock]{Operators associated with the pentablock and their relations with biball and symmetrized bidisc}
\author[Pal and Tomar]{Sourav Pal and Nitin Tomar}
\address[Sourav Pal]{Mathematics Department, Indian Institute of Technology Bombay,
	Powai, Mumbai - 400076, India.} \email{souravmaths@gmail.com, sourav@math.iitb.ac.in}
\address[Nitin Tomar]{Mathematics Department, Indian Institute of Technology Bombay, Powai, Mumbai-400076, India.} \email{tnitin@math.iitb.ac.in, tomarnitin414@gmail.com}		
\keywords{Pentablock, $\mathbb{P}$-contraction, $\mathbb P$-isometry, $\mathbb P$-unitary, $\mathbb B_n$-contraction, $\Gamma$-contraction, Canonical decomposition, Dilation}	
\subjclass[2020]{47A13, 47A20, 47A25, 47A45}
\begin{document}

	\begin{abstract}
	
	A commuting triple of Hilbert space operators $(A,S,P)$ is said to be a \textit{$\mathbb{P}$-contraction} if the closed pentablock $\overline{\mathbb P}$ is a spectral set for $(A,S,P)$, where 
		\[
		\mathbb{P}:=\left\{(a_{21}, \mbox{tr}(A_0), \mbox{det}(A_0))\ : \ A_0=[a_{ij}]_{2 \times 2} \; \; \& \;\; \|A_0\| <1 \right\} \subseteq \mathbb{C}^3.
		\]
		 A commuting triple of normal operators $(A, S, P)$ acting on a Hilbert space is said to be a \textit{$\mathbb P$-unitary} if the joint spectrum $\sigma_T(A, S, P)$ of $(A, S, P)$ is contained in the distinguished boundary $b\mathbb{P}$ of $\PC$. Also, $(A, S , P)$ is called a \textit{$\mathbb P$-isometry} if it is the restriction of a $\mathbb P$-unitary $(\hat A, \hat S, \hat P)$ to a joint invariant subspace of $\hat A, \hat S, \hat P$. We find several characterizations for the $\mathbb P$-unitaries and $\mathbb P$-isometries. We show that every $\mathbb P$-isometry admits a Wold type decomposition that splits it into a direct sum of a $\mathbb P$-unitary and a pure $\mathbb P$-isometry. Moving one step ahead we show that every $\mathbb P$-contraction $(A,S,P)$ possesses a canonical decomposition that orthogonally decomposes $(A,S,P)$ into a $\mathbb P$-unitary and a completely non-unitary $\mathbb P$-contraction. We find a necessary and sufficient condition such that a $\mathbb P$-contraction $(A, S, P)$ dilates to a $\mathbb P$-isometry $(X, T, V)$ with $V$ being the minimal isometric dilation of $P$. Then we show an explicit construction of such a conditional dilation. We show interplay between operator theory on the following three domains: the pentablock, the biball and the symmetrized bidisc.		
	\end{abstract} 
	
	\maketitle

	\tableofcontents
	
		\section{Introduction} \label{Intro}
	
	\vspace{0.2cm}
	
\noindent Throughout the paper, all operators are bounded linear operators acting on complex Hilbert spaces and the algebra of bounded linear operators acting on a Hilbert space $\HS$ is denoted by $\mathcal{B}(\HS)$. For a commuting tuple of operators $\underline{T}=(T_1, \dotsc, T_n)$ acting on a Hilbert space $\HS$, we denote by $\sigma_T(T_1, \dotsc, T_n)$ the polynomial joint spectrum (or simply the joint spectrum) of $(T_1, \dotsc, T_n)$ relative to the closed algebra $\mathcal{A}$ of $\mathcal{B}(\mathcal{H})$ generated by $T_1, \dotsc, T_n$ and the identity operator $I_\HS$ on $\HS$, i.e., 
\[
\sigma_T(T_1, \dotsc, T_n)=\{(\lambda_1, \dotsc, \lambda_n) \in \C^n : I_\HS \notin (T_1-\lambda_1)\mathcal{A}+\dotsc+(T_n-\lambda_n)\mathcal{A} \}.
\]
In this article, we introduce operator theory on the pentablock $\mathbb P$, a domain related to a special case of $\mu$-synthesis, which is defined by
\[
		\mathbb{P}:=\left\{(a_{21}, \mbox{tr}(A_0), \mbox{det}(A_0))\ : \ A_0=[a_{ij}]_{2 \times 2}, \|A_0\| <1 \right\} \subseteq \mathbb{C}^3.
		\]	
	 Also, we study operators having the closed Euclidean unit ball $\overline{\B}_n$ as a spectral set and then connect the operator theory of the three domains: $\Pe, \B_2$ and the symmetrized bidisc $\mathbb G_2$, where
	\begin{align*}
	\mathbb B_n & =\{ (w_1,\dots , w_n)\in \C^n \; : \, |w_1|^2+ \dots + |w_n|^2 <1 \}, \\
	\mathbb G_2 & = \{ (z_1+z_2,z_1z_2) \in \C^2 \;: \; |z_1|<1, \, |z_2|<1 \}.	
	\end{align*}
In \cite{Agler}, Agler, Lykova and Young introduced the pentablock to study a special case of $\mu$-synthesis.  The $\mu$-synthesis is a part of the theory of robust control of
systems comprising interconnected electronic devices whose outputs
are linearly dependent on the inputs. Given a linear subspace $E$
of $\mathcal M_n(\mathbb C)$, the space of all $n \times n$
complex matrices, the functional
\[
\mu_E(A_0):= (\text{inf} \{ \|Y \|: Y\in E \text{ and } (I-A_0Y)
\text{ is singular } \})^{-1}, \; A_0\in \mathcal M_n(\mathbb C),
\]
is called a \textit{structured singular value}, where the linear
subspace $E$ is referred to as the \textit{structure}. If
$E=\mathcal M_n(\mathbb C)$, then $\mu_E (A_0)$ is equal to the
operator norm $\|A_0\|$, while if $E$ is the space of all scalar multiples of the identity matrix, then $\mu_E(A_0)$
is the spectral radius $r(A_0)$. For any linear subspace $E$ of
$\mathcal M_n(\mathbb C)$ that contains the identity matrix $I$,
$r(A_0)\leq \mu_E(A_0) \leq \|A_0\|$. We refer to the pioneering
work of Doyle \cite{Doyle} on control-theory and motivation
behind considering $\mu_E$. Also, for further details on this topic an interested reader can
see \cite{Francis}. Given distinct points $\alpha_1, \dots , \alpha_d \in \D$, the open unit disk in the complex-plane $\C$, and matrices $B_1, \dots , B_d \in \mathcal M_n(\C)$, the aim of $\mu$-synthesis is to find an
analytic function $F:\,\D \rightarrow \mathcal M_n(\C)$ with $\mu_E(F(\lambda))<1$ for all $\lambda \in \mathbb D$ such that $F$ interpolates the given data, i.e. $F(\alpha_i)=B_i$ for $1\leq i \leq d$. The pentablock arises naturally in connection with a special case of $\mu$-synthesis. Indeed, if
\[
E=\bigg\{ \begin{pmatrix}
\lambda & \mu \\
0 & \lambda
\end{pmatrix}\,: \, \lambda , \mu \in \C \bigg\},
\]
then $\mu_E(A_0)<1$ for $A_0=[a_{ij}]_{2 \times 2}$ if and only if $(a_{21}, \mbox{tr}(A_0), \mbox{det}(A_0)) \in \Pe$. Thus, the function
\[
\pi: \,A_0=[a_{ij}] \mapsto (a_{21}, \mbox{tr}(A_0), \mbox{det}(A_0))
\]
maps the $\mu_E$ unit ball onto the pentablock. However, Agler, Lykova and Young refined this result in \cite{Agler} and showed that the pentablock is the image under $\pi$ of the norm unit ball $\B_{\|.\|}$ in $\mathcal M_2(\C)$, which is strictly smaller than the $\mu_E$ unit ball. The pentablock has attracted considerable attentions in recent past from complex geometric and function theoretic aspects \cite{Alsheri, Jindal, Kosinski, Su, SuII, Zapalowski}. In this article, we initiate operator theory on the pentablock. Thus, our primary object of study in this paper is a commuting operator triple that has the closed pentablock as a spectral set.	
	
\begin{defn}
Let $X \subseteq \C^n$ be a polynomially convex compact set. Then $X$ is said to be a \textit{spectral set} for a commuting tuple of operators $(T_1, \dotsc, T_n)$ if von Neumann's inequality holds for every polynomial $p \in \C[z_1, \dots , z_n]$, that is,
\begin{equation}\label{eqn_spec}
	\|p(T_1, \dotsc, T_n) \| \leq \sup \{|p(z_1, \dotsc, z_n)| : (z_1, \dotsc, z_n)\in X\} = \|p\|_{\infty, \, X \,}.
\end{equation}	
Furthermore, $X$ is said to be a \textit{complete spectral set} for $(T_1, \dotsc, T_n)$ if
\[
\|f(T_1, \dotsc, T_n) \| \leq \sup \{\|f(z_1, \dotsc, z_n)\| : (z_1, \dotsc, z_n)\in X\} 
\]	
holds for every matricial polynomial $f=[f_{ij}]$, where each $f_{ij} \in \C[z_1, \dots , z_n]$. Let $\Omega \subseteq \mathbb{C}^n$ be a bounded domain such that $\overline{\Omega}$ is polynomially convex. A commuting $n$-tuple of operators $(T_1,\dotsc,T_n)$ is said to be an \textit{$\Omega$-contraction} (or, \textit{$\overline{\Omega}$-contraction}) if $\overline{\Omega}$ is a spectral set for $(T_1,\dotsc,T_n)$.
\end{defn}
Unitaries, isometries and co-isometries are special classes of contractions. A unitary is a normal operator having its spectrum on the unit circle $\T$. An isometry is the restriction of a unitary to an invariant subspace and a co-isometry is the adjoint of an isometry. In an analogous manner, we define unitary, isometry and co-isometry associated with the pentablock. To do so, we briefly describe the definition of distinguished boundary. For a bounded domain $\Omega  \subset \C^n$, the \textit{distinguished boundary} of $\overline{\Omega}$ is the smallest closed subset $b\Omega$ (or, $b\overline{\Omega}$) of $\overline{\Omega}$ such that every function that is analytic in $\Omega$ and continuous on $\overline{\Omega}$ attains its maximum modulus on $b \Omega$.

\begin{defn}\label{defn:P-unitary}
 Let $A,S,P$ be commuting operators acting on a Hilbert space $\HS$. Then the triple $(A,S,P)$ is called 
 \begin{itemize}
 
 \item[(i)] a $\Pe$-\textit{unitary} if $A,S,P$ are normal operators and the polynomial joint spectrum $\sigma_T(A,S,P)$ lies in the distinguished boundary of the pentablock ;
  
 \item[(ii)] a $\Pe$-\textit{isometry} if there is a Hilbert space $\mathcal K \supseteq \HS$ and a $\Pe$-unitary $(\hat A, \hat S, \hat P)$ on $\mathcal K$ such that $\HS$ is a joint invariant subspace of $\hat A, \hat S, \hat P$ and $\hat A|_{\HS}=A, \hat S|_{\HS}= S$ and $\hat P|_{\HS}=P$ ;
 
 \item[(iii)] a $\Pe$-\textit{co-isometry} if $(A^*,S^*,P^*)$ is a $\Pe$-isometry;
 
 \item[(iv)] a \textit{completely non-unitary $\Pe$-contraction} or simply a \textit{c.n.u. $\Pe$-contraction} if $(A, S, P)$ is a $\Pe$-contraction and there is no closed joint reducing subspace of $A, S, P$ restricted to which $(A, S, P)$ becomes a $\Pe$-unitary.
 \end{itemize}
Similarly, one defines unitary, isometry and co-isometry for the classes of $\overline{\B}_n$-contractions and $\Gamma$-contractions. Moreover, an isometry (on $\HS$) associated with a domain is called \textit{pure} if there is no non-zero joint reducing subspace of the isometry on which it acts like a unitary associated with the domain.
\end{defn}

 \begin{rem}
 	One can define unitary with respect to a bounded domain whose closure is polynomially convex (e.g., $\Pe$-unitary as in Definition \ref{defn:P-unitary} for the domain pentablock) by considering the Taylor joint spectrum instead of the polynomial joint spectrum in the definition. These two notions of joint spectrum coincide in the context of this paper as we deal here with domains having polynomially convex closures, such as pentablock, Euclidean unit ball and symmetrized bidisc. Thus, the slightly delicate issues surrounding various notions of joint spectrum are not relevant to this paper.
\end{rem}
Amongst the central theorems that constitute the foundation of the one-variable operator theory, the following two results are remarkable: the first is due to von Neumann \cite{vN}, which states that an operator is a contraction if and only if the closed unit disc $\overline{\D}$ is a spectral set for it and the second is Sz.-Nagy's celebrated theorem \cite{Nagy}, which asserts that an operator $T$ on a Hilbert space $\HS$ is a contraction if and only if it dilates to a unitary $U$ acting on a Hilbert space $\mathcal K \supseteq \HS$, i.e. $T^n = P_{\HS}U^n|_{\HS}$ for every positive integer $n$, where $P_{\HS}$ is the orthogonal projection of $\mathcal K$ onto $\HS$. Moreover, such a dilation is called \textit{minimal} if 
\[
\mathcal K = \overline{span}\; \{ U^nh\,:\, h \in \HS, \, n \in \Z \}.
\]
 Thus, von Neumann's famous result compels to realize a contraction as an operator having $\overline{\D}$ as a spectral set and Sz.-Nagy's dilation theorem paves a way to dilate such an operator to a normal operator having its spectrum on the boundary of $\overline{\D}$. Taking cue from such inspiring classical concepts, one considers operators having other domains as spectral sets and in the same spirit studies if they dilate to normal operators associated with the boundary of the domain.
 
 \medskip 
 
  In this article, we first focus on $\Pe$-unitaries and $\Pe$-isometries; characterize them in several different ways and decipher their structures in Sections \ref{P-uni} \& \ref{P-iso}. We show that every $\Pe$-isometry admits a Wold decomposition that splits it into two orthogonal parts of which one is a $\Pe$-unitary and the other is a pure $\Pe$-isometry. This is parallel to the Wold decomposition of an isometry into a unitary and a pure isometry. Also, more generally every contraction orthogonally decomposes into a unitary and a completely non-unitary contraction. A completely non-unitary contraction is a contraction that does not have a unitary part. Such a decomposition is called the \textit{canonical decomposition} of a contraction, see Chapter I of \cite{Nagy} for details. In Section \ref{decomp}, we show that such a canonical decomposition is possible for a $\Pe$-contraction. In Section \ref{Polyball}, we discuss analogues of these results for $\B_n$-contractions.
  
  \smallskip
  
  Operators having the (closed) pentablock as a spectral set have close connections with the operators associated with the biball and the symmetrized bidisc. Indeed, in Section \ref{Prelims} we show that if $(A,S,P)$ is a $\Pe$-contraction then $(A, S \slash 2)$ is a $\B_2$-contraction and $(S,P)$ is a $\Gamma$-contraction. However, a converse to this result does not hold and we show it by a counter example. Naturally, operators associated with the symmetrized bidisc come into the picture while studying $\Pe$-contractions. Operator theoretic aspects of the symmetrized bidisc have rich literature, e.g. \cite{AglerII, AglerVII, Bhattacharyya, Tirtha-Sourav1}. An interested reader may also see the references therein. In \cite{AglerII}, Agler and Young profoundly established the success of rational dilation on the symmetrized bidisc and in \cite{Bhattacharyya}, Bhattacharyya, Pal and Shyam Roy explicitly constructed such a dilation. In Section \ref{dilation}, we mention this dilation theorem. Note that it suffices to find an isometric dilation to a commuting operator tuple associated with a domain, because, by definition every isometry associated with a domain extends to a unitary with respect to the same domain. The explicit $\Gamma$-isometric dilation of a $\Gamma$-contraction from \cite{Bhattacharyya} motivates us to construct explicitly an isometric dilation for a $\Pe$-contraction under certain conditions. Still it is unknown if every $\Pe$-contraction dilates to a $\Pe$-isometry. The fact that the component $P$ of a $\Pe$-contraction $(A,S,P)$ is a contraction leads to the possibility of a $\Pe$-isometric dilation $(X,T,V)$ of $(A,S,P)$, when $V$ is the minimal isometric dilation of $P$. We capitalize this idea in Section \ref{dilation}. In Theorem \ref{thm:main-dilation}, we find a necessary and sufficient condition such that a $\Pe$-contraction $(A,S,P)$ dilates to a pentablock isometry $(X,T,V)$, where $V$ is the minimal isometric dilation of $P$. Then we explicitly construct such a conditional $\Pe$-isometric dilation in Theorem \ref{lem6.9}. The following operator equation in $Z$ associated with a $\Pe$-contraction $(A,S,P)$ plays an important role in these dilation theorems:
  \[
  I-A^*A-\frac{1}{4}S^*S=D_P\big(Z^*Z+\frac{1}{4}FF^*\big)D_P,
  \]
  where $D_P=(I-P^*P)^{1\slash 2}$ and $F$ satisfies $S-S^*P=D_PFD_P$. In Section \ref{dilation}, we also find a necessary and sufficient condition such that the above operator equation has a solution and prove that such a solution, when exists, is unique. At every stage of this paper, we explore and find interaction of a $\Pe$-contraction with $\B_2$-contractions and $\Gamma$-contractions.
  
  \medskip
  
  \noindent \textbf{Note.} After several months of writing this paper and posting to arXiv-math, the article \cite{JindalII} appeared in arXiv-math that has intersection with parts of Theorem \ref{P_unitary} and Theorem \ref{P_isometry} of our paper.
  
  \vspace{0.1cm}

	\section{The $\Pe$-contractions, $\B_2$-contractions and $\Gamma$-contractions}\label{Prelims}
	
	\vspace{0.2cm}
	
	\noindent Recall that a $\Pe$-contraction is a commuting operator triple that has the closed pentablock $\PC$ as a spectral set. Similarly, for a commuting operator pair if the closed biball $\BC$ or the closed symmetrized bidisc $\Gamma$ is a spectral set, then it is called a $\B_2$-contraction or a $\Gamma$-contraction respectively. In this Section, we present a few basic results on $\Pe$-contractions and explore their interactions with $\B_2$-contractions and $\Gamma$-contractions. We begin with an elementary proposition whose proof is a consequence of spectral mapping theorem.	
	\begin{prop}\label{basicprop:02}
	A compact subset $X$ of $\C^n$ is a spectral set for a commuting tuple of normal operators $\underline{N}=(N_1, \dots , N_n)$ if and only if $\sigma_T(N_1, \dots , N_n) \subseteq X$.	
	\end{prop}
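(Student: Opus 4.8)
The plan is to prove both implications. The forward direction is immediate from the definition of spectral set, so the content lies in the converse: assuming only the Taylor spectrum condition $\sigma_T(N_1,\dots,N_n)\subseteq X$ for a commuting normal tuple, I must establish von Neumann's inequality for every $g\in\mathcal{R}(X)$. The key tool is the spectral theorem for commuting normal operators, which furnishes a projection-valued spectral measure $\mathsf{E}$ on $\sigma_T(\underline N)$ such that each $N_i=\int z_i\,d\mathsf{E}$ and, more generally, the joint functional calculus gives $g(\underline N)=\int_{\sigma_T(\underline N)} g(z)\,d\mathsf{E}(z)$ for any function $g$ continuous on a neighborhood of the joint spectrum. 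Since $X$ is polynomially convex is not even needed here; what I need is that $\mathcal{R}(X)$-functions are holomorphic in a neighborhood of $X\supseteq\sigma_T(\underline N)$, so the functional calculus applies.

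First I would invoke the spectral theorem to write $g(\underline N)=\int_{\sigma_T(\underline N)}g(z)\,d\mathsf{E}(z)$, where $\mathsf{E}$ is the spectral measure associated with $\underline N$ and its support is contained in $\sigma_T(\underline N)\subseteq X$. Next I would use the standard norm bound for a normal operator given by an integral against a spectral measure, namely
\[
\|g(\underline N)\|=\Big\|\int_{\sigma_T(\underline N)}g(z)\,d\mathsf{E}(z)\Big\|=\sup_{z\in\sigma_T(\underline N)}|g(z)|.
\]
This identity is precisely the statement that the norm of a normal operator equals its spectral radius, applied to the operator $g(\underline N)$, whose spectrum is $g(\sigma_T(\underline N))$ by the spectral mapping theorem for the Taylor calculus.

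Finally I would bound the right-hand side: since $\sigma_T(\underline N)\subseteq X$, we have
\[
\sup_{z\in\sigma_T(\underline N)}|g(z)|\le\sup_{z\in X}|g(z)|=\|g\|_{\infty,\,X},
\]
which is exactly von Neumann's inequality and completes the converse. Thus the Taylor spectrum condition alone forces the inequality in the normal case.

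The main obstacle, and the only point requiring care, is justifying the joint spectral functional calculus and the spectral mapping theorem for the Taylor joint spectrum of a commuting normal tuple: one must confirm that $\sigma_T(\underline N)$ coincides with the support of the spectral measure and that $g(\underline N)$, defined via the rational functional calculus $p(\underline N)q(\underline N)^{-1}$, agrees with $\int g\,d\mathsf{E}$. For commuting normal operators this is classical—their Taylor spectrum equals their joint (approximate point) spectrum and the functional calculi coincide—so the argument reduces to citing these standard facts rather than establishing anything new. Everything else is a routine consequence of the spectral theorem.
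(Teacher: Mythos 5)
Your proposal is correct and follows essentially the same route as the paper: the paper's (one-line) proof simply cites the spectral mapping theorem, and your argument is the fleshed-out version of exactly that idea --- normality gives $\|g(\underline N)\|=r(g(\underline N))$, the spectral mapping theorem gives $\sigma(g(\underline N))=g(\sigma_T(\underline N))$, and the inclusion $\sigma_T(\underline N)\subseteq X$ yields von Neumann's inequality. The spectral-measure discussion you add is a legitimate justification of these classical facts rather than a different method.
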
	
The following result comes naturally as a consequence of the definition of a $\Pe$-contraction.	
\begin{lem} \label{basiclem:01}
Let $(A,S,P)$ on a Hilbert space $\HS$ be a $\Pe$-contraction. Then
\begin{itemize}

\item[(i)] $(A^*,S^*,P^*)$ is a $\Pe$-contraction;

\item[(ii)] $(A|_{\mathcal{L}}, S|_{\mathcal{L}} , P|_{\mathcal{L}})$ is a $\Pe$-contraction for any joint invariant subspace $\mathcal L \subseteq \HS$ of $A,S,P$.

\end{itemize}

\end{lem}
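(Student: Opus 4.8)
The plan is to reduce both parts to the polynomial von Neumann inequality furnished by Lemma \ref{lem2.2}, since $\overline{\mathbb P}$ is polynomially convex and hence the Taylor spectrum condition is automatic. By that lemma, it suffices to verify that $\|f(\cdot)\| \leq \|f\|_{\infty, \overline{\mathbb P}}$ for every polynomial $f \in \C[z_1,z_2,z_3]$ applied to the triple in question.

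\medskip

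For part (ii), I would observe that if $\mathcal L$ is a joint invariant subspace of the commuting operators $A,S,P$, then $\mathcal L$ is invariant for any polynomial $f(A,S,P)$, and the compression to $\mathcal L$ is exactly the polynomial in the restricted operators: $f(A|_{\mathcal L}, S|_{\mathcal L}, P|_{\mathcal L}) = f(A,S,P)|_{\mathcal L}$. Since restricting an operator to an invariant subspace cannot increase the norm, we get $\|f(A|_{\mathcal L}, S|_{\mathcal L}, P|_{\mathcal L})\| = \|f(A,S,P)|_{\mathcal L}\| \leq \|f(A,S,P)\| \leq \|f\|_{\infty, \overline{\mathbb P}}$, using the hypothesis that $(A,S,P)$ is a $\Pe$-contraction. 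Applying Lemma \ref{lem2.2} in the reverse direction then shows $(A|_{\mathcal L}, S|_{\mathcal L}, P|_{\mathcal L})$ is a $\Pe$-contraction. I should also note that $A|_{\mathcal L}, S|_{\mathcal L}, P|_{\mathcal L}$ remain a commuting triple, which is immediate from the commutativity of $A,S,P$.

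\medskip

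For part (i), the key observation is a symmetry of the pentablock under conjugation. Directly from the definition, the map $(z_1,z_2,z_3) \mapsto (\bar z_1, \bar z_2, \bar z_3)$ carries $\mathbb P$ onto itself: if $A_0 = [a_{ij}]$ has $\|A_0\| < 1$, then $\overline{A_0} = [\bar a_{ij}]$ also has norm strictly less than $1$, and its associated point is $(\bar a_{21}, \overline{\mathrm{tr}(A_0)}, \overline{\det(A_0)})$, which is the complex conjugate of the original point. Hence $\overline{\mathbb P}$ is invariant under coordinatewise conjugation, so $\|f\|_{\infty, \overline{\mathbb P}} = \|\tilde f\|_{\infty, \overline{\mathbb P}}$, where $\tilde f(z_1,z_2,z_3) := \overline{f(\bar z_1,\bar z_2,\bar z_3)}$ is the polynomial obtained by conjugating the coefficients of $f$. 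The standard functional-calculus identity $f(A^*,S^*,P^*) = \tilde f(A,S,P)^*$ for commuting operators then gives $\|f(A^*,S^*,P^*)\| = \|\tilde f(A,S,P)\| \leq \|\tilde f\|_{\infty, \overline{\mathbb P}} = \|f\|_{\infty, \overline{\mathbb P}}$, and one more appeal to Lemma \ref{lem2.2} finishes the argument.

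\medskip

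The main point requiring care is the conjugation symmetry of $\overline{\mathbb P}$ in part (i); everything else is routine once that is in place. I expect this symmetry to follow cleanly from the explicit description of $\mathbb P$ together with the fact that the norm of a matrix equals the norm of its entrywise conjugate (equivalently, its transpose-conjugate-transpose), so this should not pose a genuine obstacle but deserves an explicit sentence in the write-up.
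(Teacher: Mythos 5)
Your proof is correct and takes essentially the same route as the paper: both parts reduce to the polynomial von Neumann inequality of Lemma \ref{lem2.2}, with part (i) handled by the conjugate-coefficient polynomial identity $f(A^*,S^*,P^*)=\tilde f(A,S,P)^*$ together with the conjugation symmetry of $\overline{\mathbb{P}}$, and part (ii) by $f(A|_{\mathcal{L}},S|_{\mathcal{L}},P|_{\mathcal{L}})=f(A,S,P)|_{\mathcal{L}}$ plus the fact that restriction to an invariant subspace does not increase the norm. The only difference is cosmetic: you explicitly justify the conjugation invariance of $\overline{\mathbb{P}}$ from the matrix description (entrywise conjugation preserves the norm of a $2\times 2$ matrix), whereas the paper asserts this symmetry without proof.
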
	
	
\begin{proof}

$(i)$	Given a polynomial $f(z_1, z_2, z_3)=\underset{0 \leq i,j,k \leq m}{\sum}a_{ijk}z_1^iz_2^jz_3^k$, we define another polynomial 
			\[
			\hat{f}(z_1, z_2, z_3)=\underset{0 \leq i,j,k \leq m}{\sum}\overline{a_{ijk}}z_1^iz_2^jz_3^k.
			\]
			For any $\mathbb{P}$-contraction $(A, S, P)$, we have
			\[
			\|f(A^*, S^*, P^*)\| =\|\hat{f}(A, S, P)^*\|=\|\hat{f}(A, S, P)\| \leq \|\hat{f}\|_{\infty,\; \overline{\mathbb{P}}}.
			\]
			Let $(a, s, p) \in \overline{\mathbb{P}}$ be a point at which $\hat f$ attains its maximum modulus value. Since for every $(z_1, z_2, z_3) \in \overline{\mathbb{P}}$, the conjugate triple $(\overline{z_1}, \overline{z_2}, \overline{z_3}) \in \overline{\mathbb{P}}$, we get that 
			\[
			\|\hat f\|_{\infty, \overline{\mathbb{P}}}=|\hat f(a, s, p)|=|f(\overline{a}, \overline{s}, \overline{p})| \leq \|f\|_{\infty,\; \overline{\mathbb{P}}}.
			\]
			Consequently, it follows that 	
			\[
			\|f(A^*, S^*, P^*)\| \leq \|f\|_{\infty,\; \overline{\mathbb{P}}},
			\]
			for every polynomial $f$ in three variables. Hence, $(A^*, S^*, P^*)$ is a $\mathbb{P}$-contraction if $(A, S, P)$ is a $\mathbb{P}$-contraction.
			
			\medskip

\noindent $(ii)$. Let $\mathcal{L}$ be a joint invariant subspace of a $\mathbb{P}$-contraction $(A, S, P)$ and let $f$ be any polynomial in three variables. Then 
			\[
			\|f(A|_{\mathcal{L}}, S|_{\mathcal{L}} , P|_{\mathcal{L}})\|= \|f(A, S, P)|_{\mathcal{L}}\| \leq \|f(A, S, P)\| \leq  \|f\|_{\infty,\; \overline{\mathbb{P}}}
			\]
and the proof is complete.			
					\end{proof}

Now we move forward to explore relations of $\Pe$ with the biball $\B_2$ and the symmetrized bidisc $\G_2$, which result in interplay between $\Pe$-contractions, $\B_2$-contractions and $\Gamma$-contractions. We start with a couple of useful results from the literature, of which the first is due to Agler and Young \cite{AglerVII}.
	
	\begin{thm}[\cite{AglerVII}, Theorems 2.2 \& 2.6]\label{Gamma_uni}
		Let $S, P$ be commuting operators on a Hilbert space $\mathcal{H}$. Then 
		\begin{enumerate}
			\item $(S, P)$ is a $\Gamma$-unitary if and only if $S=S^*P$, $P$ is unitary and $\|S\| \leq 2$.
			\item $(S, P)$ is a $\Gamma$-isometry if and only if $S=S^*P$, $P$ is isometry and $\|S\| \leq 2$.
		\end{enumerate}
	\end{thm}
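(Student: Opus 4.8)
The plan is to reduce everything to the explicit description of the distinguished boundary of the symmetrized bidisc, namely
\[
b\Gamma = \{(s,p)\in\C^2 \,:\, |p|=1,\ s=\bar{s}\,p,\ |s|\le 2\},
\]
which one obtains by writing a point of $b\Gamma$ as $(e^{i\theta}+e^{i\phi},\,e^{i(\theta+\phi)})$ and checking the three pointwise relations directly. Both implications in part (1) then follow from the spectral theorem for commuting normal tuples together with the spectral mapping theorem. First I would prove the forward direction: if $(S,P)$ is a $\Gamma$-unitary, then $S,P$ are commuting normals carrying a joint spectral measure $E$ supported on $\sigma_T(S,P)\subseteq b\Gamma$; integrating the three defining relations of $b\Gamma$ against $E$ yields $P^*P=PP^*=I$ (so $P$ is unitary), $S=\int \bar{s}\,p\,dE=S^*P$, and $\|S\|=r(S)=\sup\{|s|:(s,p)\in\sigma_T(S,P)\}\le 2$.

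For the converse in part (1) the first task is to recover normality of $S$ from the algebraic data. Since $P$ is unitary and commutes with $S$, conjugating $SP=PS$ by $P$ shows that $P^*$ also commutes with $S$; then $S=S^*P$ gives $S^*=SP^*$, and a direct computation yields $S^*S=S^2P^*=SS^*$, so $S$ is normal. Now $\{S,P\}$ together with their adjoints form a commuting family of normal operators, hence admit a joint spectral measure, and the spectral mapping theorem translates the hypotheses $P$ unitary, $S=S^*P$ and $\|S\|\le 2$ back into $|p|=1$, $s=\bar{s}\,p$ and $|s|\le 2$ on the support; thus $\sigma_T(S,P)\subseteq b\Gamma$ and $(S,P)$ is a $\Gamma$-unitary.

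Part (2) I would handle by the unitary-extension machinery. For the forward implication, write $(S,P)=(\hat S|_{\mathcal H},\hat P|_{\mathcal H})$ for a $\Gamma$-unitary $(\hat S,\hat P)$ on $\mathcal K\supseteq\mathcal H$ with $\mathcal H$ jointly invariant. Then $P$ is the restriction of the unitary $\hat P$ to an invariant subspace, hence an isometry, and $\|S\|\le\|\hat S\|\le 2$. For the relation $S=S^*P$, I would compute, for $h\in\mathcal H$, that $S^*Ph=P_{\mathcal H}\hat S^*\hat P h=P_{\mathcal H}\hat S h=Sh$, using $\hat P h\in\mathcal H$, the identity $\hat S^*\hat P=\hat S$ coming from part (1), and the invariance of $\mathcal H$.

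The genuinely substantive direction is the converse of part (2): given $S=S^*P$ with $P$ an isometry and $\|S\|\le 2$, produce a $\Gamma$-unitary extension. My plan is to take the minimal unitary extension $\hat P$ of the isometry $P$ on $\mathcal K=\overline{\bigcup_{n\ge 0}\hat P^{-n}\mathcal H}$ (here $\hat P\mathcal H=P\mathcal H\subseteq\mathcal H$, so these subspaces increase), and to extend $S$ by commutativity via $\hat S(\hat P^{-n}h):=\hat P^{-n}Sh$. Consistency of this prescription across different $n$ follows from $SP=PS$, and since $\hat P$ is unitary the estimate $\|\hat S(\hat P^{-n}h)\|=\|Sh\|\le\|S\|\,\|h\|\le 2\|\hat P^{-n}h\|$ makes $\hat S$ well defined and bounded with $\|\hat S\|\le 2$ on all of $\mathcal K$. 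The main obstacle is then to verify the operator identity $\hat S=\hat S^*\hat P$ on $\mathcal K$: the abstractly defined adjoint $\hat S^*$ must be reconciled with the extension formula, and the relation $S=S^*P$ on $\mathcal H$ is precisely the input that forces this identity to propagate consistently across the added copies of the wandering subspace. Once $\hat S=\hat S^*\hat P$ is established, the converse of part (1) shows that $(\hat S,\hat P)$ is a $\Gamma$-unitary extending $(S,P)$, whence $(S,P)$ is a $\Gamma$-isometry by definition.
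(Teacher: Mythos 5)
The first thing to note is that the paper contains no proof of this statement: it is imported verbatim from Agler--Young (\cite{AglerVII}, Theorems 2.2 and 2.6), so your attempt has to be measured against that reference and against the techniques the paper uses for its own analogous results. Your part (1) is correct, and it is in substance the same functional-calculus argument that both Agler--Young and this paper (in the proof of Theorem \ref{P_unitary}, $(1)\Leftrightarrow(2)$) employ: represent the commuting normal pair in $C(\sigma_T(S,P))$, read the defining relations of $b\Gamma$ pointwise on the joint spectrum, and use the projection property; your algebraic recovery of the normality of $S$ in the converse ($S^*=SP^*$, hence $S^*S=S^2P^*=SS^*$) is also fine. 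The forward half of part (2) is correct as well. Where you genuinely diverge from the literature is the converse of part (2): Agler--Young prove it via the Wold decomposition of the isometry $P$, showing $S$ is block-diagonal with respect to it, treating the unitary block with part (1) and the shift block with an $H^2(\mathcal{E})$ model in which $S$ is forced to be a Toeplitz operator with linear symbol. Your route---extending $S$ by commutativity to the minimal unitary extension $\hat P$ of $P$---avoids both the Wold decomposition and the function model, and it does work; it buys a shorter, model-free proof, at the price that the entire weight of the theorem lands on the single identity $\hat S=\hat S^*\hat P$.

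That identity is precisely what you leave unproved: calling it ``the main obstacle'' and asserting that $S=S^*P$ ``forces'' it is not an argument, and as written this is a genuine gap, since this step is the whole content of the hard direction. It is, however, fillable in a few lines. Because the subspaces $\hat P^{-n}\mathcal{H}$ increase with $n$, their union is a dense subspace of $\mathcal{K}$, so it suffices to verify $\langle \hat Sx,y\rangle=\langle \hat Px,\hat Sy\rangle$ for $x=\hat P^{-n}h$, $y=\hat P^{-m}g$ with $h,g\in\mathcal{H}$ and $n,m\ge 0$. Using the extension formula and the unitarity of $\hat P$, the left side equals $\langle \hat P^{\,m-n}Sh,\,g\rangle$ and the right side equals $\langle \hat P^{\,m-n+1}h,\,Sg\rangle$. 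If $m\ge n$, these become $\langle SP^{m-n}h,g\rangle$ and $\langle S^*P\cdot P^{m-n}h,g\rangle$, which agree because $S^*P=S$. If $m<n$, set $k=n-m\ge 1$; then the left side is $\langle Sh,P^kg\rangle=\langle h,S^*P\cdot P^{k-1}g\rangle=\langle h,SP^{k-1}g\rangle$ and the right side is $\langle h,P^{k-1}Sg\rangle$, and these agree by $S^*P=S$ and commutativity. Hence $\hat S=\hat S^*\hat P$ on $\mathcal{K}$, and your appeal to the converse of part (1) then legitimately closes the argument.

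A second, smaller repair: your justification of the formula for $b\Gamma$ (checking the relations at the points $(e^{i\theta}+e^{i\phi},e^{i(\theta+\phi)})$) gives only one inclusion, whereas the converse of part (1) needs the reverse one, namely that the relations imply membership in $b\Gamma$. That is also quick: if $|p|=1$, $s=\bar sp$ and $|s|\le 2$, write $p=e^{i\psi}$; then $se^{-i\psi/2}$ equals its own conjugate, so $s=te^{i\psi/2}$ with $t\in[-2,2]$ real, and the roots of $z^2-sz+p$ are $e^{i\psi/2}\bigl(t\pm i\sqrt{4-t^2}\bigr)/2$, both unimodular, so $(s,p)\in b\Gamma$. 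With these two points supplied, your proof is complete and correct.
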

	
The next result is due to Agler, Lykova and Young from \cite{Agler} which characterizes the points in $\PC$ in several ways. 
	
	\begin{thm}[\cite{Agler}, Theorem 5.3] \label{thm2.1}
		Let 
		$
		(s,p)=(\beta+\overline{\beta}p, p)=(\lambda_1+\lambda_2, \lambda_1\lambda_2) \in \Gamma \,,
		$
		where $\lambda_1, \lambda_2 \in \overline{\mathbb{D}}$ and $|\beta| \leq 1$. If $|p|=1$, then $\beta=\frac{1}{2}s$. Let $a \in \mathbb{C}$. The following statements are equivalent:
		\begin{enumerate}
			\item $(a, s, p) \in \overline{\mathbb{P}}$;
			
			\vspace{0.1cm} 
			
			\item $|a| \leq \bigg|1-\frac{\frac{1}{2}s\overline{\beta}}{1+\sqrt{1-|\beta|^2}} \bigg|$;
			
			\vspace{0.1cm} 
			
			\item $|a| \leq  \frac{1}{2}|1-\overline{\lambda}_2\lambda_1|+\frac{1}{2}(1-|\lambda_1|^2)^{\frac{1}{2}}(1-|\lambda_2|^2)^{\frac{1}{2}}$;
			
			\vspace{0.1cm} 
			
			\item there exists $A_0 \in M_2(\mathbb{C})$ such that $\|A_0\| \leq 1$ and $\pi(A_0)=(a, s, p)$.
		\end{enumerate}	
	\end{thm}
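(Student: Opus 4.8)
The plan is to treat $(1)\Leftrightarrow(4)$ by a soft topological argument, reduce $(4)$ to an extremal problem for $2\times2$ contractions, solve that problem via singular values, and finally match the extremal value to the closed forms in $(2)$ and $(3)$ by algebra. For $(1)\Leftrightarrow(4)$ it suffices to prove the set identity $\PC=\pi(\overline{B})$, where $\overline{B}=\{A_0\in M_2(\C):\|A_0\|\le1\}$. Since $\overline B$ is compact and $\pi$ continuous, $\pi(\overline B)$ is compact, hence closed; as it contains $\Pe=\pi(\{A_0:\|A_0\|<1\})$, it contains $\PC$. Conversely, if $\|A_0\|\le1$ then $\|rA_0\|<1$ for $0<r<1$, so $\pi(rA_0)\in\Pe$, and letting $r\to1^-$ gives $\pi(A_0)\in\PC$ by continuity of $\pi$. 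Hence $(a,s,p)\in\PC$ exactly when $(a,s,p)=\pi(A_0)$ for some contraction, which is $(4)$.

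It then remains, for the fixed $(s,p)\in\Gamma$, to determine for which $a$ there is a contraction $A_0=[a_{ij}]$ with $a_{21}=a$, $\mathrm{tr}(A_0)=s$ and $\det(A_0)=p$. I would first record that the set $\mathcal A(s,p)$ of such $a$ is rotation-invariant: conjugating $A_0$ by $\mathrm{diag}(e^{i\theta},1)$ preserves the norm, trace and determinant while sending $a_{21}\mapsto e^{-i\theta}a_{21}$. Combined with compactness of $\mathcal A(s,p)$ and the facts that $0\in\mathcal A(s,p)$ (take $A_0=\mathrm{diag}(\lambda_1,\lambda_2)$) and that the extremal modulus is attained, the problem becomes the computation of $M(s,p):=\max\{|a|:a\in\mathcal A(s,p)\}$, after which $(4)$ reads $|a|\le M(s,p)$.

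To compute $M(s,p)$ I would use singular values. Any contraction $A_0$ has $\sigma_1\le1$ and $\sigma_1\sigma_2=|\det A_0|=|p|$, so $\sum_{i,j}|a_{ij}|^2=\sigma_1^2+\sigma_2^2\le1+|p|^2$. Writing $a_{11}=\tfrac{s}{2}+t$, $a_{22}=\tfrac{s}{2}-t$ gives $|a_{11}|^2+|a_{22}|^2=\tfrac12|s|^2+2|t|^2$ and $a_{12}a_{21}=a_{11}a_{22}-p=q-t^2$ with $q:=\tfrac14 s^2-p$; putting $x=|a_{21}|^2$ the inequality becomes
\[
x+\tfrac12|s|^2+2|t|^2+\frac{|q-t^2|^2}{x}\le1+|p|^2 .
\]
Minimising the left-hand side over $t$ (the minimum is at $t=0$ once $x>|q|$) yields $x^2-\big(1+|p|^2-\tfrac12|s|^2\big)x+|q|^2\le0$, so $M(s,p)^2$ is the larger root
\[
x_{\max}=\tfrac12\Big(1+|p|^2-\tfrac12|s|^2+\sqrt{\big(1+|p|^2-\tfrac12|s|^2\big)^2-4|q|^2}\Big).
\]
This bound is sharp: for $|a|^2=x_{\max}$ the matrix $\left(\begin{smallmatrix} s/2 & q/a\\ a & s/2\end{smallmatrix}\right)$ has determinant $p$, squared Hilbert--Schmidt norm $1+|p|^2$ and $|\det|=|p|$, which forces its singular values to be $1$ and $|p|$; it is therefore a contraction realising $|a_{21}|=M(s,p)$.

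Finally I would reconcile $x_{\max}$ with $(2)$ and $(3)$. Substituting $s=\lambda_1+\lambda_2$, $p=\lambda_1\lambda_2$ gives $q=\tfrac14(\lambda_1-\lambda_2)^2$, and the polynomial identity $\big(1+|p|^2-\tfrac12|s|^2\big)^2-4|q|^2=|1-\overline{\lambda}_2\lambda_1|^2(1-|\lambda_1|^2)(1-|\lambda_2|^2)$ turns $x_{\max}$ into $\big(\tfrac12|1-\overline{\lambda}_2\lambda_1|+\tfrac12(1-|\lambda_1|^2)^{1/2}(1-|\lambda_2|^2)^{1/2}\big)^2$, which is $(3)$; using $s=\beta+\overline\beta p$ with $\beta=(s-p\overline s)/(1-|p|^2)$ rewrites the same radius as the right-hand side of $(2)$. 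I expect the genuine obstacle to lie not in the inequality $(4)\Rightarrow(3)$, which the singular-value bound delivers cleanly, but in the realisation direction $(3)\Rightarrow(4)$: one must produce a contraction attaining \emph{every} admissible value $a$ with $|a|\le M(s,p)$, that is, show $\mathcal A(s,p)$ is the full closed disc (filling the interior, e.g.\ by a connectedness argument for the contractions of fixed trace and determinant, since the explicit extremal matrix only realises the boundary circle). The degenerate case $|p|=1$, where the defect terms vanish and $\beta=\tfrac12 s$, would be handled separately by continuity.
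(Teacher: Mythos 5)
The paper never proves this statement: it is imported wholesale from the literature (the citation [Agler, Theorem 5.3] to Agler--Lykova--Young) and used as a black box, so your proposal can only be judged on its own merits. Most of it holds up. The compactness argument giving $\PC=\pi\bigl(\{A_0\in M_2(\C):\|A_0\|\leq 1\}\bigr)$, hence $(1)\Leftrightarrow(4)$, is correct. The necessity direction $(4)\Rightarrow(3)$ also checks out: the Frobenius bound $\sigma_1^2+\sigma_2^2\leq 1+|p|^2$, the reduction to the quadratic $x^2-\bigl(1+|p|^2-\tfrac12|s|^2\bigr)x+|q|^2\leq 0$, the discriminant identity $\bigl(1+|p|^2-\tfrac12|s|^2\bigr)^2-4|q|^2=|1-\overline{\lambda}_2\lambda_1|^2(1-|\lambda_1|^2)(1-|\lambda_2|^2)$, and the factorization of $x_{\max}$ as $\bigl(\tfrac12|1-\overline{\lambda}_2\lambda_1|+\tfrac12(1-|\lambda_1|^2)^{1/2}(1-|\lambda_2|^2)^{1/2}\bigr)^2$ are all verifiable as stated, and your extremal matrix indeed has singular values $1$ and $|p|$, so the bound is attained.

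The genuine gap is exactly where you flagged it, and it is real, not cosmetic: rotation invariance plus the extremal matrix only show that $\mathcal{A}(s,p)$ contains $\{0\}$ and the circle $|a|=M(s,p)$, and your two explicit families (the $t=0$ matrices and the triangular ones) do not fill the disc. Concretely, for $(s,p)=(1,0)$ the contractions with trace $1$ and determinant $0$ are precisely the rank-one orthogonal projections; there the $t=0$ family realizes only $|a|=\tfrac12$ and the triangular family only $a=0$, while every value $0<|a|<\tfrac12$ requires a projection with both off-diagonal entries nonzero. So as written, $(3)\Rightarrow(4)$ is not proved. The connectedness claim you gesture at does close the gap, but it needs an argument, e.g.: write any contraction $A_0$ with $\mathrm{tr}(A_0)=s$, $\det(A_0)=p$ in Schur form $A_0=UTU^*$ with $T$ upper triangular with diagonal $(\lambda_1,\lambda_2)$; the segment $T_r=rT+(1-r)\,\mathrm{diag}(\lambda_1,\lambda_2)$, $r\in[0,1]$, consists of contractions (convex combinations of two contractions) with the same trace and determinant, and $U$ can then be joined to $I$ inside the connected unitary group. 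Path-connectedness of this set makes the range of the continuous function $|a_{21}|$ the full interval $[0,M(s,p)]$, which together with rotation invariance gives the closed disc. Finally, condition $(2)$ is never actually handled: the claim that $\bigl|1-\tfrac{\frac12 s\overline{\beta}}{1+\sqrt{1-|\beta|^2}}\bigr|$ with $\beta=(s-p\overline{s})/(1-|p|^2)$ equals the radius in $(3)$ is asserted ("rewrites the same radius") but is a nontrivial identity requiring its own computation, so in your writeup $(2)$ remains unlinked to the other three conditions.
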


	\smallskip
	
	It is evident from the above theorem that if $(a, s, p)$ is an element in $\overline{\mathbb{P}}$, then $(s, p) \in \Gamma$. Also, the same holds if we consider $\Pe$ and $\G_2$ (see \cite{Agler}). Indeed, if $(a, s, p) \in \overline{\mathbb{P}}$, then there is a $2 \times 2$ contraction $A_0=[a_{ij}]$ such that	$
	a_{21}=a, tr(A_0)=s \text{ and }det(A_0)=p$. Therefore, $(s,p) \in \Gamma$ as we have from \cite{AglerII} that	$
	\Gamma=\{(tr(A_0), det(A_0)) \in \mathbb{C}^2 \ : \ A_0 \in M_2(\mathbb{C}), \ \|A_0\| \leq 1 \}$. So, we have the following lemma which can also be found in \cite{Agler}.
	
	\begin{lem} \label{basiclem:02}
	If $(a,s,p) \in \PC$ (or $\in \Pe$), then $(s,p) \in \Gamma$ (or $\in \G_2$).
	\end{lem}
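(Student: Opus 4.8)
The plan is to read the result directly off the matrix parametrization of the pentablock recorded in Theorem \ref{thm2.1}, together with the analogous matrix description of the symmetrized bidisc. The essential observation is that the map $\pi$ sends a $2\times 2$ matrix $A_0$ to a triple whose last two coordinates are exactly $(\mathrm{tr}(A_0), \mathrm{det}(A_0))$, and that $\Gamma$ is precisely the set of such pairs arising from contractions. So once a point of $\PC$ is realized as $\pi(A_0)$ for a contraction $A_0$, its projection onto the last two coordinates automatically lands in $\Gamma$.

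First I would invoke the equivalence of statements $(1)$ and $(4)$ in Theorem \ref{thm2.1}: if $(a,s,p) \in \PC$, then there exists $A_0 \in M_2(\C)$ with $\|A_0\| \leq 1$ and $\pi(A_0) = (a,s,p)$. Unravelling the definition $\pi(A_0)=(a_{21}, \mathrm{tr}(A_0), \mathrm{det}(A_0))$, this forces $s = \mathrm{tr}(A_0)$ and $p = \mathrm{det}(A_0)$ for this single contraction $A_0$.

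Next I would appeal to the matrix characterization of $\Gamma$ quoted from \cite{AglerII}, namely that $\Gamma = \{(\mathrm{tr}(A_0), \mathrm{det}(A_0)) : A_0 \in M_2(\C),\, \|A_0\| \leq 1\}$. Since we have produced a contraction $A_0$ with $(s,p) = (\mathrm{tr}(A_0), \mathrm{det}(A_0))$ and $\|A_0\| \leq 1$, it follows immediately that $(s,p) \in \Gamma$, which settles the closed case.

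For the open case I would run the identical argument while keeping track of the strict inequality. The defining description of $\Pe$ supplies, for any $(a,s,p)\in\Pe$, a matrix $A_0$ with $\|A_0\| < 1$ and $\pi(A_0)=(a,s,p)$; hence $(s,p) = (\mathrm{tr}(A_0), \mathrm{det}(A_0))$ with $\|A_0\| < 1$, placing $(s,p)$ in the open symmetrized bidisc $\G_2$, using that $\G_2$ admits the corresponding description with strict norm bound (again from \cite{Agler, AglerII}). I do not expect any genuine obstacle here; the only point demanding a little care is to verify that the strict norm estimate is preserved in the passage to the open domains, so that $\Pe$ maps into $\G_2$ and not merely into $\Gamma$.
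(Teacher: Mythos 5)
Your proof is correct and follows essentially the same route as the paper: the paper also deduces the closed case from the equivalence $(1)\Leftrightarrow(4)$ of Theorem \ref{thm2.1} together with the matrix description $\Gamma=\{(\mathrm{tr}(A_0),\mathrm{det}(A_0)) : \|A_0\|\leq 1\}$ from \cite{AglerII}, and disposes of the open case by the analogous strict-norm parametrization (citing \cite{Agler}), just as you do.
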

	
This scalar result naturally extends to the following operator theoretic version.	
	
		\begin{prop}\label{lem2.3}
		If $(A, S, P)$ is a $\mathbb{P}$-contraction, then $(S, P)$ is a $\Gamma$-contraction.
	\end{prop}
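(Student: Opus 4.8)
The plan is to verify the polynomial von Neumann inequality for $(S,P)$ over $\Gamma$ and then invoke the polynomial convexity of $\Gamma$ to upgrade it to the full spectral-set condition. Since $\Gamma$ is polynomially convex, Proposition \ref{basicprop:01} tells us that $(S,P)$ is a $\Gamma$-contraction precisely when $\|g(S,P)\| \leq \|g\|_{\infty,\Gamma}$ holds for every polynomial $g \in \C[z_1,z_2]$; in particular the Taylor-spectrum requirement in the definition of a spectral set is then automatic and need not be checked separately.

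So I would fix a two-variable polynomial $g$ and regard it as the three-variable polynomial $f(z_1,z_2,z_3) := g(z_2,z_3)$ that does not involve the first coordinate. Then $f(A,S,P) = g(S,P)$, and because $(A,S,P)$ is a $\mathbb{P}$-contraction, Lemma \ref{lem2.2} yields
\[
\|g(S,P)\| = \|f(A,S,P)\| \leq \|f\|_{\infty,\overline{\mathbb{P}}} = \sup_{(a,s,p)\in\overline{\mathbb{P}}} |g(s,p)|.
\]

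To finish, I would bound this last supremum by $\|g\|_{\infty,\Gamma}$. By Lemma \ref{basiclem:02}, every point $(a,s,p)$ of $\overline{\mathbb{P}}$ projects to a point $(s,p)$ lying in $\Gamma$; hence the set $\{(s,p): (a,s,p)\in\overline{\mathbb{P}}\}$ is contained in $\Gamma$ and therefore
\[
\sup_{(a,s,p)\in\overline{\mathbb{P}}} |g(s,p)| \leq \sup_{(s,p)\in\Gamma} |g(s,p)| = \|g\|_{\infty,\Gamma}.
\]
Chaining the two displays gives $\|g(S,P)\| \leq \|g\|_{\infty,\Gamma}$ for every polynomial $g$, which is exactly what Proposition \ref{basicprop:01} requires.

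There is no serious obstacle here: the argument is a direct chaining of Lemma \ref{lem2.2} (the polynomial formulation of the $\mathbb{P}$-contraction property), Lemma \ref{basiclem:02} (the scalar projection of $\overline{\mathbb{P}}$ into $\Gamma$), and Proposition \ref{basicprop:01} (which lets us drop the Taylor-spectrum condition for the polynomially convex set $\Gamma$). The only point that warrants a word of justification is the polynomial convexity of $\Gamma$, which licenses the use of Proposition \ref{basicprop:01}; this is a known property of the symmetrized bidisc that I would simply cite rather than reprove.
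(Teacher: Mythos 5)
Your proposal is correct and follows essentially the same route as the paper's own proof: lift a two-variable polynomial $g$ to $f(z_1,z_2,z_3)=g(z_2,z_3)$, apply Lemma \ref{lem2.2} to bound $\|g(S,P)\|$ by $\|f\|_{\infty,\overline{\mathbb{P}}}$, use the scalar projection of $\overline{\mathbb{P}}$ into $\Gamma$ to dominate this by $\|g\|_{\infty,\Gamma}$, and conclude via Proposition \ref{basicprop:01} and the polynomial convexity of $\Gamma$. The only cosmetic difference is that the paper re-derives the projection fact from Theorem \ref{thm2.1} (obtaining the slightly stronger inclusion $\overline{\mathbb{P}} \subseteq \overline{\mathbb{D}} \times \Gamma$), whereas you cite Lemma \ref{basiclem:02} directly, which is all that is needed.
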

	
	\begin{proof}
		Let $(a, s, p) \in \PC$ be any point. By Theorem \ref{thm2.1}, there is a $2 \times 2$ matrix $A_0=[a_{ij}]$ with $\|A_0\| \leq 1$ such that $\pi(A_0)=(a, s, p)$. Evidently, $|a|=|a_{21}| \leq 1$ and $(s, p)=(tr(A_0), det(A_0))$. Thus, $a \in \DC$ and $(s, p) \in \Gamma$. Therefore, we have that $\PC \subseteq \DC \times \Gamma$. We need to show that $(S, P)$ is a $\Gamma$-contraction, i.e., 
		\begin{equation*}
			\|g(S, P)\| \leq \|g\|_{\infty,\; \Gamma}=\sup\{|g(z_2, z_3)| \ : \ (z_2, z_3) \in \Gamma \}
		\end{equation*}
		for every holomorphic polynomial $g$ in two variables. For a polynomial $g \in \C[z_1,z_2]$, let us set $f(z_1, z_2, z_3)=g(z_2, z_3)$. Using the fact that $\overline{\mathbb{P}} \subseteq \overline{\mathbb{D}} \times \Gamma$, we have
		\begin{equation*}
			\begin{split}
				\|g(S, P)\|=\|f(A, S, P)\|
				& \leq \sup\{|f(z_1, z_2, z_3)| \ : \ (z_1, z_2, z_3) \in \overline{\mathbb{P}} \}\\
				& \leq \sup\{|f(z_1, z_2, z_3)| \ : \ z_1 \in \overline{\mathbb{D}}, \ (z_2, z_3) \in \Gamma \}\\
				&=\sup\{|g(z_2, z_3)| \ : \ (z_2, z_3) \in \Gamma \}.\\
			\end{split}
		\end{equation*}
		Therefore, $(S, P)$ is a $\Gamma$-contraction.
			\end{proof}
	
	It is not difficult to see that if $(a, s, p) \in \PC$ and $\alpha \in \DC$, then $(\alpha a, \alpha s, \alpha^2 p) \in \PC$ and thus \textit{$\PC$ is $(1,1,2)$-quasi-balanced}. See Section 6 in \cite{Agler} for a detailed proof of this. The next proposition generalizes this result to $\mathbb{P}$-contractions.
	
	\begin{prop}\label{prop2.5}
		Let $(A, S, P)$ be a $\mathbb{P}$-contraction on a Hilbert space $\mathcal{H}$. Then $(\alpha A, \alpha S, \alpha^2 P)$ is a $\Pe$-contraction for every $\alpha \in \DC$.
	\end{prop}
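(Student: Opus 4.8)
The plan is to reduce everything to the scalar quasi-balanced property of $\PC$ recorded just before the statement, namely that $(\alpha a, \alpha s, \alpha^2 p) \in \PC$ whenever $(a,s,p)\in\PC$ and $\alpha\in\DC$, together with Lemma \ref{lem2.2}. Since $A,S,P$ commute, the triple $(\alpha A,\alpha S,\alpha^2 P)$ is again a commuting triple for any scalar $\alpha$, so by Lemma \ref{lem2.2} it suffices to verify the polynomial von Neumann inequality $\|f(\alpha A,\alpha S,\alpha^2 P)\|\le\|f\|_{\infty,\,\PC}$ for every $f\in\C[z_1,z_2,z_3]$; the Taylor spectrum condition need not be checked separately because $\PC$ is polynomially convex.

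First I would fix $\alpha\in\DC$ and an arbitrary polynomial $f$, and introduce the substituted polynomial $g(z_1,z_2,z_3):=f(\alpha z_1,\alpha z_2,\alpha^2 z_3)$, which again lies in $\C[z_1,z_2,z_3]$. Because $g$ is obtained from $f$ by a polynomial substitution in the three commuting variables, the polynomial functional calculus gives the identity $g(A,S,P)=f(\alpha A,\alpha S,\alpha^2 P)$. Applying the hypothesis that $(A,S,P)$ is a $\Pe$-contraction to the polynomial $g$, via Lemma \ref{lem2.2}, yields $\|f(\alpha A,\alpha S,\alpha^2 P)\|=\|g(A,S,P)\|\le\|g\|_{\infty,\,\PC}$.

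It then remains to show $\|g\|_{\infty,\,\PC}\le\|f\|_{\infty,\,\PC}$, and this is exactly where the quasi-balanced property enters. I would write $\|g\|_{\infty,\,\PC}=\sup_{(z_1,z_2,z_3)\in\PC}|f(\alpha z_1,\alpha z_2,\alpha^2 z_3)|$ and observe that the scaling map $(z_1,z_2,z_3)\mapsto(\alpha z_1,\alpha z_2,\alpha^2 z_3)$ sends $\PC$ into itself for $\alpha\in\DC$; hence each point $(\alpha z_1,\alpha z_2,\alpha^2 z_3)$ at which we evaluate $f$ already belongs to $\PC$, so the supremum over $\PC$ of $|f|$ composed with the scaling cannot exceed the supremum of $|f|$ over $\PC$ itself. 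Combining the two displayed estimates gives $\|f(\alpha A,\alpha S,\alpha^2 P)\|\le\|f\|_{\infty,\,\PC}$ for every $f$, and Lemma \ref{lem2.2} finishes the proof. I do not expect a genuine obstacle here: the one point requiring care is the identity $g(A,S,P)=f(\alpha A,\alpha S,\alpha^2 P)$, which is routine once one notes that substituting $\alpha A,\alpha S,\alpha^2 P$ into $f$ and substituting $A,S,P$ into $g$ produce the same operator, so the argument is essentially a transport of the scalar quasi-balancedness through Lemma \ref{lem2.2}.
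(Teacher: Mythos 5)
Your proposal is correct and is essentially identical to the paper's own proof: both rest on the $(1,1,2)$-quasi-balanced property of $\PC$ (the scaling map $(a,s,p)\mapsto(\alpha a,\alpha s,\alpha^2 p)$ maps $\PC$ into itself), compose the given polynomial with this scaling, and then apply Lemma \ref{lem2.2} to both the hypothesis and the conclusion. The only difference is notational — you name the composed polynomial $g=f\circ f_\alpha$ explicitly rather than writing the composition inline.
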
 
	
	\begin{proof}
		We have that $\PC$ is $(1, 1, 2)$-quasi-balanced. So, for any $\alpha \in \DC$, the map $f_\alpha : \PC \to \PC$ defined by $ f_\alpha(a, s, p)=(\alpha a, \alpha s, \alpha^2 p)$ is analytic. For any holomorphic polynomial $g$ in $3$-variables, we have that 
		\begin{equation*}
			\begin{split}
				\|g(\alpha A, \alpha S, \alpha^2 P)\|& =\|g\circ f_{\alpha}(A, S, P)\| \\
				& \leq \|g\circ f_\alpha\|_{\infty, \PC}\\
				& =\sup\left\{|g(\alpha a, \alpha s, \alpha^2 p)| \ : \ (a, s, p) \in \PC \right\}\\
				& \leq \|g\|_{\infty, \PC} \ .	
			\end{split}
		\end{equation*} 
		Therefore, $(\alpha A, \alpha S, \alpha^2 P)$ is a $\Pe$-contraction. 
	\end{proof}
	
	It is evident from Theorem \ref{thm2.1} that $(\alpha a, s, p) \in \PC$ for every $(a, s, p) \in \PC$ and $\alpha \in \DC$. Thus, by an application of a similar idea as in the previous proposition, we arrive at the following result. 

	\begin{prop}
		Let $(A, S, P)$ be a $\mathbb{P}$-contraction and $\alpha \in \DC$. Then $(\alpha A, S, P)$ is a $\Pe$-contraction.
	\end{prop}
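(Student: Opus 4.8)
The plan is to mimic the proof of Proposition \ref{prop2.5} almost verbatim, replacing the quasi-balanced scaling map $f_\alpha$ by the coordinatewise map that scales only the first variable. The single structural input I need is the scalar fact recorded just above the statement, namely that $(\alpha a, s, p) \in \PC$ whenever $(a,s,p) \in \PC$ and $\alpha \in \DC$ (itself a consequence of Theorem \ref{thm2.1}). This says precisely that $\PC$ is invariant under the self-map $g_\alpha : \PC \to \PC$ given by $g_\alpha(a,s,p) = (\alpha a, s, p)$.

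First I would observe that $g_\alpha$ is a holomorphic polynomial map, since each of its three components is a polynomial in $(a,s,p)$; indeed it acts affinely on the coordinates. Consequently, for any holomorphic polynomial $h \in \C[z_1,z_2,z_3]$, the composition $h \circ g_\alpha$ is again a polynomial in three variables, and the functional calculus gives the identity $h(\alpha A, S, P) = (h \circ g_\alpha)(A,S,P)$.

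Next, since $(A,S,P)$ is a $\Pe$-contraction, I would apply Lemma \ref{lem2.2} to the polynomial $h \circ g_\alpha$ to obtain $\|(h \circ g_\alpha)(A,S,P)\| \leq \|h \circ g_\alpha\|_{\infty, \PC}$. Using the invariance $g_\alpha(\PC) \subseteq \PC$, the sup-norm on the right satisfies $\|h \circ g_\alpha\|_{\infty, \PC} = \sup\{|h(\alpha a, s, p)| : (a,s,p) \in \PC\} \leq \|h\|_{\infty, \PC}$. Combining these two estimates yields $\|h(\alpha A, S, P)\| \leq \|h\|_{\infty, \PC}$ for every polynomial $h$, and a final appeal to Lemma \ref{lem2.2} (this time identifying $(\alpha A, S, P)$ as a $\Pe$-contraction) completes the argument.

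I do not expect any genuine obstacle: the entire content of the proof is the invariance of $\PC$ under $g_\alpha$, which is already supplied. The only point meriting a moment's care is that $g_\alpha$ is a \emph{polynomial} map, so that $h \circ g_\alpha$ lies in the polynomial algebra to which Lemma \ref{lem2.2} applies; this is immediate from the affine form of $g_\alpha$.
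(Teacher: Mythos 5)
Your proposal is correct and coincides with the paper's own proof: the paper uses exactly the same self-map $\phi_\alpha(a,s,p)=(\alpha a,s,p)$ of $\PC$, the same identity $q(\alpha A,S,P)=(q\circ\phi_\alpha)(A,S,P)$, and the same two applications of Lemma \ref{lem2.2}. Your added remark that the composition $h\circ g_\alpha$ is again a polynomial is a minor (and harmless) refinement of the paper's statement that the map is analytic.
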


	\begin{proof}
	For any $\alpha \in \DC$, the map $	\phi_\alpha : \PC \to \PC$ defined by $\phi_\alpha(a, s, p)=(\alpha a, s, p)$ is analytic. For any polynomial $q(z_1,z_2,z_3)$, we have
	\begin{equation*}
		\begin{split}
			\|q(\alpha A, S, P)\| =\|q\circ \phi_{\alpha}(A, S, P)\| 
			 \leq \|q\circ \phi_\alpha\|_{\infty, \PC}
			 =\sup\left\{|q(\alpha a, s, p)| \ : \ (a, s, p) \in \PC \right\}
			 \leq \|q\|_{\infty, \PC} \ .	
		\end{split}
	\end{equation*} 
	Consequently, we have that $(\alpha A, S, P)$ is a $\Pe$-contraction. 
\end{proof}
	
	The following observation provides a way to construct $\Pe$-contraction from a given $\Gamma$-contraction.
	
	\begin{prop}
		$(S, P)$ is a $\Gamma$-contraction if and only if $(0, S, P)$ is a $\mathbb{P}$-contraction.
	\end{prop}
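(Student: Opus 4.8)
The plan is to treat the two implications separately, the engine in both being the scalar observation that the zero-slice $\{(s,p) : (0,s,p) \in \overline{\mathbb{P}}\}$ coincides with $\Gamma$. The forward implication is essentially free: if $(0, S, P)$ is a $\mathbb{P}$-contraction, then Proposition \ref{lem2.3} (applied with $A = 0$) immediately yields that $(S, P)$ is a $\Gamma$-contraction. So the content of the statement sits in the converse.

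For the converse I would first isolate the scalar fact that $\{0\} \times \Gamma \subseteq \overline{\mathbb{P}}$. Given $(s,p) \in \Gamma$, write $(s,p) = (\lambda_1 + \lambda_2, \lambda_1 \lambda_2)$ with $\lambda_1, \lambda_2 \in \overline{\mathbb{D}}$. By condition (3) of Theorem \ref{thm2.1}, the triple $(a, s, p)$ lies in $\overline{\mathbb{P}}$ exactly when $|a| \leq \frac{1}{2}|1 - \overline{\lambda}_2 \lambda_1| + \frac{1}{2}(1-|\lambda_1|^2)^{\frac{1}{2}}(1-|\lambda_2|^2)^{\frac{1}{2}}$; since the right-hand side is non-negative, the value $a = 0$ always satisfies this inequality, so $(0, s, p) \in \overline{\mathbb{P}}$. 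By Lemma \ref{basiclem:02} the reverse containment holds as well, making the zero-slice exactly $\Gamma$, but only the inclusion is needed below.

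Now assume $(S, P)$ is a $\Gamma$-contraction and fix an arbitrary polynomial $f \in \C[z_1, z_2, z_3]$. Putting $g(z_2, z_3) := f(0, z_2, z_3)$, a two-variable polynomial, we have $f(0, S, P) = g(S, P)$, and the von Neumann inequality for the $\Gamma$-contraction $(S, P)$ gives $\|g(S, P)\| \leq \|g\|_{\infty, \Gamma}$. The inclusion from the previous step lets me compare the two sup-norms,
\[
\|g\|_{\infty, \Gamma} = \sup_{(s,p) \in \Gamma} |f(0, s, p)| \leq \sup_{(a,s,p) \in \overline{\mathbb{P}}} |f(a, s, p)| = \|f\|_{\infty, \overline{\mathbb{P}}},
\]
because every point $(0, s, p)$ with $(s,p) \in \Gamma$ belongs to $\overline{\mathbb{P}}$. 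Combining these estimates gives $\|f(0, S, P)\| \leq \|f\|_{\infty, \overline{\mathbb{P}}}$ for all $f$, and Lemma \ref{lem2.2} then certifies that $(0, S, P)$ is a $\mathbb{P}$-contraction.

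I do not anticipate a genuine obstacle here: the single non-formal ingredient is the zero-slice computation, which Theorem \ref{thm2.1} supplies at once. The conceptual point is simply that setting the first operator coordinate to $0$ mirrors, on the function side, the restriction of the supremum to the zero-slice of $\overline{\mathbb{P}}$, and that slice is exactly $\Gamma$.
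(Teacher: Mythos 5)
Your proof is correct and follows essentially the same route as the paper: the converse via Proposition \ref{lem2.3}, and the forward implication by reducing $f(0,S,P)$ to $g(S,P)$ with $g(z_2,z_3)=f(0,z_2,z_3)$, comparing sup-norms through the inclusion $\{0\}\times\Gamma\subseteq\overline{\mathbb{P}}$ (from Theorem \ref{thm2.1}), and invoking Lemma \ref{lem2.2}. The only difference is that you spell out the zero-slice inclusion explicitly via condition (3) of Theorem \ref{thm2.1}, which the paper leaves as a one-line remark.
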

	
	\begin{proof}
		Assume that $(S, P)$ is a $\Gamma$-contraction. Let $f$ be a holomorphic polynomial in $3$-variables and let $g(z_2, z_3)=f(0, z_2, z_3)$. Then 
		\begin{equation*}
			\begin{split}
				\|f(0, S, P)\| & =\|g(S, P)\|\\
				& \leq \sup\{|g(z_2, z_3)| \ : \ (z_2, z_3) \in \Gamma  \}  \qquad \text{[since $\Gamma$ is a spectral set for $(S, P)$]}\\ 
				& =\sup \{|f(z_1, z_2, z_3)| \ : \ z_1=0, \ (z_2, z_3) \in \Gamma  \} \\
				& \leq \sup \{|f(z_1, z_2, z_3)| \ : \  (z_1, z_2, z_3) \in \PC  \}. \\
			\end{split}
		\end{equation*}	
		The last inequality follows from the fact that for any $(s, p) \in \Gamma$, the point $(0, s, p) \in \PC$ and this is a consequence of Theorem \ref{thm2.1}. Thus, $(0, S, P)$ is a $\Pe$-contraction. The converse part follows from Proposition \ref{lem2.3}.
	\end{proof}

	\begin{prop}\label{prop2.8}
		A pair $(T, T')$ of operators acting on a Hilbert space $\mathcal{H}$ is a commuting pair of contractions if and only if $(T, 0, T')$ is a $\Pe$-contraction on $\mathcal{H}$.
	\end{prop}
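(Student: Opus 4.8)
The plan is to reduce the statement to the classical bidisc theory by exploiting the special structure of the slice of $\overline{\mathbb{P}}$ on which the middle coordinate vanishes. The governing observation will be that this slice is exactly a closed bidisc.

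First I would pin down the scalar geometry: I claim that $(a,0,p)\in\overline{\mathbb{P}}$ if and only if $|a|\le 1$ and $|p|\le 1$. To prove this I would invoke Theorem \ref{thm2.1}. Setting $s=0$, the point $(0,p)$ lies in $\Gamma$ precisely when $|p|\le 1$, since $(0,p)=(\lambda_1+\lambda_2,\lambda_1\lambda_2)$ forces $\lambda_2=-\lambda_1$ and $p=-\lambda_1^2$. It then remains to determine the parameter $\beta$ of Theorem \ref{thm2.1} when $s=0$: for $|p|=1$ the theorem gives $\beta=\tfrac12 s=0$ outright, while for $|p|<1$ the relation $\beta+\overline{\beta}p=0$ together with its conjugate yields $\overline{\beta}(1-|p|^2)=0$, hence $\beta=0$. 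In both cases condition (2) of Theorem \ref{thm2.1} collapses to $|a|\le|1-0|=1$, which establishes the claim. Equivalently, $\overline{\mathbb{P}}\cap(\mathbb{C}\times\{0\}\times\mathbb{C})=\{(a,0,p):|a|\le 1,\ |p|\le 1\}$.

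The forward implication is then immediate. If $(T,0,T')$ is a $\mathbb{P}$-contraction, then $T$ and $T'$ commute by the definition of a commuting triple, and testing Lemma \ref{lem2.2} against the coordinate polynomials $z_1$ and $z_3$ gives $\|T\|\le\sup_{\overline{\mathbb{P}}}|z_1|\le 1$ and $\|T'\|\le\sup_{\overline{\mathbb{P}}}|z_3|\le 1$, the bounds coming from $\overline{\mathbb{P}}\subseteq\overline{\mathbb{D}}\times\Gamma$.

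For the converse I would argue as follows. By Lemma \ref{lem2.2} it is enough to check that $\|f(T,0,T')\|\le\|f\|_{\infty,\overline{\mathbb{P}}}$ for every polynomial $f$. Writing $g(z_1,z_3):=f(z_1,0,z_3)$ we have $f(T,0,T')=g(T,T')$, and since $(T,T')$ is a commuting pair of contractions, And\^o's theorem supplies a commuting unitary dilation and hence the bidisc von Neumann inequality $\|g(T,T')\|\le\|g\|_{\infty,\overline{\mathbb{D}}^2}$. The slice identity of the first step shows that the bidisc $\{(z_1,0,z_3):|z_1|\le 1,\,|z_3|\le 1\}$ is contained in $\overline{\mathbb{P}}$, so $\|g\|_{\infty,\overline{\mathbb{D}}^2}=\sup_{|z_1|,|z_3|\le 1}|f(z_1,0,z_3)|\le\|f\|_{\infty,\overline{\mathbb{P}}}$. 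Concatenating the two inequalities finishes the proof. I expect the only substantive ingredient to be this appeal to And\^o's theorem for the converse; the slice computation is routine once Theorem \ref{thm2.1} is available, and the forward direction is essentially free.
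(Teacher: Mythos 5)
Your proof is correct and follows essentially the same route as the paper: the forward direction tests the coordinate polynomials $z_1,z_3$ using $\overline{\mathbb{P}}\subseteq\overline{\mathbb{D}}\times\Gamma$, and the converse combines And\^o's inequality with the inclusion $\overline{\mathbb{D}}\times\{0\}\times\overline{\mathbb{D}}\subseteq\overline{\mathbb{P}}$ obtained from Theorem \ref{thm2.1}, exactly as in the paper's argument via Lemma \ref{lem2.2}. The only difference is that you spell out the slice computation (showing $\beta=0$ when $s=0$) in more detail than the paper, which is a harmless elaboration rather than a different method.
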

	
	\begin{proof}
		We have shown in the proof of Proposition \ref{lem2.3} that $\PC \subseteq  \DC \times  \Gamma$. Since $\Gamma \subseteq 2\DC \times \DC$, we have that $	\PC \subseteq  \DC \times 2\DC \times \DC$. Let $f_1(z_1, z_2, z_3)=z_1$ and $f_3(z_1, z_2, z_3)=z_3$. So, if $(T, 0, T')$ is a $\Pe$-contraction, then for $j=1,3$ we have  
		\begin{equation*}
			\begin{split} 
				\|f_j(T, 0, T')\| & \leq \sup\{|f_j(z_1, z_2, z_3)| \ : \ (z_1, z_2, z_3) \in \PC \}\\
				&  \leq \sup\{|f_j(z_1, z_2, z_3)| \ : \ (z_1, z_2, z_3) \in \DC \times 2 \DC \times \DC \}\\
				& \leq 1.
			\end{split} 
		\end{equation*}
		Therefore, $\|T\|, \|T'\| \leq 1$. Conversely, let us assume that $(T, T')$ is a commuting pair of contractions. Then it follows from Ando's inequality \cite{Nagy} that 
		\begin{equation}\label{Ando}
			\|p(T, T')\| \leq \|p\|_{\infty, \DC^2},
		\end{equation}
		for every holomorphic polynomial $p$ in $2$-variables. An application of Theorem \ref{thm2.1} gives $\DC \times \{0\} \times \DC \subseteq \PC$.	Let $f$ be a holomorphic polynomial in $3$-variables and let $g(z, w)=f(z, 0, w)$. Then
		\begin{equation*}
			\begin{split}
				\|f(T, 0, T')\| & =\|g(T, T')\|\\
				& \leq \sup\{|g(z_1, z_3)| \ : \ z_1, z_3 \in \DC  \}  \quad \text{[by (\ref{Ando})]}\\ 
				& =\sup \{|f(z_1, z_2, z_3)| \ : \ (z_1, z_2, z_3) \in \DC \times \{0\} \times \DC  \} \\
				& \leq \sup \{|f(z_1, z_2, z_3)| \ : \  (z_1, z_2, z_3) \in \PC  \}. \\
			\end{split}
		\end{equation*}
		Consequently, it follows that $(T, 0, T')$ is a $\mathbb{P}$-contraction.
	\end{proof}

We now show interplay between the pentablock and the Euclidean unit ball $\B_2$ in $\C^2$.
	
	\begin{lem}\label{lem2.10}
		If $(a, s, p) \in \PC$, then $\left(a, s\slash 2 \right) \in \BC$.
	\end{lem}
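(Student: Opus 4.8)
The plan is to combine the matrix realization of points of $\PC$ given by Theorem \ref{thm2.1} with the self-duality of the Euclidean ball. By Theorem \ref{thm2.1}, since $(a,s,p)\in\PC$ there is a matrix $A_0=[a_{ij}]_{2\times 2}$ with $\|A_0\|\le 1$ and $\pi(A_0)=(a,s,p)$; in particular $a=a_{21}$ and $s=a_{11}+a_{22}=\mathrm{tr}(A_0)$. Now $(a,s\slash 2)\in\BC$ means $|a|^2+\tfrac14|s|^2\le 1$, and by the self-duality of the closed unit ball of $\C^2$ this is equivalent to
\[
\Big|\,w_1 a+\tfrac{1}{2}w_2 s\,\Big|\le 1\qquad\text{for every }(w_1,w_2)\text{ with }|w_1|^2+|w_2|^2\le 1.
\]
Thus it suffices to establish this scalar inequality for an arbitrary such pair $(w_1,w_2)$.

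First I would rewrite the left-hand side as a trace. Setting
\[
B=\begin{pmatrix} \tfrac{1}{2}w_2 & w_1\\ 0 & \tfrac{1}{2}w_2\end{pmatrix},
\]
a direct computation gives $\mathrm{tr}(A_0B)=w_1 a_{21}+\tfrac12 w_2(a_{11}+a_{22})=w_1 a+\tfrac12 w_2 s$. The shape of $B$ is dictated by the very structure $E$ that defines the pentablock in the $\mu$-synthesis picture, which is why an upper-triangular matrix with equal diagonal entries is the natural choice. Applying the trace-duality inequality $|\mathrm{tr}(A_0B)|\le \|A_0\|\,\|B\|_1$, where $\|\cdot\|_1$ denotes the trace (Schatten-$1$) norm, and using $\|A_0\|\le 1$, we obtain $|w_1 a+\tfrac12 w_2 s|\le \|B\|_1$.

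It then remains to compute $\|B\|_1$. I would evaluate $B^*B$ and find that its eigenvalues $\sigma_1^2,\sigma_2^2$ satisfy $\sigma_1^2+\sigma_2^2=\mathrm{tr}(B^*B)=|w_1|^2+\tfrac12|w_2|^2$ and $\sigma_1^2\sigma_2^2=\det(B^*B)=\tfrac{1}{16}|w_2|^4$, so that $\sigma_1\sigma_2=\tfrac14|w_2|^2$. Hence $(\sigma_1+\sigma_2)^2=\sigma_1^2+\sigma_2^2+2\sigma_1\sigma_2=|w_1|^2+|w_2|^2$, which yields the clean identity $\|B\|_1=\sqrt{|w_1|^2+|w_2|^2}\le 1$. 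Combining this with the previous step gives $|w_1 a+\tfrac12 w_2 s|\le 1$ for all $(w_1,w_2)$ in the closed unit ball, and therefore $(a,s\slash 2)\in\BC$.

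There is no serious obstacle in this argument; the only point requiring care is the trace-norm computation, and the genuine idea is the choice of the auxiliary matrix $B$ (forcing its trace against $A_0$ to reproduce exactly $w_1 a+\tfrac12 w_2 s$) together with the observation that its trace norm collapses to $\sqrt{|w_1|^2+|w_2|^2}$. It is precisely this collapse that explains the factor $1\slash 2$ on $s$ and makes the passage from $\PC$ to $\BC$ work.
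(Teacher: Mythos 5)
Your proof is correct, but it takes a genuinely different route from the paper's. Both arguments start from the same place, namely the matrix realization in Theorem \ref{thm2.1}: a contraction $A_0=[a_{ij}]$ with $\pi(A_0)=(a,s,p)$. From there the paper proceeds by a direct, elementary estimate: it expands $|a|^2+\tfrac14|s|^2$, bounds $2\,\mathrm{Re}(\overline{a}_{11}a_{22})$ by $2|a_{11}a_{22}|$, splits into the cases $|a_{11}|\le|a_{22}|$ and $|a_{22}|\le|a_{11}|$, and in each case dominates the result by the squared norm of a column of $A_0^*$, which is at most $1$ since $\|A_0\|\le 1$. You instead dualize twice: first the self-duality of the Euclidean ball reduces the claim to a family of scalar inequalities $|w_1a+\tfrac12 w_2 s|\le 1$, and then the operator-norm/trace-norm (Schatten) duality $|\mathrm{tr}(A_0B)|\le\|A_0\|\,\|B\|_1$ handles each of them, with the key computation being that your upper-triangular matrix $B$ with equal diagonal entries has trace norm exactly $\sqrt{|w_1|^2+|w_2|^2}$; all of these computations check out ($\mathrm{tr}(B^*B)=|w_1|^2+\tfrac12|w_2|^2$, $\det(B^*B)=\tfrac{1}{16}|w_2|^4$, hence $(\sigma_1+\sigma_2)^2=|w_1|^2+|w_2|^2$). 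What your approach buys is conceptual clarity: the matrix $B$ lies precisely in the structure $E$ from the $\mu$-synthesis description of the pentablock, so your argument exhibits the lemma as an instance of the duality between the norm ball and that structure, and it explains intrinsically where the factor $1/2$ on $s$ comes from. What the paper's approach buys is economy: it needs nothing beyond the Cauchy--Schwarz-type estimate $\mathrm{Re}\,z \le |z|$ and the definition of the operator norm, whereas yours invokes trace-norm duality and a singular-value computation.
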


\begin{proof}	
	Let $(a, s, p) \in \PC$. Then it follows from Theorem \ref{thm2.1} that there is a $2 \times 2$ matrix $A_0=[a_{ij}]$ such that $\|A_0\| \leq 1$ and	$a_{21}=a, \quad a_{11}+a_{22}=s \quad \text{and} \quad a_{11}a_{22}-a_{12}a_{21}=p$.	Let us assume that $|a_{11}| \leq |a_{22}|$. Also, let $e_1=(1, 0)$ and $e_2=(0, 1)$ in $\C^2$. Then 
	\begin{equation*}
		\begin{split}
			|a|^2+\frac{|s|^2}{4} & =|a_{21}|^2+\frac{1}{4}|a_{11}+a_{22}|^2 \\
			&=|a_{21}|^2+\frac{1}{4}\left(|a_{11}|^2+|a_{22}|^2+2Re \ (\overline{a}_{11}a_{22})\right)\\
			&\leq |a_{21}|^2+\frac{1}{4}\left(|a_{11}|^2+|a_{22}|^2+ 2|a_{11}a_{22}|\right)\\	
			&\leq |a_{21}|^2+|a_{22}|^2\\
			&=\|A^*e_2 \|^2 \\
			& \leq 1.\\
		\end{split}
	\end{equation*}
	Similarly, one can prove that $
	|a|^2 + \frac{1}{4}|s|^2  \leq 1$ when $|a_{22}| \leq |a_{11}|$. The proof is complete.

	\end{proof}
	
As expected, this result has an operator theoretic extension, which is given below.

	\begin{prop}\label{prop2.11}
		If $(A, S, P)$ is a $\Pe$-contraction, then $\left(A, \ S \slash 2\right)$ is a $\B_2$-contraction.
	\end{prop}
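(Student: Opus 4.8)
The plan is to mimic the argument already used for Proposition~\ref{lem2.3}, with the scalar inclusion from Lemma~\ref{lem2.10} doing the genuine work and polynomial convexity handling the reduction. Since $\BC$ is convex, it is polynomially convex, so by Proposition~\ref{basicprop:01} it suffices to verify the von Neumann inequality
\[
\|g(A, S\slash 2)\| \leq \|g\|_{\infty, \BC}
\]
for every holomorphic polynomial $g \in \C[w_1, w_2]$ in two variables; the Taylor spectrum condition need not be checked separately once this holds.

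Given such a $g$, I would pull it back to a polynomial in three variables by setting $f(z_1, z_2, z_3) = g(z_1, z_2\slash 2)$. The point of this substitution is that $f$ ignores the third coordinate and rescales the second, so that $f(A, S, P) = g(A, S\slash 2)$ as operators, the functional calculus respecting this affine reparametrization of the coordinates. Applying the polynomial characterization of a $\Pe$-contraction from Lemma~\ref{lem2.2} then gives
\[
\|g(A, S\slash 2)\| = \|f(A, S, P)\| \leq \|f\|_{\infty, \PC} = \sup\{ |g(a, s\slash 2)| : (a, s, p) \in \PC \}.
\]

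To finish, I would invoke Lemma~\ref{lem2.10}: for every $(a, s, p) \in \PC$ the pair $(a, s\slash 2)$ lies in $\BC$, so each value $|g(a, s\slash 2)|$ appearing in the supremum above is bounded by $\|g\|_{\infty, \BC}$. Chaining this with the previous display yields $\|g(A, S\slash 2)\| \leq \|g\|_{\infty, \BC}$ for every $g$, and Proposition~\ref{basicprop:01} upgrades this to the assertion that $\BC$ is a spectral set for $(A, S\slash 2)$, which is exactly the claim that $(A, S\slash 2)$ is a $\B_2$-contraction. I do not expect a serious obstacle here: all the analytic content is carried by the scalar estimate of Lemma~\ref{lem2.10}, and the operator-theoretic step is the routine \emph{pull back the polynomial and compare suprema} device used repeatedly in this section. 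The only points worth confirming are that $\BC$ is polynomially convex, which is immediate from convexity, and that the substitution $f(z_1, z_2, z_3) = g(z_1, z_2\slash 2)$ genuinely realizes $g(A, S\slash 2)$, which is clear from the definition of the polynomial functional calculus.
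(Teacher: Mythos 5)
Your proposal is correct and follows essentially the same route as the paper's proof: both reduce to the polynomial von Neumann inequality via polynomial convexity of $\BC$ (Proposition \ref{basicprop:01}), pull a two-variable polynomial $g$ back to the three-variable polynomial $g(z_1, z_2\slash 2)$, bound its norm using the $\Pe$-contraction property (Lemma \ref{lem2.2}), and then pass from the supremum over $\PC$ to the supremum over $\BC$ via the scalar inclusion of Lemma \ref{lem2.10}. There is no substantive difference; the paper merely phrases the substitution as composition with the analytic map $f(a,s,p)=(a, s\slash 2)$ rather than as an explicit change of variables in the polynomial.
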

	
	\begin{proof}
		By Lemma \ref{lem2.10}, the map $f: \PC \to \BC$ defined by $f(a, s, p)=(a, s\slash 2)$ is well-defined and analytic on $\PC$. Now for any $g\in \C[z_1,z_2]$, we have 
		\begin{equation*}
			\begin{split}
				\|g(A, S\slash 2)\|& =\|g\circ f(A, S, P)\| \\
				& \leq \|g\circ f\|_{\infty, \PC}\\
				& =\sup\left\{|g(a, s \slash 2)| \ : \ (a, s, p) \in \PC \right\}\\
				& \leq \sup\left\{|g(a, s \slash 2)| \ : \ (a, s\slash 2) \in \BC \right\}\\
				& = \|g\|_{\infty, \BC} \ .	
			\end{split}
		\end{equation*} 
		Therefore, $\BC$ is a spectral set for $(A, S \slash 2)$.
	\end{proof}

\noindent Putting together everything we obtain the following theorem, which is a main result of this section.

	\begin{thm}\label{lem2.12}
		If $(A, S, P)$ is a $\Pe$-contraction, then
		\begin{enumerate}
			\item[(a)] $(A, S\slash 2)$ is a $\B_2$-contraction;
			\item[(b)] $(S, P)$ is a $\Gamma$-contraction;
			\item[(c)] $(A, P)$ is a commuting pair of contractions.
		\end{enumerate}
	\end{thm}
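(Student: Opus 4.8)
The plan is to assemble the three assertions from results already established in this section, since each of (a), (b) and (c) is either a restatement or an easy consequence of a preceding proposition.

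First, part (a) is precisely the content of Proposition \ref{prop2.11} and part (b) is precisely Proposition \ref{lem2.3}; for these two I would simply invoke those results directly, and nothing further is required.

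The only part demanding a short additional argument is (c). The commutativity of $A$ and $P$ is immediate, since by definition a $\Pe$-contraction $(A,S,P)$ is a commuting triple, whence $A$ and $P$ commute. It remains to verify that $A$ and $P$ are each contractions. For this I would use the box containment $\PC \subseteq \DC \times 2\DC \times \DC$ established inside the proof of Proposition \ref{prop2.8} (via $\PC \subseteq \DC \times \Gamma$ together with $\Gamma \subseteq 2\DC \times \DC$). Applying Lemma \ref{lem2.2} to the coordinate polynomials $f_1(z_1,z_2,z_3)=z_1$ and $f_3(z_1,z_2,z_3)=z_3$ yields
\[
\|A\|=\|f_1(A,S,P)\| \leq \|f_1\|_{\infty,\PC} \leq 1
\quad\text{and}\quad
\|P\|=\|f_3(A,S,P)\| \leq \|f_3\|_{\infty,\PC} \leq 1,
\]
where the final inequalities use that the first and third coordinates of every point of $\PC$ lie in $\DC$. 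Hence $(A,P)$ is a commuting pair of contractions.

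There is essentially no genuine obstacle here, as the theorem is a clean corollary of the machinery built up in this section. The one point worth flagging is that (c) cannot be read off directly from Proposition \ref{prop2.8}, whose hypothesis forces the middle component to vanish; instead one must extract the contractivity of $A$ and $P$ from the coordinatewise box estimate for $\PC$ together with von Neumann's inequality via Lemma \ref{lem2.2}.
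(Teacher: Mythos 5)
Your proposal is correct and matches the paper's (implicit) proof: the paper presents this theorem as a direct assembly of the preceding results, with (a) and (b) being exactly Propositions \ref{prop2.11} and \ref{lem2.3}, and (c) following from the containment $\PC \subseteq \DC \times 2\DC \times \DC$ together with von Neumann's inequality applied to the coordinate polynomials, which is precisely the computation carried out in the proof of Proposition \ref{prop2.8}. Your flag that (c) cannot be quoted verbatim from Proposition \ref{prop2.8} (whose middle component is $0$) and must instead be re-derived from the box estimate is accurate and consistent with how the paper itself argues.
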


The converse of Theorem \ref{lem2.12} is not true. Indeed, below we show that there exist $a, s$ and $p$ in $\DC$ such that $(a, s\slash 2) \in \BC , (s, p) \in \Gamma$ but $(a, s, p) \notin \PC$. 
	
	\begin{eg}
		Let $\lambda_1=1$ and $\lambda_2=0$ in $\DC$. Then it follows from the definition of $\Gamma$ that $(s, p)=(\lambda_1+\lambda_2, \lambda_1\lambda_2)=(1, 0) \in \Gamma$. Let $(a, s, p)=\left(\sqrt{3} \slash 2, 1, 0 \right)$. Then obviously	$|a|^2+\frac{1}{4}|s|^2=1$. Thus, $(a, s \slash 2) \in \BC$. Let if possible, $(a, s, p) \in \PC$. Then, by Theorem \ref{thm2.1}, we must have  
		\[
		\frac{\sqrt{3}}{2}=|a| \leq \frac{1}{2}|1-\overline{\lambda}_2\lambda_1|+\frac{1}{2}(1-|\lambda_1|^2)^{\frac{1}{2}}(1-|\lambda_2|^2)^{\frac{1}{2}}= \frac{1}{2},
		\]
		which is a contradiction. Thus, $(a, s, p) \notin \PC$. \qed 
	\end{eg}

	One naturally asks if there is $p \in \DC$ such that $(a, s\slash 2) \in \BC$ implies that $(a, s, p) \in \PC$. Indeed, existence of such a $p$ is guaranteed by the next lemma.

	\begin{lem} \label{basiclem:03}
		$(a, s\slash 2) \in \BC$ if and only if there exists $p \in \mathbb{T}$ such that $(a, s, p) \in \PC$.
	\end{lem}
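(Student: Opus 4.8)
The plan is to read off both implications from the explicit description of the closed pentablock given in Theorem \ref{thm2.1}, exploiting the special structure that appears precisely when the third coordinate is unimodular. The forward implication is immediate and requires no new work: if some $p \in \T$ satisfies $(a,s,p) \in \PC$, then Lemma \ref{lem2.10} gives at once $(a, s/2) \in \BC$. So the substance of the proof lies entirely in the reverse implication, which I would treat constructively.

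Assume $(a, s/2) \in \BC$, that is $|a|^2 + \tfrac14|s|^2 \le 1$; in particular $|s| \le 2$. The first step is to manufacture a suitable $p \in \T$. Since $|s|\le 2$, I would write $s = re^{i\psi}$ with $0 \le r \le 2$, set $\phi = \arccos(r/2)$, and take $\lambda_1 = e^{i(\psi+\phi)}$, $\lambda_2 = e^{i(\psi-\phi)}$, so that $\lambda_1 + \lambda_2 = s$ with $\lambda_1,\lambda_2 \in \T$. Then $p := \lambda_1\lambda_2 \in \T$ and $(s,p)=(\lambda_1+\lambda_2,\lambda_1\lambda_2) \in \Gamma$, putting us squarely in the hypotheses of Theorem \ref{thm2.1} (note $\beta = \tfrac12 s$ has $|\beta|=\tfrac12|s|\le 1$, as required).

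With this $p$ fixed, I would verify condition $(3)$ of Theorem \ref{thm2.1}. Because $|\lambda_1|=|\lambda_2|=1$, the second summand on its right-hand side vanishes, and factoring $1 - \overline{\lambda}_2\lambda_1 = \overline{\lambda}_2(\lambda_2 - \lambda_1)$ gives $|1 - \overline{\lambda}_2\lambda_1| = |\lambda_1 - \lambda_2|$. The key identity is then the parallelogram law $|\lambda_1+\lambda_2|^2 + |\lambda_1-\lambda_2|^2 = 2(|\lambda_1|^2+|\lambda_2|^2) = 4$, which forces $|\lambda_1 - \lambda_2| = \sqrt{4 - |s|^2}$. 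Hence condition $(3)$ reads
\[
|a| \le \tfrac12\,|1 - \overline{\lambda}_2\lambda_1| = \tfrac12\sqrt{4 - |s|^2} = \sqrt{1 - \tfrac14|s|^2},
\]
which is exactly the hypothesis $|a|^2 + \tfrac14|s|^2 \le 1$. Theorem \ref{thm2.1} then yields $(a,s,p) \in \PC$, finishing the reverse direction.

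The only point needing care is the existence step: one must exhibit a $p \in \T$ that simultaneously places $(s,p)$ in $\Gamma$ and makes the distinguished-boundary quantity $\tfrac12|1-\overline{\lambda}_2\lambda_1|$ dominate $|a|$. What makes the argument clean, rather than an optimization, is that this quantity is \emph{forced} to equal $\sqrt{1 - \tfrac14|s|^2}$ the moment $\lambda_1,\lambda_2 \in \T$ sum to $s$, so the ball bound is matched on the nose and no further freedom in choosing $p$ is needed.
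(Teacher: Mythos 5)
Your proof is correct and follows essentially the same route as the paper: the forward direction is Lemma \ref{lem2.10}, and for the converse you choose the same unimodular $p$ (your $p=e^{2i\psi}$ is exactly the paper's $p=s/\overline{s}$ when $s\neq 0$) and use Theorem \ref{thm2.1} to reduce membership in $\PC$ to the bound $|a| \le \sqrt{1-\tfrac14|s|^2}$, which is the biball hypothesis. The only cosmetic differences are that you verify item $(3)$ of Theorem \ref{thm2.1} via the parallelogram law, whereas the paper verifies item $(2)$ with $\beta=\tfrac12 s$, and your construction treats $s=0$ uniformly instead of as a separate case.
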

	
	\begin{proof}
		Let $(a, s\slash 2) \in \BC$. Then $|a| \leq 1$ and $|s| \leq 2$. If $s=0$, then it follows from Theorem \ref{thm2.1} that $(a, 0, p) \in \PC$ for any $p \in \DC$. If $s \ne 0$, we choose $p=s\slash \overline{s}$. Then $(s, p) \in b\Gamma$ by Theorem \ref{Gamma_uni}, as $|p|=1, s=\overline{s}p$ and $|s| \leq 2$. Furthermore, $(a, s \slash 2) \in \BC$, we have that 
		\begin{equation*}
			\begin{split}
				|a| \leq \bigg|1-\frac{\frac{1}{4}|s|^2}{1+\sqrt{1-\frac{1}{4}|s|^2}} \bigg|=\sqrt{1-\frac{1}{4}|s|^2}.
			\end{split}
		\end{equation*}
		Hence, by Theorem \ref{thm2.1}, $(a, s, p) \in \PC$. The converse part follows from Lemma \ref{lem2.10}.
	\end{proof}

Lemma \ref{basiclem:03} can be extended to the class of $\Pe$-contractions consisting of normal operators. Such a $\Pe$-contraction is called a \textit{normal $\Pe$-contraction}. To obtain the desired conclusion, we use the polar decomposition of normal operators. The \textit{polar decomposition theorem} (Theorem 12.35 in \cite{Rudin}) states that if $N$ is a normal operator on a Hilbert space $\mathcal{H}$, then there exists a unitary operator $U$ on $\mathcal{H}$ such that $U$ commutes with any operator that commutes with $N$ and $N=U(N^*N)^{1 \slash 2}=(N^*N)^{1 \slash 2}U$. Set $p(\lambda)=|\lambda|, \ u(\lambda)=\frac{\lambda}{|\lambda|}$ if $\lambda \ne 0$ and $u(0)=1$. Then $p$ and $u$ are bounded Borel functions on $\sigma(N)$. Define $P=p(N)$ and $U=u(N)$. Since $u\overline{u}=1, UU^*=U^*U=I$. Since $\lambda=u(\lambda)p(\lambda)$, the conclusion $N=UP$ follows from the Borel functional calculus. Furthermore, $P$ is a positive operator that satisfies $\|Px\|=\|Nx\|$ for every $x \in \mathcal{H}$ and so, $P=(N^*N)^{1\slash 2}$. If $T \in \mathcal{B}(\mathcal{H})$ commutes with both $N$ and $N^*$, then $T$ commutes with $u(N)=U$.

	\begin{lem}
		Let $(A, S\slash 2)$ be a $\B_2$-contraction consisting of normal operators acting on a space $\mathcal{H}$. Then there exists a unitary $P$ on $\mathcal{H}$ such that $(S, P)$ is a $\Gamma$-unitary and $(A, S, P)$ is a normal $\Pe$-contraction. 
	\end{lem}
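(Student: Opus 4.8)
The plan is to mimic, at the operator level, the scalar construction in the proof of Lemma \ref{basiclem:03}, where the choice $p=s/\overline{s}$ (for $s\neq 0$) promoted a point $(a,s/2)\in\BC$ to a point $(a,s,p)\in\PC$. Since $s/\overline{s}=(s/|s|)^2$, the natural operator analogue of this scalar is the square of the unitary arising from the polar decomposition of the normal operator $S$. So first I would invoke the polar decomposition recalled just above the statement: write $S=U|S|=|S|U$, where $U=u(S)$ is unitary, $u(\lambda)=\lambda/|\lambda|$ for $\lambda\neq 0$ and $u(0)=1$, and $|S|=(S^*S)^{1/2}$. Then I set $P:=U^2$. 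Being a power of a unitary, $P$ is unitary and in particular normal.

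Next I would check that $(A,S,P)$ is a commuting triple of normal operators and that $(S,P)$ is a $\Gamma$-unitary. Since $(A,S/2)$ is a $\B_2$-contraction, $A$ commutes with $S$; as $S$ is normal, Fuglede's theorem gives that $A$ commutes with $S^*$ as well, and the commutation property of the polar decomposition recalled above then shows that $U$, and hence $P=U^2$, commutes with $A$; moreover $P$ commutes with $S$ as it is a function of $S$. Thus $A,S,P$ pairwise commute and all three are normal. To see that $(S,P)$ is a $\Gamma$-unitary I verify the three conditions of Theorem \ref{Gamma_uni}(1): $P$ is unitary; applying the spectral-set inequality for $(A,S/2)$ to the coordinate function $z_2$, whose supremum modulus over $\BC$ equals $1$, gives $\|S/2\|\leq 1$, so $\|S\|\leq 2$; and, using $S^*=|S|U^*$ together with $U^*U=I$, one computes $S^*P=|S|U^*U^2=|S|U=U|S|=S$.

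The last and main step is to show that $(A,S,P)$ is a $\Pe$-contraction, for which, by Lemma \ref{lem2.2}, it suffices to verify von Neumann's inequality $\|f(A,S,P)\|\leq\|f\|_{\infty,\PC}$ for every polynomial $f$. Here I would apply the spectral theorem to the commuting normal pair $(A,S)$: let $E$ be its joint spectral measure, supported on $\sigma_T(A,S)$, and note $P=u(S)^2$, so that $f(A,S,P)=\int f(a,s,u(s)^2)\,dE(a,s)$ and hence $\|f(A,S,P)\|\leq\sup\{|f(a,s,u(s)^2)|:(a,s)\in\sigma_T(A,S)\}$. For each $(a,s)\in\sigma_T(A,S)$ we have $(a,s/2)\in\sigma_T(A,S/2)\subseteq\BC$, and the scalar computation from the proof of Lemma \ref{basiclem:03} shows $(a,s,u(s)^2)\in\PC$: for $s\neq 0$ this is precisely the point $(a,s,s/\overline{s})$ treated there, and for $s=0$ one checks $(a,0,1)\in\PC$ directly from Theorem \ref{thm2.1}. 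Consequently $\|f(A,S,P)\|\leq\|f\|_{\infty,\PC}$, so $(A,S,P)$ is a $\Pe$-contraction, and it is normal by construction.

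I expect the only genuine technical obstacle to be the spectral-integral representation $f(A,S,P)=\int f(a,s,u(s)^2)\,dE$ and the resulting norm bound, since $u$ is merely a bounded Borel function (discontinuous at the origin); this forces one to argue through the joint spectral measure of the commuting normal pair $(A,S)$ rather than through the analytic spectral mapping theorem. Everything else—the polar decomposition, the commutativity via Fuglede's theorem and the recalled commutation property, and the $\Gamma$-unitary identities—is routine once the correct choice $P=U^2$ has been identified.
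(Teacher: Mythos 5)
Your proposal is correct, and its first two steps --- the choice $P=U^2$ coming from the polar decomposition of $S$, and the verification via Theorem \ref{Gamma_uni} that $(S,P)$ is a $\Gamma$-unitary --- coincide exactly with the paper's proof. The divergence is only in the final step, and it is worth comparing because the paper's route is shorter and sidesteps precisely the technical obstacle you flag. You verify von Neumann's inequality directly (Lemma \ref{lem2.2}) by representing $f(A,S,P)=\int f(a,s,u(s)^2)\,dE(a,s)$ against the joint spectral measure of $(A,S)$, which obliges you to work with the discontinuous Borel function $u$ and to check $(a,s,u(s)^2)\in\PC$ pointwise. The paper instead observes that, $(A,S,P)$ being a commuting triple of normal operators, Proposition \ref{basicprop:02} reduces the spectral-set condition to the inclusion $\sigma_T(A,S,P)\subseteq\PC$, and this inclusion follows from the spectral mapping and projection properties of the Taylor spectrum: any $(a,s,p)\in\sigma_T(A,S,P)$ satisfies $(a,s/2)\in\sigma_T(A,S/2)\subseteq\BC$ and $|p|=1$ (since $P$ is unitary), and then criterion (2) of Theorem \ref{thm2.1} with $\beta=s/2$ gives $(a,s,p)\in\PC$. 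In particular, the paper never needs the functional relation $p=u(s)^2$ between the spectral variables --- unitarity of $P$ alone forces $|p|=1$, which is all the criterion requires --- so the measurable-functional-calculus bookkeeping in your argument, while sound, is avoidable. What your version buys in exchange is self-containedness: you do not invoke Proposition \ref{basicprop:02}, but instead re-derive the needed fact about normal tuples inline from the spectral measure.
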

	
	\begin{proof}
		It follows from given hypothesis that $S$ is normal and $\|S\| \leq 2$.	Now, from the above discussion we have that there is a unitary $U$ on $\mathcal{H}$ such that $	S=(S^*S)^{1\slash 2}U=U(S^*S)^{1\slash 2}$ and $U$ commutes with any operator that commutes with $S$. Hence, $U$ commutes with $A$. Set $P=U^2$. Then $(A, S, P)$ is a triple of commuting normal operators. Moreover	$
		S^*P=(S^*S)^{1\slash 2}U^*U^2=(S^*S)^{1\slash 2}U=S$ and $\|S\| \leq 2$. Thus, $(S, P)$ is a $\Gamma$-contraction with $P$ being unitary and so, $(S, P)$ is a $\Gamma$-unitary by the virtue of Theorem \ref{Gamma_uni}. Let $(a, s, p)$ in $\sigma_T(A, S, P)$ and let $f(z_1, z_2, z_3)=(z_1, z_2\slash 2)$. Then the spectral mapping theorem yields that 
		\[
		(a, s\slash 2)=f(a, s, p) \in f(\sigma_T(A, S, P))=\sigma_T(f(A, S, P))=\sigma_T(A, S\slash 2).
		\]
		By Proposition \ref{prop2.11}, $(a, s\slash 2) \in \BC$. From the projection property of joint spectrum, it follows that $p \in \sigma(P)$ and so, $|p|=1$ since $P$ is a unitary. Furthermore, for $\beta=\frac{s}{2}$, we have  
		\[
		\bigg|1-\frac{\frac{1}{2}s\overline{\beta}}{1+\sqrt{1-|\beta|^2}} \bigg|=\bigg|1-\frac{\frac{1}{4}|s|^2}{1+\sqrt{1-\frac{1}{4}|s|^2}} \bigg|=\sqrt{1-\frac{1}{4}|s|^2} \geq |a|,
		\]
		where the last inequality follows from the fact that $(a, s\slash 2) \in \BC$. By Theorem \ref{thm2.1}, $(a, s, p) \in \PC$. Consequently, $(A, S, P)$ is a commuting triple of normal operators so that $\sigma_T(A, S, P) \subseteq \PC$.  Hence, $(A, S, P)$ is a normal $\Pe$-contraction by Proposition \ref{basicprop:02}.
	\end{proof}

 We have seen that if $(A, S, 0)$ is a $\Pe$-contraction, then $(A, S \slash 2)$ is a $\B_2$-contraction but the converse does not hold. In this sense, every $\Pe$-contraction gives rise to a $\B_2$-contraction. It is interesting to explore if one can obtain a $\Pe$-contraction from a $\B_2$-contraction. The subsequent results provide an answer to this question. 
	
	\begin{lem}\label{lem2.13}
		Let $(a, s) \in \BC$. Then $(a, s, 0) \in \PC$. 
	\end{lem}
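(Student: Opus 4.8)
The plan is to reduce the claim to the scalar characterization of membership in $\PC$ furnished by Theorem \ref{thm2.1}, applied with the third coordinate equal to $0$. First I would record the content of the hypothesis: $(a,s)\in\BC$ means exactly $|a|^2+|s|^2\le 1$, so in particular $|s|\le 1$ and $|a|\le\sqrt{1-|s|^2}$. This is the only information I will draw on.

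Next, to fit $(a,s,0)$ into the hypotheses of Theorem \ref{thm2.1}, I would write the pair $(s,0)$ in the symmetrized form $(\lambda_1+\lambda_2,\lambda_1\lambda_2)$ by choosing $\lambda_1=s$ and $\lambda_2=0$; since $|s|\le 1$, both $\lambda_1,\lambda_2\in\DC$, so $(s,0)\in\Gamma$ and the theorem is applicable. Substituting $\lambda_1=s$ and $\lambda_2=0$ into condition $(3)$ of Theorem \ref{thm2.1}, I would find that $(a,s,0)\in\PC$ is equivalent to
\[
|a|\le \frac{1}{2}\big|1-\overline{\lambda}_2\lambda_1\big|+\frac{1}{2}(1-|\lambda_1|^2)^{1/2}(1-|\lambda_2|^2)^{1/2}=\frac{1}{2}\left(1+\sqrt{1-|s|^2}\right).
\]

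It then remains to compare this threshold with the bound coming from the biball hypothesis. The elementary inequality $\sqrt{1-|s|^2}\le\tfrac{1}{2}\big(1+\sqrt{1-|s|^2}\big)$, which rearranges to $\sqrt{1-|s|^2}\le 1$, holds trivially, so
\[
|a|\le\sqrt{1-|s|^2}\le\tfrac{1}{2}\left(1+\sqrt{1-|s|^2}\right),
\]
and Theorem \ref{thm2.1} delivers $(a,s,0)\in\PC$. I expect no genuine obstacle here; the only care needed is the bookkeeping in matching the degenerate datum $p=0$ with the $(\lambda_1,\lambda_2)$-parametrization, which forces exactly one of the $\lambda_i$ to vanish. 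As an alternative I note that one could equally run the computation through condition $(2)$ with $\beta=s$ (legitimate since $p=0$ forces $s=\beta+\overline{\beta}p=\beta$), using the simplification $\tfrac{1}{2}|s|^2/(1+\sqrt{1-|s|^2})=\tfrac{1}{2}(1-\sqrt{1-|s|^2})$; both routes collapse to the same elementary bound.
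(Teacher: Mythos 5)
Your proof is correct and is essentially identical to the paper's own argument: both take $\lambda_1=s$, $\lambda_2=0$ to place $(s,0)$ in $\Gamma$, apply condition $(3)$ of Theorem \ref{thm2.1} to reduce membership in $\PC$ to the bound $|a|\le\tfrac{1}{2}\left(1+\sqrt{1-|s|^2}\right)$, and close with $|a|\le\sqrt{1-|s|^2}\le\tfrac{1}{2}\left(1+\sqrt{1-|s|^2}\right)$. The alternative route via condition $(2)$ that you sketch is a harmless extra, not needed.
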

	
	\begin{proof}
		The proof follows from Theorem \ref{thm2.1}. If $(a, s) \in \BC$, then $a, s \in \DC$ and so, $(s, 0)=(\lambda_1+\lambda_2, \lambda_1\lambda_2)$ for $\lambda_1=s$ and $\lambda_2=0$. Thus, $(s, 0) \in \Gamma$. By Theorem \ref{thm2.1}, $(a, s, 0) \in \PC$ if and only if the following inequality holds.
		\[
		|a| \leq  \frac{1}{2}|1-\overline{\lambda}_2\lambda_1|+\frac{1}{2}(1-|\lambda_1|^2)^{\frac{1}{2}}(1-|\lambda_2|^2)^{\frac{1}{2}}=\frac{1}{2}\bigg(1+\sqrt{1-|s|^2}\bigg).
		\]
		Since $|a| \leq \sqrt{1-|s|^2}$, the above inequality holds and so, $(a, s, 0) \in \PC$.	
	\end{proof}

	\begin{prop}\label{prop2.14}
		Let $(A, S)$ be a $\B_2$-contraction. Then $(A, S, 0)$ is a $\Pe$-contraction.
	\end{prop}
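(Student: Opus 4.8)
The plan is to mimic the pattern of the preceding operator-theoretic extensions (such as Propositions \ref{lem2.3} and \ref{prop2.11}), pushing the scalar inclusion of Lemma \ref{lem2.13} through von Neumann's inequality. First I would observe that since the zero operator commutes with everything, $(A,S,0)$ is automatically a commuting triple. Because $\PC$ is polynomially convex, Lemma \ref{lem2.2} reduces the claim to verifying the polynomial von Neumann inequality $\|f(A,S,0)\|\leq \|f\|_{\infty,\PC}$ for every $f\in\C[z_1,z_2,z_3]$; in particular, the Taylor-spectrum condition need not be checked separately.

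Next, given such an $f$, I would set $g(z_1,z_2):=f(z_1,z_2,0)$, which is a polynomial in two variables satisfying $f(A,S,0)=g(A,S)$. Since $(A,S)$ is a $\B_2$-contraction, $\BC$ is a spectral set for it, and therefore
\[
\|f(A,S,0)\|=\|g(A,S)\|\leq \|g\|_{\infty,\BC}=\sup\{|f(a,s,0)| : (a,s)\in\BC\}.
\]

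The decisive step is Lemma \ref{lem2.13}: every point $(a,s)\in\BC$ yields $(a,s,0)\in\PC$, so that the slice $\{(a,s,0):(a,s)\in\BC\}$ is contained in $\PC$. This immediately gives
\[
\sup\{|f(a,s,0)| : (a,s)\in\BC\}\leq \sup\{|f(z_1,z_2,z_3)| : (z_1,z_2,z_3)\in\PC\}=\|f\|_{\infty,\PC},
\]
and combining the two displays yields the desired inequality. An appeal to Lemma \ref{lem2.2} then completes the proof.

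I do not anticipate a genuine obstacle here: the entire content is the scalar slice inclusion of Lemma \ref{lem2.13}, and once that is in hand the operator statement is a formal consequence of the spectral-set property of $\BC$ together with the polynomial convexity of $\PC$. The only point requiring a moment's care is recognizing that composing with the coordinate restriction $z_3\mapsto 0$ keeps us inside the class of polynomials, so that von Neumann's inequality for $(A,S)$ applies directly to $g=f(\,\cdot\,,\,\cdot\,,0)$.
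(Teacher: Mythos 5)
Your proposal is correct and follows exactly the paper's own argument: reduce to the polynomial von Neumann inequality via Lemma \ref{lem2.2}, restrict $f$ to the slice $z_3=0$ to get $g$ with $f(A,S,0)=g(A,S)$, apply the spectral-set property of $\BC$ for $(A,S)$, and then use the scalar inclusion of Lemma \ref{lem2.13} to pass from the supremum over $\BC\times\{0\}$ to the supremum over $\PC$. There is no substantive difference between your proof and the one in the paper.
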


	\begin{proof}
		Let $f \in \C[z_1,z_2,z_3]$ be arbitrary polynomial and let $g(z_1, z_2)=f(z_1, z_2, 0)$. Then 
		\begin{equation*}
			\begin{split}
				\|f(A, S, 0)\| & =\|g(A,S)\|\\
				& \leq \sup\{|g(z_1, z_2)| \ : \ (z_1, z_2) \in \BC  \}  \quad \text{[$\because \BC$ is a spectral set for $(A, S)$]}\\ 
				& =\sup \{|f(z_1, z_2, z_3)| \ :  \ (z_1, z_2, z_3) \in \BC \times \{0\} \} \\
				& \leq \sup \{|f(z_1, z_2, z_3)| \ : \  (z_1, z_2, z_3) \in \PC  \}, \\
			\end{split}
		\end{equation*}	
		where the last inequality follows from Lemma \ref{lem2.13}. Consequently,  $(A, S, 0)$ is a $\Pe$-contraction.
	\end{proof}
	
	The converse to Proposition \ref{prop2.14} is not even true for scalars. For example, take $(a, s)=(1\slash 2, 1)$. Then we have that $|a|^2+|s|^2>1$. Since $(s, 0) \in \Gamma$, Theorem \ref{thm2.1} yields that $(a, s, 0) \in \PC$. Thus there exist scalars $a$ and $s$ so that $(a, s, 0) \in \PC$ but $(a, s) \notin \BC$. 

\vspace{0.2cm}

	\section{Operator theory on the unit ball in $\C^n$}\label{Polyball}
	
	\vspace{0.2cm}
	
	\noindent In the previous Section, we have witnessed some interplay between the closed unit ball $\BC$ and $\PC$. Indeed, if $(A, S, P) $ is a $\Pe$-contraction, then $(A, S\slash 2)$ is a $\B_2$-contraction. Also, if $(A, S)$ is a $\B_2$-contraction, then $(A, S, 0)$ is a $\Pe$-contraction. It motivates us to study the operator theoretic aspects of the unit ball $\B^n$ in $\C^n$, where
	\[
	\B_n=\left\{(z_1, \dotsc, z_n) \in \C^n \ : \ |z_1|^2+\dotsc+|z_n|^2<1 \right\}.
	\]
	We shall discuss some useful operator theory on $\overline{\B}_n$. A commuting tuple of operators $(T_1, \dots , T_n)$ for which $\BNC$ is a spectral set is called a $\B_n$-contraction. Contractions have special classes like unitary, isometry, completely non-unitary contraction etc. As we have mentioned in the `Introduction' that a contraction is an operator for which $\overline{\D}$ is a spectral set. A unitary is a normal operator having its spectrum on the unit circle $\T$ and an isometry is the restriction of a unitary to an invariant subspace. Analogously, we have defined $\Pe$-unitary and $\Pe$-isometry associated with the pentablock in Definition \ref{defn:P-unitary}. Similarly, one can define $\B_n$-unitary and $\B_n$-isometry for the Euclidean unit ball $\mathbb B_n$. Note that $\overline{\B}_n$ is a convex compact set and hence is polynomially convex. An interesting fact about $\B_n$ is that its topological boundary $\partial \B_n$ and distinguished boundary $b\B_n$ coincide unlike the polydisc $\D^n$. Needless to mention that 
	\[
	\partial\B_n=\left\{(z_1, \dotsc, z_n) \in \C^n \ : \ |z_1|^2+\dotsc+|z_n|^2 =1 \right\}.
	\]
	The fact that $\partial \B_n=b\B_n$ is explained in \cite{Mackey} (see Example 4.10 in \cite{Mackey} and the discussion thereafter). 
	
	\begin{lem}
		For any $n\geq 1$, $b \B_n=\partial \B_n$.
	\end{lem}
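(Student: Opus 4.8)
The plan is to prove the two inclusions $b\B_n \subseteq \partial \B_n$ and $\partial \B_n \subseteq b\B_n$ separately. The first inclusion is the routine one. Since $\overline{\B}_n$ is the closure of the bounded domain $\B_n$, the maximum modulus principle guarantees that every $f \in A(\overline{\B}_n)$ attains its maximum modulus on the topological boundary $\partial \B_n$. Hence $\partial \B_n$ is a boundary for $\overline{\B}_n$ in the sense of Section \ref{basic}, and since the distinguished boundary $b\B_n$ is by definition the intersection of all boundaries (equivalently, the smallest one), we immediately obtain $b\B_n \subseteq \partial \B_n$.

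The substance of the lemma lies in the reverse inclusion, for which I would show that every point of the sphere $\partial \B_n$ is a \emph{peak point} of the uniform algebra $A(\overline{\B}_n)$; since a peak point must lie in every boundary (a peaking function attains its maximum modulus at that single point alone, so any boundary is forced to contain it), this yields $\partial \B_n \subseteq b\B_n$. Fix $w=(w_1,\dots,w_n) \in \partial \B_n$, so that $\sum_{j=1}^n |w_j|^2 = 1$, and consider the affine polynomial
\[
g_w(z) = \tfrac{1}{2}\Big(1 + \sum_{j=1}^n \overline{w_j}\, z_j\Big), \qquad z=(z_1,\dots,z_n)\in \overline{\B}_n,
\]
which clearly belongs to $A(\overline{\B}_n)$ and satisfies $g_w(w) = \tfrac12\big(1 + \|w\|^2\big) = 1$.

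To finish, I would estimate $|g_w|$ on $\overline{\B}_n$ using the Cauchy--Schwarz inequality: for $z \in \overline{\B}_n$,
\[
\Big|\sum_{j=1}^n \overline{w_j}\, z_j\Big| \le \Big(\sum_{j=1}^n |w_j|^2\Big)^{1/2}\Big(\sum_{j=1}^n |z_j|^2\Big)^{1/2} = \|z\| \le 1,
\]
so that $|g_w(z)| \le \tfrac12(1+1) = 1$ by the triangle inequality. Equality throughout forces $\sum_{j=1}^n \overline{w_j} z_j = 1$, and tracing this back through the Cauchy--Schwarz equality case (proportionality $z=\lambda w$ together with $\|z\|=1$ and $\lambda\|w\|^2 = 1$) pins down $z=w$ as the unique maximizer, so $g_w$ peaks at $w$ alone. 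The main subtlety — and the reason I do not simply use the bare linear functional $z \mapsto \sum_j \overline{w_j} z_j$ — is that this functional attains modulus $1$ on the entire circle $\{\lambda w : |\lambda|=1\}$ rather than at $w$ only; the affine shift by $1$ breaks this circular symmetry and collapses the maximizing set to the single peak point $w$. Combining the two inclusions yields $b\B_n = \partial \B_n$.
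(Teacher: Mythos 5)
Your proposal is correct, and it takes a genuinely different route for the substantive inclusion $\partial \B_n \subseteq b\B_n$. The paper works with the bare linear functional $f_w(z)=\langle z,w\rangle$, exactly the one you deliberately avoid: since its maximum-modulus set is the whole circle $\{e^{i\theta}w : \theta\in\mathbb{R}\}$, the paper can only conclude that \emph{some} rotate $e^{i\theta}w$ lies in $b\B_n$, and it then has to invoke an external result (Corollary 3.2 in \cite{KosinskiII}, on automorphisms preserving the distinguished/Shilov boundary) to map $e^{i\theta}w$ back to $w$. Your affine shift $g_w(z)=\tfrac12\big(1+\langle z,w\rangle\big)$ collapses that circle of maximizers to the single point $w$, making $w$ a peak point of $A(\overline{\B}_n)$; since a peaking function attains its maximum modulus only at its peak, every boundary must contain $w$, and the inclusion follows directly from the definition of $b\B_n$ as the intersection of all boundaries. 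What your approach buys is self-containedness and elementarity: no automorphism machinery or citation is needed, only Cauchy--Schwarz and the equality case of the triangle inequality (your tracing of the equality case, $\langle z,w\rangle=1$ forcing $z=\lambda w$ with $\lambda=1$, is complete and correct). What the paper's approach buys is that the same template generalizes to situations where explicit peaking functions are harder to write down but the automorphism group is well understood; here, though, your argument is the cleaner one.
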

	
 The works of Arveson, Eschmeier and Athavale \cite{ArvesonIII, EschmeierII, AthavaleII} show that the {spherical contractions} naturally occur in the study of operators associated with the unit ball. Before proceeding further, we recall the definition of this class along with its special subclasses from the literature. 
	\begin{defn}
		A commuting tuple $\UT=(T_1, \dotsc, T_n)$ of operators acting on a Hilbert space $\mathcal{H}$ is said to be
		\begin{enumerate}
			\item a \textit{spherical contraction} if $T_1^*T_1 + \dotsc + T_n^*T_n \leq I_\mathcal{H}$;
			
			\item a \textit{spherical unitary} if each $T_j$ is normal and $T_1^*T_1 + \dotsc + T_n^*T_n = I_\mathcal{H}$;
			
			\item a \textit{spherical isometry} if $T_1^*T_1 + \dotsc + T_n^*T_n =I_\mathcal{H}$;
			
			\item a \textit{row contraction} if $(T_1^*, \dotsc, T_n^*)$ is a spherical contraction.
		\end{enumerate}
	\end{defn} 
	Not every spherical contraction or row contraction is a $\B_2$-contraction. We recall Arveson's example from \cite{ArvesonIII} in this context.
	
\begin{eg}
		Consider the pair of multiplication operator $(M_{z_1}, M_{z_2})$ on the \textit{Drury-Arveson space} $H^2_2$, where $H_2^2$ is the reproducing kernel Hilbert space with the kernel 
		\[
		k(z,w)=\frac{1}{1-\langle z, w\rangle} \quad (z, w \in \B_2).
		\] 	
		It follows from Corollary 2 in \cite{ArvesonIII} that $(M_{z_1}, M_{z_2})$ satisfies $M_{z_1}M_{z_1}^*+M_{z_2}M_{z_2}^* \leq I$ but does not form a $\B_2$-contraction as explained in \cite{ArvesonIII}. Thus, $(M_{z_1}^*, M_{z_2}^*)$ is a spherical contraction which is not a $\B_2$-contraction which is same as saying that the row contraction $(M_{z_1},M_{z_2})$ is not a $\B_2$-contraction.\qed 
	\end{eg}

	 However, we shall see below that $\B_n$-contractions and spherical contractions agree at the level of unitaries and isometries. The next result appeared in a discussion in \cite{EschmeierII} whose proof follows directly from the spectral theorem. 
	 
	\begin{thm}\label{prop3.2}
		Let $\underline{U}=(U_1, \dotsc, U_n)$ be a tuple of commuting operators acting on a Hilbert space $\mathcal{H}$. Then $\underline{U}$ is a $\B_n$-unitary if and only if $\underline U$ is a spherical unitary.
	\end{thm}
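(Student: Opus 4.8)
The plan is to run the whole argument through the joint spectral measure of the commuting normal tuple and to translate both conditions into statements about where this measure is supported. Since $\underline{U}$ consists of commuting normal operators, the spectral theorem provides a unique regular spectral (projection-valued) measure $E$ on $K := \sigma_T(\underline{U})$ for which $U_j = \int_K z_j\, dE(z)$, the bounded Borel functional calculus reads $f(\underline{U}) = \int_K f\, dE$, and $K$ is precisely the support of $E$. Applying this to $f(z) = \sum_{j=1}^n |z_j|^2$ gives the key identity
\[
\sum_{j=1}^n U_j^* U_j = \int_K \Big(\sum_{j=1}^n |z_j|^2\Big)\, dE(z),
\]
so everything reduces to comparing the function $\sum_j |z_j|^2$ with the constant function $1$ on $\mathrm{supp}(E)$.

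For the forward implication, suppose $\underline{U}$ is a $\B_n$-unitary. By the previous lemma, $b\B_n = \partial \B_n$, so $\sum_j|z_j|^2 = 1$ for every $z \in K \subseteq b\B_n$. Plugging this into the key identity yields $\sum_j U_j^* U_j = \int_K 1\, dE = I$, and since each $U_j$ is already normal, $\underline{U}$ is a spherical unitary.

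The converse is the step that needs a little care. Assume $\underline{U}$ is a spherical unitary, so each $U_j$ is normal and $\sum_j U_j^* U_j = I$. Setting $g(z) = \sum_j|z_j|^2 - 1$, a real-valued bounded Borel function on $K$, the key identity becomes $\int_K g\, dE = 0$. I would split $K$ into $\Delta_+ = \{z \in K : g(z) > 0\}$ and $\Delta_- = \{z \in K : g(z) < 0\}$ and multiply the relation $\int_K g\, dE = 0$ by the commuting spectral projection $E(\Delta_\pm)$ to obtain $\int_{\Delta_\pm} g\, dE = 0$. Testing against an arbitrary vector $x$ turns this into $\int_{\Delta_\pm} g(z)\, d\mu_x(z) = 0$ for the finite positive measure $\mu_x = \langle E(\cdot)x, x\rangle$; since $g$ has a fixed strict sign on $\Delta_\pm$, this forces $\mu_x(\Delta_\pm) = 0$ for every $x$, that is $E(\Delta_\pm) = 0$. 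Hence $E$ is supported on $\{z \in K : g(z) = 0\} = \partial\B_n = b\B_n$, and as $K = \mathrm{supp}(E)$ we conclude $\sigma_T(\underline{U}) \subseteq b\B_n$; together with normality this says $\underline{U}$ is a $\B_n$-unitary.

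The only genuine obstacle is the measure-theoretic implication just used, namely that $\int_\Delta g\, dE = 0$ for a Borel set $\Delta$ on which $g$ is sign-definite forces $E(\Delta) = 0$; the rest is a direct transcription of the spectral theorem, relying on the earlier identification $b\B_n = \partial\B_n$.
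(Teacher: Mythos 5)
Your proof is correct, and it is essentially the paper's own approach: the paper does not spell out a proof but cites \cite{EschmeierII} and states that the result ``follows directly from the spectral theorem,'' which is exactly what you have carried out via the joint spectral measure of the commuting normal tuple (forward direction by functional calculus on $\sigma_T(\underline{U})\subseteq b\B_n=\partial\B_n$, converse by showing the spectral measure charges no set where $\sum_j|z_j|^2\neq 1$). Your measure-theoretic details, including the sign-definiteness argument forcing $E(\Delta_\pm)=0$, are sound, so this is a faithful completion of the argument the paper leaves implicit.
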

	
	Recall that a \textit{subnormal tuple} is a tuple $(T_1, \dotsc, T_n)$ of commuting operators that admits a simultaneous normal extension. The following result due to Athavale \cite{AthavaleIII}, which was later proved independently by Arveson \cite{ArvesonIII}, will be useful in the context of this paper.
	
	\begin{lem}[\cite{ArvesonIII}, Corollary 1]\label{subnormal}
		Let $T_1, \dotsc, T_n$ be a set of commuting operators on a Hilbert space $\mathcal{H}$ such that $T_1^*T_1+\dotsc+T_n^*T_n=I_\mathcal{H}$. Then $(T_1, \dotsc, T_n)$ is a subnormal tuple. 
	\end{lem}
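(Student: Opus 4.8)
The plan is to establish subnormality by constructing a commuting normal extension of $(T_1,\dots,T_n)$, the construction being forced by an explicit moment identity coming from the relation $\sum_{i=1}^n T_i^*T_i=I_{\mathcal H}$. Throughout I write $T^\gamma=T_1^{\gamma_1}\cdots T_n^{\gamma_n}$ for a multi-index $\gamma\in\N^n$; as the $T_i$ commute, so do their adjoints, and $T^{*\gamma}=(T^\gamma)^*$ is unambiguous.

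First I would introduce the map $\Phi\colon\mathcal B(\mathcal H)\to\mathcal B(\mathcal H)$ given by $\Phi(X)=\sum_{i=1}^n T_i^*XT_i$. This is completely positive, and the hypothesis $\sum_i T_i^*T_i=I_{\mathcal H}$ says exactly that $\Phi(I)=I$, so $\Phi$ is unital. An induction on $m$, using only the commutativity of the $T_i$ and the multinomial bookkeeping $\sum_{j}\tfrac{m!\,\delta_j}{\delta!}=\tfrac{(m+1)!}{\delta!}$, identifies the iterates as the spherical moments
\[
\Phi^{m}(I)=\sum_{|\gamma|=m}\frac{m!}{\gamma!}\,T^{*\gamma}T^{\gamma},\qquad m\ge 0,
\]
whence $\Phi(I)=I$ yields $\sum_{|\gamma|=m}\tfrac{m!}{\gamma!}T^{*\gamma}T^{\gamma}=I_{\mathcal H}$ for every $m$. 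These are precisely the relations satisfied by the coordinate functions on the sphere $b\B_n=\partial\B_n$, and they pin any representing measure to the sphere.

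Next I would invoke the moment-theoretic characterization of subnormality for commuting tuples (the multivariable Bram--Halmos--Ito criterion): $\UT$ is subnormal if and only if its $\mathcal B(\mathcal H)$-valued moment data $\{T^{*\alpha}T^{\beta}\}_{\alpha,\beta}$ are represented as $T^{*\alpha}T^{\beta}=\int_{\C^n}\bar z^{\alpha}z^{\beta}\,dF(z)$ by a positive $\mathcal B(\mathcal H)$-valued measure $F$. Granting such an $F$, the identities above confine its support to $b\B_n$; dilating $F$ to a spectral measure $E$ on a larger space $\mathcal K\supseteq\mathcal H$ by Naimark's theorem produces commuting normal operators $N_i=\int z_i\,dE$ with $\sigma_T(N_1,\dots,N_n)\subseteq b\B_n$, with $\mathcal H$ invariant and $N_i|_{\mathcal H}=T_i$, and with $\sum_i N_i^*N_i=I_{\mathcal K}$. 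Thus the extension $(N_1,\dots,N_n)$ is a spherical unitary, equivalently a $\B_n$-unitary by Theorem \ref{prop3.2}, and $\UT$ is subnormal.

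The main obstacle is the existence and positivity of the representing measure $F$, that is, the positivity of the full crossed moment kernel attached to $\{T^{*\alpha}T^{\beta}\}$; note that the plain Gram positivity $\sum_{\alpha,\beta}\langle T^{*\alpha}T^{\beta}h_\beta,h_\alpha\rangle=\|\sum_\alpha T^{\alpha}h_\alpha\|^2\ge 0$ is automatic and says nothing, so the real content lies in the moment matrix whose entries mix $T$ and $T^*$ with crossed indices. Here I would realize the positivity through the unital completely positive map $\Phi$: its minimal $*$-dilation (Stinespring, organised as the minimal dilation of the discrete semigroup $\{\Phi^m\}_{m\ge 0}$) yields a normal $*$-representation, and the commutativity of the $T_i$ collapses this dilation onto the commutative $C^*$-algebra $C(b\B_n)$, the spherical identities forcing the accompanying spectral measure onto the sphere. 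Verifying that the candidate inner product assembled from the moment data is genuinely positive semidefinite, and that coordinate multiplication extends the $T_i$ to commuting normals, is the technical heart of the argument; the family of spherical moment identities derived above is exactly the input that makes this positivity hold.
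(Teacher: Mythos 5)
First, a point of calibration: the paper does not prove Lemma \ref{subnormal} at all --- it is quoted verbatim from Athavale and Arveson, so your attempt has to be measured against the known proofs in those references, not against anything in this text.

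Your proposal has a genuine gap, and it sits exactly where the theorem lives. The parts you actually prove are correct but peripheral: the identity $\Phi^m(I)=\sum_{|\gamma|=m}\frac{m!}{\gamma!}T^{*\gamma}T^{\gamma}=I$ follows by a clean induction from $\Phi(I)=I$, and it is true that these identities force any positive $\mathcal B(\mathcal H)$-valued representing measure $F$ to be supported on $\partial\B_n$ (a monotone/dominated convergence argument on the scalar measures $\langle F(\cdot)h,h\rangle$ does this). But the existence of $F$ is not an auxiliary technicality --- by the very ``multivariable Bram--Halmos--Ito criterion'' you invoke, the existence of a positive representing measure for the crossed moments $\{T^{*\alpha}T^{\beta}\}$ is \emph{equivalent} to subnormality of $(T_1,\dots,T_n)$. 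So your argument reduces the statement to an equivalent restatement of itself and then asserts, rather than proves, the positivity. The closing suggestion does not close this circle: the minimal Stinespring dilation of the single CP map $\Phi$ (or the Bhat--Arveson dilation of the semigroup $\{\Phi^m\}$) dilates $\Phi$ to a $*$-endomorphism of some $\mathcal B(\mathcal K)$; nothing in that construction produces commuting \emph{normal} operators extending the $T_i$, and the claim that ``commutativity of the $T_i$ collapses the dilation onto $C(b\B_n)$'' is precisely the content of the theorem, stated without proof. Indeed the same CP-map formalism applies verbatim to noncommuting tuples with $\sum_i T_i^*T_i=I$, for which the conclusion is false --- so any correct proof must use commutativity in an essential computational way, which yours never does.

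For comparison, the proofs on record do the work you skip. Athavale's proof verifies directly the positivity conditions
\[
\sum_{0\le p\le k}(-1)^{|p|}\binom{k_1}{p_1}\cdots\binom{k_n}{p_n}\,T^{*p}T^{p}\;\geq\;0
\]
(the criterion recorded as Lemma \ref{Athavale} in this paper), by exploiting the spherical identity together with commutativity; for instance, when $n=2$ and $k=(2,0)$ one computes
\[
I-2T_1^*T_1+T_1^{*2}T_1^{2}=T_2^*\bigl(I-T_1^*T_1\bigr)T_2=T_2^{*2}T_2^{2}\;\geq\;0,
\]
and the general case is a combinatorial elaboration of exactly this kind of substitution --- note how commutativity is used to move $T_1$ past $T_2^*T_2$ inside the $*$-monomials. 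Arveson's independent proof is operator-algebraic rather than moment-theoretic. If you want to salvage your outline, the honest route is to abandon the measure/dilation rhetoric and prove the displayed positivity conditions from $\sum_iT_i^*T_i=I$; that single step is the theorem.
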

The following result shows that a $\mathbb B_n$-isometry is nothing but a spherical isometry and vice-versa.
	
	\begin{thm}[{\cite{AthavaleIII}, Proposition 2}]\label{prop3.5}
		Let $\underline{V}=(V_1, \dotsc, V_n)$ be a tuple of commuting operators acting {on a Hilbert space $\mathcal{H}$}. Then $\underline{V}$ is a $\B_n$ -isometry if and only if  $\underline{V}$ is a spherical isometry.
	\end{thm}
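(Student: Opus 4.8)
The plan is to prove the two implications separately, invoking Theorem \ref{prop3.2} to pass between $\B_n$-unitaries and spherical unitaries, and Lemma \ref{subnormal} to manufacture a normal extension in the harder direction. For the easy implication, suppose $\underline V$ is a $\B_n$-isometry, so there is a $\B_n$-unitary $\underline U=(U_1,\dots,U_n)$ on some $\mathcal K \supseteq \mathcal H$ with $\mathcal H$ jointly invariant and $V_j=U_j|_{\mathcal H}$. By Theorem \ref{prop3.2} the extension is a spherical unitary, i.e.\ $\sum_j U_j^*U_j=I_{\mathcal K}$. Since $V_jh=U_jh$ for $h\in\mathcal H$, I would simply compute $\sum_j\|V_jh\|^2=\sum_j\|U_jh\|^2=\|h\|^2$, and as $h$ is arbitrary this yields $\sum_j V_j^*V_j=I_{\mathcal H}$; hence $\underline V$ is a spherical isometry.

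For the converse, assume $\sum_j V_j^*V_j=I_{\mathcal H}$. By Lemma \ref{subnormal} the tuple $\underline V$ is subnormal, so I would pass to its minimal normal extension $\underline N=(N_1,\dots,N_n)$ on $\mathcal K\supseteq\mathcal H$, for which $\mathcal H$ is invariant under each $N_j$, $N_j|_{\mathcal H}=V_j$, and $\mathcal K=\overline{\operatorname{span}}\{N^{*\alpha}h:h\in\mathcal H,\ \alpha\in\mathbb{N}_0^n\}$. By Theorem \ref{prop3.2} it then suffices to show that $\underline N$ is a spherical unitary, namely $\sum_j N_j^*N_j=I_{\mathcal K}$, since this realizes $\underline V=\underline N|_{\mathcal H}$ as the restriction of a $\B_n$-unitary to a joint invariant subspace. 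Letting $E$ be the joint spectral measure of the commuting normal tuple $\underline N$ on $\mathbb C^n$, one has $\sum_j N_j^*N_j=\int \|z\|^2\,dE(z)$, so the goal is equivalent to showing that $E$ is supported on the unit sphere $\{z:\|z\|=1\}$.

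The core of the argument, and the step I expect to be the main obstacle, is this support statement. Because $\mathcal H$ is $N_j$-invariant, every vector $g=N^\alpha h$ with $h\in\mathcal H$ lies in $\mathcal H$ and therefore satisfies $\sum_j\|N_jg\|^2=\sum_j\|V_jg\|^2=\|g\|^2$. Writing $\mu_h=\langle E(\cdot)h,h\rangle$ and using that $E$ commutes with each $N_j$ (Fuglede--Putnam, together with $N^{*\alpha}E(\Delta)N^\alpha=\int_\Delta|z^\alpha|^2\,dE$), these equalities translate into the moment conditions
\[
\int_{\mathbb C^n}(\|z\|^2-1)\,|z^\alpha|^2\,d\mu_h(z)=0\qquad(\alpha\in\mathbb{N}_0^n).
\]
Pushing $\mu_h$ forward under the map $z\mapsto(|z_1|^2,\dots,|z_n|^2)$ to a compactly supported measure on $\mathbb{R}_{\ge 0}^n$ and invoking Stone--Weierstrass (the monomials $t^\alpha$ are dense in the continuous functions on the compact support), these conditions force $(\|z\|^2-1)\,d\mu_h=0$, i.e.\ $\mu_h$ is carried by the sphere.

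It then remains to spread this from $\mathcal H$ to all of $\mathcal K$. For any Borel set $\Delta\subseteq\{\|z\|\neq 1\}$ and any $h\in\mathcal H$ we get $\langle E(\Delta)h,h\rangle=0$, whence $E(\Delta)h=0$ by positivity of $E(\Delta)$; thus $\mathcal H\subseteq\ker E(\Delta)$. Since $E(\Delta)$ commutes with each $N_j^*$, its kernel is invariant under all $N_j^*$, and by minimality $\mathcal K$ is generated from $\mathcal H$ by the $N_j^*$, so $\ker E(\Delta)=\mathcal K$, i.e.\ $E(\Delta)=0$. Therefore $E$ is supported on the sphere and $\sum_j N_j^*N_j=\int\|z\|^2\,dE=I_{\mathcal K}$, so $\underline N$ is a spherical unitary and $\underline V$ is a $\B_n$-isometry, completing the proof.
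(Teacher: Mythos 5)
Your proposal is correct, but there is nothing in the paper to compare it against: the paper does not prove this statement at all, it imports it verbatim as Proposition 2 of Athavale's \emph{On the intertwining of joint isometries} (reference \cite{AthavaleIII}), exactly as the bracketed attribution in the theorem header indicates. What you have written is therefore a self-contained proof of a result the authors only cite, and it is essentially the classical argument behind Athavale's proposition. Both directions check out: the forward implication is the routine restriction computation from Theorem \ref{prop3.2}; for the converse, subnormality from Lemma \ref{subnormal}, the moment identities
\[
\int_{\mathbb C^n}\bigl(\|z\|^2-1\bigr)\,|z^\alpha|^2\,d\mu_h(z)=0
\]
(valid because $\mathcal H$ is invariant, so $N^\alpha h=V^\alpha h\in\mathcal H$ and the spherical-isometry identity applies to it), the pushforward to $\mathbb R_{\ge 0}^n$ followed by Stone--Weierstrass to force the support of each $\mu_h$ into the unit sphere, and finally the minimality argument (the kernel of $E(\Delta)$ is closed, contains $\mathcal H$, and is invariant under every $N_j^*$, hence is all of $\mathcal K$) are all sound; the conclusion $\sum_j N_j^*N_j=\int\|z\|^2\,dE=I_{\mathcal K}$ then makes $\underline N$ a spherical unitary, hence a $\B_n$-unitary by Theorem \ref{prop3.2}. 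One cosmetic remark: the pushforward step is needed precisely because your moment conditions only control integrals of functions of $(|z_1|^2,\dots,|z_n|^2)$, which do not separate points of $\mathbb C^n$; you handled this correctly, but it is worth stating that the Stone--Weierstrass density is applied on the compact support of the pushed-forward measure, where these monomials do generate a separating unital algebra. Incidentally, your kernel-plus-minimality device is the same mechanism the authors themselves use later, in the proof of Theorem \ref{P_isometry}, to upgrade an identity from $\mathcal H$ to the minimal normal extension space.
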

	
One of the most important results in one variable operator theory is the canonical decomposition of a contraction ({see \cite{Langer, NagyFoias} \& CH-I of \cite{Nagy}}), which states that for every contraction $T$ on a Hilbert space $\HS$, the space $\HS$ admits a unique orthogonal decomposition $\HS=\HS_1 \oplus \HS_2$ into joint reducing subspaces of $T$ such that $T|_{\HS_1}$ is a unitary and $T|_{\HS_2}$ is a completely non-unitary contraction. Below we see that a $\B_n$-contraction admits an analogous orthogonal decomposition into a $\B_n$-unitary and a completely non-unitary $\B_n$-contraction. A proof of this result could be found in \cite{EschmeierII}, which was based on Glicksberg-K\"{o}nig-Seever decomposition of a measure. However, we present a short and elementary proof here. 
	
	\begin{thm}[\textbf{Canonical decomposition}, \cite{EschmeierII}] \label{thm:decomp-Ball}
		Let $\underline{T}=(T_1, \dotsc, T_n)$ be a $\B_n$-contraction on a Hilbert space $\mathcal{H}$. Then there is an orthogonal decomposition of $\mathcal{H}$ into joint reducing subspaces $\mathcal{H}_u$ and $\mathcal{H}_c$ of $\underline T$ such that 
		\begin{enumerate}
			\item[(a)] $(T_1|_{\mathcal{H}_u}, \dotsc, T_n|_{\mathcal{H}_u})$ is a $\B_n$-unitary;
			\item[(b)] $(T_1|_{\mathcal{H}_c}, \dotsc, T_n|_{\mathcal{H}_c})$ is a completely non-unitary $\B_n$-contraction.  
		\end{enumerate}
	\end{thm}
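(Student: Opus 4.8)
The plan is to imitate the classical construction of the maximal unitary part of a contraction. Let $\mathcal{S}$ denote the family of all closed joint reducing subspaces $\mathcal{L}$ of $\UT$ on which $(T_1|_{\mathcal{L}},\dots,T_n|_{\mathcal{L}})$ is a $\B_n$-unitary, and set $\mathcal{H}_u=\overline{\mathrm{span}}\bigcup_{\mathcal{L}\in\mathcal{S}}\mathcal{L}$ and $\mathcal{H}_c=\mathcal{H}_u^{\perp}$. Since each $\mathcal{L}\in\mathcal{S}$ reduces every $T_j$, so does their closed linear span, and hence $\mathcal{H}_u$ and $\mathcal{H}_c$ are joint reducing subspaces of $\UT$. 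First I would record the routine fact that the restriction of a $\B_n$-contraction to any joint reducing subspace is again a $\B_n$-contraction: for a reducing subspace $\mathcal{M}$ and a polynomial $p$ one has $\|p(\UT|_{\mathcal{M}})\|=\|p(\UT)|_{\mathcal{M}}\|\le\|p(\UT)\|\le\|p\|_{\infty,\BNC}$, and since $\BNC$ is polynomially convex this suffices by Proposition \ref{basicprop:01}. In particular both $\UT|_{\mathcal{H}_u}$ and $\UT|_{\mathcal{H}_c}$ are $\B_n$-contractions.

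The heart of the argument is to show that $\UT|_{\mathcal{H}_u}$ is itself a $\B_n$-unitary, i.e. that $\mathcal{H}_u$ is the largest member of $\mathcal{S}$. I would first establish normality of each $T_j|_{\mathcal{H}_u}$. The self-commutator $C_j:=T_j^{*}T_j-T_jT_j^{*}$ is self-adjoint with closed kernel. For $h\in\mathcal{L}$ with $\mathcal{L}\in\mathcal{S}$, normality of $T_j|_{\mathcal{L}}$ together with the fact that $\mathcal{L}$ reduces $T_j$ gives $C_jh=0$; thus $\ker C_j$ is a closed subspace containing every such $\mathcal{L}$, hence containing their closed span $\mathcal{H}_u$. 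As $\mathcal{H}_u$ reduces $T_j$, the vanishing of $C_j$ on $\mathcal{H}_u$ means exactly that $T_j|_{\mathcal{H}_u}$ is normal, so $\UT|_{\mathcal{H}_u}$ is a commuting normal tuple.

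It remains to verify the spherical-unitary identity $\Sigma:=\sum_j (T_j|_{\mathcal{H}_u})^{*}(T_j|_{\mathcal{H}_u})=I_{\mathcal{H}_u}$, after which Theorem \ref{prop3.2} identifies $\UT|_{\mathcal{H}_u}$ as a $\B_n$-unitary. Being a $\B_n$-contraction, $\UT|_{\mathcal{H}_u}$ has Taylor spectrum in $\BNC$; combined with normality, its joint spectral measure $E$ yields $\Sigma=\int\big(\sum_j|z_j|^2\big)\,dE\le I_{\mathcal{H}_u}$, since $\sum_j|z_j|^2\le 1$ on $\BNC$. On the other hand, for $h\in\mathcal{L}\in\mathcal{S}$ the tuple $\UT|_{\mathcal{L}}$ is a spherical unitary, so $\langle(I-\Sigma)h,h\rangle=\|h\|^2-\sum_j\|T_jh\|^2=0$. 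This is where the main obstacle lies: the vanishing of the quadratic form of $I-\Sigma$ on a mere spanning set of $\mathcal{H}_u$ does not by itself force $\Sigma=I$, and the cross terms of the form cannot be controlled in general. The point that rescues the argument is the positivity $I-\Sigma\ge 0$ just obtained, which lets me write $\langle(I-\Sigma)h,h\rangle=\|(I-\Sigma)^{1/2}h\|^2$; hence $(I-\Sigma)^{1/2}$ annihilates every $h\in\bigcup_{\mathcal{L}\in\mathcal{S}}\mathcal{L}$ and therefore, by continuity, all of $\mathcal{H}_u$. Thus $\Sigma=I_{\mathcal{H}_u}$ and $\UT|_{\mathcal{H}_u}$ is a $\B_n$-unitary.

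Finally I would dispose of $\mathcal{H}_c$. It is reducing and $\UT|_{\mathcal{H}_c}$ is a $\B_n$-contraction by the first step. If some nonzero closed reducing subspace $\mathcal{M}\subseteq\mathcal{H}_c$ carried a $\B_n$-unitary, then $\mathcal{M}\in\mathcal{S}$ and so $\mathcal{M}\subseteq\mathcal{H}_u$, forcing $\mathcal{M}\subseteq\mathcal{H}_u\cap\mathcal{H}_c=\{0\}$, a contradiction; hence $\UT|_{\mathcal{H}_c}$ is completely non-unitary. This produces the desired orthogonal decomposition, and (if uniqueness is wanted) it follows at once, since any reducing subspace on which $\UT$ is a $\B_n$-unitary lies inside the maximal one $\mathcal{H}_u$ by construction.
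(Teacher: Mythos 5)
Your proposal is correct, but it reaches the decomposition by a genuinely different route than the paper. The paper works top-down: it first quotes Eschmeier's result (Corollary 4.2 of \cite{Eschmeier}) to get the \emph{maximal} joint reducing subspace
$\mathcal{H}_0=\bigcap_{\alpha,\beta\in\mathbb{N}^n}\ker\big[\UT^{\alpha}\UT^{*\beta}-\UT^{*\beta}\UT^{\alpha}\big]$
on which $\UT$ acts as a commuting normal tuple, then cuts it down to
$\mathcal{H}_u=\{h\in\mathcal{H}_0 \,:\, (I-T_1^*T_1-\dots-T_n^*T_n)h=0\}$,
using the Fuglede--Putnam property of commuting normals ($N_1^*N_2=N_2^*N_1$) to verify that this kernel is jointly reducing, and finally invokes Theorem \ref{prop3.2} to recognize the restriction as a $\B_n$-unitary. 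You work bottom-up: $\mathcal{H}_u$ is the closed span of \emph{all} reducing subspaces carrying $\B_n$-unitaries, and the substance of your argument is that this span is itself such a subspace. Your two devices for transferring the defining conditions from the spanning family to the closed span are exactly what is needed and are correctly executed: normality transfers because the self-commutators $T_j^*T_j-T_jT_j^*$ have closed \emph{linear} kernels, and the quadratic spherical identity transfers because you first prove $I-\Sigma\geq 0$ (from normality plus $\sigma_T(\UT|_{\mathcal{H}_u})\subseteq\BNC$, via the joint spectral measure or, equivalently, the continuous functional calculus the paper itself uses in Theorem \ref{P_unitary}) and then linearize through $(I-\Sigma)^{1/2}$. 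As for what each approach buys: the paper's proof is shorter because existence and maximality of the normal part are outsourced to Eschmeier, and it leaves behind an explicit kernel description of $\mathcal{H}_u$; your proof is self-contained modulo the spectral theorem for commuting normal tuples, never needs the cross-commutator kernels, and delivers maximality of $\mathcal{H}_u$ --- hence uniqueness of the decomposition, which the paper's statement does not even record --- automatically from the construction.
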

	
	\begin{proof}
		For the ease of computations, we shall follow the standard conventions: for a tuple of commuting operators $\underline{T}=(T_1, \dotsc, T_n)$ on space $\mathcal{H}$ and for $\alpha=(\alpha_1, \dotsc, \alpha_n) \in \mathbb{N}^n$, we write $\underline{T}^\alpha=T_1^{\alpha_1} \dotsc T_n^{\alpha_n}$ and $ \underline{T}^{*\alpha}=T_1^{*\alpha_1} \dotsc T_n^{*\alpha_n}$. Also, by \textit{normal tuple}, we mean a tuple of commuting normal operators. It follows from Corollary 4.2 in \cite{Eschmeier} that the closed linear subspace given by
		\[
		\mathcal{H}_0=\bigcap_{\alpha \in \mathbb{N}^n}\bigcap_{ \beta \in \mathbb{N}^n}Ker\bigg[\underline{T}^\alpha\underline{T}^{*\beta}-\underline{T}^{*\beta}\underline{T}^{\alpha}\bigg].
		\]
		is a \textit{maximal} joint reducing subspace for $\underline{T}$ on which $\underline{T}$ acts as a normal tuple. By \textit{maximality}, we mean that if there is a joint reducing subspace $\mathcal{L}$ of $\underline{T}$ such that each $T_j|_{\mathcal{L}}$ is normal, then $\mathcal{L} \subseteq \mathcal{H}_0$. Define 
		\[ 
				\mathcal{H}_u:=\{h \in \mathcal{H}_0 \ : \  \|T_1h\|^2+\dotsc+\|T_nh\|^2=\|h\|^2  \}.
			\]
		Then, $\mathcal{H}_u=\left\{h \in \mathcal{H}_0 \ : \  (I-T_1^*T_1\dotsc-T_n^*T_n)h=0  \right\}$. We show that $\mathcal{H}_u$ is a joint reducing subspace of $\underline{T}$. Let $h \in \mathcal{H}_u$ and let $1 \leq j \leq n$. Since $\mathcal{H}_0$ is a joint reducing subspace, we have that $T_jh \in \mathcal{H}_0$ and $T_j^*h \in \mathcal{H}_0$.	Since $(T_1|_{\mathcal{H}_0}, \dotsc, T_n|_{\mathcal{H}_0})$ is a commuting normal tuple and $(I-T_1^*T_1\dotsc-T_n^*T_n)h=0$, we have
$
		(I-T_1^*T_1\dotsc-T_n^*T_n)T_jh=T_j(I-T_1^*T_1\dotsc-T_n^*T_n)h=0$. We used the fact that if $N_1, N_2$ are commuting normal operators, then $N_1^*N_2=N_2^*N_1$. Therefore, $T_jh \in \mathcal{H}_u$ and similarly $T_j^*h \in \mathcal{H}_u$. Thus, $\mathcal{H}_u$ is a joint reducing subspace of $\underline{T}$ and $(T_1|_{\mathcal{H}_u}, \dotsc, T_n|_{\mathcal{H}_u})$ is a normal tuple satisfying $\|T_1h\|^2+\dotsc \|T_nh\|^2=\|h\|^2 \quad \text{for all} \ h \in \mathcal{H}_u$.	It follows from Theorem \ref{prop3.2} that $(T_1|_{\mathcal{H}_u}, \dotsc, T_n|_{\mathcal{H}_u})$ is a $\mathbb B_n$-unitary on $\mathcal{H}_u$. Setting	$
		\mathcal{H}_c=\mathcal{H}\ominus \mathcal{H}_u,
		$
		we see that $\HS_c$ is a joint reducing subspace of $\underline{T}$. Let $\mathcal{L} \subseteq \mathcal{H}_c$ be a joint reducing subspace of $\underline{T}$ such that $(T_1|_\mathcal{L}, \dotsc, T_n|_\mathcal{L})$ is a $\mathbb B_n$-unitary. Thus $(T_1|_\mathcal{L}, \dotsc, T_n|_\mathcal{L})$ is a normal tuple and the maximality of $\mathcal{H}_0$ implies that $\mathcal{L} \subseteq \mathcal{H}_0$. By Theorem \ref{prop3.2}, we have that $
		\|T_1h\|^2+\dotsc \|T_nh\|^2=\|h\|^2 \quad \text{for all} \ h \in \mathcal{L}$.	Hence, $\mathcal{L} \subseteq \mathcal{H}_u$. Putting everything together, we have that $\mathcal{L} \subset \mathcal{H}_u \cap \mathcal{H}_c=\{0\}$ and so, $\mathcal{L}=\{0\}$. Thus, $(T_1|_\mathcal{L}, \dotsc, T_n|_\mathcal{L})$ is a completely non-unitary $\B_n$-contraction and the proof is complete. 
	\end{proof}
	
		\section{The pentablock unitaries}\label{P-uni}
	
	\vspace{0.2cm}
	
		\noindent Recall that a $\mathbb{P}$-unitary is a normal $\mathbb{P}$-contraction whose joint spectrum lies in the distinguished boundary $b\mathbb{P}$ of the pentablock. In this Section, we find several characterizations for the $\Pe$-unitaries and find their interplay with $\B_2$-unitaries and $\Gamma$-unitaries. First we collect from the existing literature \cite{Agler, Jindal}, similar various characterizations for the points in the distinguished boundary of the pentablock $b\mathbb{P}$. 
	
		\begin{thm} \label{thm:distP}
		For $(a, s, p) \in \mathbb{C}^3$, the following are equivalent:
		\begin{enumerate}
			\item $(a, s, p) \in b\mathbb{P}$;
			\item $(s, p) \in b\Gamma, |a|=\sqrt{1-\frac{1}{4}|s|^2}$ ;
			
			\item There is a unitary matrix $U=[u_{ij}]_{2 \times 2}$ such that 
			$
			u_{11}=u_{22}$ \& $(a, s, p)=(u_{21}, tr(U), det(U)).
			$
		\end{enumerate}
	\end{thm}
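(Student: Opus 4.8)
The plan is to separate the purely algebraic equivalence $(2)\Leftrightarrow(3)$ from the boundary-theoretic equivalence $(1)\Leftrightarrow(2)$, the latter carrying the real weight. For $(2)\Leftrightarrow(3)$ I would start from the elementary observation that every $2\times 2$ unitary $U=[u_{ij}]$ satisfies $|u_{11}|=|u_{22}|$; this is immediate on comparing $U^*$ with $U^{-1}=(\det U)^{-1}\,\mathrm{adj}(U)$ and using $|\det U|=1$. Writing $u_{11}=re^{i\phi_1}$, $u_{22}=re^{i\phi_2}$ and $\mathrm{tr}(U)=s$, one gets $|s|\le 2r$ with equality precisely when $u_{11}=u_{22}=s/2$; since the first column is a unit vector, $|u_{21}|^2=1-r^2$, so $|u_{21}|$ is maximal, equal to $\sqrt{1-\tfrac{1}{4}|s|^2}$, exactly when $u_{11}=u_{22}$. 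As the eigenvalues of a unitary lie on $\T$, the pair $(\mathrm{tr}(U),\det U)$ always lies in $b\Gamma$. This gives $(3)\Rightarrow(2)$. For the converse, given $(s,p)\in b\Gamma$ (equivalently $|p|=1$, $s=\bar s p$, $|s|\le 2$ by Theorem \ref{Gamma_uni}) and $|a|=\sqrt{1-\tfrac{1}{4}|s|^2}$, I would exhibit the matrix $U$ with $u_{11}=u_{22}=s/2$, $u_{21}=a$ and $u_{12}=-\bar a s/\bar s$ (with the modification $u_{12}=-p/a$ when $s=0$), and verify that orthonormality of the columns forces $\det U=p$ via the relation $s=\bar s p$; this is a short direct check.

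\medskip

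The heart of the matter is $(1)\Leftrightarrow(2)$, i.e.\ identifying $b\Pe$ with the set $C:=\{(a,s,p):(s,p)\in b\Gamma,\ |a|=\sqrt{1-\tfrac{1}{4}|s|^2}\}$. I would first record that $C\subseteq\PC$: for $(s,p)\in b\Gamma$ Theorem \ref{thm2.1} applies with $\beta=\tfrac{1}{2}s$, and a one-line simplification (writing $t=\sqrt{1-\tfrac{1}{4}|s|^2}$, so that $\tfrac{|s|^2/4}{1+t}=1-t$) shows the fibre $\{a:(a,s,p)\in\PC\}$ is exactly the closed disc $\{|a|\le t\}$. To show $C$ is a \emph{boundary}, hence $b\Pe\subseteq C$, I would transfer through the surjection $\pi:\overline{\B_{\|\cdot\|}}\to\PC$ of $M_2(\C)$ recalled in the introduction: for $f\in A(\PC)$ one has $\|f\|_{\infty,\PC}=\|f\circ\pi\|_{\infty,\,\overline{\B_{\|\cdot\|}}}$, and since the distinguished boundary of the operator-norm unit ball of $M_2(\C)$ (a classical Cartan domain) is the unitary group $U(2)$, the supremum is attained on $\pi(U(2))$. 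The computation of the first paragraph identifies $\pi(U(2))=\{(a,s,p):(s,p)\in b\Gamma,\ |a|\le t\}$. Finally, for each fixed $(s,p)\in b\Gamma$ the slice $a\mapsto f(a,s,p)$ is holomorphic on $\{|a|\le t\}$, so by the maximum modulus principle its supremum is attained on the circle $|a|=t$; combining the two reductions shows $|f|$ attains its maximum on $C$.

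\medskip

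It remains to prove the reverse inclusion $C\subseteq b\Pe$, that is, that every point of $C$ is a peak point for $A(\PC)$, and this is the step I expect to be the main obstacle. For a fixed $(a_0,s_0,p_0)\in C$ I would combine a peak function for $\Gamma$ at $(s_0,p_0)\in b\Gamma$ (built from the standard linear-fractional functions $(s,p)\mapsto(2\omega p-s)/(2-\omega s)$, $\omega\in\T$) with a holomorphic function on $\PC$ that controls the $a$-coordinate relative to the fibre radius and peaks where $|a|=t$. The existence and explicit form of the latter is precisely the delicate content of the distinguished-boundary computation for the pentablock in \cite{Agler} (revisited in \cite{Jindal}); I would invoke those peak functions rather than reconstruct them, since producing a \emph{holomorphic} function whose modulus attains the non-holomorphic bound $\sqrt{1-\tfrac{1}{4}|s|^2}$ exactly on $C$ is the crux and cannot be extracted from the elementary fibre argument alone. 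Multiplying and normalising the two peaking functions yields a function peaking at $(a_0,s_0,p_0)$, placing it in $b\Pe$ and completing the equivalence $(1)\Leftrightarrow(2)$.
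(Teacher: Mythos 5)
The paper never proves this theorem: it is explicitly ``collected from the existing literature,'' with statement and proof deferred wholesale to \cite{Agler} and \cite{Jindal}. So your proposal cannot be measured against a proof in the paper; it is a partial reconstruction of the literature's argument, and for the one step you defer --- the peak-point construction showing that every point of $C=\{(a,s,p):(s,p)\in b\Gamma,\ |a|=\sqrt{1-\tfrac14|s|^2}\}$ actually lies in $b\Pe$ --- you cite exactly the sources the paper itself relies on, so you are no worse off than the paper there. The content you do supply is essentially correct: $|u_{11}|=|u_{22}|$ for any $2\times2$ unitary via $U^*=(\det U)^{-1}\mathrm{adj}(U)$, the equality case of the triangle inequality pins down $u_{11}=u_{22}=s/2$, your explicit matrix in the converse direction does satisfy $\det U=p$ precisely because $s=\bar s p$, and the identification of the fibre of $\PC$ over $(s,p)\in b\Gamma$ as the disc $\{|a|\le t\}$, $t=\sqrt{1-\tfrac14|s|^2}$, is the right one-line consequence of Theorem \ref{thm2.1}.

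One step, however, rests on an unjustified claim. In proving $b\Pe\subseteq C$ you assert that for fixed $(s,p)\in b\Gamma$ the slice $a\mapsto f(a,s,p)$ is holomorphic on $\{|a|\le t\}$. Since $|p|=1$, this entire fibre lies in the topological boundary of $\PC$, where a function $f\in A(\PC)$ is only known to be continuous; holomorphy along such a boundary slice is not automatic. The standard repair is an approximation argument: for $0<r<1$ the fibre of the open pentablock over $(rs,r^2p)\in\G_2$ is an open disc of radius $t_r\to t$ on which $f$ genuinely is holomorphic; apply the maximum modulus principle there to get $|f(a_0,rs,r^2p)|\le\max_{|a|=t_r}|f(a,rs,r^2p)|$ for $r$ close to $1$, and let $r\to1^-$, using uniform continuity of $f$ on the compact set $\PC$ and compactness of the circles to pass the inequality to the limit. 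A smaller point: your claimed equality $\pi(U(2))=\{(a,s,p):(s,p)\in b\Gamma,\ |a|\le t\}$ is not established by your first paragraph for $|a|<t$ (you only construct unitaries realizing the extremal case $|a|=t$), but only the inclusion $\pi(U(2))\subseteq\{(a,s,p):(s,p)\in b\Gamma,\ |a|\le t\}$ is needed for your boundary argument, and that inclusion your first paragraph does give. With these two repairs your argument that $C$ is a boundary, hence $b\Pe\subseteq C$, is sound.
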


	\noindent In other words, we have the following description for the points in the distinguished boundary $b \Pe$:
\[
b\mathbb{P}=\bigg\{(a, s, p) \in \mathbb{C}^3 \ : \ |a|=\sqrt{1-\frac{1}{4}|s|^2}, \ (s, p) \in b\Gamma \bigg\}\,.
			\]
			Interestingly, each of the above characterizations for a point in $b\Pe$ gives rise to a characterization of a $\Pe$-unitary. Also, we have other characterizations in terms of $\B_2$-unitaries and $\Gamma$-unitaries as shown below. We mention here that the equivalence of parts $(1), (2), (5)$ of the next theorem were established independently in \cite{JindalII}.

\begin{thm}\label{P_unitary}
Let $\underline{N}=(N_1, N_2, N_3)$ be a commuting triple of bounded linear operators. Then the following are equivalent:
		\begin{enumerate}
			\item $\underline{N}$ is a $\mathbb{P}$-unitary ;
			\item $N_1^*$ is subnormal, $(N_2, N_3)$ is a $\Gamma$-unitary and $N_1^*N_1=I-\frac{1}{4}N_2^*N_2$ ;
				
			\item $(N_2, N_3)$ is a $\Gamma$-unitary and 
			$
			N_1^*N_1=I-\frac{1}{4}N_2^*N_2$ and $N_1N_1^*=I-\frac{1}{4}N_2N_2^*$ ;

			\item $(N_1, N_2\slash 2)$ is a $\B_2$-unitary and $(N_2, N_3)$ is a $\Gamma$-unitary ;
			
			\item There is a $2 \times 2$ unitary block matrix $U=[U_{ij}]$, where $U_{ij}$ are commuting normal operators, such that $U_{11}=U_{22}$ and 
			$
			\underline{N}=(U_{21}, U_{11}+U_{22}, U_{11}U_{22}-U_{12}U_{21}).
			$
		\end{enumerate}
	\end{thm}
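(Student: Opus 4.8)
The plan is to prove the cycle of implications $(1)\Rightarrow(2)\Rightarrow(4)\Rightarrow(3)\Rightarrow(1)$ and to handle the block-matrix description $(5)$ by establishing $(1)\Leftrightarrow(5)$ separately. The organizing observation is that the single identity $N_1^*N_1=I-\frac14 N_2^*N_2$ says exactly that $N_1^*N_1+\frac14 N_2^*N_2=I$, i.e. that $(N_1,N_2\slash 2)$ is a spherical isometry; and once $N_1$ is known to be normal this becomes the spherical unitary condition, which by Theorem \ref{prop3.2} is the same as being a $\B_2$-unitary. In this way $(2),(3),(4)$ are three packagings of the single geometric description of $b\mathbb{P}$ from Theorem \ref{thm:distP}, namely $|a|=\sqrt{1-\frac14|s|^2}$ together with $(s,p)\in b\Gamma$, and the whole theorem reduces to spectral theory for commuting normals plus one hyponormality argument.

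For $(1)\Rightarrow(2)$ I would invoke the spectral theorem: the joint spectral measure $E$ of the commuting normal triple $\underline N$ is supported on $\sigma_T(\underline N)\subseteq b\mathbb{P}$, so integrating the pointwise identity $|z_1|^2=1-\frac14|z_2|^2$ (valid on $b\mathbb{P}$ by Theorem \ref{thm:distP}) yields $N_1^*N_1=I-\frac14 N_2^*N_2$, while the projection property of the Taylor spectrum sends $\sigma_T(\underline N)$ into $b\Gamma$ in the last two coordinates, so $(N_2,N_3)$ is a $\Gamma$-unitary; finally $N_1$ normal makes $N_1^*$ normal, hence subnormal. The heart of the cycle is $(2)\Rightarrow(4)$, and here is the key trick: the relation in $(2)$ exhibits $(N_1,N_2\slash 2)$ as a spherical isometry, so by Lemma \ref{subnormal} it is a subnormal tuple and in particular $N_1$ is subnormal; but $N_1^*$ is subnormal by hypothesis, so both $N_1$ and $N_1^*$ are hyponormal, which forces $N_1^*N_1=N_1N_1^*$. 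Thus $N_1$ is normal, $(N_1,N_2\slash 2)$ is a spherical unitary $=\B_2$-unitary by Theorem \ref{prop3.2}, and $(N_2,N_3)$ is still a $\Gamma$-unitary, giving $(4)$.

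The remaining arrows are short. For $(4)\Rightarrow(3)$, a $\B_2$-unitary is a spherical unitary, so $N_1,N_2$ are normal with $N_1^*N_1+\frac14 N_2^*N_2=I$, and normality gives both $N_1^*N_1=I-\frac14 N_2^*N_2$ and its adjoint form $N_1N_1^*=I-\frac14 N_2N_2^*$. For $(3)\Rightarrow(1)$, subtracting the two identities and using that $N_2$ is normal (being the first component of a $\Gamma$-unitary, by Theorem \ref{Gamma_uni}) gives $N_1^*N_1=N_1N_1^*$, so $\underline N$ is a commuting normal triple; the functional calculus then forces $|a|^2=1-\frac14|s|^2$ on $\sigma_T(\underline N)$, the projection property places $(s,p)\in b\Gamma$, and Theorem \ref{thm:distP} yields $\sigma_T(\underline N)\subseteq b\mathbb{P}$, together with Proposition \ref{basicprop:02} confirming the $\mathbb{P}$-unitary property. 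For $(5)\Rightarrow(1)$ I would pass to the commutative $C^*$-algebra generated by the commuting normal entries $U_{ij}$ (closure under adjoints being guaranteed by Fuglede's theorem) and use the Gelfand transform to realise $U$ as a field of matrices $[u_{ij}(\omega)]$; unitarity of $U$ becomes pointwise unitarity and $U_{11}=U_{22}$ becomes $u_{11}(\omega)=u_{22}(\omega)$, so Theorem \ref{thm:distP}(3) places each $\big(u_{21}(\omega),\mathrm{tr}\,U(\omega),\det U(\omega)\big)$ in $b\mathbb{P}$, whence $\sigma_T(\underline N)\subseteq b\mathbb{P}$.

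I expect the genuine obstacle to lie in $(1)\Rightarrow(5)$, the explicit construction of the block unitary. The natural choice is $U_{11}=U_{22}=N_2\slash 2$, $U_{21}=N_1$, and $U_{12}=c(\underline N)$, where $c$ is the $(1,2)$-entry of the scalar unitary matrix attached to a point of $b\mathbb{P}$ in Theorem \ref{thm:distP}(3). From $\det=\frac14 s^2-ca=p$ together with the unitarity relation $|c|^2+\frac14|s|^2=1=|a|^2+\frac14|s|^2$ one finds $c=(\frac14 s^2-p)\slash a$ when $a\neq0$ and $c=0$ when $a=0$, so $c$ is a uniquely determined, bounded Borel function on $b\mathbb{P}$ with $|c|=|a|\leq 1$; the only delicate point is measurability across $\{a=0\}$, which is settled by the explicit formula and the equality $|c|=|a|$. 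Setting $U_{12}:=\int_{b\mathbb{P}}c\,dE$ through the joint spectral measure makes $U_{12}$ a normal operator commuting with $N_1,N_2,N_3$, and since $\int f\,dE\cdot\int g\,dE=\int fg\,dE$ the desired block identities $U^*U=UU^*=I$, $\mathrm{tr}\,U=N_2$ and $\det U=N_3$ all reduce to the pointwise $2\times 2$ unitary identities on $b\mathbb{P}$, completing $(1)\Rightarrow(5)$.
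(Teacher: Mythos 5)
Your proposal is correct, and for the equivalence of $(1)$--$(4)$ it runs on essentially the same engine as the paper's proof: the functional calculus for the commuting normal triple converts the defining identity $|z_1|^2=1-\frac{1}{4}|z_2|^2$ of $b\mathbb{P}$ into $N_1^*N_1=I-\frac{1}{4}N_2^*N_2$, the projection property of the Taylor spectrum handles the $\Gamma$-unitary coordinates, Theorem \ref{prop3.2} identifies spherical unitaries with $\B_2$-unitaries, and in the converse direction normality of $N_1$ comes from exactly the paper's double-subnormality trick (Lemma \ref{subnormal} makes $N_1$ subnormal, the hypothesis makes $N_1^*$ subnormal, and an operator that is hyponormal together with its adjoint is normal) --- your hyponormality phrasing is just the standard proof of that fact, and your $(3)\Rightarrow(1)$ even gets normality more cheaply by subtracting the two identities. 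Where you genuinely diverge is in the treatment of $(5)$. The paper stays purely algebraic: for $(2)\Rightarrow(5)$ it writes down $U_{12}=-N_1^*N_3$ outright and verifies $U^*U=UU^*=I$ by hand from the $\Gamma$-unitary relations $N_2=N_2^*N_3$ and $N_3^*N_3=I$, and for $(5)\Rightarrow(2)$ it manipulates the entry-wise block-unitarity equations directly. You instead route both directions through spectral theory: integrating the scalar function $c=(\frac{1}{4}s^2-p)/a$ (extended by $0$ on $\{a=0\}$) against the joint spectral measure to build $U_{12}$, and using the Gelfand transform of the commutative $C^*$-algebra generated by the $U_{ij}$ to reduce $(5)\Rightarrow(1)$ to the pointwise scalar statement of Theorem \ref{thm:distP}(3). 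This is sound, with two caveats worth making explicit in a write-up: you are quietly invoking the standard inclusion of the Taylor spectrum in the joint (Gelfand) spectrum relative to a commutative subalgebra containing the tuple, and your $c$ in fact has the closed form $-\overline{z_1}z_3$ on $b\mathbb{P}$ (since $\overline{s}p=s$ and $|a|^2=1-\frac{1}{4}|s|^2$ give $a\cdot(-\overline{a}p)=\frac{1}{4}s^2-p$), so your spectral integral is precisely the paper's operator $-N_1^*N_3$. Recognizing that closed form collapses your measurability discussion and the construction of $U_{12}$ to one line --- which is exactly what the paper's algebraic route buys --- while your route buys a uniform ``pointwise on $b\mathbb{P}$ implies operator-valued'' transfer principle that makes all five conditions visibly instances of one scalar picture.
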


	\begin{proof}
		$(1) \implies (2):$ By definition $N_1, N_2, N_3$ are commuting normal operators and $\sigma_T(N_1,N_2,N_3) \subseteq b\mathbb{P}$. By the spectral mapping theorem, $\sigma_T(N_2, N_3)=P_{2,3}\sigma_T(\underline{N})$ where $P_{2,3}$ is the projection onto the second and third coordinates. It follows from Theorem \ref{thm:distP} and the projection property of the joint spectrum that $\sigma_T(N_2, N_3)\subseteq b\Gamma$. Hence, $(N_2, N_3)$ is a $\Gamma$-unitary. Again, the commutative $C^*$-algebra generated by the commuting normal operators $N_1, N_2, N_3$ is isometrically isomorphic to the $C(\sigma_T(\underline{N}))$ via the continuous functional calculus. The continuous functional takes the coordinate function $z_i$ to $N_i$ for $i=1,2,3$. The coordinate functions satisfy $|z_1|^2=1-\frac{1}{4}|z_2|^2$ on $b\mathbb{P}$ and hence on $\sigma_T(\underline{N})$. Thus, $N_1^*N_1=I-\frac{1}{4}N_2^*N_2$. 
		
		\medskip
		
		\noindent		$(2) \implies (1):$ From the hypothesis that $N_1^*N_1=I-\frac{1}{4}N_2^*N_2$ and  Lemma \ref{subnormal}, it follows that $N_1$ is subnormal. Thus, $N_1^*, N_1$ are subnormal operators and consequently $N_1$ is a normal operator. Let $(a, s, p) \in \sigma_T(\underline{N})$. It follows from the projection property of joint spectrum that $(s, p) \in \sigma_T(N_2, N_3)$. Since $(N_2, N_3)$ is a $\Gamma$-unitary, we have that $(s, p)\in b\Gamma$. The function 
		\[
		f(z_1, z_2, z_3)=|z_1|^2-\bigg(1-\frac{|z_2|^2}{4}\bigg),
		\]
		is continuous on $\sigma_T(\underline{N})$. Then it follows from the continuous functional calculus that 
		\[
		f(N_1, N_2, N_3)=N_1^*N_1-\bigg(I-\frac{1}{4}N_2^*N_2\bigg)=0.
		\]
		Now spectral mapping theorem gives
		$
		\{0\}=\sigma_T(f(\underline{N}))=f(\sigma_T(\underline{N})).
		$
		Since $(a, s, p) \in \sigma_T(\underline{N})$, we have that $f(a, s, p)=0$ and the desired conclusion follows. 

		\medskip

\noindent	$(2) \implies (3):$ Since $N_1^*N_1+\frac{1}{4}N_2^*N_2=I$, Lemma \ref{subnormal} yields that $N_1$ is a normal operator. Hence, 
		$
		N_1N_1^*=N_1^*N_1=I-\frac{1}{4}N_2^*N_2.
		$
		
		\medskip

\noindent $(3) \implies (2):$ This is obvious.
		
		\medskip
		
\noindent $(3) \iff (4):$ Follows from Theorem \ref{prop3.2}. 

\smallskip		

\noindent $(2) \implies (5):$ Set
		$
		U=\begin{bmatrix}
			\frac{1}{2}N_2 & -N_1^*N_3\\
			N_1 & \frac{1}{2}N_2
		\end{bmatrix}.
		$
		Since $N_1, N_2, N_3$ are commuting normal operators, $U_{ij}$ are commuting normal operators. We show that $U$ is a unitary matrix. Since $N_3$ is unitary and $N_2^*N_3=N_2$ as $(N_2, N_3)$ is a $\Gamma$-unitary, we have
		\begin{equation*}
			\begin{split}
				UU^*=\begin{bmatrix}
					\frac{1}{2}N_2 & -N_1^*N_3\\ \\
					N_1 & \frac{1}{2}N_2
				\end{bmatrix}
				\begin{bmatrix}
					\frac{1}{2}N_2^* & N_1^*\\ \\
					-N_1N_3^* & \frac{1}{2}N_2^*
				\end{bmatrix}
				&=\begin{bmatrix}
					\frac{1}{4}N_2N_2^*+ N_1^*N_3N_1N_3^* & \frac{1}{2}N_2N_1^*-\frac{1}{2}N_1^*N_3N_2^*\\ \\
					\frac{1}{2}N_1N_2^*-\frac{1}{2}N_2N_1N_3^* & N_1N_1^*+\frac{1}{4}N_2N_2^*
				\end{bmatrix}\\ \\
				&=\begin{bmatrix}
					\frac{1}{4}N_2^*N_2+ N_1^*N_1N_3^*N_3 & \frac{1}{2}N_1^*N_2-\frac{1}{2}N_1^*N_2^*N_3\\ \\
					\frac{1}{2}N_1N_2^*-\frac{1}{2}N_1N_2N_3^* & N_1^*N_1+\frac{1}{4}N_2^*N_2
				\end{bmatrix}\\ \\
				&=\begin{bmatrix}
					\frac{1}{4}N_2^*N_2+ N_1^*N_1 & \frac{1}{2}N_1^*N_2-\frac{1}{2}N_1^*N_2\\ \\
					\frac{1}{2}N_1N_2^*-\frac{1}{2}N_1N_2^* & \frac{1}{4}N_2^*N_2+ N_1^*N_1
				\end{bmatrix}=\begin{bmatrix}
					I & 0 \\
					0 & I\\
				\end{bmatrix}. 
			\end{split}
		\end{equation*}
		Similarly, we can show that $U^*U=I$. Now $U_{21}=N_1$ and $U_{11}+U_{22}=N_2$. It only remains to show that $U_{11}U_{22}-U_{12}U_{21}=N_3$ which we prove using the fact that $N_2=N_2^*N_3$ in the following way.
		\[
		U_{11}U_{22}-U_{12}U_{21}=\frac{1}{4}N_2^2+N_1^*N_1N_3=\frac{1}{4}N_2^*N_2N_3+N_1^*N_1N_3=\bigg(\frac{1}{4}N_2^*N_2+N_1^*N_1\bigg)N_3=N_3.
		\]  
\smallskip

\noindent $(5) \implies (2):$ Let $U$ be a $2 \times 2$ unitary block matrix $[U_{ij}]$ where $U_{ij}$ are commuting normal operators such that $U_{11}=U_{22}$ and 
		$
		\underline{N}=(U_{21}, U_{11}+U_{22}, U_{11}U_{22}-U_{12}U_{21}).
		$
		It is evident that 
		$
		\|N_2\| \leq \|U_{11}\|+\|U_{22}\| \leq 2\|U\|=2.
		$
		The condition  $U^*U=I$ gives the following set of equations.
		\begin{equation}\label{eq1}
			U_{11}^*U_{11}+U_{21}^*U_{21}=I, \quad
			U_{12}^*U_{12}+U_{22}^*U_{22}=I, 
		\end{equation}
		\begin{equation}\label{eq2}
			U_{12}^*U_{11}+U_{22}^*U_{21}=0, \quad U_{11}^*U_{12}+U_{21}^*U_{22}=0.
		\end{equation}
		Again, $UU^*=I$ provides the following equations.
		\begin{equation}\label{eq3}
			U_{11}U_{11}^*+U_{12}U_{12}^*=I, \quad U_{21}U_{21}^*+U_{22}U_{22}^*=I,
		\end{equation}
		\begin{equation}\label{eq4}
			U_{21}U_{11}^*+U_{22}U_{12}^*=0, \quad 
			U_{11}U_{21}^*+U_{12}U_{22}^*=0 .
		\end{equation}
		Using the above equations, we have the following.
		\begin{equation*}
			\begin{split}
				N_2^*N_3&=(U_{11}^*+U_{22}^*)(U_{11}U_{22}-U_{12}U_{21})\\
				&=(U_{11}^*U_{11})U_{22}-U_{11}^*U_{12}U_{21} +(U_{22}^*U_{22})U_{11}-U_{22}^*U_{12}U_{21}\\
				& =(I-U_{21}^*U_{21})U_{22}-U_{11}^*U_{12}U_{21} +(I-U_{12}^*U_{12})U_{11}-U_{22}^*U_{12}U_{21} \quad [\text{ by } {(\ref{eq1})}] \ \\
				&=U_{22}-(U_{22}U_{21}^*)U_{21} -U_{11}^*U_{12}U_{21}+U_{11}-(U_{12}^*U_{11})U_{12}- U_{22}^*U_{12}U_{21}\\
				&=(U_{11}+U_{22})+(U_{11}^*U_{12})U_{21} -U_{11}^*U_{12}U_{21}+(U_{22}^*U_{12})U_{12}- U_{22}^*U_{12}U_{21} \ \quad [\text{ by } {(\ref{eq2})}]\\
				&=N_2.\\
			\end{split}
		\end{equation*}
		We show that $N_3$ is unitary. Since $N_3$ is normal, it suffices to show that $N_3^*N_3=I$.
		\begin{equation*}
			\begin{split}
				N_3^*N_3&=(U_{11}^*U_{22}^*-U_{12}^*U_{21}^*)(U_{11}U_{22}-U_{12}U_{21})\\
				&=U_{11}^*U_{11}U_{22}^*U_{22}-(U_{11}^*U_{12})U_{22}^*U_{21}-(U_{12}^*U_{11})U_{21}^*U_{22}+U_{12}^*U_{21}^*U_{12}U_{21}\\
				& =U_{11}^*U_{11}U_{22}^*U_{22}+(U_{21}^*U_{22})U_{22}^*U_{21}+(U_{22}^*U_{21})U_{21}^*U_{22}+U_{12}^*U_{21}^*U_{12}U_{21} \quad [\text{ by } (\ref{eq2})]\\
				&=(U_{11}^*U_{11}+U_{21}^*U_{21})U_{22}^*U_{22}+(U_{22}^*U_{22}+U_{12}^*U_{12})U_{21}^*U_{21}\\
				& =U_{22}^*U_{22}+U_{21}^*U_{21} \quad [\text{ by } (\ref{eq1})]\\
				& =I. \quad [\text{ by } (\ref{eq3})]
			\end{split}
		\end{equation*}
		Hence, $(N_2, N_3)$ is a commuting pair of normal operators such that $\|N_2\| \leq 2, N_2^*N_3=N_2$ and $N_3$ is a unitary. Therefore, $(N_2,N_3)$ is a $\Gamma$-unitary. Since $U_{11}=U_{22}$, we have 
		\begin{equation*}
			\begin{split}
				I-\frac{1}{4}N_2^*N_2=I-\frac{1}{4}(U_{11}^*+U_{22}^*)(U_{11}+U_{22})
				=I-U_{22}^*U_{22}
				=U_{21}^*U_{21}
				=N_1^*N_1,\\
			\end{split}
		\end{equation*}
		where the second last equality follows from (\ref{eq3}). The proof is now complete.
	\end{proof}
	
Note that the assumption that $N_1^*$ is subnormal in Theorem \ref{P_unitary} cannot be dropped. Also, unlike operators associated with the symmetrized bidisc and tetrablock (see Theorem 2.5 in \cite{Bhattacharyya} and Theorem 5.4 in \cite{Bhattacharyya-01} respectively), it is not always true that a $\Pe$-unitary is a commuting triple $(N_1,N_2,N_3)$ which is a $\Pe$-contraction and $N_3$ is a unitary. The following example explains all these together.  

	\begin{eg} \label{exm:imp}
		Consider the commuting triple of subnormal operators
		$
		\underline{N}=(N_1, N_2, N_3)=(T_z, \ 0, \ -I)$ on $\ell^2(\mathbb{N})$,	where  $T_z$ is the unilateral shift on $\ell^2(\mathbb{N})$. Then
		\begin{enumerate}
			\item[(a)]  $(0, -I)$ is a $\Gamma$-unitary since $\sigma_T(0, -I)=\{(0, -1)\} \subset b\Gamma$ and;
			\item[(b)]  $N_1^*N_1=T_z^*T_z=I$ which is same as  $N_1^*N_1+\frac{1}{4}N_2^*N_2=I$.
		\end{enumerate}
		Hence, $\underline{N}$ is a commuting triple such that $(N_2, N_3)$ is a $\Gamma$-unitary and $N_1^*N_1=I-\frac{1}{4}N_2^*N_2$ but $N_1^*$ is not subnormal.	Thus, $\underline{N}$ is not a $\Pe$-unitary as $N_1$ is not normal. However, it is true that $(N_1,N_2,N_3)$ is a $\Pe$-contraction, in fact is a $\Pe$-isometry (which follows from Theorem \ref{P_isometry}) with $N_3$ being a unitary. \qed 
	\end{eg}
	
The authors of \cite{JindalII} introduced the notion of a quasi $\Pe$-unitary, defined as a $\Pe$-isometry whose last component is a unitary operator. With respect to this terminology, Example \ref{exm:imp} provides a quasi $\Pe$-unitary which is not a $\Pe$-unitary. Furthermore, Theorem \ref{P_unitary} shows that every $\Pe$-unitary is a quasi $\Pe$-unitary. Hence, the class of quasi $\Pe$-unitaries is strictly larger than the class of $\Pe$-unitaries. The following example shows that one cannot drop the hypothesis $(N_1, N_2\slash 2)$ being a $\B_2$-unitary in Theorem \ref{P_unitary}. Indeed, we show that there exists $(a, s, p) \in \PC$ such that $(s, p) \in b\Gamma$ but $(a, s, p) \notin b\Pe$. 
	\begin{eg}
		Let 
		$
		(a, s, p)=(0, 0, 1).
		$
		Then 
		$
		(s, p) \in b\Gamma, (a, s, p) \in \PC$ and $|a|^2+\frac{1}{4}|s|^2 \ne 1$.
Thus $(a, s, p) \notin b\Pe$. Moreover, it shows that 
		$
		b\Pe \ne \{(a, s, p) \in \PC \ : \ |p|=1 \}.
		$	\qed
	\end{eg}

 The next corollary is an immediate consequence of Theorem \ref{Gamma_uni} and Theorem \ref{P_unitary}. 
	
	\begin{cor}\label{corU_1,U_2}
		$(U_1, U_2)$ is a pair of commuting unitaries if and only if $(U_1, 0,  U_2)$ is a $\Pe$-unitary.
	\end{cor}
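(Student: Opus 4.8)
The plan is to read this off directly from the equivalence $(1) \Longleftrightarrow (2)$ of Theorem \ref{P_unitary} combined with the Agler--Young characterization of $\Gamma$-unitaries in Theorem \ref{Gamma_uni}, specializing the triple $\underline{N}=(N_1,N_2,N_3)$ to $(U_1,0,U_2)$. Since $N_2=0$, the three defining conditions of part $(2)$ collapse to very simple statements, and both implications become short verifications.

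For the forward direction I would assume that $(U_1,U_2)$ is a commuting pair of unitaries and check the three clauses of Theorem \ref{P_unitary}$(2)$ for $\underline N=(U_1,0,U_2)$. First, the pair $(N_2,N_3)=(0,U_2)$ is a $\Gamma$-unitary: taking $S=0$ and $P=U_2$ in Theorem \ref{Gamma_uni}, one has $S=S^*P$ trivially, $\|S\|=0\le 2$, and $P$ unitary, so all hypotheses hold. Second, since $U_1$ is unitary, $N_1^*N_1=U_1^*U_1=I=I-\tfrac14 N_2^*N_2$. Third, $N_1^*=U_1^*$ is normal, hence subnormal. As $U_1,U_2$ commute and $0$ commutes with everything, $\underline N$ is a commuting triple, and Theorem \ref{P_unitary} then yields that $(U_1,0,U_2)$ is a $\Pe$-unitary.

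For the converse I would assume $(U_1,0,U_2)$ is a $\Pe$-unitary and invoke Theorem \ref{P_unitary}$(2)$ (or $(3)$). The clause ``$(N_2,N_3)=(0,U_2)$ is a $\Gamma$-unitary'' together with Theorem \ref{Gamma_uni} forces $U_2$ to be unitary. The relation $N_1^*N_1=I-\tfrac14 N_2^*N_2$ becomes $U_1^*U_1=I$, so $U_1$ is an isometry; but $U_1$ is also normal, being a component of a $\Pe$-unitary, and a normal isometry is unitary. Commutativity of $(U_1,U_2)$ is inherited from the commuting triple. This gives the desired conclusion.

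I expect no genuine obstacle here; the statement is essentially the degenerate ($a=0$) slice of Theorem \ref{P_unitary}. The only point requiring a moment's care is verifying that the boundary pair $(0,U_2)$ really is a $\Gamma$-unitary, i.e.\ that the Agler--Young conditions survive the degenerate choice $S=0$, which is immediate from Theorem \ref{Gamma_uni}. Everything else is a direct substitution.
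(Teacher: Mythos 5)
Your proof is correct and follows exactly the route the paper intends: the paper states this corollary as an immediate consequence of Theorem \ref{Gamma_uni} and Theorem \ref{P_unitary}, and your argument is precisely that specialization (setting $N_2=0$, verifying the $\Gamma$-unitary condition for $(0,U_2)$ via Theorem \ref{Gamma_uni}, and using normality of the components to upgrade the isometry $U_1$ to a unitary in the converse). No gaps; nothing further is needed.
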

	
	 One can easily construct a $\mathbb{P}$-unitary from a given $\Gamma$-unitary in the following way.

	\begin{eg}\label{eg3.5}
		Let $(N_2, N_3)$ be a $\Gamma$-unitary on a Hilbert space $\mathcal{H}$. It follows from the definition of $\Gamma$-contraction that $\frac{1}{2}N_2$ is a contraction. Therefore, we can consider its defect operator which is 
		$
		D_{N_2\slash 2}=(I-\frac{1}{4}N_2^*N_2)^{1\slash 2}.
		$ 
		Since $(N_2, N_3)$ are commuting normal operators, it immediately follows that $D_{N_2\slash 2}$ commutes with $N_1$ and $N_2$. Therefore, $(D_{N_2\slash 2}, N_2, N_3)$ is a triple of commuting normal operators such that $(N_2, N_3)$ is a $\Gamma$-unitary and 
		$
		D_{N_2\slash 2}^2+\frac{1}{4}N_2^*N_2=I.
		$
		Thus, it follows from Theorem \ref{P_unitary} that $(D_{N_2\slash 2}, N_2, N_3)$ is a $\Gamma$-unitary on $\mathcal{H}$.\qed
	\end{eg}
	
We shall use the polar decomposition for normal operators to show that the aforementioned example serves as a prototype of a $\mathbb{P}$-unitary. The proof of the next theorem follows from the polar decomposition theorem,  Theorem \ref{P_unitary} and Example \ref{eg3.5}.

	\begin{thm}
		A commuting triple of operators $\underline{N}=(N_1, N_2, N_3)$ acting on a Hilbert space $\mathcal{H}$ is a $\mathbb{P}$-unitary if and only $(N_2, N_3)$ is a $\Gamma$-unitary and there is a unitary $U$ on $\mathcal{H}$ such that $U$ commutes with $N_2, N_3$ and 
		$
		N_1=UD_{N_2\slash 2}=D_{N_2\slash 2}U.
		$
	\end{thm}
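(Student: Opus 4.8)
The plan is to prove both implications by combining the characterization of $\Pe$-unitaries from Theorem \ref{P_unitary} with the polar decomposition of the normal operator $N_1$, using Example \ref{eg3.5} as the guiding prototype (the case $U=I$).

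For the forward direction, I would suppose $\underline{N}=(N_1,N_2,N_3)$ is a $\Pe$-unitary. Theorem \ref{P_unitary} immediately gives that $(N_2,N_3)$ is a $\Gamma$-unitary and that $N_1^*N_1 = I-\frac{1}{4}N_2^*N_2 = D_{N_2\slash 2}^2$; since both sides are positive, taking positive square roots yields $(N_1^*N_1)^{1\slash 2}=D_{N_2\slash 2}$. As $N_1$ is normal, the polar decomposition theorem supplies a unitary $U$ with $N_1=U(N_1^*N_1)^{1\slash 2}=(N_1^*N_1)^{1\slash 2}U = U D_{N_2\slash 2}=D_{N_2\slash 2}U$, where $U$ commutes with every operator that commutes with both $N_1$ and $N_1^*$. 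It then remains only to check that $U$ commutes with $N_2$ and $N_3$: since the triple is commuting, $N_2$ and $N_3$ commute with $N_1$, and because all three operators are normal, Fuglede's theorem forces them to commute with $N_1^*$ as well, hence with $U$.

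For the reverse direction, I would assume $(N_2,N_3)$ is a $\Gamma$-unitary and that $U$ is a unitary commuting with $N_2,N_3$ with $N_1=U D_{N_2\slash 2}=D_{N_2\slash 2}U$. First I would record that $D_{N_2\slash 2}=(I-\frac{1}{4}N_2^*N_2)^{1\slash 2}$ is a positive operator (as $\frac{1}{2}N_2$ is a normal contraction) obtained from $N_2$ by the Borel functional calculus, so it commutes with $N_2$ and $N_3$; thus $N_1=U D_{N_2\slash 2}$ commutes with $N_2,N_3$ and the triple is commuting. Using $N_1=UD_{N_2\slash 2}=D_{N_2\slash 2}U$ with $U$ unitary and $D_{N_2\slash 2}\ge 0$, a direct computation gives $N_1^*N_1=D_{N_2\slash 2}^2=N_1N_1^*$, so $N_1$ is normal and $N_1^*N_1=I-\frac{1}{4}N_2^*N_2$. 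Then $\underline{N}$ is a commuting triple of normal operators meeting condition $(2)$ of Theorem \ref{P_unitary} (with $N_1^*$ subnormal, since $N_1$ is normal), whence $\underline{N}$ is a $\Pe$-unitary.

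The main obstacle in both directions is the commutation bookkeeping. In the forward direction the subtlety is that the polar decomposition only guarantees $U$ commutes with operators commuting with \emph{both} $N_1$ and $N_1^*$, so one genuinely needs Fuglede's theorem to pass from commutation with $N_1$ to commutation with $N_1^*$. In the reverse direction one should verify that the stated hypothesis $N_1=UD_{N_2\slash 2}=D_{N_2\slash 2}U$ is internally consistent, i.e. that $U$ really does commute with $D_{N_2\slash 2}$: this holds because $N_3$ is unitary, so $U$ commutes with $N_3^*=N_3^{-1}$, and the $\Gamma$-unitary relation $N_2=N_2^*N_3$ gives $N_2^*=N_3^*N_2$, whence $U$ commutes with $N_2^*N_2$ and therefore with $D_{N_2\slash 2}$.
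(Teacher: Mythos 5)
Your proposal is correct and follows essentially the same route as the paper: the forward direction combines Theorem \ref{P_unitary} with the polar decomposition of the normal operator $N_1$ (the paper's version of the polar decomposition theorem already packages the Fuglede-type commutation you make explicit), and the reverse direction verifies commutativity, normality of $N_1=UD_{N_2\slash 2}$, and the identity $N_1^*N_1=I-\frac{1}{4}N_2^*N_2$ before citing Theorem \ref{P_unitary}. The only cosmetic difference is that, to see $U$ commutes with $D_{N_2\slash 2}$, the paper invokes Fuglede's theorem directly while you derive commutation with $N_2^*$ from the $\Gamma$-unitary relation $N_2=N_2^*N_3$; both are valid and equally short.
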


	\begin{proof}
		Let $(N_2, N_3)$ be a $\Gamma$-unitary and let $U$ be a unitary on $\mathcal{H}$ that commutes with $N_2$ and $N_3$. Then $U$ commutes with $N_2^*$ due to Fuglede's theorem and so, 
		\[
		U\left(I-\frac{1}{4}N_2^*N_2\right)=\left(I-\frac{1}{4}N_2^*N_2\right)U.
		\]
		\noindent Consequently, the continuous functional calculus for normal operators yields that $U$ commutes with $D_{N_2\slash 2}$. If we take $N_1=UD_{N_2\slash 2}$, then $N_1$ commutes with $N_2$ and $N_3$ since $U$ and $D_{N_2\slash 2}$ commute with $N_2, N_3$. Also, we have 
		\[
		N_1^*N_1+\frac{1}{4}N_2^*N_2=U^*UD_{N_2\slash2}^2+\frac{1}{4}N_2^*N_2=I.
		\]
		Thus, it follows from Theorem \ref{P_unitary} that $(N_1, N_2, N_3)$ is a $\Pe$-unitary. To see the converse, let $(N_1, N_2, N_3)$ be a $\Pe$-unitary.  By Theorem \ref{P_unitary}, $(N_2, N_3)$ is a $\Gamma$-unitary. Moreover, 
		$
		N_1^*N_1=I-\frac{1}{4}N_2^*N_2=D_{N_2\slash2}^2$ and thus $ (N_1^*N_1)^{1\slash 2}=D_{N_2\slash 2}$.
		It follows from polar decomposition theorem (see the discussion after Lemma \ref{basiclem:03}) that there is a unitary $U$ on $\mathcal{H}$ which commutes with $N_2, N_3$ and 	
		$
		N_1=U(N_1^*N_1)^{1\slash 2}=(N_1^*N_1)^{1\slash 2}U.
		$
		Consequently, $N_1=UD_{N_2\slash 2}=D_{N_2\slash 2}U$ and the proof is complete.
	\end{proof}

 We conclude this section with the following sufficient condition for a $\Pe$-unitary. Recall that a commuting tuple of operators $(T_1, \dotsc, T_n)$ is said to be \textit{doubly commuting} if $T_iT_j^*=T_j^*T_i$ for all $ i \ne j $. 

\begin{prop}
	Let $(A, S, P)$ be a doubly commuting $\Pe$-contraction on $\C^2$ such that 
	$
	\sigma_T(A, S, P)\subseteq b\Pe
	$
	and let $(A, S\slash 2)$ be a spherical contraction. Then $(A, S, P)$ is a $\Pe$-unitary.   
\end{prop}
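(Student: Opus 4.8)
The plan is to exploit the finite–dimensionality of $\C^2$ and reduce everything to a statement about the simultaneous triangularization of the commuting matrices $A,S,P$. Since $A,S,P$ commute on the finite–dimensional space $\C^2$, they can be put simultaneously into upper–triangular form with respect to some orthonormal basis, and in this situation the Taylor joint spectrum coincides with the set of joint eigenvalues, i.e. with the set of diagonal triples $(a_i,s_i,p_i)$, $i=1,2$. By hypothesis each such triple lies in $b\Pe$, so by the description
\[
b\Pe=\Big\{(a,s,p)\ :\ |a|=\sqrt{1-\tfrac14|s|^2},\ (s,p)\in b\Gamma\Big\}
\]
recorded after Theorem \ref{thm:distP}, we have $|a_i|^2+\tfrac14|s_i|^2=1$ and $|p_i|=1$ for each $i$.

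The key step is a trace computation. The hypothesis that $(A,S\slash 2)$ is a spherical contraction means precisely $A^*A+\tfrac14 S^*S\leq I$, hence $\operatorname{tr}(A^*A)+\tfrac14\operatorname{tr}(S^*S)\leq\operatorname{tr}(I)=2$. On the other hand, in the simultaneous triangular form the diagonal entries of $A$ and $S$ are exactly the $a_i$ and $s_i$, so Schur's inequality gives $\operatorname{tr}(A^*A)=\|A\|_{HS}^2\geq\sum_i|a_i|^2$ and likewise $\operatorname{tr}(S^*S)\geq\sum_i|s_i|^2$. Combining these with the boundary relation yields
\[
2\geq\operatorname{tr}(A^*A)+\tfrac14\operatorname{tr}(S^*S)\geq\sum_i\Big(|a_i|^2+\tfrac14|s_i|^2\Big)=\sum_i 1=2,
\]
so equality holds throughout. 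Equality in Schur's inequality forces the off–diagonal entries of $A$ and of $S$ to vanish in the triangularizing basis; thus $A$ and $S$ are diagonal, hence normal, and moreover $A^*A+\tfrac14 S^*S=I$.

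It remains to handle $P$. By Theorem \ref{lem2.12}(c), $P$ is a contraction, and in the triangularizing basis it is upper–triangular with diagonal entries $p_i$ of modulus $1$. Then $|\det P|=\prod_i|p_i|=1$, while every singular value of $P$ is at most $\|P\|\leq 1$; since the product of the singular values equals $|\det P|=1$, each singular value equals $1$ and $P$ is unitary. An upper–triangular unitary matrix is necessarily diagonal, so $P$ is diagonal in the same basis. Consequently $A,S,P$ are simultaneously diagonal, hence a commuting triple of normal operators whose joint spectrum is $\{(a_i,s_i,p_i)\}\subseteq b\Pe$, which is exactly the definition of a $\Pe$-unitary.

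I expect the main obstacle to be the honest justification of the two structural facts used in finite dimensions — that the Taylor joint spectrum of the commuting triple equals its set of diagonal joint eigenvalues under simultaneous triangularization, and the equality case of Schur's inequality — after which the trace argument closes the proof cleanly. I note that the double–commutativity hypothesis does not seem to be needed here; only the commutativity and the finite–dimensionality of $\C^2$ are used. One could alternatively phrase the conclusion through characterization $(4)$ of Theorem \ref{P_unitary}, checking that $(A,S\slash 2)$ is a $\B_2$-unitary via Theorem \ref{prop3.2} and that $(S,P)$ is a $\Gamma$-unitary, but the direct diagonalization route above is shorter.
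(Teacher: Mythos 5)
Your proof is correct, and it takes a genuinely different route from the paper's at the decisive step. Both arguments share the same setup — simultaneous unitary upper-triangularization of $A,S,P$ on $\C^2$, identification of $\sigma_T(A,S,P)$ with the diagonal joint eigenvalues, and the boundary relations $|a_i|^2+\tfrac14|s_i|^2=1$, $|p_i|=1$ — but they diverge afterwards. The paper kills the off-diagonal entries one operator at a time: $p_{12}=0$ from positive semi-definiteness of $I-P^*P$; then $s_{12}=0$ by first upgrading $(S,P)$ to a $\Gamma$-unitary and using the relation $S=S^*P$; and finally $a_{12}=0$ by invoking the double-commutativity relations $AS^*=S^*A$, $AP^*=P^*A$ to force $s_{11}=s_{22}$, $p_{11}=p_{22}$, and only then running a trace argument on $I-A^*A-\tfrac14 S^*S$. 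Your single Schur-inequality trace computation disposes of $a_{12}$ and $s_{12}$ simultaneously, and your determinant/singular-value argument handles $P$ without reference to $\Gamma$-unitaries. This buys two things: the argument is shorter, and it shows the double-commutativity hypothesis is indeed superfluous — your version of the proposition holds for merely commuting triples, which is a genuine (if mild) strengthening. In fact, inspecting the paper's own computation confirms your observation: once $s_{12}=0$ is known, the trace of $I-A^*A-\tfrac14 S^*S$ already equals $-|a_{12}|^2$ using only the boundary relations, so the detour through $s_{11}=s_{22}$ was never needed. What the paper's route buys instead is consistency with its broader narrative (everything is funneled through the $\Gamma$-contraction machinery of Theorems \ref{Gamma_uni} and \ref{thm2.1}). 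One remark on your flagged obstacle: the identification of the Taylor joint spectrum with the diagonal joint eigenvalues under simultaneous triangularization is asserted without proof in the paper as well, so relying on it puts you at exactly the paper's level of rigor; the equality case of Schur's inequality is elementary for upper-triangular matrices (the Hilbert--Schmidt norm squared is the sum of all squared entries), so no real gap remains.
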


\begin{proof}
	With respect to a fixed orthonormal basis, we can write $(A, S, P)$ in the following way:
	\[
	A=\begin{bmatrix}
		a_{11} & a_{12}\\
		0 & a_{22}
	\end{bmatrix}, \quad 	
S=\begin{bmatrix}
	s_{11} & s_{12}\\
	0 & s_{22}
\end{bmatrix} \quad  \text{and} \quad 	
P=\begin{bmatrix}
p_{11} & p_{12}\\
0 & p_{22}
\end{bmatrix}.
	\]
Note that 
$
\sigma_T(A, S, P)=\left\{(a_{11}, s_{11}, p_{11}), (a_{22}, s_{22}, p_{22})\right\} \subset b\Pe.
$
By Theorem \ref{P_unitary}, we have 
\begin{equation}\label{doublycommuting}
|p_{11}|=|p_{22}|=1 \quad \text{and} \quad |a_{11}|^2+\frac{1}{4}|s_{11}|^2=1=|a_{22}|^2+\frac{1}{4}|s_{22}|^2.
\end{equation}
It follows from {Theorem} \ref{lem2.12} that $P$ is a contraction and thus, we have
\[
0 \leq I-P^*P=\begin{bmatrix}
	1-|p_{11}|^2 & -\overline{p_{11}}p_{12} \\
	-p_{11}\overline{p_{12}} & 1-|p_{22}|^2-|p_{12}|^2
\end{bmatrix}=\begin{bmatrix}
0 & -\overline{p_{11}}p_{12} \\
-p_{11}\overline{p_{12}} & -|p_{12}|^2
\end{bmatrix}.
\]
So, we have $p_{12}=0$ and thus $P$ is a normal operator such that $P^*P=I$. Therefore, $P$ is unitary and $(S, P)$ is a $\Gamma$-contraction which yields that $(S, P)$ is a $\Gamma$-unitary. It follows from Theorem \ref{thm2.1} that $S-S^*P=0$. A straight forward calculation gives the following: 
\[
S-S^*P=\begin{bmatrix}
	s_{11}-\overline{s_{11}}p_{11} & s_{12}\\
	\overline{s_{12}}p_{11} & s_{22}-\overline{s_{22}}p_{22}
\end{bmatrix}.
\] 	
Hence, $S-S^*P=0$ gives that $s_{12}=0$. Putting everything together, we have that 
\[
	A=\begin{bmatrix}
	a_{11} & a_{12}\\
	0 & a_{22}
\end{bmatrix}, \quad 	
S=\begin{bmatrix}
	s_{11} & 0\\
	0 & s_{22}
\end{bmatrix} \quad  \text{and} \quad 	
P=\begin{bmatrix}
	p_{11} & 0\\
	0 & p_{22}
\end{bmatrix} \quad (|p_{11}|=|p_{22}|=1).
\]	
If $a_{12}=0$, then $(A, S, P)$ is a normal $\Pe$-contraction with $\sigma_T(A, S, P) \subset b\Pe$ and hence, $(A, S, P)$ is a $\Pe$-unitary. Let us assume that $a_{12} \ne 0$. Now, we use the fact that $A$ doubly commutes with $S$ and $P$. A routine computation yields that 
\[
AS^*-S^*A=\begin{bmatrix}
	0 & a_{12}(\overline{s_{22}}-\overline{s_{11}})\\
	0 & 0
\end{bmatrix} \quad \text{and} \quad
 AP^*-P^*A=\begin{bmatrix}
0 & a_{12}(\overline{p_{22}}-\overline{p_{11}})\\
0 & 0
\end{bmatrix}.
\] 	
Thus, $p_{11}=p_{22}$ and $s_{11}=s_{22}$. Now, using the hypothesis that $I-A^*A-\frac{1}{4}S^*S \geq 0$, we have
\[
0 \leq I-A^*A-\frac{1}{4}S^*S=\begin{bmatrix}
	1-|a_{11}|^2-\frac{1}{4}|s_{11}|^2 & -\overline{a_{11}}a_{12} \\
	-a_{11}\overline{a_{12}} & 1-|a_{22}|^2-\frac{1}{4}|s_{11}|^2-|a_{12}|^2\\
\end{bmatrix}
=\begin{bmatrix}
0 & -\overline{a_{11}}a_{12} \\
-a_{11}\overline{a_{12}} & -|a_{12}|^2\\
\end{bmatrix},
\]
where the last inequality follows from (\ref{doublycommuting}). The positive semi-definiteness of $I-A^*A-\frac{1}{4}S^*S$ implies that $-|a_{12}|^2 \geq 0$ and hence, $a_{12}=0$. This is a contradiction. Hence, $a_{12}=0$ and the proof is complete.
\end{proof}

	\section{The pentablock isometries}\label{P-iso}
	
	\vspace{0.2cm}

	\noindent Recall that a $\Pe$-isometry is the restriction of a $\Pe$-unitary $(A,S,P)$ to a joint invariant subspace of $A,S$ and $P$. Thus, a $\Pe$-isometry is a subnormal triple. Note that a tuple of commuting operators $\underline{T}=(T_1, \dotsc, T_m)$ acting on a Hilbert space $\mathcal{H}$ is said to be \textit{subnormal} if there is a Hilbert space $\mathcal{K} \supseteq \mathcal{H}$ and a commuting tuple of normal operators $\underline{N}=(N_1, \dotsc, N_m)$ of commuting normal operators  in $\mathcal{B}(\mathcal{K})$ such that $\mathcal{H}$ is invariant under $N_1, \dotsc, N_m$ and $N_j|_{\HS}=T_j$ for each $j=1, \dots , m$. The tuple $\underline{N}$ is said to be a \textit{normal extension} of $\underline{T}$. It follows from the theory of subnormal operators (see \cite{Lubin, Athavale}) that every subnormal tuple admits a minimal normal extension to the space 
	\[
	\mathcal{K}=\overline{span}\left\{N_1^{*k_1} \dotsc N_m^{*k_m}h \ : \ h \in \mathcal{H} \ \& \ \ k_1, \dotsc, k_m \in \N \cup \{0\} \right\},
	\]
	and a minimal normal extension is unique up to unitary equivalence. We invoke the following results on subnormal operators to prove our theorems of this Section.	
	
	\begin{lem}[\cite{Bram}, Theorem 8]\label{Bram}
		If $S$ is subnormal and $T$ is normal such that $ST=TS$, then $S$ and $T$ have a commuting normal extension. 
	\end{lem}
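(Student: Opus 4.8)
The plan is to build the commuting normal extension directly on the minimal normal extension of $S$. Let $N$ on $\mathcal{K} \supseteq \mathcal{H}$ denote the minimal normal extension of the subnormal operator $S$, so that $N|_{\mathcal{H}} = S$ and $\mathcal{K} = \overline{\operatorname{span}}\{N^{*k} h : h \in \mathcal{H},\, k \geq 0\}$. I would keep $N$ itself as the normal extension of $S$, so that the entire problem reduces to producing a normal operator $\tilde T$ on $\mathcal{K}$ that extends $T$ and commutes with $N$. The natural candidate is the map defined on the dense subspace $\mathcal{D} = \operatorname{span}\{N^{*k} h : h \in \mathcal{H},\, k \geq 0\}$ by $\tilde T\big(\sum_k N^{*k} h_k\big) = \sum_k N^{*k} T h_k$.

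Before anything else I would record two preliminary facts. First, since $\mathcal{H}$ is invariant under $N$ and $N|_{\mathcal{H}} = S$, normality of $N$ gives $N^{*j} N^k = N^k N^{*j}$, and hence, for $h, g \in \mathcal{H}$, one has $\langle N^{*j} h, N^{*k} g\rangle = \langle S^k h, S^j g\rangle$; thus all inner products among elements of $\mathcal{D}$ are governed entirely by $S$ acting on $\mathcal{H}$. Second, because $T$ is normal and commutes with $S$, Fuglede's theorem yields that $T^*$ also commutes with $S$, so $T$ commutes with both $S$ and $S^*$, and therefore with every word in $S$ and $S^*$.

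The crux, and the step I expect to be the main obstacle, is showing that $\tilde T$ is well defined and bounded; both follow at once from the operator inequality $\|\sum_k N^{*k} T h_k\|^2 \le \|T\|^2 \,\|\sum_k N^{*k} h_k\|^2$. To prove it I would fix a finite tuple $H = (h_0, \dots, h_n) \in \mathcal{H}^{n+1}$ and use the first preliminary fact to write $\|\sum_k N^{*k} h_k\|^2 = \langle G H, H\rangle$, where $G = [\,S^{*j} S^k\,]_{k,j}$ is a positive operator on $\mathcal{H}^{n+1}$, being the Gram matrix of the map $(h_0,\dots,h_n) \mapsto \sum_k N^{*k} h_k$. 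Writing $\mathbf{T} = \operatorname{diag}(T, \dots, T)$, the same identity gives $\|\sum_k N^{*k} T h_k\|^2 = \langle \mathbf{T}^* G \mathbf{T} H, H\rangle$. Since $T$ commutes with both $S$ and $S^*$, the diagonal operator $\mathbf{T}$ commutes with every block $S^{*j} S^k$ of $G$, whence $\mathbf{T}^* G \mathbf{T} = (\mathbf{T}^* \mathbf{T}) G$. Now $\mathbf{T}^* \mathbf{T} = \operatorname{diag}(T^*T) \le \|T\|^2 I$, so $\|T\|^2 I - \mathbf{T}^* \mathbf{T}$ is positive and commutes with the positive operator $G$; as the product of two commuting positive operators is positive, I conclude $\mathbf{T}^* G \mathbf{T} \le \|T\|^2 G$, which is exactly the required bound. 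This shows that $\tilde T$ is well defined and extends to a bounded operator on $\mathcal{K}$ with $\|\tilde T\| = \|T\|$.

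It then remains to verify that $\tilde T$ commutes with $N$ and is normal, both being routine algebraic checks on the dense domain $\mathcal{D}$. For the commutation, using $Nh = Sh$ and $TS = ST$, I compute $N \tilde T (N^{*k} h) = N^{*k} N T h = N^{*k} S T h$ and $\tilde T N (N^{*k} h) = \tilde T (N^{*k} S h) = N^{*k} T S h = N^{*k} S T h$, so $\tilde T N = N \tilde T$. For normality I would introduce the analogous operator $\widetilde{T^*}(N^{*k} h) = N^{*k} T^* h$, legitimate since $T^*$ also commutes with $S$; a short inner-product computation using the first preliminary fact shows $\widetilde{T^*} = (\tilde T)^*$, and then $\tilde T (\tilde T)^* (N^{*k} h) = N^{*k} T T^* h = N^{*k} T^* T h = (\tilde T)^* \tilde T (N^{*k} h)$ by normality of $T$. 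Hence $(N, \tilde T)$ is a commuting pair of normal operators on $\mathcal{K}$ extending $(S, T)$, which is the desired conclusion.
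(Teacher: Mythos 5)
The paper does not prove this lemma at all: it is imported as a black box from Bram's 1955 paper (cited as \cite{Bram}, Theorem 8), so there is no internal argument to compare yours against. Judged on its own, your proof is correct and complete, and it is essentially the classical route to Bram's theorem: keep the minimal normal extension $N$ of $S$ on $\mathcal{K}$, define $\tilde T\big(\sum_k N^{*k}h_k\big)=\sum_k N^{*k}Th_k$ on the dense subspace generated by $\mathcal{H}$, and reduce every verification to computations inside $\mathcal{H}$ via the identity $\langle N^{*j}h, N^{*k}g\rangle=\langle S^kh,S^jg\rangle$. You place the weight exactly where it belongs: Fuglede's theorem, applied to the normal operator $T$, gives $ST^*=T^*S$, hence $T$ and $T^*$ commute with every word in $S$ and $S^*$, which is what makes the diagonal operator $\mathbf{T}$ (and $\mathbf{T}^*\mathbf{T}$) commute with the Gram matrix $G$; and the inequality $\mathbf{T}^*G\mathbf{T}=(\mathbf{T}^*\mathbf{T})G\le\|T\|^2G$, justified by the fact that a product of commuting positive operators is positive, yields well-definedness and boundedness of $\tilde T$ in one stroke. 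The remaining checks, namely commutation with $N$ on generators, the identification $(\tilde T)^*=\widetilde{T^*}$, and normality of $\tilde T$ inherited from that of $T$, are carried out correctly and pass to all of $\mathcal{K}$ by density and continuity. The only point worth making explicit is that the commutation of $\mathbf{T}^*\mathbf{T}$ with $G$ needs $T^*$ (not just $T$) to commute with $S$ and $S^*$; this follows from your Fuglede step by taking adjoints and costs one extra line, but it is implicit in your write-up rather than a gap.
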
 	
	
	\begin{lem}[\cite{Abrahamese}, pp. 173]\label{Abrahamese}
		If $S$ and $T$ are commuting subnormal operators, then $S$ and $T$ have a commuting normal extension if $\sigma(T)$ is finitely connected and the spectrum of minimal normal extension of $T$ is contained in the topological boundary of $\sigma(T)$.
	\end{lem}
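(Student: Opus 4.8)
The statement is essentially M.\,B.~Abrahamse's criterion for commuting subnormal operators, and the natural plan is to reduce it to Lemma~\ref{Bram} by first extending $S$ across the minimal normal extension of $T$. Let $N$ be the minimal normal extension of $T$ on a space $\mathcal{K}\supseteq\mathcal{H}$, so that $\mathcal{H}$ is invariant under $N$ and $N|_{\mathcal{H}}=T$. For any minimal normal extension one has the inclusions $\partial\sigma(T)\subseteq\sigma(N)\subseteq\sigma(T)$; together with the hypothesis $\sigma(N)\subseteq\partial\sigma(T)$ this forces $\sigma(N)=\partial\sigma(T)$, so the spectral measure of $N$ is carried by $\partial\sigma(T)$.

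With $N$ in hand, I would extend $S$ to the operator $\tilde{S}$ on $\mathcal{K}$ defined on the dense manifold $\operatorname{span}\{N^{*k}h:h\in\mathcal{H},\,k\geq 0\}$ by $\tilde{S}\big(\sum_k N^{*k}h_k\big)=\sum_k N^{*k}Sh_k$. Granting that this is well defined and bounded, the relations $ST=TS$ and $N|_{\mathcal{H}}=T$ show that $\tilde{S}$ commutes with $N$ and restricts to $S$ on $\mathcal{H}$, and $\tilde{S}$ is again subnormal. Lemma~\ref{Bram}, applied to the subnormal $\tilde{S}$ and the normal $N$ with $\tilde{S}N=N\tilde{S}$, then yields a commuting normal extension $(\hat{S},\hat{N})$ on a still larger space; restricting $(\hat{S},\hat{N})$ to $\mathcal{H}$ returns $(S,T)$ and gives the required commuting normal extension.

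The content of the hypotheses is what makes $\tilde{S}$ legitimate, and the cleanest way to see this is through the functional model available here. Because $\sigma(N)=\partial\sigma(T)$ and $\sigma(T)$ is finitely connected, $N$ is unitarily equivalent to multiplication by $z$ on an $L^2(\mu)$-space with $\mu$ supported on $\partial\sigma(T)$, and $\mathcal{H}$ becomes the rationally invariant subspace $R^2(\sigma(T),\mu)$ (tensored with a multiplicity space) on which $T$ acts as $M_z$. The finite connectivity guarantees that $R(\sigma(T))$ is dense in $A(\sigma(T))$, and from this one deduces that the commutant of $M_z$ on $R^2(\sigma(T),\mu)$ consists precisely of multiplication operators $M_\phi$; thus $S=M_\phi$ for a bounded multiplier $\phi$, and $M_\phi$ extends verbatim to the normal operator $M_\phi$ on $L^2(\mu)$, which commutes with $N=M_z$. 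This is exactly the extension $\tilde{S}$ described above, now manifestly well defined.

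The main obstacle is precisely the well-definedness and boundedness of $\tilde{S}$, equivalently the identification of the commutant of $M_z$ on $R^2(\sigma(T),\mu)$ with multipliers. Concretely one must show that a relation $\sum_k N^{*k}h_k=0$ in $\mathcal{K}$ forces $\sum_k N^{*k}Sh_k=0$; this says that $S$ respects the analytic structure of $\mathcal{H}$ and is false for a general commuting subnormal $S$, being rescued only by the twin hypotheses through the density of $R(\sigma(T))$ and the resulting control of bounded point evaluations on $R^2(\sigma(T),\mu)$. I expect the delicate points to be setting up the model (choice of $\mu$, the multiplicity, and the multiplier-norm estimate $\|M_\phi\|=\|\phi\|_\infty$), whereas the passage from Bram's extension, or from $(M_\phi,M_z)$, back down to $(S,T)$ is routine.
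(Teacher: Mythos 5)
A preliminary remark on the comparison itself: the paper does not prove this statement at all. Lemma \ref{Abrahamese} is imported verbatim from Abrahamse \cite{Abrahamese} (p.~173) and used as a black box in the proof of Theorem \ref{P_isometry}, so there is no internal proof to measure your sketch against; what follows assesses it as a reconstruction of Abrahamse's theorem.

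Your overall strategy --- lift $S$ to an operator $\tilde S$ in the commutant of the minimal normal extension $N$ of $T$, apply Lemma \ref{Bram} to the pair $(\tilde S, N)$, and restrict back to $\mathcal{H}$ --- is the natural route, and your preliminary observations are correct (in particular $\partial\sigma(T)\subseteq\sigma(N)\subseteq\sigma(T)$, hence $\sigma(N)=\partial\sigma(T)$, follows from spectral inclusion and the fact that $\sigma(T)$ is $\sigma(N)$ with holes filled in). But there are two genuine gaps. First, the well-definedness and boundedness of $\tilde S$ is not a technicality to be discharged by a functional model: it \emph{is} the theorem, and the model you invoke is wrong in general. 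You cannot reduce to scalar multipliers: $N$ may have arbitrary multiplicity, and by the Abrahamse--Douglas theory a subnormal operator over a finitely connected $\sigma(T)$ whose minimal normal extension has spectrum in $\partial\sigma(T)$ is modeled by a \emph{bundle shift} attached to a flat unitary vector bundle over the interior; such bundles are nontrivial precisely because the domain is multiply connected, and the commutant of $T$ then consists of multiplications by bounded holomorphic bundle-endomorphism-valued (i.e.\ operator-valued) functions, not scalar $M_\phi$'s. Consequently the punch line ``$M_\phi$ extends verbatim to the normal operator $M_\phi$ on $L^2(\mu)$'' is available only in the multiplicity-one situation.

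Second, even granting a lift $\tilde S\in\{N\}'$, your appeal to Lemma \ref{Bram} requires $\tilde S$ to be subnormal, which you assert (``$\tilde S$ is again subnormal'') but never prove. In the scalar model this is invisible because $\tilde S=M_\phi$ is normal; in the operator-valued/bundle case $\tilde S$ is multiplication by an operator-valued function, which is in general neither normal nor obviously subnormal. Worse, the assertion is essentially circular: writing the Halmos--Bram positivity condition for $\tilde S$ on vectors from the dense span of $\{N^{*k}h : h\in\mathcal{H},\ k\geq 0\}$ and commuting $N^{*k}$ past powers of $\tilde S$, one finds that subnormality of $\tilde S$ unwinds exactly to the It\^{o}-type joint positivity condition expressing that $(S,T)$ has a commuting normal extension --- the very conclusion being proved. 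So your sketch relocates the entire difficulty into two steps: one you flag but resolve with a model that fails beyond multiplicity one, and one you do not flag at all.
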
 
	
	\begin{lem}[\cite{Athavale}, Proposition 0]\label{Athavale} Let $(S_1, \dotsc, S_n)$ be a commuting $n$-tuple of contractions acting on the space $\mathcal{H}$. Then the following are equivalent:
		\begin{enumerate}
			\item There is a commuting $n$-tuple $(N_1, \dotsc, N_n)$ of normal operators on the space $\mathcal{K} \supseteq \mathcal{H}$ such that $S_i=N_i|_\mathcal{H}, \ i=1, \dotsc, n$.
			\item For every non-negative integers $k_1, \dotsc, k_n$, we have 
			\[
			\underset{\substack{0 \leq p_i \leq k_i\\ 1 \leq i \leq n}}{\sum}(-1)^{p_1+\dotsc+p_n}\binom{k_1}{p_1}\dotsc \binom{k_n}{p_n}S_1^{*p_1}\dotsc S_n^{*p_n}S_1^{p_1}\dotsc S_n^{p_n} \geq 0. 
			\]
		\end{enumerate}  
	\end{lem}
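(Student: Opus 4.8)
The plan is to prove the two implications separately: $(1)\Rightarrow(2)$ is a direct spectral computation, while $(2)\Rightarrow(1)$ is a construction that carries the real content. Throughout I write $S^{\mathbf p}=S_1^{p_1}\cdots S_n^{p_n}$ for a multi-index $\mathbf p=(p_1,\dots,p_n)\in\N^n$, so that the operator appearing in $(2)$ is $S^{*\mathbf p}S^{\mathbf p}=(S^{\mathbf p})^*S^{\mathbf p}\ge 0$, and condition $(2)$ asserts the positivity of the iterated backward differences of the operator multisequence $\mathbf p\mapsto S^{*\mathbf p}S^{\mathbf p}$.

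For $(1)\Rightarrow(2)$, I would first pass to the \emph{minimal} normal extension of $(N_1,\dots,N_n)$. Since each $S_i$ is a contraction and a minimal normal extension preserves the operator norm, each $N_i$ is a contraction, so the Taylor spectrum of the tuple lies in $\DC^{\,n}$. Let $E$ be the joint spectral measure of the commuting normal tuple and set $\mu_h(\cdot)=\langle E(\cdot)h,h\rangle$ for $h\in\mathcal H$. Because $\mathcal H$ is invariant under each $N_i$ with $N_i|_{\mathcal H}=S_i$, we have $S^{\mathbf p}h=N^{\mathbf p}h$, whence
\[
\big\langle S^{*\mathbf p}S^{\mathbf p}h,h\big\rangle=\|N^{\mathbf p}h\|^2=\int_{\DC^{\,n}}|z_1|^{2p_1}\cdots|z_n|^{2p_n}\,d\mu_h(z).
\]
Substituting this into the quadratic form of the alternating sum in $(2)$ and summing the binomial series in each coordinate (using $\sum_{p=0}^{k}(-1)^p\binom{k}{p}x^p=(1-x)^k$) collapses it to $\int_{\DC^{\,n}}\prod_{i=1}^n(1-|z_i|^2)^{k_i}\,d\mu_h(z)$, which is nonnegative since the integrand is nonnegative on $\DC^{\,n}$. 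This yields $(2)$.

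For $(2)\Rightarrow(1)$, the idea is to read $(2)$ as \emph{complete monotonicity} of the positive operator multisequence $\mathbf p\mapsto S^{*\mathbf p}S^{\mathbf p}$: the displayed alternating sums are precisely its mixed backward differences, one per coordinate. By the operator-valued multivariate Hausdorff moment theorem, complete monotonicity is equivalent to the existence of a positive $\mathcal B(\mathcal H)$-valued measure $F$ on $[0,1]^n$ with $S^{*\mathbf p}S^{\mathbf p}=\int_{[0,1]^n}t_1^{p_1}\cdots t_n^{p_n}\,dF(t)$. I would then aim to construct the minimal normal extension on $\mathcal K=\overline{\mathrm{span}}\{N_1^{*k_1}\cdots N_n^{*k_n}h\}$ through the candidate kernel $\langle N^{*\alpha}h,N^{*\beta}g\rangle=\langle S^{\beta}h,S^{\alpha}g\rangle$ (which, via Fuglede's theorem, is forced on any commuting normal extension), defining $N_i$ as the coordinate shift $e_\alpha\otimes h\mapsto e_{\alpha+e_i}\otimes h$ on finitely supported $\mathcal H$-valued functions on $\N^n$, quotienting by the null space and completing; the embedding $h\mapsto e_{\mathbf 0}\otimes h$ then realizes $\mathcal H$ as a joint invariant subspace with $N_i|_{\mathcal H}=S_i$.

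The main obstacle is exactly this construction. The delicate point is bridging from the \emph{diagonal} data supplied by $(2)$ — the complete monotonicity of $\{S^{*\mathbf p}S^{\mathbf p}\}$, which only controls norms $\|S^{\mathbf p}h\|^2$ — to the \emph{full} positive-semidefiniteness of the kernel $(\alpha,\beta)\mapsto\langle S^{\beta}\cdot,S^{\alpha}\cdot\rangle$, whose off-diagonal blocks $S^{*\alpha}S^{\beta}$ ($\alpha\neq\beta$) are not seen by the moment measure $F$ alone. This is where the full strength of the alternating-sum form of $(2)$, together with the commutativity of the $S_i$, must be exploited; it is also what forces the representing measure onto $[0,1]^n$ and hence the extension onto $\DC^{\,n}$, ruling out the failures (Parrott-type) that afflict general commuting contractions. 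Once positivity of the kernel is in hand, I would finish by checking that each shift $N_i$ extends to a bounded operator and is genuinely normal — verifying that $\langle N_i^*N_i\xi,\xi\rangle$ and $\langle N_iN_i^*\xi,\xi\rangle$ agree by testing against the moment measure — and that the $N_i$ mutually commute, inherited from commutativity of the shifts on the dense subspace.
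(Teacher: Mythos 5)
This lemma is not proved in the paper at all: it is quoted from Athavale--Pederson [\cite{Athavale}, Proposition 0] and used as a black box, so there is no internal proof to compare your attempt against; your proposal has to stand on its own. Your direction $(1)\Rightarrow(2)$ is essentially correct: after passing to the minimal normal extension, each $N_i$ is a contraction (this is supplied by Lemma \ref{Lubin} of the paper, Lubin's Corollary 1, together with the classical fact that the minimal normal extension of a single subnormal contraction is norm-preserving --- you should cite this rather than assert it), and the joint spectral measure computation collapsing the alternating sum to $\int_{\overline{\D}^n}\prod_{i=1}^n(1-|z_i|^2)^{k_i}\,d\mu_h(z)\geq 0$ is the standard and correct argument.

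The genuine gap is in $(2)\Rightarrow(1)$, which is the entire content of the result, and your proposal does not prove it --- indeed you say so yourself. The Hausdorff-moment step only converts condition $(2)$ into the diagonal representation $S^{*\mathbf{p}}S^{\mathbf{p}}=\int_{[0,1]^n}t_1^{p_1}\cdots t_n^{p_n}\,dF(t)$, which controls the norms $\|S^{\mathbf{p}}h\|^2$ but carries no information about the mixed inner products $\langle S^{\beta}h,S^{\alpha}g\rangle$ for $\alpha\neq\beta$. The GNS-type construction you sketch cannot even begin until one knows the positive semi-definiteness of the Bram--Ito kernel, i.e. $\sum_{\alpha,\beta}\langle S^{\beta}h_{\alpha},S^{\alpha}h_{\beta}\rangle\geq 0$ for every finitely supported family $\{h_{\alpha}\}\subset\mathcal{H}$, since without that positivity the quotient-and-complete step is undefined; and this kernel positivity is, by Ito's theorem, \emph{equivalent} to the existence of a commuting normal extension. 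Deducing it from the diagonal moment data is precisely the theorem of Athavale--Pederson, and your proposal acknowledges the obstacle (``this is where the full strength of the alternating-sum form of $(2)$ \dots must be exploited'') without supplying any argument that overcomes it. The closing remarks about boundedness and normality of the coordinate shifts are likewise contingent on this missing step. In short: the easy implication is done, the hard implication is reduced to a correctly identified but unproven claim, so the proposal is a proof outline rather than a proof.
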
 
	
	\begin{lem}[\cite{Lubin}, Corollary 1]\label{Lubin}
		Let $\underline{S}=(S_1, \dotsc, S_n)$ be a subnormal tuple and let $\underline{N}=(N_1, \dotsc, N_n)$ be the minimal normal extension of $\underline{S}$. Then each $N_i$ is unitarily equivalent to the minimal normal extension of $S_i$.
	\end{lem}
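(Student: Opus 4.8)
The plan is to reduce the statement to the defining property of a minimal normal extension together with the spectral multiplicity theory for normal operators. First I would record the easy half: since $\underline N=(N_1,\dotsc,N_n)$ is a normal extension of $\underline S$ with $\HS$ invariant under each $N_i$ and $N_i|_{\HS}=S_i$, each individual $N_i$ is a normal operator extending the subnormal operator $S_i$. Hence it only remains to prove that, up to unitary equivalence, this extension is the \emph{minimal} one.

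The next step is to locate the individual minimal normal extension inside $\mathcal K$. Put
\[
\mathcal K_i=\overline{\operatorname{span}}\{N_i^{*k}h : h\in\HS,\ k\ge 0\}.
\]
Using the normality of $N_i$ one checks that $\mathcal K_i$ reduces $N_i$: it is $N_i$-invariant because $N_iN_i^{*k}=N_i^{*k}N_i$ and $N_i\HS\subseteq\HS$, and it is $N_i^*$-invariant by construction. Moreover $\mathcal K_i$ is the smallest reducing subspace of $N_i$ containing $\HS$, so $N_i|_{\mathcal K_i}$ is by definition the minimal normal extension of $S_i$. The whole problem thus becomes: show that $N_i$ on all of $\mathcal K$ is unitarily equivalent to its restriction $N_i|_{\mathcal K_i}$.

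To compare the two I would invoke Fuglede's theorem: the $N_j$ being commuting normals, they doubly commute, so each $N_j^*$ commutes with $N_i$ and hence with the spectral projections $E_i(\cdot)$ of $N_i$. Writing $\mu_{N_i,x}=\langle E_i(\cdot)x,x\rangle$ for the scalar spectral measure, the relation $E_i(\omega)N_j^*=N_j^*E_i(\omega)$ gives $\mu_{N_i,N_j^*h}\ll\mu_{N_i,h}$ for every $h\in\HS$; iterating over the generators $N_1^{*k_1}\dotsm N_n^{*k_n}h$ of $\mathcal K$, the scalar spectral measure of $N_i$ at each such vector is absolutely continuous with respect to that of $\HS$. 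Consequently $N_i|_{\mathcal K}$ and $N_i|_{\mathcal K_i}$ carry the \emph{same spectral measure class}, which settles the ``support'' part of the unitary equivalence.

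The decisive and hardest point is the matching of the \emph{multiplicity functions}. Decomposing $\mathcal K$ as a direct integral over $\sigma(N_i)$ relative to $N_i$, the operators $N_j^*$ ($j\ne i$) act fibrewise, and one must show that adjoining these extra directions does not raise the fibre dimensions beyond those already present in $\mathcal K_i$. This is exactly where the phenomenon of infinite spectral multiplicity is indispensable: whenever the complementary reducing subspace $\mathcal K\ominus\mathcal K_i$ is nonzero, the minimal extension $N_i|_{\mathcal K_i}$ already has infinite multiplicity on the relevant portion of $\sigma(N_i)$, so that the two multiplicity functions agree almost everywhere. Making this precise demands a careful disintegration argument controlling the fibre dimensions; this is the analytic heart of the statement and the part I would expect to be the main obstacle, and it is precisely the content supplied by \cite{Lubin}.
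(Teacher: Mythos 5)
First, a point of comparison: the paper does not prove this statement at all — it is imported verbatim from Lubin (Corollary 1 of \cite{Lubin}) and used as a black box. So the only question is whether your argument stands on its own, and it does not. Your preparatory steps are correct: $\mathcal{K}_i=\overline{\operatorname{span}}\{N_i^{*k}h : h\in\mathcal{H},\ k\ge 0\}$ reduces $N_i$, the restriction $N_i|_{\mathcal{K}_i}$ is a minimal normal extension of $S_i$, and the Fuglede argument ($\mu_{N_i,N_j^*h}\ll\mu_{N_i,h}$ because $E_i(\omega)N_j^*h=N_j^*E_i(\omega)h$) does show that $N_i$ on $\mathcal{K}$ and $N_i|_{\mathcal{K}_i}$ have the same spectral measure class.

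The gap is that measure-class equality gives only quasi-equivalence — a $*$-isomorphism $f(N_i)\mapsto f\bigl(N_i|_{\mathcal{K}_i}\bigr)$ of the generated von Neumann algebras — and never, by itself, unitary equivalence: $N$ and $N\oplus N$ have the same measure class. Writing $N_i\cong N_i|_{\mathcal{K}_i}\oplus N_i|_{\mathcal{K}\ominus\mathcal{K}_i}$, what remains to be shown is exactly your final assertion: at almost every point where $N_i|_{\mathcal{K}\ominus\mathcal{K}_i}$ has nonzero multiplicity, the minimal extension $N_i|_{\mathcal{K}_i}$ has infinite multiplicity. Given the measure-class step, this assertion is logically \emph{equivalent} to the lemma, and you offer no argument for it — you explicitly defer it to \cite{Lubin}. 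Nothing earlier in your sketch forces it: Fuglede and the fibrewise action of the $N_j^*$ in the direct integral of $N_i$ hold for any commuting normal tuple, whereas the multiplicity claim must use subnormality of $\underline{S}$ and joint minimality of $\mathcal{K}$ in an essential way (for a general reducing subspace with full central support in $W^*(N_i)'$, the multiplicity functions genuinely can differ). So the proposal is a correct reformulation of the lemma in multiplicity language followed by a citation for the reformulated statement; the analytic heart of the proof is missing.
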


	\begin{lem}[\cite{Lubin}, Corollary 2]\label{Lubin2}
		Let $\underline{S}=(S_1, \dotsc, S_n)$ be a subnormal tuple and let $\underline{N}=(N_1, \dotsc, N_n)$ be the minimal normal extension of $\underline{S}$. Then for any $n$-variable polynomial $p, p(\underline{N})$ is unitarily equivalent to the minimal normal extension of $p(\underline{S})$.
	\end{lem}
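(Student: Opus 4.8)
The statement bundles three claims about $p(\underline{N})$: that it is normal, that it extends $p(\underline{S})$, and that this extension is \emph{minimal}. The plan is to settle the first two at once through the joint continuous functional calculus for the commuting normal tuple $\underline{N}$, and then to concentrate all attention on minimality, which is the only substantial point.

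First I would record normality. Since $N_1,\dots,N_n$ commute and each is normal, the Fuglede--Putnam theorem gives $N_iN_j^*=N_j^*N_i$ for all $i,j$, so the unital $C^*$-algebra $C^*(\underline{N})$ is commutative and, through the continuous functional calculus, isomorphic to $C(\sigma_T(\underline{N}))$ with $N_i$ corresponding to the coordinate function $z_i$. Under this isomorphism $p(\underline{N})$ corresponds to the continuous function $p$, so $p(\underline{N})^*=\bar p(\underline{N})$ and $p(\underline{N})p(\underline{N})^*=|p|^2(\underline{N})=p(\underline{N})^*p(\underline{N})$; hence $p(\underline{N})$ is normal. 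Next, because $\mathcal{H}$ is invariant under each $N_i$ and $p$ carries no conjugates, $p(\underline{N})\mathcal{H}\subseteq\mathcal{H}$, and for $h\in\mathcal{H}$ we have $p(\underline{N})h=\sum_\alpha c_\alpha \underline{N}^\alpha h=\sum_\alpha c_\alpha \underline{S}^\alpha h=p(\underline{S})h$. Thus $p(\underline{N})|_{\mathcal{H}}=p(\underline{S})$ and $p(\underline{N})$ is a normal extension of the subnormal operator $p(\underline{S})$.

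It remains to address minimality, and this is where the real work lies. By the standard theory of subnormal operators, the minimal normal extension of $p(\underline{S})$ is the restriction of $p(\underline{N})$ to $\mathcal{K}_0:=\overline{\mathrm{span}}\{\,p(\underline{N})^{*k}h:h\in\mathcal{H},\ k\ge 0\,\}=\overline{C^*(p(\underline{N}))\mathcal{H}}$, the smallest reducing subspace of $p(\underline{N})$ containing $\mathcal{H}$. So the content of the lemma is precisely the identity $\mathcal{K}_0=\mathcal{K}$. I would prove this by translating it into the language of $C(\sigma_T(\underline{N}))$: writing $X=\sigma_T(\underline{N})$, the subalgebra $C^*(p(\underline{N}))$ corresponds to the closed subalgebra $\{f\circ p:f\in C(p(X))\}$ sitting inside $C(X)\cong C^*(\underline{N})$, while the minimality of the \emph{joint} extension $\underline{N}$ says exactly that $\overline{C^*(\underline{N})\mathcal{H}}=\mathcal{K}$.

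The hard part will therefore be to show that the smaller algebra $\{f\circ p\}$, acting on $\mathcal{H}$, still sweeps out all of $\mathcal{K}$. The plan is to run a Stone--Weierstrass argument on the support of the spectral measure of $\underline{N}$ built over $\mathcal{H}$: the family $\{f\circ p\}$ is conjugation-closed and contains the constants, so it is dense in the continuous functions that are constant on the fibres of $p$, and one must verify that these fibres do not merge distinct pieces of the cyclic decomposition of $\mathcal{H}$. I expect this to be the crux, and I would handle it by reducing to the single-coordinate case already recorded in Lemma \ref{Lubin} (which is the special case $p=z_i$) and then bootstrapping to a general polynomial through the density of polynomials in the functional calculus, taking care to isolate the behaviour of $p$ on $X$ that forces $\mathcal{K}_0$ to exhaust $\mathcal{K}$.
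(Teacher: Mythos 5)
The paper offers no proof of this lemma for you to be compared against: it is quoted directly from Lubin's paper as \cite{Lubin}, Corollary 2. Your attempt must therefore stand on its own, and it has a genuine gap at precisely the step you call the crux. You reformulate the lemma as the spatial identity $\mathcal{K}_0=\mathcal{K}$, where $\mathcal{K}_0$ is the smallest reducing subspace of $p(\underline{N})$ containing $\mathcal{H}$. That identity is strictly stronger than the asserted unitary equivalence, and it is false. Take $n=2$, $\mathcal{H}=H^2$, $\underline{S}=(I_{H^2},T_z)$ with minimal normal extension $\underline{N}=(I_{L^2},M_z)$ on $\mathcal{K}=L^2(\mathbb{T})$ (minimality holds because any subspace reducing $M_z$ and containing $H^2$ is all of $L^2(\mathbb{T})$), and take $p(z_1,z_2)=z_1$. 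Then $p(\underline{S})=I_{H^2}$ is already normal, so its minimal normal extension is itself and $\mathcal{K}_0=H^2\subsetneq L^2(\mathbb{T})=\mathcal{K}$; the lemma survives only because $I_{L^2}$ and $I_{H^2}$ happen to be unitarily equivalent, being identity operators on two separable infinite-dimensional spaces. The same phenomenon occurs whenever $p(\underline{S})$ is normal but $\underline{S}$ is not --- in particular for $p=z_i$ with $S_i$ normal --- which shows that Lemma \ref{Lubin} likewise asserts only unitary equivalence and cannot be read as the spatial statement your bootstrapping step would need. Consequently the Stone--Weierstrass plan, whose goal is to show that the algebra $\{f\circ p : f\in C(p(X))\}$ (in your notation $X=\sigma_T(\underline{N})$) applied to $\mathcal{H}$ sweeps out all of $\mathcal{K}$, is aimed at a false target; no analysis of the fibres of $p$ can repair it.

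What the lemma really requires is an absorption argument. Writing $p(\underline{N})=M\oplus R$ with respect to $\mathcal{K}=\mathcal{K}_0\oplus(\mathcal{K}\ominus\mathcal{K}_0)$, where $M=p(\underline{N})|_{\mathcal{K}_0}$ is a copy of the minimal normal extension of $p(\underline{S})$, one must prove $M\oplus R\cong M$: the minimality of the \emph{joint} extension $\underline{N}$ forces the extra normal summand $R$ to be absorbed by $M$ up to unitary equivalence, even though it cannot be made to disappear spatially. This is a multiplicity-theoretic statement about commuting normal operators (and is how Lubin argues, via intertwining relations), not a density statement. The parts of your write-up that do work are the normality of $p(\underline{N})$, the identity $p(\underline{N})|_{\mathcal{H}}=p(\underline{S})$, and the identification of $\mathcal{K}_0$ as carrying the minimal normal extension; the error is solely the claim that the lemma ``is precisely'' $\mathcal{K}_0=\mathcal{K}$.
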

	
	Also, we recall from the literature the following theorem, which gives characterizations of $\Gamma$-unitaries, appeared in parts in \cite{AglerVII} and \cite{Bhattacharyya}. We shall use this theorem below.

	\begin{thm}\label{G_unitary}
		Let $(S, P)$ be a pair of commuting operators on a Hilbert space $\mathcal{H}$. Then, the following statements are equivalent:
		
		\begin{enumerate}
			\item $(S, P)$ is a $\Gamma$-unitary;
			\item there exist commuting unitary operators $U_1$ and $U_2$ on $\mathcal{H}$ such that
			\[
			S=U_1+U_2, \quad P=U_1U_2; 
			\]
			\item $P$ is unitary, $S=S^*P$ and $r(S) \leq 2$, where $r(S)$ is the spectral radius of $S$;
			\item $(S, P)$ is a $\Gamma$-contraction and $P$ is unitary;
			\item $P$ is a unitary and $S=U+U^*P$ for unitary $U$ commuting with $P$.
		\end{enumerate}
	\end{thm}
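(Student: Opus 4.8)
The plan is to anchor everything on the Agler--Young characterization recorded in Theorem \ref{Gamma_uni}, namely that $(S,P)$ is a $\Gamma$-unitary if and only if $P$ is unitary, $S=S^*P$ and $\|S\|\le 2$, and then to run the cycle $(1)\Rightarrow(2)\Rightarrow(5)\Rightarrow(3)\Rightarrow(1)$, handling $(4)$ separately by establishing $(1)\Leftrightarrow(4)$.

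For $(1)\Rightarrow(2)$ I would invoke the spectral theorem for the commuting normal pair $(S,P)$: there is a projection-valued measure $E$ supported on $\sigma_T(S,P)\subseteq b\Gamma$ with $S=\int z_1\,dE$ and $P=\int z_2\,dE$. Every point of $b\Gamma$ has the form $(z_1+z_2,z_1z_2)$ with $|z_1|=|z_2|=1$, so the two roots of $\lambda^2-s\lambda+p=0$ are unimodular; choosing a Borel measurable selection $u_1,u_2\colon b\Gamma\to\mathbb{T}$ of these roots and setting $U_j=\int u_j\,dE$ produces commuting unitaries with $U_1+U_2=S$ and $U_1U_2=P$. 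The measurability of the root selection is the only delicate point here.

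The remaining links of the cycle are short operator identities. Given $(2)$, I would put $U=U_1$; then $U^*P=U_1^*U_1U_2=U_2$ and $U$ commutes with $P=U_1U_2$, so $S=U_1+U_2=U+U^*P$, which is $(5)$. From $(5)$ one computes $S^*P=(U^*+UP^*)P=U^*P+U=S$ and $r(S)\le\|S\|\le\|U\|+\|U^*P\|=2$, giving $(3)$. Finally, for $(3)\Rightarrow(1)$ the key observation is that $S=S^*P$ with $P$ unitary forces $S$ to be normal and to commute with $P$: taking adjoints gives $S^*=P^*S$, whence $S=P^*SP$ and $SP=PS$; Fuglede's theorem then yields $SP^*=P^*S$, and a direct computation shows $SS^*=P^*S^2=S^*S$. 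Normality gives $r(S)=\|S\|$, so the hypothesis $r(S)\le2$ becomes $\|S\|\le2$ and Theorem \ref{Gamma_uni} applies.

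It remains to settle $(1)\Leftrightarrow(4)$. The implication $(1)\Rightarrow(4)$ is immediate: a $\Gamma$-unitary consists of commuting normals with $\sigma_T(S,P)\subseteq b\Gamma\subseteq\Gamma$, so $\Gamma$ is a spectral set by Proposition \ref{basicprop:02}, and $P$ is unitary. The reverse implication $(4)\Rightarrow(1)$ is the main obstacle. Here $\|S\|\le2$ comes for free from von Neumann's inequality applied to the coordinate function $z_1$ on $\Gamma$, but extracting the identity $S=S^*P$ from the mere spectral-set hypothesis requires the structural theory of $\Gamma$-contractions from \cite{Bhattacharyya}: every $\Gamma$-contraction admits a fundamental operator $F$ satisfying $S-S^*P=D_PFD_P$ with $D_P=(I-P^*P)^{1/2}$. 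Since $P$ is unitary we have $D_P=0$, so the fundamental equation collapses to $S=S^*P$, and Theorem \ref{Gamma_uni} finishes the proof. I expect this last step --- converting the spectral-set hypothesis into the algebraic relation $S=S^*P$ --- to be where all the real content sits.
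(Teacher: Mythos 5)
Your proof is mathematically sound, but note that the paper itself offers no proof of this statement: it is recalled verbatim from the literature, with the equivalences attributed in parts to Agler--Young \cite{AglerVII} and Bhattacharyya--Pal--Shyam Roy \cite{Bhattacharyya}. So there is no ``paper proof'' to match against; what you have produced is a self-contained reconstruction, and it holds up. The cycle $(1)\Rightarrow(2)\Rightarrow(5)\Rightarrow(3)\Rightarrow(1)$ together with $(1)\Leftrightarrow(4)$ covers all five statements. Each link checks out: the spectral-theorem argument for $(1)\Rightarrow(2)$ works once you fix a Borel branch of the square root in the quadratic formula (the ``delicate'' measurable selection you flag is genuinely routine); the algebra in $(2)\Rightarrow(5)\Rightarrow(3)$ is correct, using that commutation with a unitary $P$ gives commutation with $P^*$; the Fuglede argument in $(3)\Rightarrow(1)$ correctly upgrades $r(S)\le 2$ to $\|S\|\le 2$ via normality of $S$, after which the cited Theorem \ref{Gamma_uni} closes the loop; and $(4)\Rightarrow(1)$ via the fundamental operator equation $S-S^*P=D_PFD_P$ with $D_P=0$ is a clean, legitimate shortcut, since the existence of the fundamental operator is recorded in the paper as a quoted fact from \cite{Bhattacharyya} whose proof does not rest on the present theorem (so there is no circularity). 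One historical remark: Agler--Young's original route to $(4)\Rightarrow(1)$ predates fundamental-operator theory and instead works with the positivity characterization of $\Gamma$-contractions; your version trades that for machinery the paper already quotes, which makes the whole argument shorter at the cost of resting on a heavier imported result.
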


We now present a characterization for a $\Pe$-isometry which is also independently proved in \cite{JindalII} (see Theorem 5.2 in \cite{JindalII}). However, our proof to the part $(2) \implies(1)$ of this theorem is significantly different.
	
		\begin{thm}\label{P_isometry}
		Let $(V_1, V_2, V_3)$ be a commuting triple of operators acting on the Hilbert space $\mathcal{H}$. Then the following are equivalent.	
		\begin{enumerate}
			\item $(V_1, V_2, V_3)$ is a $\mathbb{P}$-isometry;
		
			\vspace{0.1cm}
			
			\item $(V_1, V_2\slash 2)$ is a $\B_2$-isometry and $(V_2, V_3)$ is a $\Gamma$-isometry.
		\end{enumerate}
	\end{thm}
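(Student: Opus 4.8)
The plan is to prove the two implications separately; the forward implication is routine and the reverse one carries all the weight. For $(1)\Rightarrow(2)$, suppose $(V_1,V_2,V_3)$ is a $\Pe$-isometry, so by definition it is the restriction of some $\Pe$-unitary $(N_1,N_2,N_3)$ to a joint invariant subspace $\mathcal H$. By Theorem \ref{P_unitary}, $(N_1,N_2\slash 2)$ is a $\B_2$-unitary and $(N_2,N_3)$ is a $\Gamma$-unitary. Restricting each of these normal tuples to the joint invariant subspace $\mathcal H$ gives, directly from the definitions of $\B_2$-isometry and $\Gamma$-isometry, that $(V_1,V_2\slash 2)=(N_1|_{\mathcal H},(N_2\slash 2)|_{\mathcal H})$ is a $\B_2$-isometry and $(V_2,V_3)=(N_2|_{\mathcal H},N_3|_{\mathcal H})$ is a $\Gamma$-isometry. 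This settles the forward direction with no computation.

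For $(2)\Rightarrow(1)$, I would first record the operator identities hidden in the hypotheses. Since $(V_1,V_2\slash 2)$ is a $\B_2$-isometry, Theorem \ref{prop3.5} identifies it with a spherical isometry, giving $V_1^*V_1+\frac{1}{4}V_2^*V_2=I$; equivalently $V_1^*V_1=D_{V_2\slash 2}^2$, where $D_{V_2\slash 2}=(I-\frac{1}{4}V_2^*V_2)^{1\slash 2}$. Since $(V_2,V_3)$ is a $\Gamma$-isometry, Theorem \ref{Gamma_uni} supplies $V_2=V_2^*V_3$, with $V_3$ an isometry and $\|V_2\|\le 2$. The goal is then to exhibit a Hilbert space $\mathcal K\supseteq\mathcal H$ and a $\Pe$-unitary $(\hat V_1,\hat V_2,\hat V_3)$ on $\mathcal K$ having $\mathcal H$ as a joint invariant subspace with $\hat V_i|_{\mathcal H}=V_i$.

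My construction proceeds in two stages. First, because $(V_2,V_3)$ is a $\Gamma$-isometry, it extends by definition to a $\Gamma$-unitary; I would take the minimal such extension $(\hat V_2,\hat V_3)$ on $\mathcal K\supseteq\mathcal H$, a commuting pair of normal operators with $\hat V_3$ unitary and $\hat V_2=\hat V_2^*\hat V_3$. Second, I would adjoin the first coordinate by setting $\hat V_1=\hat U D_{\hat V_2\slash 2}$, where $D_{\hat V_2\slash 2}=(I-\frac{1}{4}\hat V_2^*\hat V_2)^{1\slash 2}$ and $\hat U$ is a unitary on $\mathcal K$ commuting with $\hat V_2,\hat V_3$. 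Since $\hat U D_{\hat V_2\slash 2}$ is then normal, commutes with $\hat V_2,\hat V_3$, and satisfies $(\hat U D_{\hat V_2\slash 2})^*(\hat U D_{\hat V_2\slash 2})=I-\frac{1}{4}\hat V_2^*\hat V_2$, the prototype in Example \ref{eg3.5} together with Theorem \ref{P_unitary} shows that every such triple is automatically a $\Pe$-unitary. Thus the whole problem reduces to choosing $\hat U$ so that $\hat V_1$ restricts to $V_1$ and leaves $\mathcal H$ invariant, and here the identity $V_1^*V_1=D_{V_2\slash 2}^2$ suggests matching $\hat U$ on $\mathcal H$ with the partial-isometry factor $W$ in the polar decomposition $V_1=W(V_1^*V_1)^{1\slash 2}=W D_{V_2\slash 2}$.

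The hard part will be exactly this matching, i.e. realizing $V_1$ as $\hat U D_{\hat V_2\slash 2}|_{\mathcal H}$ for a genuine global unitary $\hat U$ on $\mathcal K$, which is equivalent to showing that $(V_1,V_2,V_3)$ is jointly subnormal with $\Pe$-unitary minimal normal extension. The subtlety is that adjoint relations do not transfer freely between a subnormal tuple and a normal extension, so one must verify that $D_{\hat V_2\slash 2}$ compresses to $D_{V_2\slash 2}$ on $\mathcal H$ and that $\mathcal H$ stays invariant under $\hat U D_{\hat V_2\slash 2}$; this is precisely where the minimality and explicit structure of the $\Gamma$-unitary extension of $(V_2,V_3)$ enter. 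As an alternative route to joint subnormality that sidesteps constructing $\hat U$ by hand, I would instead verify Athavale's positivity criterion (Lemma \ref{Athavale}) for the commuting contraction triple $(V_1,V_2\slash 2,V_3)$ directly from $V_1^*V_1+\frac{1}{4}V_2^*V_2=I$, $V_2=V_2^*V_3$ and $V_3^*V_3=I$, and then identify the minimal normal extension via Lemma \ref{Lubin} (so that its third coordinate, being the minimal normal extension of the isometry $V_3$, is unitary) and Theorem \ref{P_unitary}.
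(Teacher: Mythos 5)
Your forward implication is correct and complete: restricting the $\B_2$-unitary $(N_1,N_2\slash 2)$ and the $\Gamma$-unitary $(N_2,N_3)$ to the joint invariant subspace gives the two isometries directly from the definitions (the paper instead verifies the spherical-isometry identity $V_1^*V_1+\frac{1}{4}V_2^*V_2=I$ by compressing $I-U_1^*U_1-\frac{1}{4}U_2^*U_2$ to $\mathcal{H}$, but your route is equally valid). Your second route for $(2)\Rightarrow(1)$ --- Athavale's criterion for the triple $(V_1,V_2\slash 2,V_3)$, then Lemma \ref{Lubin} and Theorem \ref{P_unitary} applied to the minimal normal extension --- is precisely the paper's strategy, so the overall plan is the right one; your first route (building the unitary $\hat U$ by hand) is abandoned by you and is not what the paper does. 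However, as written the proposal has two genuine gaps, both at the places where you say the remaining verifications can be done ``directly.''

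First, Athavale's positivity criterion is not an algebraic consequence of the three identities $V_1^*V_1+\frac{1}{4}V_2^*V_2=I$, $V_2=V_2^*V_3$, $V_3^*V_3=I$. The single-operator conditions, the pairs $(V_1,V_2\slash 2)$ and $(V_2,V_3)$, and the triple condition do follow (the first two via Lemma \ref{subnormal} and the $\Gamma$-isometry hypothesis, the triple condition because $V_3^{*p}V_3^{p}=I$ collapses $\Delta_{123}$ to $\Delta_{12}$); but the mixed pair $(V_1,V_3)$ is the real obstruction, and nothing in your identities addresses it. The paper proves $\Delta_{13}\geq 0$ by showing $(V_1,V_3)$ has a commuting normal extension, through a case split on $V_3$: Bram's theorem (Lemma \ref{Bram}) when $V_3$ is unitary, and Abrahamese's theorem (Lemma \ref{Abrahamese}) when $V_3$ has a shift part, using $\sigma(V_3)=\overline{\D}$ and the fact that the minimal normal extension of an isometry is unitary with spectrum in $\T=\partial\sigma(V_3)$. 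Second, even granting joint subnormality, you must still show that the minimal normal extension $(U_1,U_2,U_3)$ is a $\Pe$-unitary, and --- as you yourself observe when discussing your first route --- adjoint relations do not pass from the subnormal triple to its normal extension. Knowing $U_3$ is unitary is not enough: one needs $(U_2,U_3)$ to be a $\Gamma$-unitary, which the paper obtains by proving the von Neumann inequality over $\Gamma$ for $(U_2,U_3)$ via Lemma \ref{Lubin2} together with Bram's spectral inclusion $\sigma(\widetilde{N})\subseteq\sigma(\widetilde{S})$; and one needs $U_1^*U_1+\frac{1}{4}U_2^*U_2=I$ on all of $\mathcal{K}$, which the paper gets by expanding $\|(U_1^*U_1+\frac{1}{4}U_2^*U_2-I)h\|^2$, for $h\in\mathcal{H}$, into a sum of norms of forward monomials in $U_1,U_2$ applied to $h$ (using normality), replacing these by the corresponding norms with $V_1,V_2$ (by invariance of $\mathcal{H}$), observing the sum vanishes by the spherical-isometry identity, and finally invoking minimality of $\mathcal{K}$ to propagate the identity from $\mathcal{H}$ to $\mathcal{K}$. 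Without these two mechanisms your outline does not close; supplying them turns it into the paper's proof.
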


	\begin{proof}
		$(1) \implies (2):$ Let $(V_1,V_2,V_3)$ on $\HS$ be a $\Pe$-isometry. Then there is a pentablock unitary $(U_1, U_2, U_3)$ on a Hilbert space $\mathcal{K} \supseteq \mathcal{H}$ such that $\mathcal{H}$ is a joint invariant subspace and $(V_1, V_2, V_3)=(U_1|_{\mathcal{H}}, U_2|_{\mathcal{H}} ,U_3|_{\mathcal{H}})$. Theorem \ref{P_unitary} yields that $U_1$ is normal and $(U_2, U_3)$ is a $\mathbb{P}$-unitary and as a consequence we have that $V_1$ is subnormal and $(V_2, V_3)$ is a $\Gamma$-isometry. Since each $V_j$ is the restriction of $U_j$ to $\mathcal{H}$, we have that 
		$
		V_j^*V_j=P_\mathcal{H}U_j^*U_j|_\mathcal{H}$  for $j=1, 2, 3.
		$
		Therefore, it follows from Theorem \ref{P_unitary} that
		\[
		I-\frac{1}{4}V_2^*V_2-V_1^*V_1=P_{\mathcal{H}}\bigg(I-\frac{1}{4}U_2^*U_2-U_1^*U_1\bigg)\bigg|_\mathcal{H}=0,
		\] 
		and consequently $(V_1, V_2\slash 2)$ is a $\B_2$-isometry by Theorem \ref{prop3.5}.
		
		\medskip
		
\noindent	$(2) \implies (1):$ We first show that $(V_1, V_2, V_3)$ has a simultaneous normal extension which is same as showing that $(V_1, V_2\slash 2, V_3)$ has a simultaneous normal extension. For the ease of writing, we denote 
		$
		(S_1,S_2,S_3)=(V_1, V_2\slash 2,V_3).
		$
	Since $S_1, S_2, S_3$ are all contractions, Lemma \ref{Athavale} yields that there is a commuting triple $(U_1, U_2, U_3)$ of normal operators on a space $\mathcal{K} \supseteq \HS$ such that $\mathcal{H}$ is a joint invariant subspace of $U_1, U_2, U_3$ and $S_i=U_i|_\mathcal{H}$ for $i=1,2,3$ if and only if the following operators are non-negative.

		\begin{enumerate}
			\item $\Delta_i=\underset{0 \leq p_i \leq k_i}{\sum}(-1)^{p_i}\binom{k_i}{p_i}S_i^{*p_i}S_i^{p_i}$ \quad for $k_i \in \mathbb{N} , i=1,2,3;$

			\item $\Delta_{ij}=		\underset{\substack{0 \leq p_i \leq k_i\\ 0 \leq p_j \leq k_j}}{\sum}(-1)^{p_i+p_j}\binom{k_i}{p_i} \binom{k_j}{p_j}S_i^{*p_i}S_j^{*p_j}S_i^{p_i} S_j^{p_j}$ \quad for $k_i , k_j \in \mathbb{N}$, $i,j=1,2,3$ and $i\ne j;$

			\item $\Delta_{123}=		\underset{\substack{0 \leq p_i \leq k_i\\ 1 \leq i \leq 3}}{\sum}(-1)^{p_1+p_2+p_3}\binom{k_1}{p_1}\binom{k_2}{p_2} \binom{k_3}{p_3}S_1^{*p_1}S_2^{*p_2} S_3^{*p_3}S_1^{p_1}S_2^{p_2} S_3^{p_3}$ \quad for $k_1, k_2, k_3 \in \mathbb{N}$.	
		\end{enumerate} 

		We prove the non-negativity of each of the operators defined above.
		\begin{enumerate}
			\item[(a)] Since $S_2$ and $S_3$ are subnormal operators, it follows again from Lemma \ref{Athavale} that $\Delta_2, \Delta_3 \geq 0$. Note that 
			$
			S_1^*S_1+S_2^*S_2=V_1^*V_1+\frac{1}{4}V_2^*V_2=I.
			$ 
			By Lemma \ref{subnormal}, we have that $S_1$ is subnormal and so $\Delta_1 \geq 0$.
			
			\vspace{0.1cm}
			
			\item[(b)] 	Again by Lemma \ref{subnormal}, we have that $(S_1, S_2)$  is a subnormal pair. Thus $\Delta_{12} \geq 0$. Another application of Lemma \ref{Athavale} gives $\Delta_{23} \geq 0$ because, $(V_2, V_3)$ is a $\Gamma$-isometry and hence admits a simultaneous commuting normal extension which is also true for $(S_2, S_3)$.  We now prove that $\Delta_{13} \geq 0$. Since $V_3$ is an isometry, $V_3$ can either be a unitary or has a non-zero shift part.
			
			\smallskip
			
	\noindent \textit{Case I:} Let $S_3=V_3$ be a unitary. Then $(S_1, S_3)$ is a commuting pair of operators such that $S_1$ is subnormal and $S_3$ is normal. By Lemma \ref{Bram}, $S_1$ and $S_3$ have a simultaneous commuting normal extension.
			
			\smallskip
			
			\noindent \textit{Case II:} Suppose that $S_3=V_3$  has a non-zero shift part. In this case $\sigma(S_3)=\overline{\mathbb{D}}$.  The minimal normal extension, say, $N_3$ of $S_3$ is a unitary and hence, we have $\sigma(N_3) \subseteq \mathbb{T}=\partial \mathbb{D}=\partial \sigma(S_3)$. Lemma \ref{Abrahamese} yields that $S_1$ and $S_3$ have a simultaneous commuting normal extension. 
			
			\vspace{0.1cm}

			\noindent In either case, $(S_1, S_3)$ admits a simultaneous commuting normal extension and so, $\Delta_{13} \geq 0$ which follows from Lemma \ref{Athavale}. 
			
			\vspace{0.1cm}
			
			\item[(c)]  It is only remaining to show that $\Delta_{123} \geq 0$. In the computation of $\Delta_{123}$ we use the fact that $S_3^{*p}S_3^p=I$ for every $p \geq 0$, which happens because $V_3$ is an isometry. So, we have the following.
			\begin{equation*}
				\begin{split}
					\Delta_{123}&=		\underset{\substack{0 \leq p_i \leq k_i\\ 1 \leq i \leq 3}}{\sum}(-1)^{p_1+p_2+p_3}\binom{k_1}{p_1}\binom{k_2}{p_2} \binom{k_3}{p_3}S_1^{*p_1}S_2^{*p_2} S_1^{p_1}S_2^{p_2} \\
					&=\bigg[\overset{k_3}{\underset{p_3=0}{\sum}}(-1)^{p_3}\binom{k_3}{p_3}\bigg]\; \bigg[\;\overset{k_1}{\underset{p_1=0}{\sum}}\overset{k_2}{\underset{p_2=0}{\sum}}(-1)^{p_1+p_2}\binom{k_1}{p_1} \binom{k_2}{p_2}S_1^{*p_1}S_2^{*p_2}S_1^{p_1}S_2^{p_2}\bigg]\\
					&=\overset{k_1}{\underset{p_1=0}{\sum}}\overset{k_2}{\underset{p_2=0}{\sum}}(-1)^{p_1+p_2}\binom{k_1}{p_1} \binom{k_2}{p_2}S_1^{*p_1}S_2^{*p_2}S_1^{p_1}S_2^{p_2},\\
				\end{split}
			\end{equation*}
			where the last equality again uses the fact that $\overset{k_3}{\underset{p_3=0}{\sum}}(-1)^{p_3}\binom{k_3}{p_3}=1$ since $k_3 \ne 0$. Now, the non-negativity of $\Delta_{12}$ gives $\Delta_{123} \geq 0$.
		\end{enumerate}
		
\noindent Thus, combining everything together, we see that there is a commuting triple $(U_1, U_2, U_3)$ of normal operators on a space $\mathcal{K} \supseteq \HS$  such that $\mathcal{H}$ is a joint invariant subspace of $U_1, U_2, U_3$ and
		$
		(V_1, V_2, V_3)=(U_1|_\mathcal{H}, U_2|_\mathcal{H}, U_3|_\mathcal{H}).
		$ 
		Without loss of generality, we assume that $(U_1, U_2, U_3)$ on $\mathcal{K}$ is the minimal normal extension of the triple $(V_1, V_2, V_3)$ and the space $\mathcal{K}$ is given by 
		\[
		\overline{\mbox{span}}\{U_1^{*j_1}U_2^{*j_2} U_3^{*j_3}h \ | \ j_1, j_2, j_3 \geq 0, \ h \in \mathcal{H}\}.
		\]
		We claim that $(U_1, U_2, U_3)$ on $\mathcal{K}$ is a $\mathbb{P}$-unitary. It follows from Lemma \ref{Lubin} that each $U_i$ on $\mathcal{K}$ is unitarily equivalent to the minimal normal extension of $V_i$ for $i=1,2,3$. We prove that  $(U_2, U_3)$ is a $\Gamma$-unitary. By Theorem \ref{G_unitary}, it suffices to show that $(U_2, U_3)$ is a $\Gamma$-contraction and $U_3$ is a unitary. Note that $U_3$ is a unitary by being the minimal normal extension of the isometry $V_3$. Let $g$ be a holomorphic polynomial in $2$-variables. Let $f(z_1, z_2, z_3)=g(z_2, z_3)$.  It follows from Lemma \ref{Lubin2} that $f(U_1, U_2, U_3)$ is unitarily equivalent to the minimal normal extension, say, $\widetilde{N}$ of $\widetilde{S}=f(V_1, V_2, V_3)$. Bram \cite{Bram} proved that a subnormal operator satisfies the spectral inclusion relation, that is $\sigma(\widetilde{N}) \subseteq \sigma(\widetilde{S})$. Since $\widetilde N$ is normal, we have that 
		\begin{equation*}
			\begin{split}
				\|\widetilde{N}\|=\sup\{|\lambda| \ : \ \lambda \in \sigma(\widetilde{N})\}
				\leq \sup\{|\lambda| \ : \ \lambda \in \sigma(\widetilde{S})\}
				& = \sup\{|\lambda| \ : \ \lambda \in \sigma(f(V_1, V_2, V_3))\}\\
				& = \sup\{|\lambda| \ : \ \lambda \in \sigma(g(V_2, V_3))\}\\
				& = \sup\{|\lambda| \ : \ \lambda \in g(\sigma_T(V_2, V_3))\}\\
				& \leq \sup\{|\lambda| \ : \ \lambda \in g(\Gamma)\}\\
				&=\|g\|_{\infty, \Gamma}.
			\end{split}
		\end{equation*} 
		Since $g(U_2, U_3)=f(U_1, U_2, U_3)$ and $f(U_1,U_2,U_3)$ is unitarily equivalent to $\widetilde{N}$, we must have 
		\[
		\|g(U_2, U_3)\| =\|\widetilde{N}\| \leq \|g\|_{\infty, \Gamma}.
		\]
Hence, $(U_2, U_3)$ is a $\Gamma$-contraction. Thus, $(U_2, U_3)$ is a $\Gamma$-unitary. We now show that $(U_1,U_2)$ is a $\B_2$-unitary, that is
		$
		U_1^*U_1+\frac{1}{4}U_2^*U_2-I=0.
		$
		Let $h \in \mathcal{H}$. Then
		\begin{small}
			\begin{equation*}
				\begin{split}
				& \|(U_1^*U_1+\frac{1}{4}U_2^*U_2-I)h\|^2 = \\
					& \bigg(\|U_1^2h\|^2+\frac{1}{4}\|U_1U_2h\|^2-\|U_1h\|^2\bigg)
					+\bigg(\|U_1U_2h\|^2+\frac{1}{4}\|U_2^2h\|^2-\|U_2h\|^2\bigg)
					+\bigg(\|U_1h\|^2+\frac{1}{4}\|U_2h\|^2-\|h\|^2\bigg)\\	
					&=\bigg(\|V_1^2h\|^2+\frac{1}{4}\|V_1V_2h\|^2-\|V_1h\|^2\bigg)
					+\bigg(\|V_1V_2h\|^2+\frac{1}{4}\|V_2^2h\|^2-\|V_2h\|^2\bigg)
					+\bigg(\|V_1h\|^2+\frac{1}{4}\|V_2h\|^2-\|h\|^2\bigg)\\	 
	&  \rightline{$[\because V_i=U_i|_{\mathcal{H}}]$} \\
	& =0,\\
				\end{split}
			\end{equation*}
		\end{small} 
		where, the last equality holds because 
		\[
		\|V_1h\|^2+\frac{1}{4}\|V_2h\|^2-\|h\|^2=\langle (V_1^*V_1-I+\frac{1}{4}V_2^*V_2)h, h \rangle=0, 
		\]
		for every $h \in \mathcal{H}$. Therefore, 
		$
		(U_1^*U_1+\frac{1}{4}U_2^*U_2-I)h=0
		$
		for every $h \in \mathcal{H}$. From the definition of $\mathcal{K}$, it follows that $U_1^*U_1+\frac{1}{4}U_2^*U_2-I=0$ on $\mathcal{K}$. Theorem \ref{P_unitary} yields that $(U_1, U_2, U_3)$ on $\mathcal{K}$ is a $\mathbb{P}$-unitary. Hence, $(V_1, V_2, V_3)$ is a $\mathbb{P}$-isometry by being the restriction of the $\Pe$-unitary $(U_1, U_2, U_3)$ to that joint invariant subspace $\HS$. The proof is now complete.
	\end{proof}

 The following two results are direct consequences of Theorem \ref{Gamma_uni} and Theorem \ref{P_isometry}. 
	
	\begin{cor}\label{V_1, 0, V_2}
		$(V_1, V_2)$ is a pair of commuting isometries if and only if $(V_1, 0,  V_2)$ is a $\Pe$-isometry.
	\end{cor}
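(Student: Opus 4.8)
The plan is to apply Theorem \ref{P_isometry} directly to the triple $(V_1, 0, V_2)$, so that the question splits into two independent conditions. Setting $(V_1, V_2, V_3) = (V_1, 0, V_2)$ in the statement of Theorem \ref{P_isometry}, the middle slot is the zero operator, and hence $(V_1, 0, V_2)$ is a $\Pe$-isometry if and only if $(V_1, 0)$ is a $\B_2$-isometry and $(0, V_2)$ is a $\Gamma$-isometry. My goal then reduces to showing that the first condition is equivalent to $V_1$ being an isometry and the second to $V_2$ being an isometry.

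For the $\B_2$-isometry condition, I would invoke Theorem \ref{prop3.5}, which identifies $\B_2$-isometries with spherical isometries. Since the second component here is $0$, the spherical isometry relation $V_1^*V_1 + 0^*0 = I$ collapses to $V_1^*V_1 = I$; that is, $(V_1, 0)$ is a $\B_2$-isometry exactly when $V_1$ is an isometry.

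For the $\Gamma$-isometry condition, I would use the characterization in Theorem \ref{Gamma_uni}(2): the pair $(S, P)$ is a $\Gamma$-isometry if and only if $S = S^*P$, $P$ is an isometry, and $\|S\| \leq 2$. Taking $(S, P) = (0, V_2)$, the equation $S = S^*P$ and the bound $\|S\| \leq 2$ hold trivially, so the only surviving requirement is that $V_2$ be an isometry. Combining the two equivalences, and noting that commutativity of $V_1$ and $V_2$ is built into both sides, the triple $(V_1, 0, V_2)$ is a $\Pe$-isometry precisely when $V_1$ and $V_2$ are commuting isometries, which is the assertion of the corollary.

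I do not anticipate a genuine obstacle here; the whole content is a matter of specializing the two named theorems to the degenerate data in which the symmetrized-bidisc variable vanishes, and observing that the nontrivial relations become vacuous. The only point warranting a moment's care is confirming that $\|0\| \leq 2$ and $0 = 0^*V_2$ indeed hold trivially, so that Theorem \ref{Gamma_uni}(2) imposes no constraint on $(0, V_2)$ beyond $V_2$ being an isometry.
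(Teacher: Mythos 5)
Your proof is correct and follows exactly the route the paper intends: the paper states this corollary as a direct consequence of Theorem \ref{Gamma_uni} and Theorem \ref{P_isometry}, which is precisely your specialization of Theorem \ref{P_isometry} to the triple $(V_1,0,V_2)$ together with the $\Gamma$-isometry criterion for $(0,V_2)$ and the spherical-isometry identification (Theorem \ref{prop3.5}) for $(V_1,0)$. Nothing further is needed.
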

	
	\begin{cor}\label{P_unitaryII}
		Let $\underline{N}=(N_1, N_2, N_3)$ be a commuting triple of bounded linear operators. Then $\underline{N}$ is a $\mathbb{P}$-unitary if and only if both $(N_1, N_2, N_3)$ and $(N_1^*, N_2^*, N_3^*)$ are $\Pe$-isometries.
	\end{cor}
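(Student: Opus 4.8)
The plan is to reduce everything to characterizations already in hand, namely Theorem \ref{P_isometry} for $\Pe$-isometries, condition $(3)$ of Theorem \ref{P_unitary} for $\Pe$-unitaries, Theorem \ref{prop3.5} identifying $\B_2$-isometries with spherical isometries, and Theorem \ref{Gamma_uni} for $\Gamma$-unitaries and $\Gamma$-isometries. The corollary then drops out by exploiting the evident symmetry of these conditions under the adjoint operation. A convenient first remark is that a $\Pe$-unitary is trivially a $\Pe$-isometry, since the whole space is a joint (reducing) invariant subspace and $\underline N$ is the restriction of itself.

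For the forward implication I would argue that if $\underline N$ is a $\Pe$-unitary, then so is $(N_1^*, N_2^*, N_3^*)$, whence the latter is a $\Pe$-isometry too. By Theorem \ref{P_unitary}$(3)$, $(N_2, N_3)$ is a $\Gamma$-unitary and $N_1^*N_1 = I - \tfrac14 N_2^*N_2$, $N_1 N_1^* = I - \tfrac14 N_2 N_2^*$. Taking adjoints of the relation $N_2 = N_2^* N_3$ from Theorem \ref{Gamma_uni} and using that $N_3$ is unitary yields $N_2^* = N_2 N_3^* = (N_2^*)^* N_3^*$, while $N_3^*$ is unitary and $\|N_2^*\| = \|N_2\| \le 2$; hence $(N_2^*, N_3^*)$ is again a $\Gamma$-unitary. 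Since passing to adjoints merely interchanges the two defect identities, the triple $(N_1^*, N_2^*, N_3^*)$ satisfies condition $(3)$ of Theorem \ref{P_unitary} and is therefore a $\Pe$-unitary.

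For the converse I would unpack both hypotheses through Theorem \ref{P_isometry}: that $(N_1, N_2, N_3)$ and $(N_1^*, N_2^*, N_3^*)$ are $\Pe$-isometries says exactly that $(N_1, N_2\slash 2)$ and $(N_1^*, N_2^*\slash 2)$ are $\B_2$-isometries and that $(N_2, N_3)$ and $(N_2^*, N_3^*)$ are $\Gamma$-isometries. By Theorem \ref{prop3.5} the two $\B_2$-isometry conditions read $N_1^*N_1 + \tfrac14 N_2^*N_2 = I$ and $N_1 N_1^* + \tfrac14 N_2 N_2^* = I$. From the two $\Gamma$-isometry conditions, $N_3$ and $N_3^*$ are both isometries, so $N_3$ is unitary; then $(N_2, N_3)$ is a $\Gamma$-isometry with unitary last entry, which by Theorem \ref{Gamma_uni} (equivalently Theorem \ref{G_unitary}$(4)$) is a $\Gamma$-unitary, and in particular $N_2$ is normal. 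Consequently $N_2^*N_2 = N_2 N_2^*$, and subtracting the two spherical identities gives $N_1^*N_1 = N_1 N_1^*$, so $N_1$ is normal. Now all three entries are normal, $(N_2, N_3)$ is a $\Gamma$-unitary, and the defect identities $N_1^*N_1 = I - \tfrac14 N_2^*N_2$ and $N_1 N_1^* = I - \tfrac14 N_2 N_2^*$ both hold; by Theorem \ref{P_unitary}$(3)$, $\underline N$ is a $\Pe$-unitary.

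The arguments are short once the right characterizations are invoked, and the step I would flag as the crux is the deduction in the converse that $N_1$ is normal. This is not immediate from either $\Pe$-isometry hypothesis in isolation; it surfaces only after combining the two spherical-isometry identities and invoking the normality of $N_2$, which in turn is available precisely because $N_3$ being simultaneously an isometry and a co-isometry forces $(N_2, N_3)$ to upgrade from a mere $\Gamma$-isometry to a $\Gamma$-unitary.
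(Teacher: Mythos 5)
Your proof is correct, and its converse direction runs along a genuinely different track from the paper's own. The paper's argument pivots on subnormality: since every component of a $\Pe$-isometry is the restriction of a normal operator, the two hypotheses make each $N_j$ and each $N_j^*$ subnormal, hence each $N_j$ normal; after that, the single identity $N_1^*N_1+\frac{1}{4}N_2^*N_2=I$, the unitarity of $N_3$, Theorem \ref{Gamma_uni} and Theorem \ref{P_unitary} finish the proof. You never invoke the fact that a subnormal operator with subnormal adjoint is normal. Instead you harvest \emph{both} spherical identities $N_1^*N_1+\frac{1}{4}N_2^*N_2=I$ and $N_1N_1^*+\frac{1}{4}N_2N_2^*=I$ (one from each hypothesis, via Theorem \ref{P_isometry} and Theorem \ref{prop3.5}), upgrade $(N_2,N_3)$ to a $\Gamma$-unitary essentially as the paper does, and feed everything into condition $(3)$ of Theorem \ref{P_unitary}. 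This makes the corollary an almost formal consequence of the equivalence $(1)\iff(3)$: the subnormality argument is still present, but absorbed into the already-proved equivalence rather than repeated. What each approach buys: the paper's is more self-contained in spirit but reuses a nontrivial operator-theoretic fact; yours stays purely algebraic at the level of this corollary and exploits the adjoint-symmetry of condition $(3)$, which also powers your forward direction (correct, and more detailed than the paper's one-line citation, with Fuglede's theorem rightly supplying $N_2N_3^*=N_3^*N_2$). One remark: the step you flag as the crux --- deducing that $N_1$ is normal by subtracting the two identities and using normality of $N_2$ --- is actually redundant in your own argument, since condition $(3)$ of Theorem \ref{P_unitary} does not require normality of $N_1$ as a hypothesis; you may delete it without loss.
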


	\begin{proof}
		The necessary condition follows from Theorem \ref{P_unitary} and Theorem \ref{P_isometry}. Let us assume that $(N_1, N_2, N_3)$ and $(N_1^*, N_2^*, N_3^*)$ are $\Pe$-isometries. In particular, each $N_j$ and $N_j^*$ are subnormal operators and so, each $N_j$ is normal. Therefore,  $(N_1, N_2, N_3)$ is a commuting triple of normal operators such that
		$N_1^*N_1+\frac{1}{4}N_2^*N_2=I$ and $N_3^*N_3=N_3N_3^*=I$. Theorem \ref{Gamma_uni} and Theorem \ref{P_unitary} yield that $(N_1, N_2, N_3)$ is a $\mathbb{P}$-unitary.
	\end{proof}

Our next theorem is an analogue of the Wold decomposition for a $\mathbb{P}$-isometry. Indeed, we show that a $\Pe$-contraction orthogonally decomposes into a $\Pe$-unitary and a pure $\Pe$-isometry. Before that we state a result due to Agler and Young from \cite{AglerVII}, which will be useful. 

	\begin{thm}[\cite{AglerVII}, Theorem 2.6] \label{Wold_Gamma_Isometry}
		Let $(S,P)$ be a $\Gamma$-isometry and $\mathcal{H}=\mathcal{H}_1 \oplus \mathcal{H}_2$ be the Wold decomposition of the isometry $P$ into its unitary part $P|_{\mathcal{H}_1}$ and the shift part $P|_{\mathcal{H}_2}$. Then $\mathcal{H}_1$ and $\mathcal{H} _2$ are reducing subspaces for $S$ such that $(S|_{\mathcal{H}_1}, P|_{\mathcal{H}_1})$ is a $\Gamma$-unitary and $(S|_{\mathcal{H}_2}, P|_{\mathcal{H}_2})$ is a pure $\Gamma$-isometry i.e. $(S|_{\mathcal{H}_2}, P|_{\mathcal{H}_2})$ is a $\Gamma$-isometry and $P|_{\mathcal{H}_2}$ is a unilateral shift operator.
	\end{thm}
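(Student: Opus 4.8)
The plan is to apply the classical Wold decomposition to the isometry $P$ and then verify that the resulting orthogonal splitting is compatible with $S$. First I would invoke Theorem \ref{Gamma_uni}(2), which says that a $\Gamma$-isometry $(S,P)$ is exactly a commuting pair with $P$ an isometry, $\|S\| \leq 2$, and $S = S^*P$. Taking adjoints in this last identity yields the crucial relation $S^* = P^*S$, and this will be the engine of the whole argument.

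Next I would recall that the Wold decomposition writes $\mathcal{H} = \mathcal{H}_1 \oplus \mathcal{H}_2$, where $\mathcal{H}_1 = \bigcap_{n \geq 0} P^n\mathcal{H}$ carries the unitary part of $P$ and $\mathcal{H}_2$ carries the unilateral shift part; both are reducing subspaces for $P$. The heart of the proof is to show that $\mathcal{H}_1$ reduces $S$ as well. For $S\mathcal{H}_1 \subseteq \mathcal{H}_1$ I would use commutativity: if $h \in \mathcal{H}_1$, then for each $n$ we may write $h = P^n k_n$ with $k_n \in \mathcal{H}$, whence $Sh = P^n(S k_n) \in P^n\mathcal{H}$, so $Sh$ lies in $\bigcap_{n} P^n\mathcal{H} = \mathcal{H}_1$. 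For the invariance of $\mathcal{H}_1$ under $S^*$ I would feed in the relation $S^* = P^*S$: since $Sh \in \mathcal{H}_1$ and $\mathcal{H}_1$ reduces $P$ (so $P^*\mathcal{H}_1 \subseteq \mathcal{H}_1$), we obtain $S^*h = P^*(Sh) \in \mathcal{H}_1$. Thus $\mathcal{H}_1$ reduces $S$, and therefore its orthogonal complement $\mathcal{H}_2$ reduces $S$ automatically.

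With the reduction in hand, the structure of the two pieces follows from Theorem \ref{Gamma_uni}. On $\mathcal{H}_1$ the operator $P|_{\mathcal{H}_1}$ is unitary, the identity $S|_{\mathcal{H}_1} = (S|_{\mathcal{H}_1})^*P|_{\mathcal{H}_1}$ is inherited from $S = S^*P$ (because $\mathcal{H}_1$ reduces $S$, the compression $(S|_{\mathcal{H}_1})^*$ coincides with $S^*|_{\mathcal{H}_1}$), and $\|S|_{\mathcal{H}_1}\| \leq \|S\| \leq 2$; hence Theorem \ref{Gamma_uni}(1) declares $(S|_{\mathcal{H}_1}, P|_{\mathcal{H}_1})$ a $\Gamma$-unitary. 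On $\mathcal{H}_2$ the same three properties hold with $P|_{\mathcal{H}_2}$ an isometry, so $(S|_{\mathcal{H}_2}, P|_{\mathcal{H}_2})$ is a $\Gamma$-isometry; and since $P|_{\mathcal{H}_2}$ is by construction a unilateral shift, this is precisely a pure $\Gamma$-isometry in the sense of the statement.

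The only genuinely delicate point is the reduction of $S$ by $\mathcal{H}_1$, and there the payoff comes entirely from translating $S = S^*P$ into $S^* = P^*S$. I expect no further obstacle, since everything else is a direct bookkeeping check against the characterizations of Theorem \ref{Gamma_uni}.
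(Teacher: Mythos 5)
Your proof is correct. There is nothing in the paper to compare it against: the paper states this result as a quotation from Agler--Young \cite{AglerVII} (Theorem 2.6) and gives no proof of its own. Your argument --- take the Wold decomposition of $P$ with unitary part on $\mathcal{H}_1=\bigcap_{n\geq 0}P^n\mathcal{H}$, use commutativity to get $S\mathcal{H}_1\subseteq \mathcal{H}_1$, use the adjoint relation $S^*=P^*S$ (coming from $S=S^*P$) together with $P^*\mathcal{H}_1\subseteq\mathcal{H}_1$ to get $S^*\mathcal{H}_1\subseteq\mathcal{H}_1$, and then identify the two restricted pairs via the characterizations in Theorem \ref{Gamma_uni} --- is the standard argument and is in substance how the cited source proves it. All steps check out, including the small but necessary points that the restrictions inherit commutativity and the norm bound, and that on a reducing subspace the adjoint of a restriction is the restriction of the adjoint, so the identity $S=S^*P$ genuinely passes to $(S|_{\mathcal{H}_i},P|_{\mathcal{H}_i})$ for $i=1,2$.
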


	\begin{thm}\textbf{(Wold decomposition for a $\mathbb{P}$-isometry).}\label{Wold}
		Let $(V_1, V_2, V_3)$ be a $\mathbb{P}$-isometry on a Hilbert space $\mathcal{H}$. Then, there is a unique orthogonal decomposition $\mathcal{H}=\mathcal{H}_u \oplus \mathcal{H}_c$ such that $\mathcal{H}_u$ and $\mathcal{H}_c$ are reducing subspaces of $V_1, V_2, V_3$ and that $(V_1|_{\HS_u}, V_2|_{\HS_u},V_3|_{\HS_u})$ is a $\Pe$-unitary and $(V_1|_{\HS_c}, V_2|_{\HS_c},V_3|_{\HS_c})$ is a pure $\Pe$-isometry.
	\end{thm}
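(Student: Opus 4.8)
The plan is to realize $\mathcal{H}_u$ as the \emph{maximal} joint reducing subspace of $(V_1,V_2,V_3)$ on which the triple acts as a commuting normal tuple, and then to show that a $\Pe$-isometry restricted to such a subspace is automatically a $\Pe$-unitary. Before doing so I would flag the conceptual subtlety that dictates the whole approach: one is tempted to take $\mathcal{H}_u$ to be the unitary part of the isometry $V_3$ and invoke the $\Gamma$-isometry Wold decomposition of $(V_2,V_3)$ from Theorem \ref{Wold_Gamma_Isometry}, but this fails. Indeed Example \ref{exm:imp} exhibits the pure $\Pe$-isometry $(T_z,0,-I)$ in which $V_3=-I$ is unitary while $V_1=T_z$ is a non-normal shift, so $V_3$ being unitary does not force the triple to be a $\Pe$-unitary. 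The correct splitting must detect normality of the \emph{whole} triple. Following the construction in the proof of Theorem \ref{thm:decomp-Ball}, I would set
\[
\mathcal{H}_u=\bigcap_{\alpha,\beta\in\mathbb{N}^3}\ker\big[\,\underline{V}^{\alpha}\underline{V}^{*\beta}-\underline{V}^{*\beta}\underline{V}^{\alpha}\,\big],\qquad \underline{V}=(V_1,V_2,V_3),
\]
which by Corollary 4.2 of \cite{Eschmeier} is a closed joint reducing subspace on which $\underline{V}$ restricts to a normal tuple and is maximal with this property, and then put $\mathcal{H}_c=\mathcal{H}\ominus\mathcal{H}_u$, also reducing.

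The first substantive step is to verify that $(V_1|_{\mathcal{H}_u},V_2|_{\mathcal{H}_u},V_3|_{\mathcal{H}_u})$ is a $\Pe$-unitary. Since the defining identity $V_1^*V_1+\tfrac14V_2^*V_2=I$ of a $\B_2$-isometry and the $\Gamma$-isometry relations for $(V_2,V_3)$ pass to reducing subspaces, Theorem \ref{P_isometry} shows the restriction to $\mathcal{H}_u$ is again a $\Pe$-isometry; thus $(V_1|_{\mathcal{H}_u},V_2|_{\mathcal{H}_u}/2)$ is a $\B_2$-isometry and $(V_2|_{\mathcal{H}_u},V_3|_{\mathcal{H}_u})$ is a $\Gamma$-isometry. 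Normality now upgrades each piece. On $\mathcal{H}_u$ the operator $V_3$ is a normal isometry, hence unitary, so $(V_2|_{\mathcal{H}_u},V_3|_{\mathcal{H}_u})$ is a $\Gamma$-contraction with unitary second component and therefore a $\Gamma$-unitary by Theorem \ref{G_unitary}; and $(V_1|_{\mathcal{H}_u},V_2|_{\mathcal{H}_u}/2)$ is a spherical isometry consisting of normal operators, hence a spherical unitary, i.e. a $\B_2$-unitary by Theorems \ref{prop3.5} and \ref{prop3.2}. Condition $(4)$ of Theorem \ref{P_unitary} then gives that the restriction to $\mathcal{H}_u$ is a $\Pe$-unitary. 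This upgrade from isometry to unitary on $\mathcal{H}_u$ is exactly where the real content sits; once normality of the full triple is in hand it is routine.

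For the complementary part, the restriction to $\mathcal{H}_c$ is a $\Pe$-isometry by the same inheritance via Theorem \ref{P_isometry}, and it is pure: if $\mathcal{L}\subseteq\mathcal{H}_c$ were a nonzero joint reducing subspace on which the triple is a $\Pe$-unitary, then the restriction to $\mathcal{L}$ would be a normal tuple, so $\mathcal{L}\subseteq\mathcal{H}_u$ by maximality, forcing $\mathcal{L}\subseteq\mathcal{H}_u\cap\mathcal{H}_c=\{0\}$. Finally, uniqueness follows from maximality together with purity: if $\mathcal{H}=\mathcal{G}_u\oplus\mathcal{G}_c$ is another decomposition of the stated form, then $(V_1,V_2,V_3)|_{\mathcal{G}_u}$ is a $\Pe$-unitary and hence normal, so $\mathcal{G}_u\subseteq\mathcal{H}_u$ by maximality; the reducing subspace $\mathcal{H}_u\ominus\mathcal{G}_u$ lies in $\mathcal{G}_c$ and carries a $\Pe$-unitary (being a reducing subspace of the $\Pe$-unitary on $\mathcal{H}_u$), so purity of $\mathcal{G}_c$ forces $\mathcal{H}_u\ominus\mathcal{G}_u=\{0\}$, giving $\mathcal{G}_u=\mathcal{H}_u$ and $\mathcal{G}_c=\mathcal{H}_c$.
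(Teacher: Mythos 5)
Your proof is correct, and it takes a genuinely different route from the paper's. The paper builds $\mathcal{H}_u$ in two stages: it first splits $\mathcal{H}=\mathcal{H}_1\oplus\mathcal{H}_2$ along the Wold decomposition of the isometry $V_3$, using Agler--Young's decomposition of the $\Gamma$-isometry $(V_2,V_3)$ (Theorem \ref{Wold_Gamma_Isometry}) together with the unitary-versus-shift intertwining lemma to see that $V_1$ is also block diagonal for this splitting; it then extracts from $\mathcal{H}_1$ the maximal reducing subspace on which $V_1$ acts normally via Morrel's lemma \cite{Morrel}, invoking Fuglede's theorem \cite{Fuglede} to check that this subspace reduces $V_2,V_3$ as well, before applying Theorem \ref{P_unitary}. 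You instead obtain $\mathcal{H}_u$ in one shot as Eschmeier's maximal joint reducing subspace on which the \emph{whole triple} is normal --- the same device the paper itself uses for its canonical decompositions (Theorem \ref{thm:decomp-Ball} and Section \ref{decomp}) --- and then upgrade the restricted, now normal, $\Pe$-isometry to a $\Pe$-unitary by soft arguments: a normal isometry is unitary, so $(V_2|_{\mathcal{H}_u},V_3|_{\mathcal{H}_u})$ is a $\Gamma$-unitary by Theorem \ref{G_unitary}, and a spherical isometry consisting of commuting normal operators is a spherical unitary, so $(V_1|_{\mathcal{H}_u},V_2|_{\mathcal{H}_u}\slash 2)$ is a $\B_2$-unitary by Theorems \ref{prop3.5} and \ref{prop3.2}; condition $(4)$ of Theorem \ref{P_unitary} then closes the loop. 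Your purity and uniqueness arguments are essentially identical to the paper's, both resting on the tacit but easily verified fact that a $\Pe$-unitary restricted to a joint reducing subspace is again a $\Pe$-unitary (e.g.\ via the algebraic characterization $(3)$ of Theorem \ref{P_unitary}). As for what each approach buys: yours is shorter, dispenses with Morrel's lemma, Fuglede's theorem and the intertwining lemma, and makes the Wold decomposition an instance of the same maximal-normal-subspace template used for $\B_n$-contractions and general $\Pe$-contractions; the paper's more constructive route yields finer structure, namely the refinement $\mathcal{H}=\mathcal{H}_u\oplus\mathcal{H}_0\oplus\mathcal{H}_2$ tied to the unitary and shift parts of $V_3$, and in particular the $\Pe$-isometry with pure isometry last component that the paper exploits immediately afterwards for its Toeplitz-operator model. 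Your opening caveat --- that taking $\mathcal{H}_u$ to be the unitary part of $V_3$ fails, as Example \ref{exm:imp} shows --- is accurate, and is precisely why the paper's proof requires its second, Morrel-type stage.
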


	\begin{proof}
		Let $V_3=P_3 \oplus Q_3$ with respect to the orthogonal decomposition $\mathcal{H}=\mathcal{H}_1 \oplus \mathcal{H}_2$ be the Wold decomposition of the isometry $V_3$ such that $P_3$ on $\HS_1$ is a unitary and $Q_3$ on $\HS_2$ is a pure isometry i.e. a unilateral shift. It follows from Theorem \ref{P_isometry} that $(V_2, V_3)$ is a $\Gamma$-isometry. Therefore, Theorem \ref{Wold_Gamma_Isometry} yields that $\mathcal{H}_1, \mathcal{H}_2$ are reducing subspaces for $V_2$ such that $(V_2|_{\mathcal{H}_1}, V_3|_{\mathcal{H}_1})$ is a $\Gamma$-unitary and $(V_2|_{\mathcal{H}_2}, V_3|_{\mathcal{H}_2})$ is a pure $\Gamma$-isometry. Thus, if $V_2|_{\HS_1}=P_2$ and $V_2|_{\HS_2}=Q_2$, then with respect to the decomposition $\HS=\HS_1 \oplus \HS_2$,
		\[
		\quad V_2=	\begin{bmatrix}
			P_2 & 0\\
			0 & Q_2\\
		\end{bmatrix}, \quad V_3=\begin{bmatrix}
			P_3 & 0\\
			0 & Q_3\\
		\end{bmatrix}.
		\]
		Suppose
		\[
		V_1=\begin{bmatrix}
			P_{1} & A_{12}\\
			A_{21} & Q_{1}\\
		\end{bmatrix}, \quad \text{with respect to } \HS=\HS_1 \oplus \HS_2.
		\]
		By the commutativity of $V_1$ with $V_3$, we have that $A_{12}Q_3=P_3A_{12}$ and  $A_{21}P_3=Q_3A_{21}$. It is well-known that (see Lemma 2.13 in \cite{Bhattacharyya}) that no non zero operator can have such intertwining relation since $P_3$ is a unitary and $Q_3$ is a shift. Consequently, $A_{12}=A_{21}=0$. Thus, with respect to the decomposition $\mathcal{H}=\mathcal{H}_1 \oplus \mathcal{H}_2$, we have
		\[
		V_1=\begin{bmatrix}
			P_1 & 0\\
			0 & Q_1\\
		\end{bmatrix}.
		\]
		Evidently, $P_{1}$ and ${Q_{1}}$ are contractions. Thus, we have that the commuting triple $(P_1, P_2, P_3)$ on $\mathcal{H}_1$ is a $\mathbb{P}$-isometry such that $(P_2, P_3)$ is a $\Gamma$-unitary and $P_1$ is a subnormal contraction. We further decompose the space $\mathcal{H}_1$. It follows from Lemma 3.1 in \cite{Morrel} that the space
		\[
		\mathcal{H}_u=\overset{\infty}{\underset{j=0}{\bigcap}}\text{Ker}\bigg(P_1^{*j}P_1^j-P_1^jP_1^{*j}\bigg) \subseteq \mathcal{H}_1,
		\]
		is a reducing subspace for $P_1$ on which $P_1$ acts a normal operator. Since $P_2$ and $P_3$ are normal operators that commute with $P_1$, Fuglede's theorem \cite{Fuglede} yields that $P_2^*$ and $P_3^*$ also commute with $P_1$. Consequently, $P_2$ and $P_3$ doubly commute with $(P_1^{*j}P_1^j-P_1^jP_1^{*j})$ for every $j \geq 0$. Thus $\mathcal{H}_u$ is a reducing subspace for $P_2$ and $P_3$ as well. With respect to the orthogonal decomposition $\mathcal{H}_1=\mathcal{H}_u \oplus \mathcal{H}_{0}$, let the block matrix form of each $P_i$ be given by
		\[
		P_1=\begin{bmatrix}
			U_1 & 0\\
			0 & B_1\\
		\end{bmatrix}, \quad P_2=\begin{bmatrix}
			U_2 & 0\\
			0 & B_2\\
		\end{bmatrix} \quad \text{and} \quad P_3=\begin{bmatrix}
			U_3 & 0\\
			0 & B_3\\
		\end{bmatrix}.
		\]
		Since $U_1, U_2, U_3$ are restrictions of $V_1, V_2, V_3$ respectively to the common reducing subspace $\mathcal{H}_u$, therefore, 
		$
		U_1^*U_1=I-\frac{1}{4}U_2^*U_2.
		$
		Hence, it follows from Theorem \ref{P_unitary} that the commuting triple $(U_1, U_2, U_3)$ of normal operators acting on $\mathcal{H}_u$ is indeed a $\mathbb{P}$-unitary. With respect to the decomposition of the whole space $\mathcal{H}=\mathcal{H}_u \oplus \mathcal{H}_0 \oplus \mathcal{H}_2$, the block matrix form of each $V_i$ is given by
		\[
		V_1=\begin{bmatrix}
			U_1 & 0 & 0\\
			0 & B_1 & 0\\
			0 & 0 & Q_1
		\end{bmatrix} , \quad 
		V_2=\begin{bmatrix}
			U_2 & 0 & 0\\
			0 & B_2 & 0\\
			0 & 0 & Q_2
		\end{bmatrix}, \quad
		V_3=\begin{bmatrix}
			U_3 & 0 & 0\\
			0 & B_3 & 0\\
			0 & 0 & Q_3
		\end{bmatrix},
		\]
		which we re-write as 
		\[
		V_1=\begin{bmatrix}
			U_1 & 0 \\
			0 & S_1 
		\end{bmatrix} , \quad 
		V_2=\begin{bmatrix}
			U_2 & 0 \\
			0 & S_2 
		\end{bmatrix}, \quad
		V_3=\begin{bmatrix}
			U_3 & 0 \\
			0 & S_3
		\end{bmatrix},
		\]
		with respect to the decomposition $\mathcal{H}=\mathcal{H}_u\oplus \mathcal{H}_c$, where, $\mathcal{H}_c=\mathcal{H}_0 \oplus \mathcal{H}_2$. Denoting 
		\[
		(V_1|_{\mathcal{H}_c}, V_2|_{\mathcal{H}_c}, V_3|_{\mathcal{H}_c})=(S_1, S_2, S_3),
		\] 
		we now show that $(S_1,S_2,S_3)$ is a pure $\mathbb{P}$-isometry. If possible, let there be a closed joint reducing subspace say, $\mathcal{L}$ of $\mathcal{H}_c$ on which $(S_1, S_2, S_3)$ acts as a $\mathbb{P}$-unitary. Theorem \ref{P_unitary} shows that $(S_2, S_3)$ is a $\Gamma$-unitary and hence $S_3$ is a unitary. Consequently, $\mathcal{L} \subseteq \mathcal{H}_1$. On the subspace $\mathcal{L}$, the contraction $P_1$  acts as a normal operator because, $\mathcal{L} \subseteq \mathcal{H}_1$ implying that $S_1|_{\mathcal{L}}=V_1|_{\mathcal{L}}=P_1|_{\mathcal{L}}$. Since $\mathcal{H}_u$ is the maximal closed subspace of $\mathcal{H}_1$ which reduces $P_1$ and on which $P_1$ acts as a normal, therefore, $\mathcal{L} \subseteq \mathcal{H}_u$. Hence, $\mathcal{L} \subseteq \mathcal{H}_u \cap \mathcal{H}_c=\{0\}$. This also shows that any closed joint reducing subspace of $\mathcal{H}$ on which $(V_1, V_2, V_3)$ acts as a $\mathbb{P}$-unitary must be contained in $\mathcal{H}_u$ and in this sense, $\mathcal{H}_u$ is maximal.
		
\smallskip		
		
		We now prove the uniqueness of the decomposition. Let  $\mathcal{H}=\mathcal{L}_u \oplus \mathcal{L}_c$ be an arbitrary decomposition of $\mathcal{H}$ with the properties in the statement of the theorem. The maximality of $\mathcal{H}_u$ implies that $\mathcal{L}_u \subseteq \mathcal{H}_u$. The spaces $\mathcal{H}_u$ and $\mathcal{L}_u$ reduce each $V_i$, therefore, the same is true for $\mathcal{H}_u \ominus \mathcal{L}_u$ and $(V_1, V_2, V_3)|_{\mathcal{H}_u \ominus \mathcal{L}_u}$ is a $\mathbb{P}$-unitary. Since $\mathcal{H}_u \ominus \mathcal{L}_u \subseteq \mathcal{H} \ominus \mathcal{L}_u=\mathcal{L}_c$ and since $(V_1, V_2, V_3)$ is a pure $\mathbb{P}$-isometry on $\mathcal{L}_c$, we have that $\mathcal{H}_u \ominus \mathcal{L}_u=\{0\}$. This shows that $\mathcal{H}_u =\mathcal{L}_u$ and the desired conclusion follows.
	\end{proof}
	
	Note that it is not necessary that a $\Pe$-isometry is a commuting triple $(A,S,P)$ that is a $\Pe$-contraction with $P$ being an isometry unlike the isometries associated with the symmetrized bidisc and tetrablock, (see Theorem 2.14 in \cite{Bhattacharyya} and Theorem 5.7 in \cite{Bhattacharyya-01} respectively). The following example shows this.
	
\begin{eg} \label{exm:imp-1}
We recall Example \ref{exm:imp} first. It follows from Theorem \ref{P_isometry} that the commuting triple of subnormal operators
		$
		\underline{N}=(N_1, N_2, N_3)=(T_z, \ 0, \ -I)$ on $\ell^2(\mathbb{N})$, where $T_z$ is the unilateral shift on $\ell^2(\mathbb{N})$, is a $\Pe$-isometry but not a $\Pe$-unitary. Thus, its adjoint $(T_z^*,0,-I)$ is a $\Pe$-contraction by Lemma \ref{basiclem:01} whose last component, that is $-I$ is an isometry. However, it follows from Theorem \ref{P_isometry} that $(T_z^*,0,-I)$ is not a $\Pe$-isometry.
		\qed
		\end{eg}
		
A $\Pe$-isometry with its last component being a pure isometry, plays major role in determining the structure of a $\Pe$-isometry. Indeed, in the proof of Theorem \ref{Wold}, the last component of the $\Pe$-isometry $(Q_1,Q_2,Q_3)=(V_1|_{\HS_2}, V_2|_{\HS_2}, V_3|_{\HS_2})$ was a pure isometry. We conclude this Section by producing a concrete operator model for a $\Pe$-isometry whose last component is a pure isometry. For this we need to mention the highly efficient machinery called the fundamental operator of a $\Gamma$-contraction. It was proved in \cite{Bhattacharyya} that to every $\Gamma$-contraction $(S,P)$ there is a unique operator $F \in \mathcal B(\mathcal D_P)$ with numerical radius $\omega(F) \leq 1$ such that
\begin{equation} \label{eqn:funda-01}
	S-S^*P=D_PFD_P, 
	\end{equation}
where $D_P=(I-P^*P)^{\frac{1}{2}}$ and $\mathcal D_P=\overline{Ran}\, D_P$. Indeed, a major role in the operator theory of the symmetrized bidisc is played by this unique operator. For this reason $F$ was named the \textit{fundamental operator} of the $\Gamma$-contraction $(S,P)$.

		\begin{thm}
		Let $(V_1, V_2, V_3)$ be a $\Pe$-contraction on a Hilbert space $\mathcal{H}$. If $V_3$ is a pure isometry, then there is a unitary operator $U: \mathcal{H} \to H^2(\mathcal{D}_{V_3^*})$ and a partial isometry $V$ on $\mathcal{H}$ such that 
		\[
		V_1=VU^*D_{\frac{1}{2}T_\phi}U, \quad V_2=U^*T_\phi U , \quad \text{and} \quad V_3=U^*T_z U, 
		\]
		where $\phi(z)=F_*^*+F_*z$ and $F_*$ is the fundamental operator of $(V_2^*, V_3^*)$. 
	\end{thm}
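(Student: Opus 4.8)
The plan is to separate the problem into its $\Gamma$-part, which governs $(V_2,V_3)$, and a factorization problem for $V_1$ through the defect operator of $\frac{1}{2}V_2$. First I would show that $(V_2,V_3)$ is a \emph{pure} $\Gamma$-isometry. By Theorem \ref{lem2.12} the pair $(V_2,V_3)$ is a $\Gamma$-contraction, and since $V_3$ is an isometry we have $D_{V_3}=(I-V_3^*V_3)^{1/2}=0$; the fundamental operator equation \eqref{eqn:funda-01} for $(V_2,V_3)$ then reads $V_2-V_2^*V_3=D_{V_3}FD_{V_3}=0$, so $V_2=V_2^*V_3$, and as $\|V_2\|\le 2$, Theorem \ref{Gamma_uni} identifies $(V_2,V_3)$ as a $\Gamma$-isometry. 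Because $V_3$ is moreover a pure isometry, this is a pure $\Gamma$-isometry. I would then invoke the functional model for pure $\Gamma$-isometries from \cite{Bhattacharyya}, which furnishes a unitary $U\colon\mathcal H\to H^2(\mathcal D_{V_3^*})$ with $UV_3U^*=T_z$ and $UV_2U^*=T_\phi$, where $\phi(z)=F_*^*+F_*z$ and $F_*$ is the fundamental operator of $(V_2^*,V_3^*)$. This delivers the identities $V_2=U^*T_\phi U$ and $V_3=U^*T_zU$ verbatim.

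It remains to recover $V_1$. Because $U$ is unitary and $UV_2U^*=T_\phi$, one has $U^*D_{\frac{1}{2}T_\phi}U=D_{\frac{1}{2}V_2}=(I-\frac{1}{4}V_2^*V_2)^{1/2}$, so the target identity is $V_1=V\,D_{\frac{1}{2}V_2}$. The crucial inequality to establish is
\[
V_1^*V_1+\tfrac{1}{4}V_2^*V_2\le I,
\]
that is, that $(V_1,\frac{1}{2}V_2)$ is a spherical contraction; this is the operator form of Lemma \ref{lem2.10}. Granting it, $V_1^*V_1\le D_{\frac{1}{2}V_2}^2$, so $\|V_1h\|\le\|D_{\frac{1}{2}V_2}h\|$ for every $h$. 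Hence the assignment $D_{\frac{1}{2}V_2}h\mapsto V_1h$ is well defined and contractive on $Ran\,D_{\frac{1}{2}V_2}$, and extends by continuity to $\overline{Ran}\,D_{\frac{1}{2}V_2}$ and by zero on its orthocomplement (this is Douglas' range-inclusion lemma) to an operator $V$ on $\mathcal H$ with $V_1=V\,D_{\frac{1}{2}V_2}$ and $\ker V\supseteq\ker D_{\frac{1}{2}V_2}$. Conjugating back gives $V_1=V\,U^*D_{\frac{1}{2}T_\phi}U$, as required.

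Two points require genuine care, and I expect the second to be the main obstacle. First, the spherical-contraction inequality cannot be extracted from the scalar von Neumann inequality alone: tested against linear polynomials, the spectral-set hypothesis only bounds the largest eigenvalue of the Gram matrix $[\langle V_jh,V_ih\rangle]$, not its trace $\|V_1h\|^2+\frac{1}{4}\|V_2h\|^2$. I would therefore derive it from the matricial (complete) von Neumann inequality applied to the column-valued map $(a,s,p)\mapsto(a,\tfrac{1}{2}s)$, whose operator norm is at most $1$ on $\PC$ by Lemma \ref{lem2.10}, or alternatively from the $\B_2$-isometric rigidity forced on the pure-shift part by $V_3$. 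Second, and more delicately, the factorization above produces only a contraction $V$; promoting it to a \emph{partial} isometry amounts to showing that $V^*V$ is a projection, equivalently that $\|V_1h\|=\|D_{\frac{1}{2}V_2}h\|$ on the initial space of $V$. This is precisely where the fine structure of $b\Pe$ — encoded in the symbol $\phi(z)=F_*^*+F_*z$ and the boundary relation $|z_1|^2=1-\frac{1}{4}|z_2|^2$ — must be brought to bear, and identifying the initial space of $V$ and proving isometry there is the crux of the argument.
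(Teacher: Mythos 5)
Your treatment of $(V_2,V_3)$ is correct and matches the paper's (the paper quotes the pure $\Gamma$-isometry model from Theorem 2.16 of \cite{PalI}, which is what you invoke), but the handling of $V_1$ has a genuine gap --- one you flag yourself --- and it stems from working with the hypothesis ``$\Pe$-contraction'' at face value. The missing idea is that this theorem lives in the $\Pe$-\emph{isometry} setting (it is stated right after the Wold decomposition, as a model for $\Pe$-isometries with pure isometric last component), and that is exactly what the paper's proof uses: by Theorem \ref{P_isometry}, $(V_1, V_2\slash 2)$ is a $\B_2$-isometry, hence a spherical isometry by Theorem \ref{prop3.5}, so one has the \emph{equality} $V_1^*V_1=I-\tfrac{1}{4}V_2^*V_2$, not merely the inequality $V_1^*V_1+\tfrac{1}{4}V_2^*V_2\le I$ you aim for. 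For a mere $\Pe$-contraction the conclusion is in fact false: take $(V_1,V_2,V_3)=(\tfrac{1}{2}I,0,T_z)$, a $\Pe$-contraction by Proposition \ref{prop2.8} whose third component is a pure isometry; here the fundamental operator of $(0,T_z^*)$ is $F_*=0$, so $\phi=0$ and $D_{\frac{1}{2}T_\phi}=I$, and the asserted factorization would force $V=\tfrac{1}{2}I$ to be a partial isometry. This also explains why you could not close either of your two ``points requiring care'': the spherical inequality is not available for general $\Pe$-contractions (the paper obtains it only for subnormal ones, Lemma \ref{lem7.20}, and your matricial von Neumann route would need $\PC$ to be a \emph{complete} spectral set, which is not part of the hypothesis), and no amount of boundary structure of $b\Pe$ can promote the Douglas contraction to a partial isometry, because the statement being proved is simply untrue at that level of generality.

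Once the equality is in hand, the ``crux'' dissolves. From $V_1^*V_1=I-\tfrac{1}{4}V_2^*V_2=U^*\bigl(I-\tfrac{1}{4}T_\phi^*T_\phi\bigr)U$ and the fact that conjugation by the unitary $U$ commutes with the functional calculus (the paper spells this out by approximating $\lambda^{1/2}$ by polynomials), one gets $(V_1^*V_1)^{1/2}=U^*D_{\frac{1}{2}T_\phi}U$ \emph{exactly}. Then the canonical polar decomposition --- every bounded operator factors as $T=V(T^*T)^{1/2}$ with $V$ a partial isometry, with initial space $\overline{Ran}\,(T^*T)^{1/2}$ --- gives $V_1=V(V_1^*V_1)^{1/2}=VU^*D_{\frac{1}{2}T_\phi}U$ directly. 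No Douglas range-inclusion argument, no separate identification of the initial space, and no appeal to the relation $|z_1|^2=1-\tfrac{1}{4}|z_2|^2$ on $b\Pe$ are needed: the partial isometry comes for free precisely because the second factor equals $(V_1^*V_1)^{1/2}$ rather than merely dominating it.
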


	\begin{proof}
		It follows from Theorem \ref{P_isometry} that $(V_2, V_3)$ is a $\Gamma$-isometry. Since $V_3$ is a pure isometry, Theorem 2.16 in \cite{PalI} yields that there is a unitary operator $U: \mathcal{H} \to H^2(\mathcal{D}_{V_3^*})$ such that 
		\[
		V_2=U^*T_\phi U \quad \text{and} \quad  V_3=U^*T_z U, \quad \phi(z)=F_*^*+F_*z,
		\]
		$F_*$ being the fundamental operator of $(V_2^*, V_3^*)$. Again by Theorem \ref{P_isometry}, we have $V_1^*V_1=I-\frac{1}{4}V_2^*V_2$ and hence
		$
		V_1^*V_1=U^*\left(I-\frac{1}{4}T_\phi^*T_\phi\right)U.
		$
		It follows by iteration that
		\[
		(V_1^*V_1)^n=U^*\left(I-\frac{1}{4}T_\phi^*T_\phi\right)^nU \quad \text{for} \quad n=0,1 ,2, \dotsc.
		\]
		Consequently, we have that 
		$
		p(V_1^*V_1)=U^*p\left(I-\frac{1}{4}T_\phi^*T_\phi\right)U
		$
		for every polynomial $p(\lambda)=a_0+a_1\lambda+ \dotsc + a_n\lambda^n$. Choose a sequence of polynomials $p_m(\lambda)$ which tends to the function $\lambda^{1\slash 2}$ on the interval $0 \leq \lambda \leq 1$. The sequence of operators $p_m(T)$ converges then converges to $T^{1\slash 2}$ in operator norm. Therefore, 
		$
		(V_1^*V_1)^{1\slash 2}=U^*\left(I-\frac{1}{4}T_\phi^*T_\phi\right)^{1\slash 2}U.
		$
		Recall that for every $T \in \mathcal{B}(\mathcal{H})$, there is a partial isometry $V$ on $\mathcal{H}$ such that $T=V(T^*T)^{1 \slash 2}$. Therefore, there is a partial isometry $V$ on $\mathcal{H}$ such that 
		\[
		V_1=V(V_1^*V_1)^{1\slash 2}=VU^*\left(I-\frac{1}{4}T_\phi^*T_\phi\right)^{1\slash 2}U=VU^*D_{\frac{1}{2}T_\phi}U
		\]
		and this completes the proof. 
		\end{proof}
	
	\section{Canonical decomposition of a $\mathbb{P}$-contraction}\label{decomp}
	
\vspace{0.2cm}

\noindent As we have mentioned in Section \ref{Polyball} (see the discussion before Theorem \ref{thm:decomp-Ball}) that every contraction $T$ acting on a Hilbert space $\HS$ admits a canonical decomposition $T_1\oplus T_2$ with respect to $\HS=\HS_1 \oplus \HS_2$, where $T_1$ is a unitary and $T_2$ is a completely non-unitary contraction. The maximal reducing subspace $\HS_1$ on which $T$ acts as a unitary is given by
	\begin{equation*}
		\begin{split}
			\mathcal{H}_1 =\{h \in \mathcal{H}: \|T^nh\|=\|h\|=\|T^{*n}h\|, \ n=1,2, \dotsc \} = \underset{n \in \mathbb{Z}}{\bigcap} Ker D_{T(n)},\\	
		\end{split}
	\end{equation*}
	where, 
	\[
	D_{T(n)}= \left\{
	\begin{array}{ll}
		(I-T^{*n}T^n)^{1\slash 2} & n \geq 0 \\
		(I-T^{|n|}T^{*|n|})^{1\slash 2} & n <0 \,.\\
	\end{array} 
	\right. 
	\]
	An analogous result holds for a pair of doubly commuting contractions as the following result shows.
		
	\begin{thm}[{\cite{SloncinskiII} \& \cite{Pal-II}, Theorem 4.2}]\label{lem5.2}
		For a pair of doubly commuting contractions $P,Q$ acting on a Hilbert space $\HS$, if $Q=Q_1 \oplus Q_2$ is the canonical decomposition of $Q$ with respect to the orthogonal decomposition $\HS=\HS_1 \oplus \HS_2$, then $\mathcal{H}_1, \mathcal{H}_2$ are reducing subspaces for $P$.
	\end{thm}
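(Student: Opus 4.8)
The plan is to exploit the explicit description of the unitary subspace recorded just before the statement, namely
\[
\mathcal{H}_1=\underset{n \in \mathbb{Z}}{\bigcap} \operatorname{Ker} D_{Q(n)},
\]
and to show that $P$ maps each kernel $\operatorname{Ker}D_{Q(n)}$ into itself, with the same holding for $P^*$. Once this is established, $P\mathcal{H}_1\subseteq\mathcal{H}_1$ and $P^*\mathcal{H}_1\subseteq\mathcal{H}_1$, so $\mathcal{H}_1$ reduces $P$; since the orthogonal complement of a reducing subspace is again reducing, $\mathcal{H}_2=\mathcal{H}\ominus\mathcal{H}_1$ then reduces $P$ as well, which is exactly the conclusion. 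Thus everything reduces to a commutation statement between $P$ and the defect operators $D_{Q(n)}$.

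The key observation is that $D_{Q(n)}$ is, by its very definition, the positive square root of an operator that is a non-commutative polynomial in $Q$ and $Q^*$: for $n\geq 0$ it is $(I-Q^{*n}Q^{n})^{1/2}$ and for $n<0$ it is $(I-Q^{|n|}Q^{*|n|})^{1/2}$. Because $P$ and $Q$ doubly commute, we have both $PQ=QP$ and $PQ^*=Q^*P$, so $P$ commutes with every word in $Q,Q^*$; in particular $P$ commutes with $I-Q^{*n}Q^{n}$ and with $I-Q^{|n|}Q^{*|n|}$ for every $n$. Since a bounded operator that commutes with a positive operator also commutes with its unique positive square root (the square root being a norm limit of polynomials in that positive operator via the continuous functional calculus), I would conclude that $PD_{Q(n)}=D_{Q(n)}P$ for all $n\in\mathbb{Z}$.

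Next I would observe that $P^*$ inherits the same double-commuting relation: taking adjoints of $PQ=QP$ and $PQ^*=Q^*P$ gives $P^*Q^*=Q^*P^*$ and $P^*Q=QP^*$, so $P^*$ likewise commutes with $Q$ and $Q^*$, and hence with each $D_{Q(n)}$ by the identical argument. Now if $h\in\operatorname{Ker}D_{Q(n)}$, then $D_{Q(n)}(Ph)=PD_{Q(n)}h=0$ and $D_{Q(n)}(P^*h)=P^*D_{Q(n)}h=0$, so $\operatorname{Ker}D_{Q(n)}$ is invariant under both $P$ and $P^*$; intersecting over $n$ shows the same for $\mathcal{H}_1$, completing the argument. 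I do not anticipate a genuine obstacle here, as the proof is essentially a functional-calculus bookkeeping exercise; the only point meriting care is the justification that commutation with the positive operator $D_{Q(n)}^2$ passes to its square root $D_{Q(n)}$, which is where the double-commuting hypothesis (rather than mere commutativity, which would not give $PQ^*=Q^*P$) is genuinely used.
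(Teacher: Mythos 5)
Your proof is correct, but there is nothing in the paper to compare it against: the paper does not prove this statement, it simply quotes it from the literature (S\l oci\'nski's work on doubly commuting contractions and Theorem 4.2 of the cited paper of Pal). Your argument is a complete, self-contained proof along the standard lines, and it uses exactly the ingredients the paper sets up immediately before the statement, namely the description of the unitary part as $\mathcal{H}_1=\bigcap_{n\in\mathbb{Z}}\operatorname{Ker}D_{Q(n)}$. The three steps are all justified: double commutation of $P$ with $Q$ gives commutation of $P$ with every word in $Q,Q^*$, hence with each defect square $D_{Q(n)}^2$; commutation with a positive operator passes to its unique positive square root by the continuous functional calculus, so $PD_{Q(n)}=D_{Q(n)}P$; and taking adjoints of $PQ=QP$, $PQ^*=Q^*P$ shows $P^*$ doubly commutes with $Q$ as well, so the same holds for $P^*$. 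From this, each $\operatorname{Ker}D_{Q(n)}$ is invariant under both $P$ and $P^*$, hence so is the intersection $\mathcal{H}_1$, and the orthocomplement $\mathcal{H}_2$ then reduces $P$ automatically. You also correctly flag the one genuinely delicate point (square-root commutation) and correctly identify where the hypothesis $PQ^*=Q^*P$, rather than mere commutativity, is indispensable.
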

	
	In \cite{AglerVII}, Agler and Young  proved an analogue of canonical decomposition for a $\Gamma$-contraction $(S, P)$. Interestingly, such a decomposition of $(S, P)$ corresponds to the canonical decomposition of the contraction $P$ as the following theorem shows.
	
	\begin{thm}[\cite{AglerVII}, Theorem 2.8]\label{thm5.3}
		Let $(S, P)$ be a $\Gamma$-contraction on a Hilbert space $\mathcal{H}$. Let $\mathcal{H}_1$ be the maximal subspace of $\mathcal{H}$ which reduces $P$ and on which $P$ is unitary. Let $\mathcal{H}_2=\mathcal{H} \ominus \mathcal{H}_1$. Then $\mathcal{H}_1, \mathcal{H}_2$ reduces $S, (S|_{\mathcal{H}_1}, P|_{\mathcal{H}_1})$ is a $\Gamma$-unitary and $(S|_{\mathcal{H}_2}, P|_{\mathcal{H}_2})$ is a $\Gamma$-contraction for which $P|_{\mathcal{H}_2}$ is a completely non-unitary contraction.
	\end{thm}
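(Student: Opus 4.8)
The plan is to reduce everything to a single assertion: that the maximal unitary subspace $\mathcal{H}_1$ of the contraction $P$ also reduces $S$. Once this is granted, the two remaining conclusions are immediate. Indeed, restricting a $\Gamma$-contraction to a joint reducing subspace again yields a $\Gamma$-contraction (the argument of Lemma \ref{basiclem:01}(ii) applies verbatim, since $\Gamma$ is polynomially convex), so both $(S|_{\mathcal{H}_1}, P|_{\mathcal{H}_1})$ and $(S|_{\mathcal{H}_2}, P|_{\mathcal{H}_2})$ are $\Gamma$-contractions. On $\mathcal{H}_1$ the operator $P|_{\mathcal{H}_1}$ is unitary by the very definition of $\mathcal{H}_1$, so part $(4)$ of Theorem \ref{G_unitary} forces $(S|_{\mathcal{H}_1}, P|_{\mathcal{H}_1})$ to be a $\Gamma$-unitary; and on $\mathcal{H}_2$ the operator $P|_{\mathcal{H}_2}$ is completely non-unitary by construction of the canonical decomposition of $P$. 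Thus the whole theorem rests on the invariance of $\mathcal{H}_1$ under $S$ and $S^*$.

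To prove that invariance I would exploit the fundamental operator. Recall that $\mathcal{H}_1=\{h:\ \|P^nh\|=\|h\|=\|P^{*n}h\|,\ n\ge 1\}$, so that $D_Ph=D_{P^*}h=0$ for every $h\in\mathcal{H}_1$. Applying the fundamental equation \eqref{eqn:funda-01} for $(S,P)$, its adjoint, and the corresponding pair of equations for the $\Gamma$-contraction $(S^*,P^*)$, and using $D_Ph=D_{P^*}h=0$, I obtain for each $h\in\mathcal{H}_1$ the four intertwining identities $Sh=S^*Ph$, $S^*h=P^*Sh$, $S^*h=SP^*h$ and $Sh=PS^*h$. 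These identities, combined with the unitarity of $P$ on $\mathcal{H}_1$ (so that $PP^*h=P^*Ph=h$ for $h\in\mathcal{H}_1$) and the commutation $SP=PS$, first give $P^*P(Sh)=Sh$ and $PP^*(Sh)=Sh$, that is $Sh\in\operatorname{Ker}D_P\cap\operatorname{Ker}D_{P^*}$. An induction on $n$, repeatedly feeding the vectors $P^{\pm n}h\in\mathcal{H}_1$ back into the same four identities, then upgrades this to $\|P^nSh\|=\|Sh\|=\|P^{*n}Sh\|$ for all $n\ge 1$, which is precisely the statement $Sh\in\mathcal{H}_1$. Hence $S\mathcal{H}_1\subseteq\mathcal{H}_1$, and running the identical argument for the $\Gamma$-contraction $(S^*,P^*)$ (whose maximal $P^*$-unitary subspace is again $\mathcal{H}_1$) gives $S^*\mathcal{H}_1\subseteq\mathcal{H}_1$; so $\mathcal{H}_1$, and therefore $\mathcal{H}_2=\mathcal{H}\ominus\mathcal{H}_1$, reduces $S$.

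The main obstacle is precisely this reducibility step. The naive route --- ``any operator commuting with both $P$ and $P^*$ preserves the maximal unitary subspace'' --- is unavailable, because being a $\Gamma$-contraction forces $S$ to commute only with $P$, not with $P^*$. The fundamental operator is what repairs this: although $S$ need not commute with $P^*$ globally, the vanishing of $D_P$ and $D_{P^*}$ on $\mathcal{H}_1$ turns \eqref{eqn:funda-01} and its relatives into \emph{exact} intertwining relations valid on $\mathcal{H}_1$, and these are strong enough to propagate through all powers of $P$ and $P^*$. The only points demanding genuine care are the bookkeeping in the induction and the verification that the maximal $P^*$-unitary subspace coincides with $\mathcal{H}_1$; both are routine.
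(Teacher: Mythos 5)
Your proof is correct, but there is nothing in the paper to compare it against: Theorem \ref{thm5.3} is imported verbatim from Agler--Young (\cite{AglerVII}, Theorem 2.8) and used as a black box, with no proof given. So your argument is best measured against the original one, which predates the fundamental operator of \cite{Bhattacharyya} and works directly from Agler--Young's characterizations of $\Gamma$-contractions; your route instead makes everything follow algebraically from the single identity \eqref{eqn:funda-01}. The reduction step is sound (restriction of a $\Gamma$-contraction to a reducing subspace is again a $\Gamma$-contraction by polynomial convexity of $\Gamma$ and Proposition \ref{basicprop:01}; part (4) of Theorem \ref{G_unitary} settles $\mathcal{H}_1$; complete non-unitarity of $P|_{\mathcal{H}_2}$ is just maximality of $\mathcal{H}_1$), and the heart of the argument checks out: since $D_Ph=D_{P^*}h=0$ on $\mathcal{H}_1$, the fundamental equations of $(S,P)$ and of $(S^*,P^*)$ together with their adjoints yield exactly your four identities. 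The cleanest way to close your induction is to observe that (b) and (c) combine to $P^*Sh=S^*h=SP^*h$, i.e. $P^*S=SP^*$ on $\mathcal{H}_1$; since $\mathcal{H}_1$ is invariant under $P$ and $P^*$ with $P^{*n}P^n=P^nP^{*n}=I$ there, one gets $P^{*n}P^nSh=P^{*n}SP^nh=SP^{*n}P^nh=Sh$ and symmetrically $P^nP^{*n}Sh=Sh$, which says precisely that $Sh\in\bigcap_{n\in\Z}\operatorname{Ker}D_{P(n)}=\mathcal{H}_1$. (Also note that once $S\mathcal{H}_1\subseteq\mathcal{H}_1$ is known, identity (c) gives $S^*\mathcal{H}_1=SP^*\mathcal{H}_1\subseteq S\mathcal{H}_1\subseteq\mathcal{H}_1$ immediately, so the second pass through $(S^*,P^*)$ is not even needed.) What your approach buys is a short, purely algebraic and self-contained proof within this paper's toolkit; what it costs is reliance on the existence of the fundamental operator, itself a nontrivial theorem of \cite{Bhattacharyya}, whereas the original proof needs no such input.
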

	
	Here we present a canonical decomposition of a $\Pe$-contraction. Indeed, we show that every $\Pe$-contraction admits an orthogonal decomposition into a $\Pe$-unitary and a completely non-unitary $\Pe$-contraction. We divide our proof into two parts. First we prove the result for a normal $\Pe$-contraction.	
	
	\begin{prop}\label{normal}
		Let $(A, S, P)$ be a normal $\mathbb{P}$-contraction on a Hilbert space $\mathcal{H}$. Then there is an orthogonal decomposition $\mathcal{H}=\mathcal{H}_u\oplus \mathcal{H}_c$ into joint reducing subspaces $\mathcal{H}_u, \mathcal{H}_c$ of $A, S, P$ such that $(A|_{\mathcal{H}_u}, S|_{\mathcal{H}_u}, P|_{\mathcal{H}_u})$ is a $\mathbb{P}$-unitary and $(A|_{\mathcal{H}_c}, S|_{\mathcal{H}_c}, P|_{\mathcal{H}_c})$ is a completely non-unitary $\mathbb{P}$-contraction. Moreover, $\mathcal{H}_u$ is the maximal closed joint reducing subspace of $\mathcal{H}$ on which $(A, S, P)$ acts as a $\Pe$-unitary.
	\end{prop}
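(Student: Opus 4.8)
The plan is to carve out the subspace on which the three conditions characterising a $\Pe$-unitary among normal triples hold simultaneously. By part $(3)$ of Theorem \ref{P_unitary}, a commuting triple of normal operators is a $\Pe$-unitary precisely when $(S,P)$ is a $\Gamma$-unitary (so $P$ is unitary) and $A^*A=I-\tfrac14 S^*S$. Accordingly I would build $\mathcal{H}_u$ as the intersection of the maximal subspace on which $P$ is unitary with the kernel of the positive operator $G:=I-A^*A-\tfrac14 S^*S$, and set $\mathcal{H}_c=\mathcal{H}\ominus\mathcal{H}_u$.

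First I would decompose along $P$. By Theorem \ref{lem2.12}, $(S,P)$ is a $\Gamma$-contraction, so Theorem \ref{thm5.3} furnishes the maximal reducing subspace $\mathcal{K}_1$ of $P$ on which $P$ is unitary, with $(S|_{\mathcal{K}_1},P|_{\mathcal{K}_1})$ a $\Gamma$-unitary and $P|_{\mathcal{K}_1^\perp}$ completely non-unitary. Since $A$ is normal and commutes with $P$, Fuglede's theorem gives $AP^*=P^*A$, so $A$ and $P$ are doubly commuting contractions (both are contractions by Theorem \ref{lem2.12}). Theorem \ref{lem5.2} then shows that $\mathcal{K}_1$ and $\mathcal{K}_1^\perp$ reduce $A$ as well, so $\mathcal{K}_1$ reduces the whole triple.

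Next I would control $G$. Lemma \ref{lem2.10} gives $|a|^2+\tfrac14|s|^2\le 1$ for every $(a,s,p)\in\PC$, so the continuous function $(a,s,p)\mapsto 1-|a|^2-\tfrac14|s|^2$ is nonnegative on $\sigma_T(A,S,P)\subseteq\PC$; applying the continuous functional calculus for the commuting normal triple yields $G\ge 0$ on all of $\mathcal{H}$. As $A,S,P$ are commuting normal operators, $G$ commutes with each of them, so $\ker G=\{h:\|Ah\|^2+\tfrac14\|Sh\|^2=\|h\|^2\}$ is a joint reducing subspace. Put $\mathcal{H}_u=\mathcal{K}_1\cap\ker G$ and $\mathcal{H}_c=\mathcal{H}\ominus\mathcal{H}_u$; both reduce $A,S,P$. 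On $\mathcal{H}_u$ the pair $(S,P)$ is a $\Gamma$-unitary (a restriction of one to a reducing subspace), $A|_{\mathcal{H}_u}$ is normal and $A^*A=I-\tfrac14 S^*S$ holds, so Theorem \ref{P_unitary} makes $(A,S,P)|_{\mathcal{H}_u}$ a $\Pe$-unitary; on $\mathcal{H}_c$ the restriction is a $\Pe$-contraction by Lemma \ref{basiclem:01}.

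Finally, for complete non-unitarity and maximality I would argue uniformly: if $\mathcal{L}$ is any nonzero joint reducing subspace on which $(A,S,P)$ restricts to a $\Pe$-unitary, then by Theorem \ref{P_unitary} $P|_{\mathcal{L}}$ is unitary and $A^*A=I-\tfrac14 S^*S$ on $\mathcal{L}$; maximality of $\mathcal{K}_1$ forces $\mathcal{L}\subseteq\mathcal{K}_1$, while the second identity forces $\mathcal{L}\subseteq\ker G$, whence $\mathcal{L}\subseteq\mathcal{H}_u$. Taking $\mathcal{L}\subseteq\mathcal{H}_c$ gives $\mathcal{L}\subseteq\mathcal{H}_u\cap\mathcal{H}_c=\{0\}$, so $\mathcal{H}_c$ is completely non-unitary, and the same computation shows every $\Pe$-unitary reducing subspace sits inside $\mathcal{H}_u$, which is the asserted maximality. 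I expect the main obstacle to be the two positivity/reduction facts feeding the construction — verifying that $\mathcal{K}_1$ genuinely reduces $A$ (the Fuglede plus doubly-commuting step) and that $G\ge 0$ globally rather than merely on the $\Gamma$-unitary part — since everything else is a matter of restricting the characterisation in Theorem \ref{P_unitary}. As an alternative one could bypass these by using the joint spectral measure $E$ of the normal triple and setting $\mathcal{H}_u=\operatorname{ran}E(b\Pe)$, $\mathcal{H}_c=\operatorname{ran}E(\PC\setminus b\Pe)$, after which the support characterisation of the Taylor spectrum delivers all the claims directly.
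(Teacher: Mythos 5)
Your proposal is correct and follows essentially the same route as the paper: decompose along the canonical decomposition of $P$ (Theorems \ref{thm5.3} and \ref{lem5.2}, with Fuglede supplying the double commutation), then cut down by the kernel of $I-A^*A-\tfrac{1}{4}S^*S$, which is joint reducing because it commutes with the normal triple, and finish with the identical maximality argument via Theorem \ref{P_unitary}. Your $\mathcal{H}_u=\mathcal{K}_1\cap\ker G$ coincides with the paper's $\ker\bigl[I-A_1^{*}A_1-\tfrac{1}{4}S_1^{*}S_1\bigr]\subseteq\mathcal{H}_1$, so the only additions are the (not strictly necessary) global positivity check on $G$ and the sketched spectral-measure alternative.
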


	\begin{proof}
		It follows from Proposition \ref{lem2.3} that $(S, P)$ on $\HS$ is a $\Gamma$-contraction and thus $P$ and $S\slash 2$ are contractions. Let $\mathcal{H}=\mathcal{H}_1\oplus \mathcal{H}_2$ be the canonical decomposition of $P$. An application of Theorem \ref{lem5.2} yields that $\mathcal{H}_1, \mathcal{H}_2$ are reducing subspaces for $A$ and $S$. Let 
		\[
		A=\begin{bmatrix}
			A_1 & 0 \\
			0 & A_2
		\end{bmatrix}, \quad S=\begin{bmatrix}
			S_1 & 0 \\
			0 & S_2
		\end{bmatrix} \quad \text{and} \quad P=\begin{bmatrix}
			P_1 & 0 \\
			0 & P_2
		\end{bmatrix}
		\]
		with respect to the decomposition $\mathcal{H}=\mathcal{H}_1\oplus \mathcal{H}_2$, so that $P_1$ is unitary and $P_2$ is completely non-unitary. Theorem \ref{thm5.3} yields that $(S_1, P_1)$ is a $\Gamma$-unitary on $\mathcal{H}_1$. We now further decompose $\mathcal{H}_1$ into an orthogonal sum of two joint reducing subspaces, say, $\mathcal{H}_1=\mathcal{H}_{11} \oplus \mathcal{H}_{12}$ so that $(A_1|_{\mathcal{H}_{11}}, S_1|_{\mathcal{H}_{11}}, P_1|_{\mathcal{H}_{11}})$ is a $\mathbb{P}$-unitary. To do this, Theorem \ref{P_unitary} implies that we must have 
		$
		A_1^*A_1=I-\frac{1}{4}S_1^*S_1$ on $\ \mathcal{H}_{11}$.	Indeed, we take 
		\[
		\mathcal{H}_{11}= Ker\bigg[I-A_1^{*}A_1-\frac{1}{4}S_1^*S_1 \bigg] \subseteq \mathcal{H}_1.
		\] 
		Since $A_1, S_1, P_1$ are commuting normal operators, it follows that $A_1, S_1, P_1$ doubly commutes with the operator $(I-A_1^{*}A_1-\frac{1}{4}S_1^*S_1)$. Therefore, $\mathcal{H}_{11}$ reduces $A_1, S_1, P_1$ and hence, $A, S, P$.  For any $x \in \mathcal{H}_{11}$, we have 
		$	(I-A_1^*A_1-\frac{1}{4}S_1^*S_1)x=0.
		$
		Then by Theorem \ref{P_unitary}, $(A_1|_{\mathcal{H}_{11}}, S_1|_{\mathcal{H}_{11}}, P_1|_{\mathcal{H}_{11}})$ is a $\mathbb{P}$-unitary. Setting
		$
		\mathcal{H}_{u}=\mathcal{H}_{11}$ and $\mathcal{H}_c=\mathcal{H}\ominus \mathcal{H}_{11}$, it is immediate that $\mathcal{H}_c$ reduces $A, S, P$. We show that $(A_1|_{\mathcal{H}_c}, S_1|_{\mathcal{H}_c}, P_1|_{\mathcal{H}_c})$ is a completely non-unitary $\mathbb{P}$-contraction. Assume that there is a closed joint reducing subspace, say, $\mathcal{L}$ of $\mathcal{H}$ on which $(A, S, P)$ acts as a $\mathbb{P}$-unitary. Theorem \ref{P_unitary} implies that $(S, P)$ is a $\Gamma$-unitary and hence $P$ is a unitary. Consequently, $\mathcal{L} \subseteq \mathcal{H}_1$. On the subspace $\mathcal{L}$, the triple $(A, S, P)$ acts as $\mathbb{P}$-unitary. Thus, Theorem \ref{P_unitary} yields that 
		$
		A^*A-\frac{1}{4}S^*S-I=0$ on $\ \mathcal{L}$. Consequently, $\mathcal{L} \subseteq \mathcal{H}_u$. Hence, $\mathcal{H}_u$ is the maximal closed joint reducing subspace of $\mathcal{H}$ restricted to which $(A, S, P)$ acts as a $\Pe$-unitary. Let $\mathcal{L}$ be a closed joint reducing subspace of $\mathcal{H}_c$ on which $(A, S, P)$ acts as a $\mathbb{P}$-unitary. Since $\mathcal{H}_u$ is a maximal such subspace,  $\mathcal{L} \subseteq \mathcal{H}_u$. Hence, 
		\[
		\mathcal{L} \subseteq \mathcal{H}_u \cap \mathcal{H}_c=\{0\}
		\]
		and so, $(A|_{\mathcal{H}_c}, S|_{\mathcal{H}_c}, P|_{\mathcal{H}_c})$ is a completely non-unitary $\mathbb{P}$-contraction. The proof is now complete.
	\end{proof}
	
		Now we are going to present the main theorem of this Section, the canonical decomposition of a $\Pe$-contraction. We shall follow the same notations as in Section \ref{Polyball}, that is to say for a commuting operator tuple $\underline{T}=(T_1, \dotsc, T_n)$ and for $\alpha=(\alpha_1, \dotsc, \alpha_n) \in \mathbb{N}^n$, we write 
	$
	\underline{T}^\alpha=T_1^{\alpha_1} \dotsc T_n^{\alpha_n}$ and $\underline{T}^{*\alpha}=T_1^{*\alpha_1} \dotsc T_n^{*\alpha_n}. 
	$
	
	\begin{thm}\textbf{(Canonical decomposition of a $\Pe$-contraction).}
		Let $(A, S, P)$ be a $\mathbb{P}$-contraction on a Hilbert space $\mathcal{H}$. Then $\mathcal{H}$ admits an orthogonal decomposition $\HS=\mathcal{H}_u\oplus \mathcal{H}_c$ into joint reducing subspaces $\mathcal{H}_u, \mathcal{H}_c$ of $A, S, P$ such that $(A|_{\mathcal{H}_u}, S|_{\mathcal{H}_u}, P|_{\mathcal{H}_u})$ is a $\mathbb{P}$-unitary and $(A|_{\mathcal{H}_c}, S|_{\mathcal{H}_c}, P|_{\mathcal{H}_c})$ is a completely non-unitary $\mathbb{P}$-contraction.
	\end{thm}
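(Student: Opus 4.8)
The plan is to imitate the strategy behind Theorem \ref{thm:decomp-Ball}: first isolate the maximal joint reducing subspace on which the whole triple acts as a commuting normal tuple, and then feed that subspace into the already-settled normal case, Proposition \ref{normal}. Writing $\underline{T}=(A,S,P)$, I would set
\[
\mathcal{H}_0=\bigcap_{\alpha \in \mathbb{N}^3}\bigcap_{\beta \in \mathbb{N}^3}\ker\big[\underline{T}^{\alpha}\underline{T}^{*\beta}-\underline{T}^{*\beta}\underline{T}^{\alpha}\big].
\]
By Corollary 4.2 in \cite{Eschmeier} (exactly as used in the proof of Theorem \ref{thm:decomp-Ball}), $\mathcal{H}_0$ is the maximal joint reducing subspace of $A,S,P$ on which these three operators act as a commuting normal triple.

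Since $\mathcal{H}_0$ reduces $A,S,P$, Lemma \ref{basiclem:01}$(ii)$ shows that $(A|_{\mathcal{H}_0},S|_{\mathcal{H}_0},P|_{\mathcal{H}_0})$ is a $\mathbb{P}$-contraction, and by construction it consists of commuting normal operators; hence it is a normal $\mathbb{P}$-contraction. I would then apply Proposition \ref{normal} to obtain a decomposition $\mathcal{H}_0=\mathcal{H}_u\oplus(\mathcal{H}_0\ominus \mathcal{H}_u)$ into reducing subspaces of $A,S,P$ for which $(A,S,P)|_{\mathcal{H}_u}$ is a $\mathbb{P}$-unitary and $\mathcal{H}_u$ is the maximal reducing subspace \emph{of $\mathcal{H}_0$} on which $(A,S,P)$ acts as a $\mathbb{P}$-unitary.

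The decisive step is to promote the maximality of $\mathcal{H}_u$ from inside $\mathcal{H}_0$ to inside all of $\mathcal{H}$. Here I would argue that if $\mathcal{L}\subseteq \mathcal{H}$ is any closed joint reducing subspace on which $(A,S,P)$ is a $\mathbb{P}$-unitary, then in particular $A,S,P$ act normally on $\mathcal{L}$, so maximality of $\mathcal{H}_0$ forces $\mathcal{L}\subseteq \mathcal{H}_0$, and then maximality of $\mathcal{H}_u$ inside $\mathcal{H}_0$ gives $\mathcal{L}\subseteq \mathcal{H}_u$. Setting $\mathcal{H}_c=\mathcal{H}\ominus\mathcal{H}_u$, which again reduces $A,S,P$, the restriction $(A,S,P)|_{\mathcal{H}_c}$ is a $\mathbb{P}$-contraction by Lemma \ref{basiclem:01}, and it is completely non-unitary: any reducing $\mathcal{L}\subseteq\mathcal{H}_c$ supporting a $\mathbb{P}$-unitary would satisfy $\mathcal{L}\subseteq \mathcal{H}_u\cap\mathcal{H}_c=\{0\}$.

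The main obstacle I anticipate is not the bookkeeping of the orthogonal splitting but the \emph{correct identification} of the unitary part: on $\mathcal{H}_0$ the triple is normal, yet its Taylor spectrum need not lie in $b\mathbb{P}$, so one genuinely needs Proposition \ref{normal} — which, after the canonical decomposition of the contraction $P$, carves out the piece via the kernel $\ker\big[I-A^{*}A-\tfrac14 S^{*}S\big]$ — to locate where a normal $\mathbb{P}$-contraction is actually a $\mathbb{P}$-unitary, and to verify through Fuglede's theorem that this kernel reduces all three operators simultaneously. The transfer-of-maximality argument must be executed carefully, since it is precisely what yields that $\mathcal{H}_u$ is intrinsic to $(A,S,P)$ on $\mathcal{H}$ rather than merely to its normal part.
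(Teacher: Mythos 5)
Your proposal is correct and takes essentially the same approach as the paper's own proof: the same Eschmeier subspace $\mathcal{H}_0$ of joint normality, the same application of Proposition \ref{normal} to the normal part, and the same two-step maximality transfer (first into $\mathcal{H}_0$, then into $\mathcal{H}_u$) showing that any joint reducing subspace of $\mathcal{H}_c$ carrying a $\Pe$-unitary must lie in $\mathcal{H}_u \cap \mathcal{H}_c = \{0\}$. There are no gaps.
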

	
	\begin{proof}
		Let $\underline{T}=(A, S, P)$ and let
		\[
		\mathcal{H}_0=\bigcap_{\alpha \in \mathbb{N}^3}\bigcap_{ \beta \in \mathbb{N}^3}Ker\bigg[\underline{T}^\alpha\underline{T}^{*\beta}-\underline{T}^{*\beta}\underline{T}^{\alpha}\bigg].
		\]
		It follows from Eschmeier's work (see Corollary 4.2 in \cite{Eschmeier}) that $\mathcal{H}_0$ is the largest joint reducing subspace of $A,S,P$ on which $(A, S, P)$ acts as a commuting triple of normal operators. Let 
		$
		(A_0, \ S_0, \ P_0)=(A|_{\mathcal{H}_0}, S|_{\mathcal{H}_0}, P|_{\mathcal{H}_0})
		$
		which is a $\Pe$-contraction consisting of normal operators. Proposition \ref{normal} yields that $\mathcal{H}_0$ admits an orthogonal decomposition 
		$
		\mathcal{H}_0=\mathcal{H}_u\oplus \mathcal{H}_{0c}
		$ 
		such that $\mathcal{H}_u$ and $\mathcal{H}_{0c}$ reduce $A_0, S_0, P_0$ and hence $A, S, P$. Moreover, $(A|_{\mathcal{H}_u}, S|_{\mathcal{H}_u}, P|_{\mathcal{H}_u})$ is a $\mathbb{P}$-unitary and $(A|_{\mathcal{H}_{0c}}, S|_{\mathcal{H}_{0c}}, P|_{\mathcal{H}_{0c}})$ is a completely non-unitary $\mathbb{P}$-contraction. Let 
		$
		\mathcal{H}_c=\mathcal{H}\ominus\mathcal{H}_u.
		$
		Let if possible, there is a non-zero closed joint reducing subspace $\mathcal{L}$ of $\mathcal{H}_c$ on which $(A, S, P)$ acts as a $\Pe$-unitary. In particular, $(A, S, P)$ acts as a commuting triple of normal operators on $\mathcal L$ and thus $\mathcal{L} \subseteq \mathcal{H}_0$. Since $\mathcal{H}_u$ is the maximal joint reducing subspace of $\mathcal{H}_0$ restricted to which $(A, S, P)$ is a $\Pe$-unitary, we have that $\mathcal{L} \subseteq \mathcal{H}_u$. Therefore, $\mathcal{L} \subseteq \mathcal{H}_u \cap \mathcal{H}_c=\{0\}$. Hence, 
		$
		(A|_{\mathcal{H}_c}, S|_{\mathcal{H}_c}, P|_{\mathcal{H}_c})
		$
		is a completely non-unitary $\mathbb{P}$-contraction and proof is complete. \end{proof}

	\section{Dilation of a $\mathbb{P}$-contraction}\label{dilation}
	
	\vspace{0.2cm}
	
	\noindent In this Section, we find a necessary and sufficient condition such that a $\Pe$-contraction $(A,S,P)$ admits a $\Pe$-isometric dilation on the minimal dilation space of the contraction $P$ and then explicitly construct such a dilation. Note that the existence of a $\Pe$-isometric dilation guarantees the existence of a $\Pe$-unitary dilation as every $\Pe$-isometry extends to a $\Pe$-unitary. However, our result does not ensure the success of rational dilation on the pentablock. Again, the pentablock is a polynomially convex domain. So, the Oka-Weil theorem (see CH-7 of \cite{Oka-Weil}) yields that the algebra of polynomials is dense in the rational algebra $\mathcal R(\overline{\Pe})$. Also, rational dilation for a $\Pe$-contraction is just a $\Pe$-unitary dilation of it. Below we define $\Pe$-isometric and $\Pe$-unitary dilations of a $\Pe$-contraction.
	\begin{defn}
		Let $(A, S, P)$ be a $\mathbb{P}$-contraction on a Hilbert space $\mathcal{H}$. A $\Pe$-isometry (or $\Pe$-unitary) $(X, T, V)$ acting on a Hilbert space $\mathcal{K} \supseteq \mathcal{H}$ is said to be a \textit{$\mathbb{P}$-isometric dilation} (or a $\Pe$-\textit{unitary dilation}) of $(A, S, P)$ if 
				\[
		A^{i}S^{j}P^{k}=P_\mathcal{H}X^{i}T^{j}V^{k}|_\mathcal{H}, \quad \text{for all } \ \ i, j, k \in \mathbb{N} \cup \{0\}.
		\] 
		Moreover, such a $\Pe$-isometric dilation is called \textit{minimal} if 
		\[
		\mathcal{K}=\overline{\text{span}}\{X^iT^jV^kh \ : \ h \in \mathcal{H} \ \text{and} \ i, j, k \in \mathbb{N} \cup \{0\} \}.
		\]
		However, the \textit{minimality} of a $\Pe$-unitary dilation demands $i,j,k$ to vary over the set of integers $\Z$.
	\end{defn} 
	
We begin with a few preparatory results associated with $\Pe$-contractions.

\begin{prop}
		If a $\mathbb{P}$-contraction $(A, S, P)$ defined on a Hilbert space $\mathcal{H}$ has a $\mathbb{P}$-isometric dilation, then it has a minimal $\mathbb{P}$-isometric dilation.
	\end{prop}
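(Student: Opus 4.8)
The plan is to start from an arbitrary $\Pe$-isometric dilation and cut it down to the smallest invariant subspace containing $\HS$. Suppose $(X,T,V)$ on $\mathcal{K} \supseteq \HS$ is a $\Pe$-isometric dilation of $(A,S,P)$, so that $A^iS^jP^k = P_{\HS}X^iT^jV^k|_{\HS}$ for all $i,j,k \in \N \cup \{0\}$. I would define
\[
\mathcal{K}_0 = \overline{\text{span}}\{X^iT^jV^kh \ : \ h \in \HS, \ i,j,k \in \N \cup \{0\}\},
\]
which is by construction the candidate minimal dilation space. The first step is to verify that $\mathcal{K}_0$ is a joint invariant subspace for $X,T,V$: since these operators commute, applying any one of them to a spanning vector $X^iT^jV^kh$ merely raises one of the exponents and so lands back in the spanning set, giving $X\mathcal{K}_0, T\mathcal{K}_0, V\mathcal{K}_0 \subseteq \mathcal{K}_0$.

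The key step is to argue that $(X|_{\mathcal{K}_0}, T|_{\mathcal{K}_0}, V|_{\mathcal{K}_0})$ is again a $\Pe$-isometry, and here the definition of a $\Pe$-isometry as the restriction of a $\Pe$-unitary must be invoked rather than any intrinsic characterization. By definition there is a $\Pe$-unitary $(\hat{X}, \hat{T}, \hat{V})$ on some $\hat{\mathcal{K}} \supseteq \mathcal{K}$ with $\mathcal{K}$ a joint invariant subspace and $X = \hat{X}|_{\mathcal{K}}$, $T = \hat{T}|_{\mathcal{K}}$, $V = \hat{V}|_{\mathcal{K}}$. Since $\mathcal{K}_0 \subseteq \mathcal{K}$, the restrictions $\hat{X}|_{\mathcal{K}_0}$ and $X|_{\mathcal{K}_0}$ coincide, and likewise for $T$ and $V$; hence invariance of $\mathcal{K}_0$ under $X,T,V$ is the same as invariance under $\hat{X}, \hat{T}, \hat{V}$. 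Thus $\mathcal{K}_0$ is a joint invariant subspace of the $\Pe$-unitary $(\hat{X}, \hat{T}, \hat{V})$, and its restriction to $\mathcal{K}_0$ is therefore a $\Pe$-isometry by definition.

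Finally I would check the dilation identity on the smaller space. For $h \in \HS \subseteq \mathcal{K}_0$ and any $i,j,k$, the vector $X^iT^jV^kh$ lies in $\mathcal{K}_0$, and the orthogonal projection of $\mathcal{K}_0$ onto $\HS$ agrees with that of $\mathcal{K}$ onto $\HS$ because $\HS$ is a common closed subspace of both. Consequently
\[
P_{\HS}(X|_{\mathcal{K}_0})^i(T|_{\mathcal{K}_0})^j(V|_{\mathcal{K}_0})^kh = P_{\HS}X^iT^jV^kh = A^iS^jP^kh,
\]
so $(X|_{\mathcal{K}_0}, T|_{\mathcal{K}_0}, V|_{\mathcal{K}_0})$ is a $\Pe$-isometric dilation, and it is minimal by the very definition of $\mathcal{K}_0$. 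The only genuinely delicate point is the middle step, namely ensuring that passing to $\mathcal{K}_0$ preserves the $\Pe$-isometry property; this is handled cleanly by working through the ambient $\Pe$-unitary rather than attempting to argue with the isometry in isolation.
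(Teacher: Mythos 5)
Your proof is correct and follows essentially the same route as the paper: both define the minimal space $\mathcal{K}_0$ as the closed span of $\{X^iT^jV^kh\}$, verify its joint invariance, and pass through the ambient $\Pe$-unitary guaranteed by the definition of $\Pe$-isometry to conclude that the restriction to $\mathcal{K}_0$ is again a $\Pe$-isometry dilating $(A,S,P)$. No gaps; the paper handles the "delicate" middle step exactly as you do.
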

	
	\begin{proof}
		Let $(X, T, V)$ on $\mathcal{K} \supseteq \mathcal{H}$ be a $\mathbb{P}$-isometric dilation of $(A, S, P)$. Let $\mathcal{K}_0$ be the space defined as 
		\[
		\mathcal{K}_0= \overline{\text{span}}\{X^iT^jV^kh \ : \ h \in \mathcal{H} \ \text{and} \ i, j, k \in \mathbb{N} \cup \{0\} \}.
		\] 
		It is easy to see that $\mathcal{K}_0$ is invariant under $X^{i}, T^j$ and $V^k$, for any non-negative integers $i, j, k$. Therefore, if we denote the restrictions of $X, T, V$ to the common invariant subspace $\mathcal{K}_0$ by $X_1, T_1, V_1$ respectively, we get 
		$
		X_1^iy=X^iy$, $T_1^jy=T^jy$, and $V_1^ky=V^ky$ for all $y \in \mathcal{K}_0$. Hence
		\[
		\mathcal{K}_0= \overline{\text{span}}\{X_1^iT_1^jV_1^kh \ : \ h \in \mathcal{H} \ \text{and} \ i, j, k \in \mathbb{N} \cup \{0\} \}.
		\] 
		Therefore, for any non-negative integers $i, j$ and $k$, we have that
		$
		P_\mathcal{H}(X_1^iT_1^jV_1^k)h=A^iS^jP^kh$, for all $h \in \mathcal{H}$. Since $(X, T, V)$ on $\mathcal{K}$ is a $\mathbb{P}$-isometry, it follows from the definition that there is a larger space $\widetilde{\mathcal{K}}$ containing $\mathcal{K}$ and a $\mathbb{P}$-unitary $(U_1, U_2, U_3)$ on $\widetilde{\mathcal{K}}$ such that $\mathcal{K}$ is a common invariant subspace of $\mathcal{K}$ and  
		$
		(X, T, V)=(U_1|_{\mathcal{K}}, U_2|_{\mathcal{K}}, U_3|_{\mathcal{K}}).
		$
		Since $\mathcal{K}_0$ is a subspace of $\mathcal{K}$ which is invariant under $X, T$ and $V$, we have that 
		\[
		(X_1, T_1, V_1)=(X|_{\mathcal{K}_0}, T|_{\mathcal{K}_0}, V|_{\mathcal{K}_0})=(U_1|_{\mathcal{K}_0}, U_2|_{\mathcal{K}_0}, U_3|_{\mathcal{K}_0}).
		\]
		Therefore, $(X_1, T_1, V_1)$ on $\mathcal{K}_0$ is a minimal $\mathbb{P}$-isometric dilation of $(A, S, P)$.
	\end{proof}

	\begin{prop}
		Let $(X, T, V)$ on $\mathcal{K}$ be a $\mathbb{P}$-isometric dilation of a $\mathbb{P}$-contraction $(A, S, P)$ on $\mathcal{H}$. If $(X, T, V)$ is minimal, then $(X^*, T^*, V^*)$ is a {$\mathbb{P}$-co-isometric extension} of $(A^*, S^*, P^*)$.
	\end{prop}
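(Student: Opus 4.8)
The plan is to use minimality to show that $\HS$ is a joint \emph{co-invariant} subspace for the $\Pe$-isometry $(X,T,V)$ --- i.e.\ a joint invariant subspace for $X^*,T^*,V^*$ --- and that the restricted operators are precisely $A^*,S^*,P^*$. Once this is established the conclusion is immediate: by the definition of a $\Pe$-co-isometry, the fact that $(X,T,V)$ is a $\Pe$-isometry says exactly that $(X^*,T^*,V^*)$ is a $\Pe$-co-isometry, and the coinvariance of $\HS$ together with the identities $X^*|_\HS=A^*$, $T^*|_\HS=S^*$, $V^*|_\HS=P^*$ is precisely the statement that $(X^*,T^*,V^*)$ is a $\Pe$-co-isometric extension of $(A^*,S^*,P^*)$. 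This parallels the classical fact that the minimal isometric dilation of a contraction is, on the adjoint side, a co-isometric extension.

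First I would fix $h\in\HS$ and prove the three identities
\[
X^*h=A^*h,\qquad T^*h=S^*h,\qquad V^*h=P^*h.
\]
Because $A^*h,S^*h,P^*h$ all lie in $\HS$, each identity simultaneously gives coinvariance of $\HS$ under the relevant adjoint and the value of its restriction. By minimality, $\mathcal{K}$ is the closed linear span of the vectors $X^iT^jV^kk'$ with $i,j,k\ge 0$ and $k'\in\HS$, so it suffices to check that $X^*h-A^*h$ (and similarly the other two differences) is orthogonal to every such generator.

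The key computation is to evaluate one inner product in two ways. On one hand, using commutativity of $X,T,V$ to absorb the extra factor into the monomial and then the dilation relation $A^{i+1}S^jP^k=P_\HS X^{i+1}T^jV^k|_\HS$ (together with $P_\HS h=h$), I obtain
\[
\langle X^*h,\;X^iT^jV^kk'\rangle=\langle h,\;X^{i+1}T^jV^kk'\rangle=\langle h,\;A^{i+1}S^jP^kk'\rangle .
\]
On the other hand, since $A^*h\in\HS$, the dilation relation for the exponent $(i,j,k)$ and the commutativity of $A,S,P$ give
\[
\langle A^*h,\;X^iT^jV^kk'\rangle=\langle A^*h,\;A^iS^jP^kk'\rangle=\langle h,\;A^{i+1}S^jP^kk'\rangle .
\]
The two right-hand sides coincide, so $X^*h-A^*h\perp X^iT^jV^kk'$ for every generator, whence $X^*h=A^*h$. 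The verifications for $T^*$ and $V^*$ are identical, the stray factor being absorbed as $T^{j+1}$ or $V^{k+1}$, respectively.

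I do not anticipate a genuine obstacle; the only delicate point is bookkeeping. One must use commutativity in $\mathcal{K}$ to slide the stray operator inside the monomial, apply the dilation identity with the \emph{shifted} exponent (still nonnegative, so the identity applies), and separately use commutativity of $(A,S,P)$ together with $A^*h,S^*h,P^*h\in\HS$ to collapse the small-space inner product to the same scalar. Ensuring that both reductions terminate at the identical vector $A^{i+1}S^jP^kk'$ (and its analogues) is the crux of the argument; the remainder is formal.
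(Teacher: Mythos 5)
Your proof is correct and takes essentially the same route as the paper's: both use minimality to reduce everything to the generators $X^iT^jV^kh$ ($h\in\HS$), apply the dilation identity with a shifted exponent together with commutativity, and conclude $X^*|_\HS=A^*$, $T^*|_\HS=S^*$, $V^*|_\HS=P^*$, which is exactly the co-isometric extension statement. The only cosmetic difference is that the paper first records the intertwining $AP_\HS=P_\HS X$ on generators and then takes adjoints, whereas you verify the orthogonality $X^*h-A^*h\perp X^iT^jV^kk'$ directly; the underlying computation is identical.
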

	
	\begin{proof}
	
		We first prove that $AP_\mathcal{H}=P_\mathcal{H}X, SP_\mathcal{H}=P_\mathcal{H}T$ and $PP_\mathcal{H}=P_\mathcal{H}V$. Clearly 
		\[
		\mathcal{K}=\overline{\text{span}}\{X^{i}T^{j}V^{k}h \ : \ h \in \mathcal{H} \ \text{and} \ i, j, k \in \mathbb{N} \cup \{0\} \}.
		\]
		Now for $h \in \mathcal{H}$, we have
		\[
		AP_\mathcal{H}(X^{i}T^{j}V^{k}h)=A(A^{i}S^{j}P^{k}h)=A^{i+1}S^{j}P^{k}h=P_\mathcal{H}(X^{i+1}T^{j}V^{k}h)=P_\mathcal{H}X(X^{i}T^{j}V^{k}h).
		\]	
		Thus we get that $AP_\mathcal{H}=P_\mathcal{H}X$ and similarly, we can show that $SP_\mathcal{H}=P_\mathcal{H}T$ and $PP_\mathcal{H}=P_\mathcal{H}V$. Also for $h \in \mathcal{H}$ and $k \in \mathcal{K}$, we have
		\[
		\langle A^*h, k \rangle =\langle P_\mathcal{H}A^*h, k \rangle =\langle A^*h, P_\mathcal{H}k \rangle =\langle h, AP_\mathcal{H}k \rangle =\langle h, P_\mathcal{H}Xk \rangle =\langle X^*h, k \rangle. 
		\]
		Hence, $A^*=X^*|_\mathcal{H}$ and similarly $S^*=T^*|_\mathcal{H}$ and $P^*=V^*|_\mathcal{H}$. The proof is complete. 
	\end{proof}
	
We have explained in Section \ref{Prelims} the connection between $\Pe$-contractions and the operator theory on the symmetrized bidisc. Indeed, Proposition \ref{lem2.3} shows that if $(A,S,P)$ is a $\Pe$-contraction then $(S,P)$ is a $\Gamma$-contraction. For this reason, the success of rational dilation on $\Gamma$ (see \cite{AglerII, Bhattacharyya}) will play a major role in the dilation of a $\Pe$-contraction. In \cite{Bhattacharyya}, an explicit $\Gamma$-isometric dilation was constructed for any $\Gamma$-contraction. Below we mention this dilation theorem from \cite{Bhattacharyya}.

	\begin{thm}[\cite{Bhattacharyya}, Theorem 4.3]\label{thm6.2} 
		Let $(S,P)$ be a $\Gamma$-contraction on a Hilbert space $\mathcal{H}$. Let $F$ be the fundamental operator of $(S,P)$, that is, unique solution of the operator equation $S-S^*P=D_PXD_P$ as in $(\ref{eqn:funda-01})$. Consider the operators $T_F, V_0$ defined on $\mathcal{H}\bigoplus \ell^2(\mathcal{D}_P)$ by 
		\begin{equation*}	
			\begin{split}
				& T_F(x_0, x_1, x_2, \dotsc)=(Sh_0, F^*D_Ph_0+Fh_1, F^*h_1+Fh_2, F^*h_2+Fh_1, \dotsc)\\
				& V_0(x_0, x_1, x_2, \dotsc)=(Ph_0, D_Ph_0,h_1, h_2, \dotsc). 
			\end{split}  
		\end{equation*}
		Then
		\begin{enumerate}
			\item $(T_F, V_0)$ is a $\Gamma$-isometric dilation of $(S,P)$.
			\item If $(\widehat{T}, V_0)$ on $\mathcal H \oplus l^2(\mathcal D_P)$ is a $\Gamma$-isometric dilation of $(S, P)$, then $\widehat{T}=T_F$.
			\item If $(T, V)$ is a $\Gamma$-isometric dilation of $(S, P)$ where $V$ is a minimal isometric dilation of $P$, then $(T, V)$ is unitarily equivalent to $(T_F, V_0)$.
		\end{enumerate}
	\end{thm}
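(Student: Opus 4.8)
The plan is to reduce everything to the characterization of a $\Gamma$-isometry in Theorem \ref{Gamma_uni}(2): a commuting pair $(T,V)$ is a $\Gamma$-isometry exactly when $V$ is an isometry, $T=T^*V$, and $\|T\|\le 2$. First I would recognize $V_0$ as the Sz.-Nagy--Sch\"{a}ffer minimal isometric dilation of the contraction $P$ (note $P$ is a contraction, being the second component of a $\Gamma$-contraction). Thus $V_0$ is an isometry on $\mathcal{H}\oplus\ell^2(\mathcal{D}_P)$ satisfying $P_{\mathcal{H}}V_0^k|_{\mathcal{H}}=P^k$, and $\ell^2(\mathcal{D}_P)$ is invariant for $V_0$ with $V_0^*|_{\mathcal{H}}=P^*$.

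For part (1) I would verify the three $\Gamma$-isometry conditions for $(T_F,V_0)$ by componentwise computation on vectors $(h_0,h_1,h_2,\dots)$, drawing on three inputs: the fundamental-operator equation $S-S^*P=D_PFD_P$ from (\ref{eqn:funda-01}), the numerical-radius bound $\omega(F)\le 1$, and the standard defect-operator intertwining identities $PD_P=D_{P^*}P$ and $D_PP^*=P^*D_{P^*}$. The commutation $T_FV_0=V_0T_F$ comes out of comparing components, with the fundamental equation being exactly what reconciles the coupling between $\mathcal{H}$ and the first copy of $\mathcal{D}_P$. The relation $T_F=T_F^*V_0$ is likewise a blockwise identity whose compression to $\mathcal{H}$ reproduces $S-S^*P=D_PFD_P$ and whose remaining entries encode $F$ and $F^*$. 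Finally the norm bound $\|T_F\|\le 2$ is where $\omega(F)\le 1$ enters, through the fact that the pure part of $T_F$ is unitarily equivalent to a Toeplitz-type operator with symbol $\varphi(z)=F+F^*z$, for which $\omega(F)\le 1$ is equivalent to $\sup_{|z|=1}\|F+F^*z\|\le 2$. To finish (1), observe that both $T_F$ and $V_0$ leave $\ell^2(\mathcal{D}_P)=\mathcal{H}^\perp$ invariant, since the $\mathcal{H}$-component of $T_F(0,h_1,h_2,\dots)$ and of $V_0(0,h_1,h_2,\dots)$ vanishes; hence $\mathcal{H}$ is co-invariant, $T_F^*|_{\mathcal{H}}=S^*$ and $V_0^*|_{\mathcal{H}}=P^*$, and taking adjoints gives $P_{\mathcal{H}}T_F^jV_0^k|_{\mathcal{H}}=S^jP^k$ for all $j,k\ge 0$.

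For part (2), given another $\Gamma$-isometric dilation $(\widehat{T},V_0)$ on the same space, I would write $\widehat T$ as a block operator with respect to $\mathcal{H}\oplus\mathcal{D}_P\oplus\mathcal{D}_P\oplus\cdots$ and impose the constraints $\widehat T V_0=V_0\widehat T$ and $\widehat T=\widehat T^*V_0$, together with co-invariance forcing the $(0,0)$ block to be $S$. These relations propagate down the columns and determine every entry uniquely, yielding $\widehat T=T_F$. Part (3) is then formal: if $(T,V)$ is a $\Gamma$-isometric dilation in which $V$ is a minimal isometric dilation of $P$, the uniqueness of the minimal isometric dilation (Sz.-Nagy, \cite{Nagy}) furnishes a unitary $W:\mathcal{K}\to\mathcal{H}\oplus\ell^2(\mathcal{D}_P)$ fixing $\mathcal{H}$ with $WVW^*=V_0$; conjugating $T$ produces a $\Gamma$-isometric dilation $(WTW^*,V_0)$, so part (2) gives $WTW^*=T_F$ and $(T,V)$ is unitarily equivalent to $(T_F,V_0)$.

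I expect the \emph{main obstacle} to be the norm bound $\|T_F\|\le 2$. The algebraic relations---commutation and $T_F=T_F^*V_0$---are routine once the intertwining identities and the fundamental equation are available, but the norm estimate genuinely requires exploiting $\omega(F)\le 1$ and identifying the pure part of $T_F$ with a Toeplitz operator whose symbol norm is governed by the numerical radius of $F$; this quantitative step is the heart of the argument, and the two uniqueness statements are comparatively mechanical afterwards.
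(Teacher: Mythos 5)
The paper does not prove this statement: it is imported verbatim from \cite{Bhattacharyya} (Theorem 4.3 there), so your attempt must be measured against that source's argument. Your architecture --- identifying $V_0$ as the Sch\"affer minimal isometric dilation of $P$, verifying $T_FV_0=V_0T_F$ and $T_F=T_F^*V_0$ blockwise, deducing the dilation property from invariance of $\ell^2(\mathcal{D}_P)=\mathcal{H}^\perp$, and reducing part (3) to part (2) via uniqueness of the minimal isometric dilation --- is essentially the architecture of that proof (and of this paper's own pentablock adaptation, Theorem \ref{thm:main-dilation}). One small caveat: the commutation $T_FV_0=V_0T_F$ does not follow from the fundamental equation $S-S^*P=D_PFD_P$ alone; comparing second components forces the identity $D_PS=FD_P+F^*D_PP$, which needs its own short derivation (set $G=D_PS-FD_P-F^*D_PP$, check $D_PG=0$ using $D_P^2=I-P^*P$, and conclude $G=0$ since $G$ maps into $\mathcal{D}_P$); this is exactly (\ref{eqn6.7'}) in Section \ref{dilation}. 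By contrast, $T_F=T_F^*V_0$ is, as you say, precisely the fundamental equation in its first component.

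The genuine gap is the step you yourself single out as the heart of the matter: the bound $\|T_F\|\le 2$. With respect to $\mathcal{H}\oplus\ell^2(\mathcal{D}_P)$, $T_F$ is block lower triangular, $T_F=\bigl(\begin{smallmatrix} S & 0\\ C & T_\varphi\end{smallmatrix}\bigr)$, where $C$ has the single nonzero entry $F^*D_P$ and $T_\varphi$ is the Toeplitz operator with symbol $\varphi(z)=F+F^*z$. Your argument bounds only $\|T_\varphi\|\le 2$ (which is indeed equivalent to $\omega(F)\le 1$), but the norm of a block triangular operator is \emph{not} controlled by the norms of its diagonal blocks: $\bigl(\begin{smallmatrix}1 & 0\\ c & 1\end{smallmatrix}\bigr)$ has unit diagonal blocks and arbitrarily large norm. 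The coupling entry $F^*D_P$ is exactly what your estimate ignores, so $\|T_F\|\le 2$ is not established by this route. The standard repair --- and the way the source's argument is organized --- is to trade the norm for the spectral radius: for block triangular operators one does have $\sigma(T_F)\subseteq\sigma(S)\cup\sigma(T_\varphi)$, hence $r(T_F)\le\max\{r(S),r(T_\varphi)\}\le\max\{\|S\|,\|T_\varphi\|\}\le 2$, and one then invokes the spectral-radius characterization of $\Gamma$-isometries ($V$ an isometry, $T=T^*V$, $r(T)\le 2$; see \cite{Bhattacharyya}, Theorem 2.14) instead of the norm version in Theorem \ref{Gamma_uni} that you planned to use. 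With that substitution your proof closes, and $\|T_F\|\le 2$ follows \emph{a posteriori} from $(T_F,V_0)$ being a $\Gamma$-isometry; it cannot serve as the engine of the argument in the way you propose.
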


It is evident from the definition that with respect to the decomposition $\mathcal H \oplus l^2(\mathcal D_P) = \mathcal H \oplus \mathcal D_P \oplus \mathcal D_P \oplus \dots $, the operators $T_F, V_0$ have the following form:
	\[
	T_F=  \begin{bmatrix} 
		S & 0 & 0 & 0 & \dotsc \\
		F^*D_{P} & F & 0 & 0 & \dotsc \\
		0 & F^* & F & 0 & \dotsc\\
		0 & 0 & F^* & F & \dotsc\\
		\dotsc & \dotsc & \dotsc & \dotsc & \dotsc \\
	\end{bmatrix},  \quad 
	V_0=  \begin{bmatrix} 
		P & 0 & 0 & 0 & \dotsc \\
		D_{P} & 0 & 0 & 0 & \dotsc \\
		0 & I & 0 & 0 & \dotsc\\
		0 & 0 & I & 0 & \dotsc\\
		\dotsc & \dotsc & \dotsc & \dotsc & \dotsc \\
	\end{bmatrix}. 
	\]
	It is evident that if $(X, T, V)$ is a $\Pe$-isometric dilation of a $\Pe$-contraction $(A, S, P)$, then $(T,V)$ is a $\Gamma$-isometric dilation of the $\Gamma$-contraction $(S,P)$. Again, Theorem \ref{thm6.2} tells us that if $V$ is the minimal isometric dilation of $P$, then $(T, V)$ is unitarily equivalent to $(T_F, V_0)$. Taking cue from this, we find a necessary and sufficient condition such that $(A, S, P)$ dilates to a $\Pe$-isometry $(X, T_F, V_0)$ acting on the space $\mathcal{H} \oplus \ell^2(\mathcal{D}_P)$.

	\begin{thm}\label{thm:main-dilation}
		Let $(A, S, P)$ be a $\mathbb{P}$-contraction on $\mathcal{H}$. Then $(A, S, P)$ admits a $\mathbb{P}$-isometric dilation $(X,T,V)$ with $V$ being the minimal isometric dilation of $P$ if and only if there exist sequences $(X_{n1})_{n=2}^\infty$ and $(X_n)_{n=2}^\infty$ of operators acting on $\mathcal{H}$ and $\mathcal{D}_P$ respectively such that the following hold:
		\begin{enumerate}
			\item $X_{n1}=X_{n+1, 1}P+X_{n+1}D_P \quad $ for $n=2,3, \dotsc$ ,\\
			\item $X_{21}P+X_2D_P=D_PA$,\\
			\item $X_{21}S+X_2F^*D_P=F^*D_PA+FX_{21}$,\\
			\item $X_{n1}S+X_nF^*D_P=F^*X_{n-1, 1}+FX_{n1} \quad $ for $n=3,4,\dotsc$, \\
			\item $X_2F=FX_2$,\\
			\item $X_nF+X_{n-1}F^*=F^*X_{n-1}+FX_n \quad $ for $n=3,4,\dotsc$,\\
			\item $I-A^*A-\frac{1}{4}S^*S=\overset{\infty}{\underset{n=2}{\sum}}X_{n1}^*X_{n1}+\frac{1}{4}D_PFF^*D_P$,\\
			\item $\overset{\infty}{\underset{n=2}{\sum}}X_{n}^*X_{n}=I-\frac{1}{4}(F^*F+FF^*)$,\\
			\item $\overset{\infty}{\underset{n=2}{\sum}}X_{n}^*X_{n+k,1}=0=\overset{\infty}{\underset{n=2}{\sum}}X_{n+k+1}^*X_{n} \quad $ for $k=1,2, \dotsc$,\\
			\item 		$\overset{\infty}{\underset{n=2}{\sum}}X_{n1}^*X_{n}+\frac{1}{4}D_PF^2=0=\overset{\infty}{\underset{n=2}{\sum}}X_{n+1}^*X_{n}+\frac{1}{4}F^2$.
		\end{enumerate}
	\end{thm}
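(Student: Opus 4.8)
The plan is to pin down the last two components $T$ and $V$ first, and then read off conditions (1)--(10) as the block-entry form of three operator identities satisfied by the remaining component $X$. Since $(A,S,P)$ is a $\Pe$-contraction, Proposition \ref{lem2.3} gives that $(S,P)$ is a $\Gamma$-contraction, and by Theorem \ref{P_isometry} the second and third components of any $\Pe$-isometric dilation form a $\Gamma$-isometry dilating $(S,P)$. As $V$ is prescribed to be the minimal isometric dilation of $P$, part (3) of Theorem \ref{thm6.2} forces $(T,V)$ to be unitarily equivalent to the explicit pair $(T_F,V_0)$ on $\mathcal K=\mathcal H\oplus \ell^2(\mathcal D_P)$, where $F$ is the fundamental operator of $(S,P)$ from \eqref{eqn:funda-01}. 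So throughout I would take $(T,V)=(T_F,V_0)$ in the displayed band forms and work on $\mathcal K=\mathcal H\oplus\mathcal D_P\oplus\mathcal D_P\oplus\cdots$.

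Next I would fix the block shape of $X$. Since $V_0$ is minimal over $\mathcal H$, one has $\mathcal K=\overline{\mathrm{span}}\{V_0^k\mathcal H:k\ge 0\}$, so any $\Pe$-isometric dilation built on this space is automatically minimal; the standard co-invariance property of minimal dilations then gives $X^*\mathcal H,\ T_F^*\mathcal H,\ V_0^*\mathcal H\subseteq\mathcal H$. Hence $X$ is block lower-triangular with $X_{11}=P_{\mathcal H}X|_{\mathcal H}=A$, and commutativity $XV_0=V_0X$ forces the tail of $X$ to be Toeplitz (constant down diagonals). This produces exactly two families of free parameters: the first-column blocks $X_{n1}\colon\mathcal H\to\mathcal D_P$ $(n\ge 2)$ and the tail blocks $X_n\colon\mathcal D_P\to\mathcal D_P$ $(n\ge 2)$, with $X_n$ placed on the $(n-2)$-th sub-diagonal. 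I would then compute the $(i,j)$ blocks of $XV_0-V_0X$, of $XT_F-T_FX$, and of $X^*X+\tfrac14 T_F^*T_F-I$. The identity $XV_0=V_0X$ yields (1) and (2); the identity $XT_FX=T_FX$ yields (3)--(6); and the spherical-isometry identity $X^*X+\tfrac14 T_F^*T_F=I$, which is equivalent by Theorem \ref{prop3.5} to $(X,T_F/2)$ being a $\B_2$-isometry, yields (7)--(10): the $(1,1)$ block gives (7), the diagonal tail block gives (8), the higher-order off-diagonal blocks give (9), and the first super-/sub-diagonal and first-row blocks give (10).

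For the converse I would run the computation backwards: given sequences satisfying (1)--(10), define $X$ by the lower-triangular Toeplitz band just described. Conditions (1)--(6) make $X$ commute with $V_0$ and $T_F$; conditions (7)--(10) are exactly the block entries of $X^*X+\tfrac14 T_F^*T_F=I$, so that, evaluated on the dense subspace of finitely supported vectors $\xi$, one gets $\|X\xi\|^2=\langle(I-\tfrac14 T_F^*T_F)\xi,\xi\rangle\le\|\xi\|^2$, which certifies that the formal block matrix $X$ extends to a genuine bounded operator with $\|X\|\le 1$ and that $X^*X=I-\tfrac14 T_F^*T_F$. By Theorem \ref{prop3.5}, $(X,T_F/2)$ is then a $\B_2$-isometry, and $(T_F,V_0)$ is a $\Gamma$-isometry by Theorem \ref{thm6.2}; hence $(X,T_F,V_0)$ is a $\Pe$-isometry by Theorem \ref{P_isometry}. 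Finally, since $\mathcal H$ is co-invariant for the commuting triple $(X,T_F,V_0)$ with $P_{\mathcal H}X|_{\mathcal H}=A$, $P_{\mathcal H}T_F|_{\mathcal H}=S$ and $P_{\mathcal H}V_0|_{\mathcal H}=P$, the multiplicativity of compression onto a co-invariant subspace gives $A^iS^jP^k=P_{\mathcal H}X^iT_F^jV_0^k|_{\mathcal H}$ for all $i,j,k\ge 0$, so $(X,T_F,V_0)$ is the desired $\Pe$-isometric dilation.

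The main obstacle I anticipate is structural and bookkeeping rather than any single deep idea: justifying cleanly that commutation with $V_0$ (together with lower-triangularity) forces the Toeplitz band form so that only the two families $X_{n1},X_n$ survive, and then matching every off-diagonal block of the three operator identities to the correct one of (1)--(10). The most delicate point is the convergence of the infinite sums $\sum_n X_{n1}^*X_{n1}$ and $\sum_n X_n^*X_n$ and the boundedness of $X$: this is guaranteed \emph{a posteriori} by $X^*X\le I$, but the argument must be arranged so that $X$ is seen to be a well-defined bounded operator (via its action on finitely supported vectors) before that estimate is invoked.
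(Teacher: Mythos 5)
Your proposal is correct and follows essentially the same route as the paper's proof: reduce to $(T,V)=(T_F,V_0)$ on $\mathcal{H}\oplus\ell^2(\mathcal{D}_P)$ via Theorem \ref{thm6.2}, show that $X$ must be block lower-triangular with $A$ in the $(1,1)$ corner and a Toeplitz tail, and then read conditions (1)--(10) off the block entries of the three identities $XV_0=V_0X$, $XT_F=T_FX$ and $X^*X+\tfrac{1}{4}T_F^*T_F=I$, reversing the computation for sufficiency and invoking Theorems \ref{prop3.5} and \ref{P_isometry} exactly as the paper does. Your explicit justification of the co-invariance of $\mathcal{H}$ (via minimality of the $V_0$-span and the co-isometric extension property) and of the boundedness of $X$ in the converse direction merely makes rigorous two points that the paper dispatches as ``straightforward computation.''
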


	\begin{proof}
		Suppose that a $\mathbb{P}$-contraction $(A, S, P)$ acting on a Hilbert space $\mathcal{H}$ dilates to a $\mathbb{P}$-isometry $(X, T, V)$ on a Hilbert space $\mathcal{K}$ with $V$ being the minimal isometric dilation of $P$. Since the minimal isometric dilation of a contraction is unique up to a unitary, without loss of generality let us assume that $V$ is the Sch\"{a}ffer's minimal isometric dilation of $P$, that is
		\[  
V=  \begin{bmatrix} 
			P & 0 & 0 & 0 & \dotsc \\
			D_{P} & 0 & 0 & 0 & \dotsc \\
			0 & I & 0 & 0 & \dotsc\\
			0 & 0 & I & 0 & \dotsc\\
			\dotsc & \dotsc & \dotsc & \dotsc & \dotsc \\
		\end{bmatrix}	
	\]	
	acting on the space $\mathcal K =\HS \oplus \mathcal D_P \oplus \mathcal D_P \oplus \dots$ . Then $V=\begin{bmatrix}
			P & 0\\
			C_3 & E_3
		\end{bmatrix}$ with respect to the decomposition $\mathcal{H} \oplus \ell^2(\mathcal{D}_P)$ of $\mathcal{K}$, where, 
		\[
		C_3=\begin{bmatrix}
			D_P\\
			0 \\
			0\\
			\dotsc
		\end{bmatrix} \ : \ \mathcal{H} \to \mathcal{D}_P \oplus \mathcal{D}_P \oplus \mathcal{D}_P \oplus \dotsc \quad \& \quad E_3= \begin{bmatrix} 
			0 & 0 & 0 & \dotsc \\
			I & 0 & 0 &  \dotsc\\
			0 & I & 0 &  \dotsc\\
			\dotsc & \dotsc  & \dotsc & \dotsc \\
		\end{bmatrix} \ \ \text{on} \ \mathcal{D}_P\oplus \mathcal{D}_P \oplus \mathcal{D}_P \oplus \dotsc .
		\]
		Using the $2 \times 2$ block matrix form of $V$ and the fact that $X$ and $T$ commute with $V$, it follows from straightforward computation that with respect to the decomposition $\mathcal{K}=\mathcal{H} \oplus \ell^2(\mathcal{D}_P)$, the operators $X$ and $T$ have the following operator matrix forms: 
		\[
		X=\begin{bmatrix}
			A & 0\\
			C_1 & E_1
		\end{bmatrix} \quad \& \quad T=\begin{bmatrix}
			S & 0\\
			C_2 & E_2
		\end{bmatrix},
		\] 
		for some $C_i$ and $E_i$ and $1\leq i \leq 2$. It then follows from Theorem \ref{thm6.2} that 
		\[
		T=T_F=  \begin{bmatrix} 
			S & 0 & 0 & 0 & \dotsc \\
			F^*D_{P} & F & 0 & 0 & \dotsc \\
			0 & F^* & F & 0 & \dotsc\\
			0 & 0 & F^* & F & \dotsc\\
			\dotsc & \dotsc & \dotsc & \dotsc & \dotsc \\
		\end{bmatrix}  \quad  \text{and} \quad
		V=V_0=  \begin{bmatrix} 
			P & 0 & 0 & 0 & \dotsc \\
			D_{P} & 0 & 0 & 0 & \dotsc \\
			0 & I & 0 & 0 & \dotsc\\
			0 & 0 & I & 0 & \dotsc\\
			\dotsc & \dotsc & \dotsc & \dotsc & \dotsc \\
		\end{bmatrix}, 
		\]
		with respect to the decomposition $\mathcal{K} = \mathcal{H} \oplus \mathcal{D}_P\oplus \mathcal{D}_P\oplus \dotsc$. Suppose that with respect to the same decomposition of $\mathcal{K}, X$ has the operator matrix form given by 
		\[
		X=  \begin{bmatrix} 
			A & 0 & 0 & 0 & \dotsc \\
			X_{21} & X_{22} & X_{23} & X_{24} & \dotsc \\
			X_{31} & X_{32} & X_{33} & X_{34} & \dotsc \\
			X_{41} & X_{42} & X_{43} & X_{44} & \dotsc \\
			\dotsc & \dotsc & \dotsc & \dotsc & \dotsc \\
		\end{bmatrix}.
		\]
		Some routine but laborious calculations yield the following: 
		
		\[
		XV_0=  \begin{bmatrix} 
			AP & 0 & 0 & 0 & \dotsc \\
			X_{21}P+X_{22}D_P & X_{23} & X_{24} & X_{25} &  \dotsc \\
			X_{31}P+X_{32}D_P & X_{33} & X_{34} & X_{35} &  \dotsc\\
			X_{41}P+X_{42}D_P & X_{43} & X_{44} & X_{45} & \dotsc\\
			\dotsc & \dotsc & \dotsc & \dotsc & \dotsc \\
		\end{bmatrix}
		\quad \text{and} \quad 
		V_0X=  \begin{bmatrix} 
			PA & 0 & 0 & 0 &  \dotsc \\
			D_{P}A & 0 & 0 & 0 & \dotsc \\
			X_{21} & X_{22} & X_{23} & X_{24} & \dotsc \\
			X_{31} & X_{32} & X_{33} & X_{34} & \dotsc \\
			\dotsc & \dotsc & \dotsc & \dotsc & \dotsc \\
		\end{bmatrix}.
		\]	
		This shows that $X$ and $V_0$ commute if and only if 
		\begin{enumerate}
			\item[(a)] $X_{ij}=0 \ $ for all $2 \leq i<j$,
			\item[(b)] $X_{ij}=X_{i+k, j+k} \ $ for all $i,j \geq 2$ and $k \in \mathbb{N}$,
			\item[(c)] $X_{21}P+X_{22}D_P=D_PA$,
			\item[(d)] $X_{n1}=X_{n+1, 1}P+X_{n+1, 2}D_P$ for all $n \geq 2$.   
		\end{enumerate}
		Hence, the operator matrix form of $X$ with respect to the decomposition of $\mathcal{K}=\mathcal{H} \oplus \mathcal{D}_P\oplus \mathcal{D}_P\oplus \dotsc$ takes the form 
		\begin{equation}\label{eqnX}
			X=  \begin{bmatrix} 
				A & 0 & 0 & 0 & \dotsc \\
				X_{21} & X_2 & 0 & 0 & \dotsc \\
				X_{31} & X_3 & X_2 & 0 & \dotsc \\
				X_{41} & X_4 & X_3 & X_2 & \dotsc \\
				\dotsc & \dotsc & \dotsc & \dotsc & \dotsc \\
			\end{bmatrix}\, ,
		\end{equation}
		where
		\begin{equation}\label{eqn6.2}
			X_{21}P+X_2D_P=D_PA \quad \text{and} \quad X_{n1}=X_{n+1, 1}P+X_{n+1}D_P, \quad n=2,3, \dotsc.
		\end{equation}
		Again, straightforward computations show that 
		\[
		XT_F=  \begin{bmatrix} 
			AS & 0 & 0 & 0 &  \dotsc \\
			X_{21}S+X_2F^*D_P & X_2F & 0 & 0 &  \dotsc \\
			X_{31}S+X_3F^*D_P & X_3F+X_2F^* & X_2F & 0 &  \dotsc \\
			X_{41}S+X_4F^*D_P & X_4F+X_3F^* & X_3F+X_2F^* & X_2F &  \dotsc \\
			X_{51}S+X_5F^*D_P & X_5F+X_4F^* & X_4F+X_3F^* & X_3F+X_2F^* &  \dotsc \\
			\dotsc & \dotsc & \dotsc & \dotsc & \dotsc \\
		\end{bmatrix}
		\]
		and
		\[ 
		T_FX=   \begin{bmatrix} 
			SA & 0 & 0 & 0 &  \dotsc \\
			F^*D_PA+FX_{21} & FX_2 & 0 & 0 &  \dotsc \\
			F^*X_{21}+FX_{31} & F^*X_2+FX_3 & FX_2 & 0 &  \dotsc \\
			F^*X_{31}+FX_{41} & F^*X_3+FX_4 & F^*X_2+FX_3 & FX_2  &  \dotsc \\
			F^*X_{41}+FX_{51} & F^*X_4+FX_5 & F^*X_3+FX_4 & F^*X_2+FX_3  &  \dotsc \\
			\dotsc & \dotsc & \dotsc & \dotsc & \dotsc \\
		\end{bmatrix}.
		\]
		Therefore, $X$ and $T_F$ commutes if and only if 
		\begin{equation}\label{eqn6.3}
			\left.\begin{split}	
			& (a) \quad X_{21}S+X_2F^*D_P=F^*D_PA+FX_{21}, \\
				& (b) \quad X_{n1}S+X_nF^*D_P=F^*X_{n-1, 1}+FX_{n1} \quad  \text{for} \quad  n=3,4,\dotsc, \\
				& (c) \quad X_2F=FX_2 \quad \text{and} \\
				& (d) \quad X_nF+X_{n-1}F^*=F^*X_{n-1}+FX_n \quad  \text{for} \quad  n=3,4,\dotsc.
			\end{split}\right\}
		\end{equation}
		Again, a sequence of routine computations yield
		\[
		X^*X=  \begin{bmatrix} 
			A^*A+\overset{\infty}{\underset{n=2}{\sum}}X_{n1}^*X_{n1} & \overset{\infty}{\underset{n=2}{\sum}}X_{n1}^*X_{n} & \overset{\infty}{\underset{n=2}{\sum}}X_{n+1, 1}^*X_{n} & \overset{\infty}{\underset{n=2}{\sum}}X_{n+2, 1}^*X_{n} & \dotsc \\
			\\
			\overset{\infty}{\underset{n=2}{\sum}}X_{n}^*X_{n1} & \overset{\infty}{\underset{n=2}{\sum}}X_{n}^*X_{n} & \overset{\infty}{\underset{n=2}{\sum}}X_{n+1}^*X_{n}  & 
			\overset{\infty}{\underset{n=2}{\sum}}X_{n+2}^*X_{n} & \dotsc \\
			\\
			\overset{\infty}{\underset{n=2}{\sum}}X_{n}^*X_{n+1, 1} & \overset{\infty}{\underset{n=2}{\sum}}X_{n}^*X_{n+1} & \overset{\infty}{\underset{n=2}{\sum}}X_{n}^*X_{n}  & 
			\overset{\infty}{\underset{n=2}{\sum}}X_{n+1}^*X_{n} & \dotsc \\
			\\
			\overset{\infty}{\underset{n=2}{\sum}}X_{n}^*X_{n+2, 1} & \overset{\infty}{\underset{n=2}{\sum}}X_{n}^*X_{n+2} & \overset{\infty}{\underset{n=2}{\sum}}X_{n}^*X_{n+1}  & 
			\overset{\infty}{\underset{n=2}{\sum}}X_{n}^*X_{n} & \dotsc \\
			
			\dotsc & \dotsc & \dotsc & \dotsc  & \dotsc \\
		\end{bmatrix}
		\]
		and
		\[
		T_F^*T_F=  \begin{bmatrix} 
			S^*S+D_PFF^*D_P & D_PF^2 & 0 & 0  & \dotsc \\
			F^{*2}D_{P} & F^*F+FF^* & F^2 & 0  & \dotsc \\
			0 & F^{*2} & F^*F+FF^* & F^2 & \dotsc\\
			0 & 0 & F^{*2} & F^*F+FF^* &  \dotsc\\
			\dotsc & \dotsc & \dotsc & \dotsc  & \dotsc \\
		\end{bmatrix}.
		\]
		Hence, $X^*X+\frac{1}{4}T_F^*T_F= I$ if and only if 
		\begin{equation}\label{eqn6.4}
			\left.\begin{split}
				& (a) \quad I-A^*A-\frac{1}{4}S^*S=\overset{\infty}{\underset{n=2}{\sum}}X_{n1}^*X_{n1}+\frac{1}{4}D_PFF^*D_P\\
				& (b) \quad
				\overset{\infty}{\underset{n=2}{\sum}}X_{n}^*X_{n}=I-\frac{1}{4}(F^*F+FF^*)\\
				& (c) \quad \overset{\infty}{\underset{n=2}{\sum}}X_{n}^*X_{n+k,1}=0=\overset{\infty}{\underset{n=2}{\sum}}X_{n+k+1}^*X_{n} \quad  \text{for} \ k=1,2, \dotsc\\
				& (d) \quad \overset{\infty}{\underset{n=2}{\sum}}X_{n1}^*X_{n}+\frac{1}{4}D_PF^2=0=\overset{\infty}{\underset{n=2}{\sum}}X_{n+1}^*X_{n}+\frac{1}{4}F^2.\\
			\end{split}\right\}
		\end{equation}
		Hence, the necessary part follows from equations (\ref{eqn6.2}) -- (\ref{eqn6.4}).\\
		
		Conversely, let us assume that the operator equations given in the statement of the theorem hold. Set 
		\[
		X=  \begin{bmatrix} 
			A & 0 & 0 & 0 & \dotsc \\
			X_{21} & X_2 & 0 & 0 & \dotsc \\
			X_{31} & X_3 & X_2 & 0 & \dotsc \\
			X_{41} & X_4 & X_3 & X_2 & \dotsc \\
			\dotsc & \dotsc & \dotsc & \dotsc & \dotsc \\
		\end{bmatrix}, \; T_F=  \begin{bmatrix} 
			S & 0 & 0 & 0 & \dotsc \\
			F^*D_{P} & F & 0 & 0 & \dotsc \\
			0 & F^* & F & 0 & \dotsc\\
			0 & 0 & F^* & F & \dotsc\\
			\dotsc & \dotsc & \dotsc & \dotsc & \dotsc \\
		\end{bmatrix},  \;
		V_0=  \begin{bmatrix} 
			P & 0 & 0 & 0 & \dotsc \\
			D_{P} & 0 & 0 & 0 & \dotsc \\
			0 & I & 0 & 0 & \dotsc\\
			0 & 0 & I & 0 & \dotsc\\
			\dotsc & \dotsc & \dotsc & \dotsc & \dotsc \\
		\end{bmatrix}
		\]
		on the space $\mathcal{H} \oplus \mathcal{D}_P\oplus \mathcal{D}_P\oplus \dotsc=\mathcal{H} \oplus \ell^2(\mathcal{D}_P)$. It follows from Theorem \ref{thm6.2} that $(T_F, V_0)$ is a $\Gamma$-isometry on $\mathcal{H} \oplus \ell^2(\mathcal{D}_P)$. Again, using the same computations for  equations (\ref{eqn6.2}) -- (\ref{eqn6.4}), we get that $X$ commutes with both $T_F$ as well as with $V_0$ and
	$
	X^*X+\frac{1}{4}T_F^*T_F=I.
	$
		Consequently, Theorem \ref{P_isometry} yields that $(X, T_F, V_0)$ is a $\mathbb{P}$-isometry on $\mathcal{H} \oplus \ell^2(\mathcal{D}_P)$. Evidently, 
		$
		A^*=X^*|_\mathcal{H}$, $S^*=T_F^*|_\mathcal{H}$ and $P^*=V_0^*|_\mathcal{H}$ and hence $(X,T_F,V_0)$ dilates $(A,S,P)$. The proof is now complete.		
	\end{proof}
	
	The conditions in Theorem \ref{thm:main-dilation} can be a bit relaxed if want a dilation of a special kind. Indeed, we shall see below that under seven of the conditions as in Theorem \ref{thm:main-dilation}, we can exhibit a particular $\Pe$-isometric dilation of a $\Pe$-contraction $(A,S,P)$ on the minimal dilation space of $P$. However, Theorem \ref{thm:main-dilation} provides the general case which can come only in the presence of all ten conditions.

	\begin{thm}\label{lem6.9}
		Let $(A, S, P)$ be a $\mathbb{P}$-contraction on a Hilbert space $\mathcal{H}$. If there are two operators $F_1, F_2\in \mathcal B(\mathcal{D}_P)$ satisfying the following:
		
		\begin{minipage}[t]{0.5\textwidth}
			\smallskip
			\begin{equation*}
				\begin{split}
					& 1. \ \ F_2D_PP+F_1D_P=D_PA, \\
					& 3.  \ \ F_1F=FF_1 , \\
					& 5.  \ \ F_2F+F_1F^*=FF_2+F^*F_1 , \\
					& 7. \ \ I-A^*A-\frac{1}{4}S^*S=D_P\bigg(F_2^*F_2+\frac{1}{4}FF^*\bigg)D_P ,\\		
				\end{split}
			\end{equation*}	
		\end{minipage}
		\begin{minipage}[t]{0.5\textwidth}
			\smallskip
			\begin{equation}\label{eqn7}
				\begin{split}
					& 2. \ \  F_2F^*=F^*F_2 , \\
					& 4.  \ \ F_2^*F_1+F^2 \slash 4=0, \\
					& 6. \ \ F_1^*F_1+F_2^*F_2=I-\frac{1}{4}(F^*F+FF^*),\\		
				\end{split}
			\end{equation}	
		\end{minipage}
		
		then $(X,T_F,V_0)$ on $\mathcal
	H \oplus \ell^2(\mathcal{D}_P)$ is a minimal $\Pe$-isometric dilation of $(A, S, P)$, where
		\[
		X=  \begin{bmatrix} 
		A & 0 & 0 & 0 & \dotsc \\
		F_2D_{P} & F_1 & 0 & 0 & \dotsc \\
		0 & F_2 & F_1 & 0 & \dotsc\\
		0 & 0 & F_2 & F_1 & \dotsc\\
		\dotsc & \dotsc & \dotsc & \dotsc & \dotsc \\
	\end{bmatrix}, \; T_F=  \begin{bmatrix} 
			S & 0 & 0 & 0 & \dotsc \\
			F^*D_{P} & F & 0 & 0 & \dotsc \\
			0 & F^* & F & 0 & \dotsc\\
			0 & 0 & F^* & F & \dotsc\\
			\dotsc & \dotsc & \dotsc & \dotsc & \dotsc \\
		\end{bmatrix},  \;
		V_0=  \begin{bmatrix} 
			P & 0 & 0 & 0 & \dotsc \\
			D_{P} & 0 & 0 & 0 & \dotsc \\
			0 & I & 0 & 0 & \dotsc\\
			0 & 0 & I & 0 & \dotsc\\
			\dotsc & \dotsc & \dotsc & \dotsc & \dotsc \\
		\end{bmatrix}.
		\]
	\end{thm}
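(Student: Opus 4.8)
The plan is to recognize $(X,T_F,V_0)$ as the specialization of the operators in Theorem \ref{thm:main-dilation} obtained by setting $X_{21}=F_2D_P$, $X_2=F_1$, $X_3=F_2$ and $X_{n1}=0$ $(n\ge 3)$, $X_n=0$ $(n\ge 4)$, leaving $T_F$ and $V_0$ unchanged. Granting this, it suffices to verify that the ten conditions (1)--(10) of Theorem \ref{thm:main-dilation} follow from the seven hypotheses (1)--(7) of the present statement; the sufficiency direction of Theorem \ref{thm:main-dilation} then delivers at once that $X$ commutes with $T_F$ and $V_0$ and that $X^*X+\frac{1}{4}T_F^*T_F=I$, whence, via Theorem \ref{P_isometry}, that $(X,T_F,V_0)$ is the sought $\Pe$-isometric dilation.

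Most of this reduction is bookkeeping. Conditions (1) and (9) of Theorem \ref{thm:main-dilation} hold automatically: the first is the banded recursion, consistent with the vanishing tails $X_{n1}=0$, $X_n=0$, and the second concerns sums of products of entries far off the diagonal, all of which vanish by the Toeplitz shape of $X$. After substitution, conditions (2), (5), (7), (8) and (10) become, respectively, hypotheses (1), (3), (7), (6) and (4) verbatim. Conditions (4) and (6) unwind term by term into hypotheses (2) and (5) at the small indices and into trivialities for large indices.

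The one genuinely non-automatic step, which I expect to be the main obstacle, is condition (3) of Theorem \ref{thm:main-dilation}, namely $F_2D_PS+F_1F^*D_P=F^*D_PA+FF_2D_P$; this relation is not among the seven hypotheses. The key is the fundamental-operator identity $D_PS=FD_P+F^*D_PP$, obtained by equating the $(2,1)$-blocks in the commutation $T_FV_0=V_0T_F$, which is valid because $(T_F,V_0)$ is a $\Gamma$-isometry by Theorem \ref{thm6.2}, with $(S,P)$ a $\Gamma$-contraction of fundamental operator $F$ by Proposition \ref{lem2.3}. Substituting this identity for $D_PS$ on the left and hypothesis (1), i.e. $D_PA=F_2D_PP+F_1D_P$, on the right, the relation collapses: the terms carrying $D_PP$ cancel using hypothesis (2) ($F_2F^*=F^*F_2$), and the remaining terms carrying $D_P$ cancel using hypothesis (5) ($F_2F+F_1F^*=FF_2+F^*F_1$). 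This is the sole place where the listed hypotheses must be supplemented by the structural identity.

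With all ten conditions in force, Theorem \ref{thm:main-dilation} gives that $(T_F,V_0)$ is a $\Gamma$-isometry and that $(X,T_F/2)$ is a spherical isometry, hence a $\B_2$-isometry by Theorem \ref{prop3.5}, so $(X,T_F,V_0)$ is a $\Pe$-isometry by Theorem \ref{P_isometry}. Since each of $X,T_F,V_0$ is block lower-triangular for $\mathcal H\oplus\ell^2(\mathcal{D}_P)$ with $(1,1)$-entries $A$, $S$, $P$, the subspace $\mathcal H$ is co-invariant and one reads off $A^*=X^*|_{\mathcal H}$, $S^*=T_F^*|_{\mathcal H}$, $P^*=V_0^*|_{\mathcal H}$, which iterates to the dilation relations $A^iS^jP^k=P_{\mathcal H}X^iT_F^jV_0^k|_{\mathcal H}$ for all $i,j,k\ge 0$. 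Finally, minimality is automatic: $V_0$ is by construction the minimal isometric dilation of $P$, so already $\overline{\text{span}}\{V_0^kh:h\in\mathcal H,\ k\ge 0\}=\mathcal H\oplus\ell^2(\mathcal{D}_P)$, and the larger span over all $X^iT_F^jV_0^kh$ therefore fills the whole space.
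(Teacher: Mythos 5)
Your proposal is correct and follows essentially the same route as the paper: specialize Theorem \ref{thm:main-dilation} with $X_{21}=F_2D_P$, $X_2=F_1$, $X_3=F_2$ and vanishing remaining entries, note that every condition except (3) is either automatic or verbatim one of the seven hypotheses, and derive condition (3), namely $F_2D_PS+F_1F^*D_P=F^*D_PA+FF_2D_P$, from hypotheses (1), (2), (5) together with the identity $D_PS=FD_P+F^*D_PP$, with minimality coming for free from $V_0$ being the minimal isometric dilation of $P$. The only cosmetic difference is that you obtain the identity $D_PS=FD_P+F^*D_PP$ by equating $(2,1)$-blocks in $T_FV_0=V_0T_F$ (legitimate, since Theorem \ref{thm6.2} guarantees $(T_F,V_0)$ is a commuting $\Gamma$-isometry), whereas the paper proves it directly by showing $D_P\left(D_PS-FD_P-F^*D_PP\right)=0$; both derivations are valid.
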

	
	\smallskip

\begin{proof}

The minimality is immediate once we prove that $(X,T_F,V_0)$ is a $\Pe$-isometric dilation of $(A,S,P)$, because, $V_0$ acting on $\HS \oplus \ell^2(\mathcal D_P) $ is the minimal isometric dilation of $P$. If we put 
	\[
	X_2=F_1, \quad X_3=F_2, \quad X_{21}=F_2D_P \quad \text{and} \quad X_{n1}=0=X_{n+1} \quad \text{for} \ n \geq 3
	\]
	in Theorem \ref{thm:main-dilation}, then the conditions (1) and (9) in Theorem \ref{thm:main-dilation} become redundant and the operator $X$ takes the block-matrix form as in the statement of this theorem. Thus, to ensure that $(X, T_F,V_0)$ is a $\Pe$-isometric dilation of $(A,S,P)$ in view of Theorem \ref{thm:main-dilation}, it suffices to prove
	\[
F_2D_PS+F_1F^*D_P=F^*D_PA+FF_2D_P\,,
\]
because, the other conditions are the hypotheses of this theorem. We deduce the above condition from the identities $(1), (4)$ and $(5)$ in (\ref{eqn7}). Note that the fundamental operator $F$ of a $\Gamma$-contraction $(S, P)$ satisfies 
\begin{equation}\label{eqn6.7'}
	D_PS=FD_P+F^*D_PP.
\end{equation}
See the last section of \cite{Bhattacharyya} for a proof. Let $G=D_PS-FD_P-F^*D_PP$. Then $G:\mathcal{H} \to \mathcal{D}_P$ satisfies the following:
\begin{equation*}
	\begin{split}
		D_PG=D_P^2S-D_PFD_P-D_PF^*D_PP=(I-P^*P)S-(S-S^*P)-(S^*-P^*S)P=0.
	\end{split}
\end{equation*}
Now, $\langle Gx, D_Py\rangle=\langle D_PGx, y\rangle=0$ for all $x, y \in \mathcal{H}$, which implies that $G=0$. Now multiplying both sides of $F_2D_PP+F_1D_P=D_PA$ by $F^*$ both sides, we have
\begin{equation*}
	\begin{split}
		F^*D_PA& = F^*F_2D_PP+F^*F_1D_P\\
		&=F_2F^*D_PP+F^*F_1D_P \qquad [\text{ by condition-(2)}]\\
		&=F_2D_PS-F_2FD_P+F^*F_1D_P \quad [\text{ by } \ (\ref{eqn6.7'})] \\
		&=  F_2D_PS-(F_2F-F^*F_1)D_P   \\
		&=F_2D_PS-(FF_2-F_1F^*)D_P. \quad [\text{ by condition-(5) of } (\ref{eqn7})]\\	
	\end{split}
\end{equation*}
Thus, we have that $F_2D_PS+F_1F^*D_P=F^*D_PA+FF_2D_P$ and this completes the proof.

\end{proof}

\begin{rem}\label{rem:last-section}
The conditional dilations as in Theorems \ref{thm:main-dilation} \& \ref{lem6.9} determine a class of $\Pe$-contractions $(A,S,P)$ that dilate to $\Pe$-isometries on the minimal isometric dilation space for $P$. However, there are limitations these theorems mainly because the concerned dilation space, i.e. $\HS \oplus \ell^2(\mathcal D_P)$ is too small. Below we provide examples to show that Theorems \ref{thm:main-dilation} \& \ref{lem6.9} provide dilations to nontrivial classes of $\Pe$-contractions and also at the same time they are not applicable for some $\Pe$-contractions.

\begin{enumerate}

\item Let $T$ be a contraction acting on a Hilbert space $\mathcal{H}$ such that $D_TT=0$. Then the $\mathbb{P}$-contraction $(A, S, P)=(T, 0, 0)$ admits a $\mathbb{P}$-isometric dilation $(X, T_F, V)$, where $V$ is a minimal isometric dilation space of $P$. Indeed, it follows from Theorem \ref{lem6.9} that if there exist $F_1$ and $F_2$ in $\mathcal{B}(\mathcal{D}_P)$ such that the operator equations in (\ref{eqn7}) hold, then the desired conclusion follows. Here $F=0, D_P=I$ and so $\mathcal{D}_P=\mathcal{H}$. A straightforward computation shows that for $(F_1, F_2)=(T, D_T)$, the operator equations in (\ref{eqn7}) admit a solution.

\smallskip

\item Proposition \ref{prop2.8} yields that $(I, 0, T)$ is a $\Pe$-contraction for any contraction $T$. Then the $\mathbb{P}$-contraction $(A, S, P)=(I, 0, T)$ admits a $\mathbb{P}$-isometric dilation $(X, T_F, V)$, where $V$ is a minimal isometric dilation space of $P$. The choice of $F_1=I$ and $F_2=0$ in $\mathcal{B}(\mathcal{D}_P)$ gives a solution to (\ref{eqn7}) and the rest follows from Theorem \ref{lem6.9}.

\smallskip

\item On the other hand, $(A,S,P)=(0,0,I)$ on $\mathbb{C}^2$ is a $\Pe$-contraction as it is a commuting normal triple with $\sigma_T(A,S,P)=\{ (0,0,1) \} \subset \overline{\Pe}$. Now, since $(S, P) =(0,I)$ on $\mathbb{C}^2$ is a $\Gamma$-isometry (in fact a $\Gamma$-unitary), it follows that the minimal isometric dilation space of $P$ is $\mathbb{C}^2$ itself. Note that $(0,0,I)$ is not a $\Pe$-isometry as the first two component i.e., $(0,0)$ is not a $\mathbb B_2$-isometry. If $(0, 0, I)$ were to dilate to a $\mathbb{P}$-isometry $(X,T,V)$ on $\mathbb{C}^2$ with $V$ being the minimal isometric dilation of $I$, then $P_{\C^2}X|_{\C^2}=X=0$, $P_{\C^2}T|_{\C^2}=T=0$ and $P_{\C^2}V|_{\C^2}=V=I$, but Theorem \ref{P_unitary} yields that $X^*X+\frac{1}{4}T^*T=I$, which is a contradiction. Hence, $(0,0,I)$ does not dilate to a $\Pe$-isometry on the minimal isometric dilation space of the last component.
\end{enumerate} 
\end{rem}

If we move out of the territory of the minimal isometric dilation of the last component as in Theorems \ref{thm:main-dilation} \& \ref{lem6.9}, we can find $\Pe$-isometric dilation for some $\Pe$-contractions as shown below.

	\begin{prop}
		Every $\Pe$-contraction of the form $(T_1, 0, T_2)$ admits a $\Pe$-isometric dilation. \end{prop}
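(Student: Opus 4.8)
The plan is to reduce the statement to the classical Ando dilation for a commuting pair of contractions, exploiting the vanishing middle entry. First I would invoke Proposition \ref{prop2.8}, which tells us that $(T_1, 0, T_2)$ being a $\Pe$-contraction is equivalent to $(T_1, T_2)$ being a commuting pair of contractions on $\HS$. This is the crucial structural input: the zero middle component strips the problem of any genuinely three-variable content and hands us an object for which a complete dilation theory is already in place.

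Next I would apply Ando's theorem (see \cite{Nagy}) to the commuting pair of contractions $(T_1, T_2)$ to obtain a Hilbert space $\mathcal K \supseteq \HS$ and a commuting pair of isometries $(X, V)$ on $\mathcal K$ such that
\[
T_1^{i} T_2^{k} = P_{\HS} X^{i} V^{k}|_{\HS}, \qquad i, k \in \N \cup \{0\}.
\]
The isometric (rather than merely unitary) form of Ando's dilation is obtained from the commuting unitary dilation by restricting the two commuting unitaries to the smallest subspace containing $\HS$ that is jointly invariant under both; this restriction produces commuting isometries and preserves the compression identities for all non-negative powers. With $(X, V)$ in hand, Corollary \ref{V_1, 0, V_2} immediately upgrades this commuting pair of isometries to a $\Pe$-isometry $(X, 0, V)$ on $\mathcal K$.

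It then remains to verify that $(X, 0, V)$ dilates $(T_1, 0, T_2)$, that is, $T_1^{i} 0^{j} T_2^{k} = P_{\HS} X^{i} 0^{j} V^{k}|_{\HS}$ for all $i, j, k \geq 0$. For $j = 0$ this is exactly the Ando compression identity displayed above, while for $j \geq 1$ both sides vanish because of the zero middle slot, so no new constraint arises. Thus $(X, 0, V)$ is the desired $\Pe$-isometric dilation. I do not expect a genuine obstacle in this argument; the only points meriting care are extracting the commuting \emph{isometric} dilation from Ando's theorem and confirming that the trivial middle component imposes no condition beyond the commuting-isometry requirement supplied by Corollary \ref{V_1, 0, V_2}.
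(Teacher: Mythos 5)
Your proposal is correct and follows essentially the same route as the paper's own proof: reduce via Proposition \ref{prop2.8} to the commuting pair of contractions $(T_1,T_2)$, invoke Ando's theorem to obtain a commuting isometric dilation, and then apply Corollary \ref{V_1, 0, V_2} to recognize $(X,0,V)$ as a $\Pe$-isometry dilating $(T_1,0,T_2)$. The only difference is that you spell out the verification of the compression identities (trivial for $j\geq 1$) and the passage from unitary to isometric dilation, details the paper leaves as ``evident.''
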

	
	\begin{proof}

It follows from Proposition \ref{prop2.8} that $(T_1, T_2)$ is a commuting pair of contractions if and only if $(T_1,0, T_2)$ is a $\Pe$-contraction.	
	 Let $(T_1, 0, T_2)$ be a $\Pe$-contraction acting on a Hilbert space $\mathcal{H}$.  A famous result due to Ando (see Chapter-I of \cite{Nagy}) yields that $(T_1,T_2)$ dilates to a pair of commuting isometries $(V_1, V_2)$. By Corollary \ref{V_1, 0, V_2}, we have that $(V_1, 0, V_2)$ is a $\Pe$-isometry. Evidently, $(V_1, 0, V_2)$ is a $\Pe$-isometric dilation of $(T_1, 0, T_2)$.
	\end{proof}
	
	\vspace{0.2cm}
	
A major role in the $\Pe$-isometric dilation of Theorem \ref{lem6.9} is played by the existence of a solution to the operator equation
	\begin{equation}\label{eqn:dilation-01}
	I-A^*A-\frac{1}{4}S^*S=D_P\big(Z^*Z+\frac{1}{4}FF^*\big)D_P.
	\end{equation}
 Indeed, it is evident from Theorem \ref{lem6.9}	that if (\ref{eqn:dilation-01}) has a solution $Z=F_2$ satisfying $I-\dfrac{1}{4}(F^*F+FF^*)-F_2^*F_2 \geq 0$, then it confirms the existence of $F_1$ and the rest boils down to $F_1,F_2$ satisfying the other identities. For this reason, we put special emphasis on (\ref{eqn:dilation-01}). In other words, we seek a solution $X \in \mathcal (\mathcal{D}_P)$ to the operator equation
	\begin{equation}\label{eqn6.8}
		I-A^*A-\frac{1}{4}S^*S=D_PXD_P.
	\end{equation}
	such that 
	$
	X=F_2^*F_2+\frac{1}{4}FF^*.
	$
	 Moreover, if there is a solution to (\ref{eqn6.8}), then $X\geq 0$ and consequently $D_PXD_P \geq 0$ which implies that $I-A^*A-\frac{1}{4}S^*S\geq 0$. Then we have
	\[
	\langle XD_Ph, D_Ph \rangle = \langle D_PXD_Ph, h \rangle= \langle (I-A^*A-S^*S \slash 4)h,h \rangle \leq \langle h , h\rangle=\|h\|^2
	\]
for every $h \in \mathcal{H}$ and hence it is necessary that $\omega(X) \leq 1$. In this connection let us recall an important result associated with the numerical radius.
	\begin{lem}[\cite{Bhattacharyya}, Lemma 2.9]
		The numerical radius of an operator $T$ is not greater than one if and only if Re $\beta T \leq I$ for all complex numbers $\beta$ of unit modulus. 
	\end{lem}
	
	It follows from the above lemma that $\omega\left( Z^*Z+\frac{1}{4}FF^* \right) \leq 1$ if and only if $Re \ \beta\left(Z^*Z+\frac{1}{4}FF^*\right) \leq I$ for all $\beta \in \T$. This is equivalent to saying that $Z^*Z+\frac{1}{4}FF^* \leq I$ as $Z^*Z+\frac{1}{4}FF^*$ is self-adjoint. In order to solve the operator equations in Theorem \ref{lem6.9}, we must have 
	\[
	I-F_2^*F_2-\frac{1}{4}FF^*=F_1^*F_1+\frac{1}{4}F^*F \geq 0.
	\]
	Thus, to obtain solutions that fit in with the system of equations in Theorem \ref{lem6.9}, we have to find $X \geq 0$ in $\mathcal{B}(\mathcal{D}_P)$ with $\omega(X) \leq 1$ such that (\ref{eqn6.8}) is satisfied. Our next result characterizes the class of $\Pe$-contractions for which $(\ref{eqn6.8})$ has a solution with the desired properties. The proof of this result requires the following lemma.

	\begin{lem}[\cite{Bhattacharyya}, Lemma 4.1] \label{lem6.11}
		Let $\Sigma$ and $D$ be two bounded operators on a Hilbert space $\mathcal{H}$. Then 
		\[
		DD^* \geq Re (e^{i\theta}\Sigma) \quad \text{for all} \ \theta \in \mathbb{R}
		\]
		if and only if there is $X \in \mathcal{B}(\mathcal{D}_*)$ with numerical radius of $X$ not greater than $1$ such that $\Sigma=DXD^*$, where $\mathcal{D}_*=\overline{Ran}(D^*)$.	
	\end{lem}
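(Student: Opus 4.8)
The plan is to prove the two implications separately, after first recording the elementary reformulation that, since $\max_{\theta\in\mathbb{R}}\mathrm{Re}(e^{i\theta}z)=|z|$ for a scalar $z$, the hypothesis ``$DD^*\ge \mathrm{Re}(e^{i\theta}\Sigma)$ for all $\theta\in\mathbb{R}$'' is equivalent to the single modulus bound
\[
|\langle \Sigma v,v\rangle|\le \langle DD^*v,v\rangle=\|D^*v\|^2\qquad(v\in\mathcal{H}).
\]
The direction asserting that the existence of $X$ implies the inequality is then immediate: if $\Sigma=DXD^*$ with $\omega(X)\le 1$, then $\mathrm{Re}(e^{i\theta}\Sigma)=D\,\mathrm{Re}(e^{i\theta}X)\,D^*$ for each $\theta$, and the numerical radius lemma recalled just above (Lemma 2.9 of \cite{Bhattacharyya}) gives $\mathrm{Re}(e^{i\theta}X)\le I_{\mathcal{D}_*}$. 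Conjugating this inequality by $D$ and using $\overline{Ran}\,D^*\subseteq\mathcal{D}_*$, so that $D\,I_{\mathcal{D}_*}D^*=DD^*$, yields $\mathrm{Re}(e^{i\theta}\Sigma)\le DD^*$.

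For the converse the idea is to build $X$ on the dense subspace $Ran\,D^*\subseteq\mathcal{D}_*$ via the prescription $\langle XD^*h,D^*k\rangle=\langle\Sigma h,k\rangle$, which is exactly what $\Sigma=DXD^*$ forces. The first and most delicate step is to show this is well defined, i.e. that $\ker D^*\subseteq\ker\Sigma\cap\ker\Sigma^*$. If $D^*h=0$, the modulus bound applied to $v=h$ gives $\langle\Sigma h,h\rangle=0$; applying it next to $v=h+\lambda g$ for arbitrary $g\in\mathcal{H}$ and $\lambda\in\mathbb{C}$ gives $|\lambda\langle\Sigma g,h\rangle+\overline{\lambda}\langle\Sigma h,g\rangle+|\lambda|^2\langle\Sigma g,g\rangle|\le|\lambda|^2\|D^*g\|^2$, and dividing by $|\lambda|$ and letting $|\lambda|\to 0$ (through every phase of $\lambda$) forces $\langle\Sigma g,h\rangle=\langle\Sigma h,g\rangle=0$ for all $g$, whence $\Sigma h=\Sigma^*h=0$.

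With well-definedness established, the sesquilinear form $B(D^*h,D^*k):=\langle\Sigma h,k\rangle$ on $Ran\,D^*$ has diagonal $B(\xi,\xi)=q(\xi)$ satisfying $|q(\xi)|\le\|\xi\|^2$ by the modulus bound; a polarization identity then yields $|B(\xi,\eta)|\le 2\|\xi\|\,\|\eta\|$, so $B$ extends to a bounded sesquilinear form on $\mathcal{D}_*$, and the Riesz representation theorem produces $X\in\mathcal{B}(\mathcal{D}_*)$ with $\langle X\xi,\eta\rangle=B(\xi,\eta)$. By construction $\langle DXD^*h,k\rangle=\langle\Sigma h,k\rangle$ for all $h,k\in\mathcal{H}$, so $\Sigma=DXD^*$, while $|\langle X\xi,\xi\rangle|=|q(\xi)|\le\|\xi\|^2$ on the dense set extends by continuity to all of $\mathcal{D}_*$, giving $\omega(X)\le 1$.

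I expect the main obstacle to be precisely the kernel inclusion of the second paragraph, which is what makes the construction work when $D$ is \emph{not} invertible; were $D$ invertible one could simply set $X=D^{-1}\Sigma D^{-*}$, note that $\mathrm{Re}(e^{i\theta}X)=D^{-1}\mathrm{Re}(e^{i\theta}\Sigma)D^{-*}\le I$, and read off $\omega(X)\le 1$ at once. It is in the perturbation argument isolating the linear-in-$\lambda$ term that the full force of the ``for all $\theta$'' hypothesis (equivalently, the modulus bound) is used; the subsequent passage from the form $B$ to a bounded operator via polarization and Riesz representation is then routine.
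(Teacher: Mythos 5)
This lemma is imported by the paper verbatim from \cite{Bhattacharyya} (Lemma 4.1 there) and is never proved in the paper itself, so the only meaningful comparison is with the argument in that source; measured against it, your proof is correct and follows essentially the same route. Both of your directions check out: the easy implication via $\mathrm{Re}(e^{i\theta}\Sigma)=D\,\mathrm{Re}(e^{i\theta}X)\,D^{*}$ combined with the numerical-radius lemma quoted just above (Lemma 2.9 of \cite{Bhattacharyya}), and the converse via the reformulation $|\langle \Sigma v,v\rangle|\le \|D^{*}v\|^{2}$, the kernel inclusion $\ker D^{*}\subseteq \ker\Sigma\cap\ker\Sigma^{*}$, the bounded sesquilinear form $B(D^{*}h,D^{*}k)=\langle \Sigma h,k\rangle$, and Riesz representation, with $\omega(X)\le 1$ correctly read off from the diagonal estimate $|\langle XD^{*}h,D^{*}h\rangle|=|\langle\Sigma h,h\rangle|\le\|D^{*}h\|^{2}$ on a dense set rather than from the lossy norm bound $\|X\|\le 2$. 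The one place where you take a longer path is well-definedness: your separate perturbation argument for $\ker D^{*}\subseteq\ker\Sigma\cap\ker\Sigma^{*}$ can be absorbed by polarizing the form $(h,k)\mapsto\langle\Sigma h,k\rangle$ on $\mathcal{H}$ itself, which gives $|\langle\Sigma h,k\rangle|\le\|D^{*}h\|^{2}+\|D^{*}k\|^{2}$; replacing $(h,k)$ by $(th,k/t)$ with $t>0$ and optimizing yields $|\langle\Sigma h,k\rangle|\le 2\|D^{*}h\|\,\|D^{*}k\|$, an inequality that simultaneously proves well-definedness (the case $D^{*}h=0$ follows by letting $t\to\infty$) and boundedness of $B$ in one stroke. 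This is only an economy of presentation; your argument as written has no gap.
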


	\begin{thm}\label{thm6.17}
		Let $(A, S, P)$ be a $\Pe$-contraction on a Hilbert space $\mathcal{H}$. Then there is a unique solution $X \in \mathcal{B}(\mathcal{D}_P)$ with $\omega(X) \leq 1$ to the operator equation (\ref{eqn6.8}), i.e. $I-A^*A-\frac{1}{4}S^*S=D_PXD_P$ if and only if 
		\begin{equation}\label{eqn6.10}
			\pm	\left(I-A^*A-\frac{1}{4}S^*S\right) \leq  D_P^2.
		\end{equation}
		Moreover, if such a solution $X$ exists, then
		$
		X \geq 0 \ \text{if and only if} \ \text{$(A, S\slash 2)$ is a spherical contraction}.
		$ 
	\end{thm}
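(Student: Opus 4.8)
The plan is to reduce the whole statement to Lemma \ref{lem6.11} through a careful choice of operators. Set $\Sigma = I - A^*A - \frac{1}{4}S^*S$ and $D = D_P = (I-P^*P)^{1/2}$. Since $(A,S,P)$ is a $\Pe$-contraction, Theorem \ref{lem2.12} guarantees that $P$ is a contraction, so $D_P$ is a well-defined positive \emph{self-adjoint} operator; consequently $DD^* = D_P^2$ and $\mathcal{D}_* = \overline{Ran}(D^*) = \overline{Ran}(D_P) = \mathcal{D}_P$. With these identifications, Lemma \ref{lem6.11} asserts that a solution $X \in \mathcal{B}(\mathcal{D}_P)$ with $\omega(X) \leq 1$ to $\Sigma = D_P X D_P$ exists if and only if $D_P^2 \geq \mathrm{Re}(e^{i\theta}\Sigma)$ for every $\theta \in \mathbb{R}$. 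The crucial observation is that $\Sigma$ is self-adjoint, so $\mathrm{Re}(e^{i\theta}\Sigma) = \frac{1}{2}(e^{i\theta}\Sigma + e^{-i\theta}\Sigma^*) = (\cos\theta)\Sigma$; as $\theta$ runs over $\mathbb{R}$, $\cos\theta$ sweeps out $[-1,1]$, so the family of operator inequalities collapses to its two extreme cases $\pm\Sigma \leq D_P^2$, which is precisely condition (\ref{eqn6.10}). This settles the existence half of the first assertion.

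For uniqueness I would argue directly. If $X, Y \in \mathcal{B}(\mathcal{D}_P)$ both satisfy the equation, then $D_P(X-Y)D_P = 0$, hence $\langle (X-Y)D_P h, D_P k\rangle = \langle D_P(X-Y)D_P h, k\rangle = 0$ for all $h, k \in \mathcal{H}$. Since $Ran(D_P)$ is dense in $\mathcal{D}_P = \overline{Ran}(D_P)$, continuity of the inner product yields $\langle (X-Y)u, v\rangle = 0$ for all $u, v \in \mathcal{D}_P$, forcing $X = Y$. The same density principle drives the \emph{moreover} clause.

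Finally, for the positivity characterization, recall that $(A, S/2)$ being a spherical contraction means exactly $A^*A + \frac{1}{4}S^*S \leq I$, i.e. $\Sigma \geq 0$. If $X \geq 0$, then $\Sigma = D_P X D_P \geq 0$ at once, since $D_P$ is self-adjoint. Conversely, if $\Sigma \geq 0$, then for every $h \in \mathcal{H}$ we have $\langle X D_P h, D_P h\rangle = \langle D_P X D_P h, h\rangle = \langle \Sigma h, h\rangle \geq 0$, so $\langle Xu, u\rangle \geq 0$ for all $u \in Ran(D_P)$, and by density of $Ran(D_P)$ in $\mathcal{D}_P$ together with boundedness of $X$, for all $u \in \mathcal{D}_P$; thus $X \geq 0$. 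I do not anticipate a genuine obstacle: the substantive content is entirely carried by Lemma \ref{lem6.11}, and the only points requiring care are the self-adjointness reduction $\mathrm{Re}(e^{i\theta}\Sigma) = (\cos\theta)\Sigma$ that converts the one-parameter family of bounds into the two-sided estimate (\ref{eqn6.10}), and the repeated use of the density of $Ran(D_P)$ in $\mathcal{D}_P$ to transfer both the uniqueness and the positivity statements from $Ran(D_P)$ to all of $\mathcal{D}_P$.
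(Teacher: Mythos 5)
Your proposal is correct and follows essentially the same route as the paper's own proof: both reduce the existence statement to Lemma \ref{lem6.11} with $\Sigma=I-A^*A-\frac{1}{4}S^*S$ and $D=D_P$, use self-adjointness of $\Sigma$ to rewrite $\mathrm{Re}(e^{i\theta}\Sigma)=(\cos\theta)\Sigma$ and reduce the one-parameter family of inequalities to $\pm\Sigma\leq D_P^2$ (the paper via a pointwise sign case analysis, you via the equivalent extreme-point/convexity observation), and both obtain uniqueness and the positivity equivalence from the identity $\langle XD_Ph,D_Pk\rangle=\langle D_PXD_Ph,k\rangle$ together with density of $Ran(D_P)$ in $\mathcal{D}_P$.
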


	\begin{proof}
		Let 
		$
		\Sigma=I-A^*A-\frac{1}{4}S^*S$ and $D=D_P$. Then, it follows from Lemma \ref{lem6.11} that there is an operator $X \in \mathcal{B}(\mathcal{D}_P)$ with $\omega(X) \leq 1$ such that 
		$
		I-A^*A-\frac{1}{4}S^*S=D_PXD_P
		$	
		if and only if 
		\begin{equation*}
			\begin{split}
				0 & \leq D_P^2-Re(e^{i\theta} \Sigma)=D_P^2-\Sigma \ Re(e^{i\theta})=D_P^2-\cos\theta \ \Sigma \quad \text{for all} \ \theta \in \mathbb{R}. 
			\end{split}
		\end{equation*}
		We show that 
		$
		D_P^2 \geq \cos \theta \Sigma $ for all $\theta \in \mathbb{R}$ if and only if $ D_P^2 \geq \pm \ \Sigma$. The necessary part is obvious. We prove the converse. Let $D_P^2 \geq \pm \ \Sigma$. Since $\Sigma$ is a self-adjoint operator, we have that
		$
		\langle \Sigma x, x \rangle \in \mathbb{R}$ for every $x \in \mathcal{H}$.
		Take any $\theta \in \mathbb{R}$ and $x \in \mathcal{H}$. We consider two different cases here depending on whether $\langle \Sigma x, x \rangle$ is positive or negative. If $\langle \Sigma x, x \rangle \geq 0$, then
		$
		\cos \theta \langle \Sigma x, x \rangle \leq \langle \Sigma x, x \rangle \leq \langle D_P^2x, x \rangle.
		$
		Also, if $\langle \Sigma x, x \rangle \leq 0$, then
		$
		\cos \theta \langle \Sigma x, x \rangle \leq -\langle \Sigma x, x \rangle \leq \langle D_P^2x, x \rangle.
		$
		In either case, we have that $\langle (D_P^2-\cos \theta \Sigma) x, x \rangle \geq 0$. Thus, $D_P^2 \geq \cos\theta \ \Sigma$ for all $\theta \in \mathbb{R}$. Thus, there is a solution $X \in \mathcal{B}(\mathcal{D}_P)$ with $\omega(X) \leq 1$ to the operator equation 
	$
		I-A^*A-\frac{1}{4}S^*S=D_PXD_P
		$
		if and only if $D_P^2 \geq \pm\Sigma$ which is equivalent to saying that 
		$
		\pm	\left(I-A^*A-\frac{1}{4}S^*S\right) \leq  D_P^2$.
		
		\smallskip
		
		For the uniqueness part, let there be two such solutions $X_1$ and $X_2$. Then 
		$
		D_P\widehat{X}D_P=0$, where $ \widehat{X}=X_1-X_2 \in \mathcal{B}(\mathcal{D}_P)$. Then, for all $x, y \in \mathcal{H}$, we have
		$
		\langle \widehat{X}D_Px, D_Py\rangle=\langle D_P\widehat{X}D_Px, y \rangle =0 ,
		$
		which shows that $\widehat{X}=0$. Hence, $X_1=X_2$.

\smallskip

Let us assume that there is an operator $X \in \mathcal{B}(\mathcal{D}_P)$ such that (\ref{eqn6.8}) holds. For any $x \in \mathcal{H}$, we have 
		\[
		\langle XD_Px, D_Px \rangle=\langle D_PXD_Px, x \rangle=\left\langle \left(I-A^*A-\frac{1}{4}S^*S\right)x, x \right \rangle
		\]
		which shows that 
		$
		I-A^*A-\frac{1}{4}S^*S \geq 0$ if and only if $ X \geq 0$.	The proof is now complete.
	\end{proof}
	
	Note that (\ref{eqn6.10}) does not hold for all $\Pe$-contractions. The scalar version of (\ref{eqn6.10}) is given by
			\[
			\pm \left(1-|a|^2-\frac{1}{4}|s|^2\right) \leq 1-|p|^2.
			\]  
			Now $(a, s, p)=(0, 0, 1)$, which in $\PC$, does not satisfy the above inequality. Also, $I-A^*A-\frac{1}{4}S^*S \geq 0$ if and only if $(A,S \slash 2)$ is a spherical contraction. Again, for every $\Pe$-contraction $(A,S,P)$ we have that $(A, S \slash 2)$ is a $\B_2$-contraction. Thus, we are in search of $\B_2$-contractions that are spherical contractions. In the special case when $(A,S,P)$ is a subnormal $\Pe$-contraction, i.e., a $\Pe$-contraction that admits an extension to a normal $\Pe$-contraction, we have success by an application of an elegant result due to Athavale, \cite{AthavaleII}.

	\begin{lem}\label{lem7.20}
		A subnormal $\Pe$-contraction $(A, S, P)$ satisfies $I-A^*A-\frac{1}{4}S^*S \geq 0$. 
	\end{lem}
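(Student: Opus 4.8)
The plan is to exploit the defining feature of a subnormal $\Pe$-contraction: by hypothesis $(A,S,P)$ extends to a commuting triple of normal operators $(\hat A,\hat S,\hat P)$ on a space $\mathcal K \supseteq \HS$ which is itself a normal $\Pe$-contraction, with $\HS$ invariant under $\hat A,\hat S,\hat P$ and $\hat A|_{\HS}=A$, $\hat S|_{\HS}=S$, $\hat P|_{\HS}=P$. First I would record the scalar inequality that drives everything: by Lemma \ref{lem2.10}, every point $(z_1,z_2,z_3)\in\PC$ satisfies $(z_1,z_2/2)\in\BC$, that is $1-|z_1|^2-\tfrac14|z_2|^2\ge 0$ on $\PC$. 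Since $(\hat A,\hat S,\hat P)$ is a normal $\Pe$-contraction, Proposition \ref{basicprop:02} gives $\sigma_T(\hat A,\hat S,\hat P)\subseteq\PC$, so the continuous function $g(z_1,z_2,z_3)=1-|z_1|^2-\tfrac14|z_2|^2$ is non-negative on the Taylor spectrum of the normal tuple. The spectral theorem (continuous functional calculus for a commuting normal tuple) then yields
\[
g(\hat A,\hat S,\hat P)=I-\hat A^*\hat A-\tfrac14\hat S^*\hat S\ge 0
\]
on $\mathcal K$.

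The second and final step is to compress this positive operator to the invariant subspace $\HS$. For $h\in\HS$, invariance gives $\hat A h=Ah$ and $\hat S h=Sh$, so
\[
\left\langle \left(I-A^*A-\tfrac14 S^*S\right)h,h\right\rangle=\|h\|^2-\|Ah\|^2-\tfrac14\|Sh\|^2=\|h\|^2-\|\hat A h\|^2-\tfrac14\|\hat S h\|^2=\langle g(\hat A,\hat S,\hat P)h,h\rangle\ge 0.
\]
As this holds for every $h\in\HS$, we conclude $I-A^*A-\tfrac14 S^*S\ge 0$, which is the claim.

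An equivalent route, the one signalled in the surrounding text, bypasses the functional calculus entirely: by Proposition \ref{prop2.11} the pair $(A,S/2)$ is a $\B_2$-contraction, and the normal extension $(\hat A,\hat S/2)$ exhibits it as moreover subnormal (a joint normal extension, with $\HS$ invariant). Athavale's theorem in \cite{AthavaleII} then forces a subnormal $\B_2$-contraction to be a spherical contraction, i.e. $A^*A+\tfrac14 S^*S\le I$, which is exactly $I-A^*A-\tfrac14 S^*S\ge 0$.

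The only genuinely delicate point in either argument is bookkeeping about the joint normal extension: one must be sure that $\HS$ is \emph{invariant} (so that $\|\hat A h\|=\|Ah\|$ and $\|\hat S h\|=\|Sh\|$ are exact equalities, not merely compressions), and that the extension is a \emph{normal $\Pe$-contraction} so that $\sigma_T(\hat A,\hat S,\hat P)\subseteq\PC$ and Lemma \ref{lem2.10} applies on the spectrum. Both are built into the definition of a subnormal $\Pe$-contraction, so once they are invoked the positivity is immediate; the substantive input is the scalar inclusion $\PC\subseteq\{\,1-|z_1|^2-\tfrac14|z_2|^2\ge 0\,\}$ from Lemma \ref{lem2.10} (equivalently, Athavale's subnormal-to-spherical result).
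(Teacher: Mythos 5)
Your proposal is correct, but your primary argument is genuinely different from the paper's proof. The paper disposes of the lemma in two lines: by Proposition \ref{prop2.11} the pair $(A, S\slash 2)$ is a $\B_2$-contraction, hence $\sigma_T(A, S\slash 2) \subseteq \BC$, and then Theorem 5.2 of \cite{AthavaleII} (Athavale's result that a subnormal tuple with Taylor spectrum in the closed unit ball is a spherical contraction) gives $I-A^*A-\frac{1}{4}S^*S \geq 0$ directly. This is exactly the ``equivalent route'' you sketch in your penultimate paragraph, so you have in fact identified the paper's proof as well. Your main argument, by contrast, is self-contained: it unfolds the definition of a subnormal $\Pe$-contraction to get a normal $\Pe$-contraction extension $(\hat A, \hat S, \hat P)$, applies the continuous functional calculus to the non-negative function $1-|z_1|^2-\frac{1}{4}|z_2|^2$ (non-negative on $\sigma_T(\hat A,\hat S,\hat P) \subseteq \PC$ by Lemma \ref{lem2.10}) to get positivity of $I-\hat A^*\hat A-\frac{1}{4}\hat S^*\hat S$ on the big space, and then restricts to the invariant subspace $\HS$, where invariance turns compressions into exact equalities of norms. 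In effect you reprove the special case of Athavale's theorem that is needed, using the same functional-calculus technique the paper itself employs in the proof of Theorem \ref{P_unitary}. What each approach buys: the paper's citation is shorter and leans on an external model-theoretic result; yours is elementary, keeps the argument internal to the paper's toolkit, and makes transparent that the only substantive input is the scalar inclusion $\PC \subseteq \{(z_1,z_2,z_3) : |z_1|^2+\frac{1}{4}|z_2|^2 \leq 1\}$ together with the invariance of $\HS$ under the normal extension. One cosmetic remark: citing Proposition \ref{basicprop:02} for the inclusion $\sigma_T(\hat A,\hat S,\hat P) \subseteq \PC$ is harmless but unnecessary, since that inclusion is already part of the definition of $\PC$ being a spectral set for the extension.
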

	
	\begin{proof}
		Let $(A, S, P)$ be a subnormal $\Pe$-contraction. It follows from Proposition \ref{prop2.11} that $\sigma_T(A, S\slash 2) \subseteq \BC$.  Now Theorem 5.2 in \cite{AthavaleII} yields that 
		$
		I-A^*A-\frac{1}{4}S^*S \geq 0.
		$
	\end{proof}

	\noindent It is never easy to determine the success or failure of rational dilation on a domain. Rational dilation succeeds on the bidisc $\mathbb D^2$ and on the symmetrized bidisc $\mathbb G_2$ (see \cite{AglerII, Bhattacharyya}), but it is unclear at this point if it succeeds on the pentablock. No domain in $\C^n$ for $n>2$ is known to have an affirmative answer for the rational dilation problem. Thus, our wild guess is that rational dilation fails on the pentablock. Our future plan is to investigate an answer to this problem for the pentablock via operator theory on the biball $\B_2$.
	
\medskip	

\section{Data Availability Statement}

\noindent (1) Data sharing is not applicable to this article, because, as per our knowledge no datasets were generated or analysed during the current study.

\smallskip

\noindent (2) In case any datasets are generated and/or analysed during the current study which go unnoticed, they must be available from the corresponding author on reasonable request.

\section{Declarations}

\noindent \textbf{Ethical Approval.} This declaration is not applicable.

\smallskip

\noindent \textbf{Competing interests.} There are no competing interests.

\smallskip

\noindent \textbf{Authors' contributions.} All authors have contributed equally.

\vspace{0.2cm}

\noindent \textbf{Funding.} The first named author is supported by ``Core Research Grant” of Science and Engineering Research Board (SERB), Govt. of India, with Grant No. CRG/2023/005223 and the ``Early Research Achiever Award Grant” of IIT Bombay with Grant No. RI/0220-10001427-001.  The second named author is supported by the Prime Minister’s Research Fellowship (PMRF) with PMRF Id No. 1300140 of Govt. of India.

\vspace{0.1cm}

\end{document}